\newtheorem{theorem}{Theorem}
\newtheorem{lemma}[theorem]{Lemma}
\newcommand{\E}{\ensuremath{\mathbb E}}
\newcommand{\R}{\ensuremath{\mathbb R}}
\newcommand{\F}{\ensuremath{\psi}}
\newcommand{\tr}{\ensuremath{{\scriptscriptstyle\mathsf{T}}}}
\newcommand{\lab}{\label}  \newcommand{\ra}{\ensuremath{\rightarrow}}  \def\a{{\mathbf{\alpha}}} \def\de{{\mathbf{\delta}}} \def\De{{{\Delta}}}  
  \def\beq{\begin{eqnarray}} \def\eeq{\end{eqnarray}} \def\ben{\begin{enumerate}}
\def\een{\end{enumerate}} \def\bit{\begin{itemize}}
\def\bel{\begin{lemma}}
\def\eel{\end{lemma}}
\def\eit{\end{itemize}} \def\beqs{\begin{eqnarray*}} \def\eeqs{\end{eqnarray*}} \def\bel{\begin{lemma}} \def\eel{\end{lemma}}
\newcommand{\N}{\mathbb{N}} \newcommand{\Z}{\mathbb{Z}}  \newcommand{\C}{\mathcal{C}} 
\newcommand{\T}{\mathbb{T}}      \renewcommand{\b}{\mathbf{b}} 
\newcommand{\tb}{\tilde{\mathbf{b}}}
 \newcommand{\I}{I}   \newcommand{\p}{\mathbb{P}}
\newcommand{\PP}{\mathcal P}    \newcommand{\one}{\mathrm{1}}
 \newcommand{\MM}{\mathcal M} \newcommand{\la}{\lambda}  
   \def\eps{{\epsilon}}  \def\ie{i.\,e.\,} \def\g{G}
\def\vol{\mathrm{vol}}
\newcommand{\tP}{\tilde{P}}
\newcommand{\tg}{\hat{G}}
\newcommand{\La}{\Lambda}
\newcommand{\RR}{\mathbb{R}}
\newcommand{\J}{\mathcal{J}}
\renewcommand{\g}{\mathcal{G}}
\renewcommand{\C}{\mathcal C}
\newcommand{\f}{\mathbf{f}}
\newcommand{\ta}{\tilde{a}}
\newcommand{\hess}{\nabla^2}
\theoremstyle{plain}
\newtheorem{thm}{Theorem}
\newtheorem{corollary}{Corollary}
\newtheorem{observation}{Observation}
\newtheorem{defn}{Definition}
\newtheorem{claim}{Claim}
\theoremstyle{remark}
\newtheorem{notation}{Notation}
\numberwithin{equation}{section}
\numberwithin{figure}{section}
\renewcommand{\a}{\alpha}
\renewcommand{\b}{\beta}
\renewcommand{\g}{\gamma}
\newcommand{\spec}{\mathrm{spec}}
\newcommand{\Spec}{\mathrm{Spec}}
\newcommand{\Leb}{\mathrm{Leb}}
\renewcommand{\ta}{\tilde{\alpha}}
\renewcommand{\tb}{\tilde{\beta}}
\renewcommand{\tg}{\tilde{\gamma}}
\newcommand{\tla}{\tilde{\lambda}}
\newcommand{\tmu}{\tilde{\mu}}
\newcommand{\tnu}{\tilde{\nu}}
\renewcommand{\F}{\mathcal F}
\renewcommand{\RR}{\mathcal R}
\newcommand{\tX}{\tilde{X}}
\newcommand{\tY}{\tilde{Y}}
\newcommand{\D}{\mathcal{D}}
\newcommand{\bb}{\mathbf b}
\newcommand{\tilh}{\tilde{h}}
\renewcommand{\one}{{\bf{1}}}
\date{\today}
\begin{document}

\begin{frontmatter}
\title{Large deviations for random hives and the spectrum of the sum of two random matrices}

\runtitle{Large deviations for random hives}

\begin{aug}
\author[A]{\fnms{Hariharan} \snm{Narayanan}\ead[label=e1]{hariharan.narayanan@tifr.res.in}},
\and \author[B]{\fnms{Scott} \snm{Sheffield}\ead[label=e2]{sheffield@math.mit.edu}}

\address[A]{School of Technology and Computer Science, Tata Institute of Fundamental Research, Mumbai,
\printead{e1}}

\address[B]{Department of Mathematics, 
Massachusetts Institute of Technology,\\
\printead{e2}}
\end{aug}

\begin{abstract}

Suppose $\alpha, \beta$ are Lipschitz, strongly concave functions from $[0, 1]$ to $\R$ and $\g$ is a concave function from $[0, 1]$ to $\R$, such that $\a(0) = \g(0) = 0$, and  $\a(1) = \b(0) = 0$ and $\b(1) = \g(1) = 0.$  
For an $n \times n$ Hermitian matrix $W$, let $\spec(W)$ denote the vector in $\R^n$ whose coordinates are the eigenvalues of $W$ listed in non-increasing order. Let $\la = \partial^- \a$, $\mu = \partial^- \b$ on $(0, 1]$ and $\nu = \partial^- \g,$ at all points of $(0, 1]$, where $\partial^-$ is the left derivative.
Let $\la_n(i) := n^2(\a(\frac{i}{n})-\a(\frac{i-1}{n}))$, for $i \in [n]$, and similarly,
$\mu_n(i) := n^2(\b(\frac{i}{n})-\b(\frac{i-1}{n}))$, and 
$\nu_n(i) := n^2(\g(\frac{i}{n})-\g(\frac{i-1}{n}))$. 

Let $X_n, Y_n$ be independent random Hermitian matrices from unitarily invariant distributions with spectra $\la_n$,
$\mu_n$  respectively. We define norm $\|\cdot\|_\I$ to correspond in a certain way to the sup norm of an antiderivative. We prove that the following limit exists.
\beqs \lim\limits_{n \ra \infty}\frac{\log \p\left[\|\spec(X_n + Y_n) - \nu_n\|_\I < n^2 \eps\right]}{n^2}.\eeqs
We interpret this limit in terms of the surface tension $\sigma$ of continuum limits of the discrete hives defined by Knutson and Tao. 

We provide matching large deviation upper and lower  bounds for the spectrum of the sum of two random matrices $X_n$ and $Y_n$, in terms of the surface tension $\sigma$ mentioned above.

We also prove large deviation principles for random hives with $\a$ and $\b$ that are $C^2$, where the rate function can be interpreted in terms of the maximizer  of a functional that is the sum of a term related to the free energy of hives associated with $\a, \b$ and $\g$ and a quantity related to logarithms of Vandermonde determinants associated with $\g$.
\end{abstract}

\begin{keyword}[class=MSC]
\kwd[Primary ]{60F10}
\kwd{60B20}
\kwd[; secondary ]{82B41}
\end{keyword}

\begin{keyword}
\kwd{Large deviations}
\kwd{Random matrices}
\kwd{Random surfaces}
\end{keyword}

\end{frontmatter}
\newpage
\tableofcontents

\section{Introduction}

\subsection{Motivation from Representation Theory and Random Matrices}
Littlewood-Richardson coefficients play an important role in the representation theory of  the general linear groups. Among other interpretations, they count the number of tilings of certain domains using squares and equilateral triangles  \cite{squaretri}. 
Let $\la, \mu, \nu$ be vectors in $\Z^n$ whose entries are non-increasing non-negative integers. Let the $\ell_1$ norm of a vector $\a \in \R^n$ be denoted $|\a|$ and let $$|\la| + |\mu| = |\nu|.$$ Take an equilateral triangle $\Delta$ of side $1$. Tessellate it with equilateral triangles of side $1/n$. Assign boundary values to $\Delta$ as in Figure~\ref{fig:tri3}; Clockwise, assign the values $0, \la_1, \la_1 + \la_2, \dots, |\la|, |\la| + \mu_1, \dots, |\la| + |\mu|.$ Then anticlockwise, on the horizontal side, assign  $$0, \nu_1, \nu_1 + \nu_2, \dots, |\nu|.$$

Knutson and Tao defined this hive model for Littlewood-Richardson coefficients in \cite{KT1}. They showed that the Littlewood-Richardson coefficient
$c_{\la\mu}^\nu$ is given by the number of ways of assigning integer values to the interior nodes of the triangle, such that the piecewise linear extension to the interior of $\Delta$ is a concave function $f$ from $\Delta$ to $\R$.  
Such  an integral ``hive" $f$ can be described as an integer point in a certain polytope known as a hive polytope. The volumes of these polytopes shed light on the asymptotics of Littlewood-Richardson coefficients \cite{Ikenmeyer, Nar, Okounkov, Greta}. Additionally, 
they appear in certain calculations  in free probability \cite{KT2, Zuber}. Indeed,
the volume of the polytope of all real hives with fixed boundaries $\la, \mu, \nu$ is equal, up to known multiplicative factors involving Vandermonde determinants, to the probability density of obtaining a Hermitian matrix with spectrum $\nu$ when two independent Haar random Hermitian matrices with spectra  $\la$ and $\mu$   are added \cite{KT2}. The  relation was stated in an explicit form by Coquereaux and Zuber in \cite{Zuber} and has been reproduced here in (\ref{eq:2.4new}). 

We refer the reader to \cite{Zuberhorn} for background on hive polytopes and the Horn problem.

\begin{figure}
\begin{center}
\includegraphics[scale=0.40]{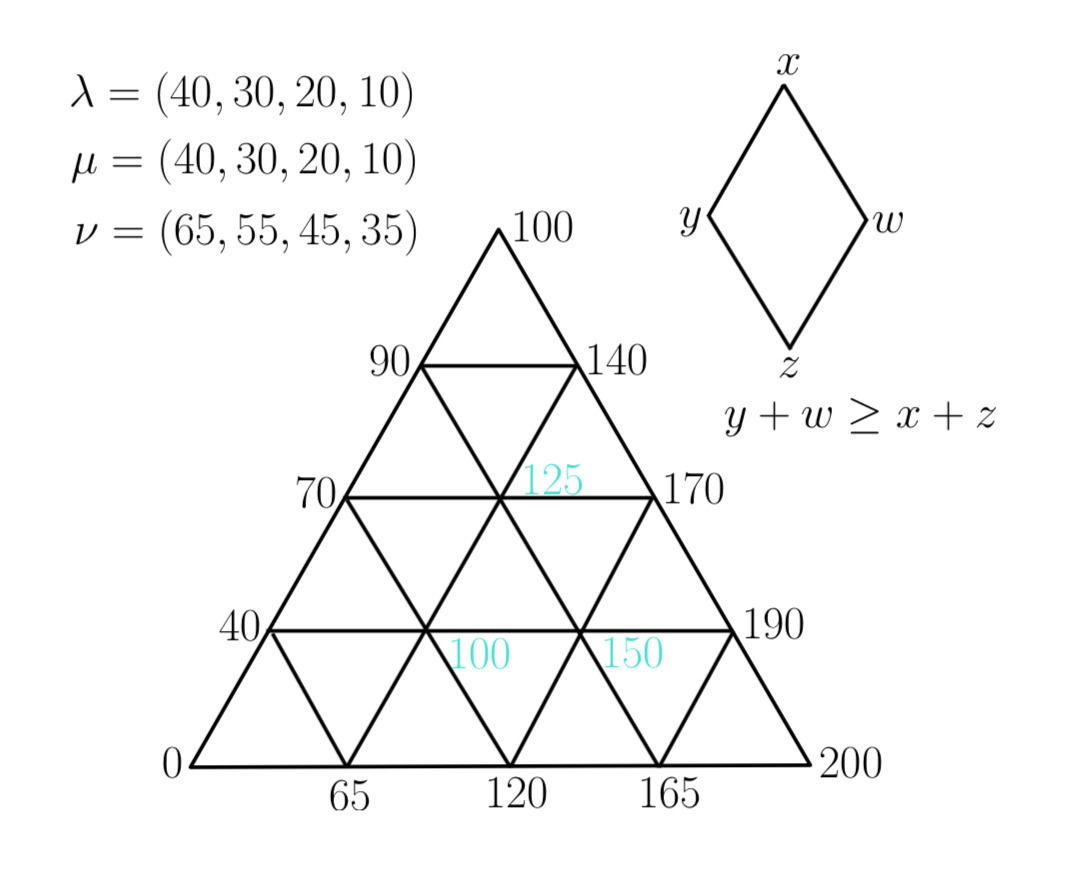}
\caption{Values taken at interior vertices in the hive model}\label{fig:tri3}
\end{center}
\end{figure}

In this paper, we will primarily be studying limits of hives of increasing size whose boundary conditions remain roughly the same, beyond the necessary modifications entailed by the increasing size.  For this reason, we will now introduce new notation.

Throughout this paper, we suppose $\alpha, \beta$ are Lipschitz strongly concave functions\footnote{We assume strong concavity, \ie that there exists a positive $\de$ such that $\a + \de x^2 $ and $\beta + \de x^2$ are concave. Without some such assumption, the normalized asymptotic volumes $V(\la)V(\mu)$ of certain polytopes (corresponding to the set of ``augmented hives" that will be defined in Definition~\ref{def:defn6}) can go to $0$ leading to degeneracies.} from $[0, 1]$ to $\R$ and $\g$ is a concave function from $[0, 1]$ to $\R$, such that $\a(0) = \g(0) = 0$, and  $\a(1) = \b(0) = 0$ and $\b(1) = \g(1) = 0.$ 
 \begin{defn}
 Let $U$ be a subset of $\R$.  We say that a function $f:U \ra \R$ is strongly decreasing if there exists $\de > 0$ such that $f(x) + \de x$ is monotonically decreasing.
 \end{defn}
For a $n \times n$ Hermitian matrix $W$, let $\spec(W)$ denote the vector in $\R^n$ whose coordinates are the eigenvalues of $W$ listed in non-increasing order. Let $\la = \partial^- \a$, $\mu = \partial^- \b$ on $(0, 1]$ and $\nu = \partial^- \g,$ at all points of $(0, 1]$, where $\partial^-$ is the left derivative. We note that $\la$ and $\mu$ are strongly decreasing and $\nu$ is monotonically decreasing.
Let $\la_n(i) := n^2(\a(\frac{i}{n})-\a(\frac{i-1}{n}))$, for $i \in [n]$, and similarly,
$\mu_n(i) := n^2(\b(\frac{i}{n})-\b(\frac{i-1}{n}))$, and 
$\nu_n(i) := n^2(\g(\frac{i}{n})-\g(\frac{i-1}{n}))$. 

We define the seminorm $\|\cdot\|_\I$ on $\R^n$ as follows.
Given  $\la_n(1) \geq \dots \geq \la_n(n)$, such that $\sum_i \la_n(i) = 0$, let $\|\la_n\|_\I$ be defined to be the supremum over all $1 \leq i \leq n$ of $\la_n(1) + \dots + \la_n(i)$. Let $\one$ denote the vector in $\R^n$ whose coordinates are all $1$. Given any vector $v \in \R^n$ such that $v - (n^{-1}) \one \langle v, \one\rangle$ has coordinates that are a permutation of $\la_n(1), \dots, \la_n(n)$, we define $\|v\|_\I$ to be the same as $\|\la_n\|_\I,$ and $\|\cdot\|_\I$ is readily seen to be a seminorm.
We use $\|\cdot\|_\I$ to also denote a seminorm on the space of functions of bounded variation (\ie expressible as the difference of two monotonically decreasing functions) 
where the value on a given function $f:[0,1]\ra\R$  equals $\sup_{n \in \N} n^{-2}\|f_n\|_\I$. Note that 
if $f$ is monotonically decreasing, then $\|f\|_\I = \lim_{n \ra \infty}n^{-2}\|f_n\|_\I$. Let $B_I(\nu, \eps):= \{\nu'\in \R^n| \|\nu' -\nu\|_I < \eps\}.$ 

Let $X_n, Y_n$ be independent random Hermitian matrices from unitarily invariant distributions with spectra $\la_n$,
$\mu_n$  respectively. 
We wish to prove that the following limit exists.
\beq \lim\limits_{n \ra \infty}\frac{\ln \p\left[n^{-2}\|\spec(X_n + Y_n) - \nu_n\|_\I < \eps\right]}{n^2}.\lab{eq:master}\eeq
We will interpret this limit in terms of the entropy of certain scaling limits of hives introduced by Knutson and Tao \cite{KT1, KT2}. In particular, 
if there exists concave surface satisfying certain conditions, on an equilateral triangle in $\R^2$, with boundary conditions derived from $\a, \b, \g$, we will show that the expression in (\ref{eq:master}) can be interpreted in terms of the maximizer  of a functional that is the sum of a term related to the free energy of hives associated with $\a, \b$ and $\g$ and a quantity related to logarithms of Vandermonde determinants associated with $\g$.

More precisely,  for a concave function $\g':[0, 1] \ra \R$ such that $\nu' = \partial^-\gamma'$,  in Section~\ref{sec:8},  we define the rate function \beqs I(\g') :=  \log\left(\frac{V(\la)V(\mu)}{V(\nu')}\right) + \inf\limits_{h' \in H(\la, \mu; \nu')}  \int_T 2\sigma((-1)(\hess h')_{ac})\Leb_2(dx) .\eeqs
In the above expression,  $V$ is defined in Definition~\ref{def:32} and $\sigma$ in Definition~\ref{def:sigma}.
The following is proved in Theorem~\ref{thm:4}:\\
{\bf Theorem}: Let $X_n, Y_n$ be independent random Hermitian matrices from unitarily invariant distributions with spectra $\la_n$,
$\mu_n$  respectively. 
Let $Z_n = X_n + Y_n$.
For any $\eps > 0$,
$$\lim\limits_{n \ra \infty}\left(\frac{-2}{n^2}\right)\log \p_n\left[\spec(Z_n) \in {B}_\I^n(\nu_n, \eps)\right] = \inf\limits_{\partial^-\gamma'\in {B}_\I(\nu, \eps)}I(\g').$$

The question of understanding the spectrum of $X_n + Y_n,$ in the setting of large deviations has been studied before in \cite{Belinschi}, where upper and lower bounds were given which agreed for measures that corresponded to free products with  amalgamation (see Theorem 1.3, \cite{Belinschi}), but not in general.  
In this paper, in the case where $\la$ and $\mu$ have integrals that are strongly concave, we provide matching upper and lower bounds in terms of a certain surface tension $\sigma$ in Theorem~\ref{thm:5}. This theorem states the following.\\
Let $I(\g)$ be defined by (\ref{eq:I}) (also stated above).

{\bf Theorem}: Let $X_n, Y_n$ be independent random Hermitian matrices from unitarily invariant distributions with spectra $\la_n$,
$\mu_n$  respectively. 
Let $Z_n = X_n + Y_n$.
Given a monotonically decreasing sequence of $n$ numbers $v_n$ whose sum is $0$, we define $\iota_n(v_n)$ to be the piecewise linear concave function $f$ from $[0, 1]$ to $\R$ such
$f(i/n) - f((i-1)/n) = v_n(i)/n^2$ for each $i \in [n]$ , and $f(0) = 0.$
For any Borel set $E\subset L^\infty[0, 1]$, let $\PP_n(E) := \p_n[\iota_n(\spec( Z_n)) \in E].$
Let $a_n := \frac{n^2}{2}.$
For each Borel measurable set 
${\displaystyle E\subset L^\infty[0, 1],}$ 
$${\displaystyle -\inf_{\g\in E^{\circ }}I(\g)\leq \liminf_{n \ra \infty}a_{n}^{-1}\log(\mathcal {P}_{n}(E))\leq \limsup_{n \ra \infty}a_{n}^{-1}\log(\mathcal {P}_{n}(E))\leq -\inf_{\g\in {\overline {E}}}I(\g).}$$

We also study the related geometric question of the shape of the restriction of a random augmented hive to the upper triangle (see Figure~\ref{fig:aug-hive}) as $n \ra \infty$. 
If $\la$ and $\mu$ are $C^1$ and strongly decreasing, Theorem~\ref{thm:5new} provides a large deviation principle for the shape of 
the restriction of a random augmented hive to the upper triangle $T$. This theorem states the following.\\
For $h \in H(\la, \mu; \nu)$,  let $I_1(h)$ be defined (as in (\ref{eq:I1}))  as follows:
\beqs I_1(h) :=  \log\left(\frac{V(\la)V(\mu)}{V(\nu)}\right) +   \int_T 2\sigma((-1)(\hess h)_{ac})\Leb_2(dx) .\eeqs
{\bf Theorem}: Suppose  $\la, \mu$ are strongly decreasing $C^1$ functions.
Given a discrete hive $h_n \in H_n(\la_n, \mu_n; \nu_n)$, we define $h' = \iota_n(h_n)$ to be the  piecewise linear extension of $h_n$. This is a map $h'$ from $T$ to $\R$ such that for any $(x, y) \in T \cap \left(\Z/n \times \Z/n\right)$,  
$h'(x, y) = h_n(nx, ny)/n^2$.
For any Borel set $E\subset L^\infty(T)$, let $\PP_n(E) := \p_n[\iota_n(h_n) \in E].$
Let $a_n := \frac{n^2}{2}.$
For each Borel measurable set 
${\displaystyle E\subset L^\infty(T),}$ 
$${\displaystyle -\inf_{h\in E^{\circ }}I_1(h)\leq \liminf_{n \ra \infty}a_{n}^{-1}\log(\mathcal {P}_{n}(E))\leq \limsup_{n \ra \infty}a_{n}^{-1}\log(\mathcal {P}_{n}(E))\leq -\inf_{h\in {\overline {E}}}I_1(h).}$$

In order to study the large deviations of random surfaces \cite{Scott}, it has proven beneficial to first examine the situation with periodic boundary conditions as was done by Cohn, Kenyon and Propp in \cite{CohnKenyonPropp} for the case of domino tilings. In our setting, these structures correspond to random hives with periodic Hessians, where the periodicity is at a scale that tends to infinity. The results of \cite{random_concave} provide some insight into concentration phenomena for these objects.

\section{Preliminaries}\lab{sec:prelim}
We will denote universal constants by $c$ and $C$,  and their precise value may depend upon the occurrence.
Let $\Spec_n$ denote the cone of all possible vectors $x = (x_1, \dots, x_n)$ in $\R^n$ such that $$x_1 \geq x_2 \geq \dots \geq x_n,$$ and $\sum_i x_i = 0.$ Let $\Leb_{n-1, 0}$ denote the $n-1$ dimensional Lebesgue measure on the linear span of $\Spec_n$.

Let $\Box$ be the square with vertices $(0, 0), (0, 1), (1, 1), (1, 0)$. We consider a $n \times n$ lattice square  $\Box_n$ with vertices $(0, 0)$, $(0, n)$, $(n, 0)$ and $(n, n)$. 
We use $[n]$ to denote $\Z \cap [1, n].$

\begin{defn}\lab{def:Tn}  Let $T$ be the isosceles right triangle in $\R^2$ whose vertices are $(0, 0)$, $(0, 1)$ and $(1, 1)$.  Let $T_n$ be the set of lattice points $nT \cap (\Z^2)$, where $n$ is a positive integer.
\end{defn}

\begin{figure}\label{fig:factorize}
\begin{center}
\includegraphics[scale=0.75]{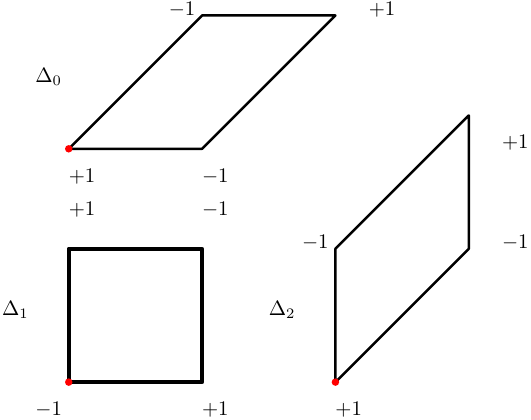}
\caption{A visual depiction of the the second order discrete operators $\De_i$.  A red dot indicates the vertex $(v_1, v_2)$.}
\end{center}
\end{figure}
\begin{defn}[Discrete Hessian and the $\De_i$ on $T_n$]
Let $f: T_n \ra \R$ be a function.
\bit
\item Let $E_0(T_n)$ be the set of all parallelograms $e_0\subseteq T_n$ whose vertices are $\{(v_1, v_2),  (v_1 + 1, v_2),  (v_1 + 1, v_2 + 1),  (v_1 +2, v_2 + 1)\}.$

\item Let $E_1(T_n)$ be the set of all parallelograms $e_1\subseteq T_n$ whose vertices are $\{(v_1, v_2),  (v_1 + 1, v_2),  (v_1, v_2 + 1),  (v_1 +1, v_2 + 1)\}.$ 
\item Let $E_2(T_n)$ be the set of all parallelograms $e_2\subseteq T_n$ whose vertices are$\{(v_1, v_2),  (v_1 + 1, v_2+1),  (v_1, v_2 + 1),  (v_1 +1, v_2 + 2)\}.$ 
\eit

We define the discrete Hessian $\nabla^2(f):E(T_n) \ra \R$ to be a  real-valued function on the set $E(T_n) = E_0(T_n) \cup E_1(T_n) \cup E_2(T_n)$ and the $\De_i$ from $\R^{V(T_n)}$ to $\R^{E_i(T_n)}$ by 
\beq\lab{eq:A}
\nabla^2 f(e_0) := \De_0 f(e_0) :=  f(v_1, v_2) - f(v_1 + 1, v_2) - f(v_1 + 1, v_2 + 1) + f(v_1 +2, v_2 + 1).\nonumber\\
\nabla^2f(e_1) := \De_1 f(e_1) :=  - f(v_1, v_2) + f(v_1 + 1, v_2) + f(v_1, v_2 + 1) - f(v_1 + 1,  v_2 + 1).\nonumber\\
\nabla^2f(e_2) := \De_2 f(e_2) :=  f(v_1, v_2) - f(v_1+1, v_2+1) - f(v_1, v_2 + 1) + f(v_1 + 1, v_2 + 2).\nonumber\\
\eeq
\end{defn}

\begin{defn}[Rhombus concavity]\lab{def:rhomb}
Given a function $h:T \ra \R$, and a positive integer $n$,  let $h_n$ denote the function from $T_n$ to $\R$ such that for $(nx, ny) \in T_n$, $h_n(nx, ny) = n^2 h(x, y).$
A function $h:T \ra \R$ is called {\bf rhombus concave} if for any positive integer $n$, 
and any $i$, $\De_i h_n$ is nonpositive on $E_i(T_n),$ and $h$ is continuous on $T$.
The corresponding function $h_n$ is called {\bf discrete (rhombus) concave}.  
Note that a necessary and sufficient condition for a function $h_n$ from $T_n$ to $\R$ to be discrete concave,  is that the piecewise linear extension (which we denote $\tilh_n$) of $h_n$ to $nT$ is rhombus concave.
Here each piece is an isosceles right triangle with a $\sqrt{2}-$length  hypotenuse parallel to the vector $(1, 1).$
\end{defn}
\begin{lemma}\lab{lem:3-Nov20}
Let $h:T \ra \R$ be a rhombus concave function.  Let the restrictions of $h$ to the line segment joining $(0, 0)$ and $(0, 1)$ and to the line segment joining $(0, 1)$ and $(1, 1)$ be $L$-Lipschitz functions.  Then $h$ is $CL$-Lipschitz on $T$.
\end{lemma}
\begin{proof}
Let $z_1$ and $z_2$ be two points in $T$ with  coordinates $(x_1, y_1)$ and $(x_2, y_2)$ respectively, where $n x_1,  n y_1, n x_2,  n y_2$ and $n$ are integers.  We wish to prove that \beq |h(z_1) - h(z_2)| < CL(|x_1 - x_2| + |y_1 - y_2|),\lab{eq:h}\eeq for some universal constant $C$ that is independent of $z_1$ and $z_2.$ There is a piecewise linear path involving at most one vertical and at most one  horizontal segment with endpoints in $\Z^2/n$,   of total length at most $2$ (the $\ell_1-$diameter of $T$),  joining $z_1$ and $z_2.$ Therefore it suffices to prove (\ref{eq:h}) in the two 
special cases when $x_1 = x_2$ and when $y_1 = y_2.$ Suppose first that $x_1 = x_2$ and w.l.o.g $y_1 < y_2.$ By (\ref{eq:A}) applied to parallelograms in $E_1(T_n),$
we see that \beqs h(z_1) - h(z_2) \leq h(0,  y_1)  - h(0,  y_2) < L |y_1 - y_2|.\eeqs Also,  by (\ref{eq:A}) applied to parallelograms in $E_2(T_n),$
we see that \beqs h(z_2) - h(z_1) \leq h(0,  y_2 - x_2)  - h(0,  y_1 - x_1) < L |y_1 - y_2|\lab{eq:h2}.\eeqs Next suppose that $y_1 = y_2.$ W.l.o.g assume that $x_1 < x_2.$
 By (\ref{eq:A}) applied to parallelograms in $E_1(T_n),$
we see that \beqs h(z_2) - h(z_1) \leq h(x_2,  1)  - h(x_1,  1) < L |x_2 - x_1|.\eeqs Also,  by (\ref{eq:A}) applied to parallelograms in $E_0(T_n),$
we see that \beqs h(z_1) - h(z_2) \leq h(x_1 - y_1 + 1,  1)  - h(x_2 - y_2 + 1,  1) < L |x_2 - x_1|.\eeqs Since $h$ is continuous,  the Lipschitz condition for rational points implies the Lipschitz condition for all  $z_1, z_2 \in T.$
\end{proof}
\begin{lemma}\lab{lem:lrc}
Any Lipschitz rhombus concave function from $T$ to $\R$ is concave.
\end{lemma}
\begin{proof}
Let $h$ be a Lipschitz rhombus concave function from $T$ to $\R.$ Let $\tilh_n$ be the piecewise linear extension of $h_n$ to $nT,$ as in Definition~\ref{def:rhomb}.  We observe that the graph of $\tilh_n$ is polyhedral,  and at a point where two or more facets meet,  it is locally concave.  Hence,   $\tilh_n$ is a concave function.  Since $h$ is Lipschitz,  for any $(x, y) \in T,$
$h(x, y) = n^{-2} \lim\limits_{n \ra \infty} \tilh_n(nx, ny).$ Since the limit of Lipschitz concave functions that converge pointwise,  is concave,  we conclude that $h$ is a concave function from $T$ to $\R.$
\end{proof}

Given $e \in E_i(T_n),$ let $conv(e)$ denote the (closed) convex hull of $e.$
\begin{defn}[\cite{Folland}, page 212]
A Radon measure, is a measure on the $\sigma$-algebra of Borel sets of a Hausdorff topological space X that is finite on all compact sets, outer regular on all Borel sets, and inner regular on open sets.
\end{defn}

\begin{lemma}\lab{lem:3.1}
Given a Lipschitz rhombus concave function $h:T \ra \R$,  for $i =0, 1, 2,$ there exists a unique nonpositive Radon  measure $D_i h$ on $T$ such that for any $n$, and any parallelogram $e \in E_i(T_n)$, and the corresponding parallelogram $conv(e)/n \subseteq T,$  $$ n^2 \int\limits_{conv(e)/n} D_i h(dx) =   n^2 \int\limits_{(conv(e)/n)^o} D_i h(dx) = \hess h_n(e).$$ 
\end{lemma}
\begin{proof} By Lemma~\ref{lem:lrc}, $h$ is concave.
The existence and uniqueness of a nonpositive definite matrix valued Radon measure that is the Hessian of a concave function are known by Theorem 3.3 \cite{dudley}. The fact that  $$ n^2 \int\limits_{conv(e)/n} D_i h(dx) =   n^2 \int\limits_{(conv(e)/n)^o} D_i h(dx)$$ follows from the fact that $h$ is Lipschitz and the inner regularity of $D_i h$ on open sets. 
The fact that  $$ n^2 \int\limits_{conv(e)/n} D_i h(dx) \leq \hess h_n(e)   \leq n^2 \int\limits_{(conv(e)/n)^o} D_i h(dx),$$ follows by considering the convolution of $h$ with a $C^\infty$ approximate identity,  letting the support of this approximate identity tend to $\{0\}$ in Hausdorff distance (note that $\hess h_n(e)$ is nonpositive) and using inner and outer regularity.
\end{proof}

\begin{defn}[Continuous Hessian and the $D_i,$ for $i \in \{0, 1, 2\}$]
Given a Lipschitz rhombus concave function $h:T \ra \R$,  for $i =0, 1, 2,$ we define $D_i h$ to be the nonpositive Radon  measure  on $T$ such that for any $n$, and any parallelogram $e \in E_i(T_n)$, and the corresponding parallelogram $conv(e)/n \subseteq T$,  $$ n^2 \int\limits_{conv(e)/n} D_i h(dx)  = \hess h_n(e).$$ 
We shall define the continuous Hessian $\hess h$ to be the vector valued Radon measure $(D_0h,  D_1h,  D_2 h).$ When $h$ is $C^2$, we have 
\beq D_0 h & = & \partial_x\partial_y h+ \partial^2_x h \lab{eq:D0}\\
         D_1 h & = & - \partial_x \partial_y h\lab{eq:D1}\\
         D_2 h & = & \partial_x \partial_y h + \partial_y^2 h.\lab{eq:D2}\eeq
\end{defn}

\begin{defn}[Discrete hive]Let $H_n(\la_n, \mu_n; \nu_n)$ denote the set of all discrete concave functions $h_n:T_n \ra \R,$ (which,  following Knutson and Tao \cite{KT1},  we call discrete hives)
such that \ben \item $\forall i \in [n]\cup\{0\},\quad h_n(0,  i) = \sum_{j = 1}^i \la_n(j).$
\item $\forall i \in [n]\cup\{0\},\quad h_n(i,  n) = \sum_{j = 1}^n \la_n(j) + \sum_{j = 1}^i \mu_n(j).$
\item $\forall i \in [n]\cup\{0\},\quad h_n(i,  i) = \sum_{j = 1}^i \nu_n(j).$
\een
Let $|H_n(\la_n, \mu_n; \nu_n)|$ denote the ${n-1 \choose 2}-$dimensional Lebesgue measure of this hive polytope. 
\end{defn}

\begin{defn}[Discrete augmented hive] \lab{def:defn6} A function $a_n$ from $\Box_n := ([n]\cup 0)\times ([n]\cup 0) \ra \R$ is called a (discrete) augmented hive and is said to belong to $A_n(\la_n, \mu_n; \nu_n)$ if the following are true.
\ben
\item Its restriction to $T_n$ belongs to $H_n(\la_n, \mu_n; \nu_n).$
\item For every unit lattice parallelogram that is not a square with vertices contained in $\Box_n \cap \{(a, b)|a \geq b\}$,  the sum of the values across the longer diagonal is at most the sum of the values across the shorter diagonal.
\item All vertices with coordinates of the form $(a, 0)$ are mapped to $0.$
\een
\end{defn}

In Figure~\ref{fig:aug-hive}, a single point from the augmented hive polytope $A_4(\la_4, \mu_4; \nu_4)$ with boundary values $\la_4 = (15, 5, -5, -15)$, $\mu_4 = (15, 5, -5, -15)$, $\nu_4 = (15, 5, -5, -15)$ is depicted. Note that the rhombus inequalities corresponding to squares need not be satisfied at the pink diagonal or below it. Indeed they are violated at the square in the southeast corner.
\begin{defn}
 Let $V_n(\la_n(1), \dots, \la_n(n))$ denote the Vandermonde determinant  corresponding to the set of $n$ real numbers $\{\la_n(1), \dots, \la_n(n)\}$, whose value is $\prod\limits_{1 \leq i < j \leq n}(\la_n(i) - \la_n(j)).$ 
\end{defn}
\begin{defn}
Let $\tau_n = \left(\frac{n-1}{2}, \frac{n-3}{2}, \dots, -\frac{n-3}{2}, -\frac{n-1}{2}\right).$
\end{defn}
\begin{lemma}The $m = (n-1)^2$ dimensional Lebesgue measure of the set of all $n \times n$ augmented hives with boundary conditions $\la_n, \mu_n, \nu_n$ is exactly $|H_n(\la_n, \mu_n; \nu_n)|V_n(\nu_n)/V_n(\tau_n)$.\end{lemma}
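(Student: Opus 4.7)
The plan is to fiber the augmented hive polytope over $H_n(\la_n, \mu_n; \nu_n)$ and identify each fiber with a Gelfand--Tsetlin polytope of top row $\nu_n$. For a hive $h \in H_n(\la_n, \mu_n; \nu_n)$, let $E(h)$ denote the polytope of extensions of $h$ from $T_n$ to $\Box_n$ that satisfy conditions (2) and (3) of the augmented hive definition. By Fubini,
\[
\Leb_m\bigl(A_n(\la_n, \mu_n; \nu_n)\bigr) \;=\; \int_{H_n(\la_n, \mu_n; \nu_n)} \Leb\bigl(E(h)\bigr)\, dh,
\]
so the task reduces to showing $\Leb\bigl(E(h)\bigr) = V_n(\nu_n)/V_n(\tau_n)$ independently of $h$.

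On the lower triangle of $\Box_n$ I would introduce the discrete gradient
\[
w_{i, j} \;:=\; a_n(i+1, j+1) - a_n(i, j), \qquad 0 \le j \le i \le n-1.
\]
On the diagonal $j = i$ one reads off $w_{i, i} = \nu_n(i+1)$, fixed by the boundary of $h$. A short algebraic expansion shows that the non-square rhombus inequality on each parallelogram generated by $(1, 0)$ and $(1, 1)$ is equivalent to $w_{i+1, j} \le w_{i, j}$, while the one on each parallelogram generated by $(0, 1)$ and $(1, 1)$ is equivalent to $w_{i, j+1} \le w_{i, j}$. Thus $w$ is weakly decreasing in each coordinate, and grouping $w_{i, j}$ by the anti-diagonals $d = i - j$ exhibits the array as a Gelfand--Tsetlin pattern whose top row (the slice $d = 0$) is $\nu_n$.

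Next, I would verify that the linear map $a_n \mapsto w$ on the free variables of $E(h)$ is unimodular. Sweeping the equations in order of increasing $d$, and within each $d$ of increasing $i$, each relation $w_{i, j} = a_n(i+1, j+1) - a_n(i, j)$ introduces exactly one new free value $a_n(i+1, j+1)$; the value $a_n(i, j)$ is by that point either fixed by the boundary (by condition (3) or by the diagonal) or known from an earlier step. The Jacobian is therefore triangular with $\pm 1$ entries, so $\Leb\bigl(E(h)\bigr)$ equals the Lebesgue volume of the Gelfand--Tsetlin polytope with top row $\nu_n$, which is the classical quantity $V_n(\nu_n)/\prod_{k=1}^{n-1} k!$. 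Since $V_n(\tau_n) = \prod_{1 \le i < j \le n}(j - i) = \prod_{k=1}^{n-1} k!$, this rearranges to $V_n(\nu_n)/V_n(\tau_n)$, independent of $h$; integrating over $H_n(\la_n, \mu_n; \nu_n)$ yields the claimed identity.

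The main obstacle I anticipate is the case-analysis verifying that the non-square rhombus inequalities in the lower triangle translate exhaustively into the GT interlacing, in particular handling rhombi whose vertices touch the diagonal, the boundary row fixed by condition (3), or the free right edge $i = n$, and confirming that the sweep producing the triangular Jacobian carries no circular dependency.
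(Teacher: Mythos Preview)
Your proposal is correct and follows essentially the same approach as the paper: fiber the augmented hive polytope over $H_n(\la_n,\mu_n;\nu_n)$, identify each fiber with the Gelfand--Tsetlin polytope of top row $\nu_n$ via the diagonal difference map, and invoke the known GT volume $V_n(\nu_n)/V_n(\tau_n)$. The only cosmetic difference is that the paper obtains the GT volume by a scaling argument from the Weyl character formula for $s_{\nu_n}(1,\dots,1)$ (citing \cite{Karola}), whereas you write out the unimodularity of the change of variables explicitly and cite the volume as classical.
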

\begin{proof}

The Lebesgue measure of a full dimensional polytope $P \subset \R^m$ can  be expressed as $\lim_{t \ra \infty}t^{-m}  |tP \cap \Z^n|$ since the boundary effects become insignificant as $t \ra \infty.$

 The restriction of $A_n(\la_n, \mu_n; \nu_n)$ to the South-East lower triangle below the main diagonal is a linear image of a Gelfand-Tsetlin polytope,  and the integer points of this polytope are in one-to-one correspondence with the integer points of a Gelfand-Tsetlin polytope (see Figure~\ref{fig:aug-hive}). The value of the Schur polynomial at the all ones vector of length $n$ is given by $$s_{\la_n}(1, \dots, 1) = \frac{\prod\limits_{1 \leq i < j \leq n}(\la_n(i) - \la_n(j) + j - i)}{\prod\limits_{1 \leq i < j \leq n}(j - i)},$$ which follows from the Weyl character formula.
Since this expression counts the number in integer points in a Gelfand-Tsetlin polytope, by scaling, we see that the volume of the polytope of GT patterns in the lower triangle is exactly $V_n(\nu_n)/V_n(\tau_n)$ (see for example, Theorem 15.1 in \cite{Postnikov} or Theorem 1.1 in \cite{Karola}). 
The Lemma follows.
\end{proof}

\begin{figure}
\begin{center}
\includegraphics[scale=1.0]{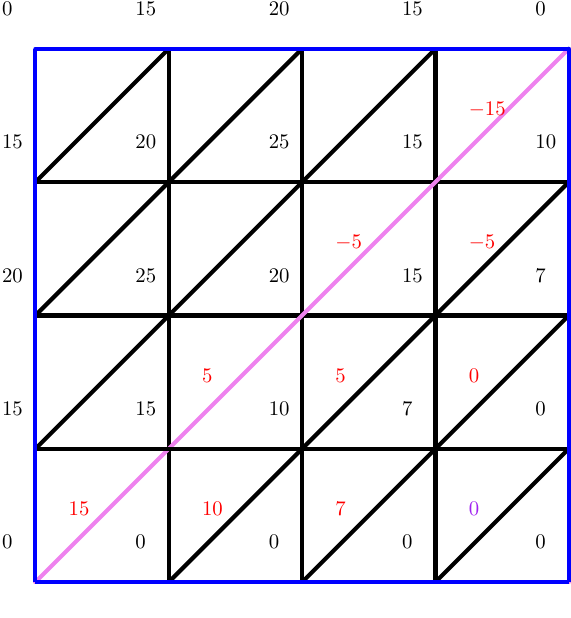}
\caption{An augmented hive for $n = 4$. The right triangle below the main diagonal of the $4\times 4$ square corresponds to a random Gelfand-Tsetlin (GT) pattern (printed in red at the center of a diagonal edge) with top row $\nu_n$, given by the difference of the number
at a vertex and the one southwest to it.
The right triangle above the main diagonal corresponds to a random hive with boundary conditions, $(\la_4, \mu_4, \nu_4)$.}
\label{fig:aug-hive}
\end{center}
\end{figure}

\begin{defn}[$C^k$ hives] A $C^k$ hive for $k \geq 0$ is a rhombus concave map from $T$ to $\R$ that is the restriction of a $C^k$ function on some open neighborhood of $T$.
\end{defn}
\begin{defn}
A $C^0$ augmented hive is a function $a$ from $\Box:= [0,1]\times [0, 1]$, such that for each positive integer $n$, $a_n:\Box_n \ra \R$ defined by $a_n(nx,  ny) = n^2a(x,  y)$ for $(nx, ny) \in ([n]\cup\{0\}) \times ([n]\cup\{0\}),$  is a discrete augmented hive.
\end{defn}

\begin{defn}\lab{def:77old}
Let $\la = \partial^- \a$, $\mu = \partial^- \b$ on $(0, 1]$ and $\nu = \partial^- \g,$ on $(0, 1]$ be bounded, strongly decreasing functions.
We denote by $H(\la, \mu; \nu)$, the set of all $C^0$ hives from $T$ to $\R$, whose restrictions to the three sides are $\a, \beta$ and $\gamma$,  and set  $H(\la, \mu) := \cup_\nu H(\la, \mu; \nu)$. We denote the set of $C^0$ augmented hives with these boundary conditions by $A(\la, \mu; \nu).$
\end{defn}

\begin{defn}[Covering numbers]
Given a metric space $X$ equipped with a distance $d$, and a subset $S \subseteq X$, we define the $\eps-$covering number of $S$ with respect to $d$ to be the smallest integer $N,$ such that there exist $N$ balls of diameter at most $\epsilon,$ whose union covers $S$.  If no such finite $N$ exists, the $\eps-$covering number of $S$ with respect to $d$ is said to be infinite.
\end{defn}
\begin{lemma}\lab{lem:Bronshtein}
Let $\la = \partial^- \a$, $\mu = \partial^- \b$ on $(0, 1]$ and $\nu = \partial^- \g,$ at all points of $(0, 1]$ be bounded, strongly decreasing functions.
Let $\la_n(i) := n^2(\a(\frac{i}{n})-\a(\frac{i-1}{n}))$, for $i \in [n]$, and similarly,
$\mu_n(i) := n^2(\b(\frac{i}{n})-\b(\frac{i-1}{n}))$, and 
$\nu_n(i) := n^2(\g(\frac{i}{n})-\g(\frac{i-1}{n}))$. 
The $n^2\eps-$covering number with respect to the $\ell^\infty$ norm of $H_n(\la_n, \mu_n; \nu_n)$ is at most $\exp\left(\frac{C'}\eps\right)$. Here $C'$ is a finite constant depending only on $\la, \mu$ and $\nu$.
\end{lemma}
\begin{proof}
For every discrete hive $h_n \in H_n(\la_n, \mu_n; \nu_n),$ there is a piecewise linear $C-$bounded $L-$Lipschitz concave function $h \in H(\la, \mu; \nu)$ on $T$  that satisfies $h(x, y):= n^{-2}h_n(nx, ny)$ for $(x, y) \in T\cap (\Z/n \times \Z/n),$ where $C$ and $L$ are finite constants depending only on $\la, \mu$ and $\nu.$
 Bronshtein's Theorem, Theorem 6, \cite{Brsh},  (see also \cite{Wellner}) gives an upper bound of $\exp\left(C'\eps^{-\frac{d}{2}}\right)$ on the  $\eps-$covering number with respect to the $L^\infty-$norm of the set of $C-$bounded $L-$Lipschitz convex functions on a compact domain in $\R^d$,  where $C'$ depends only on $C$ and $L$.  Therefore,  $n^2\eps-$covering number with respect to the $\ell_\infty-$norm of $H_n(\la_n, \mu_n; \nu_n)$ is at most $\exp\left(\frac{C'}\eps\right)$.
 \end{proof}
\subsection{Large deviations principle}
\begin{defn}
Given a Polish space 
$\Gamma$ let 
${\displaystyle \{\mathbb {P}_{n}\}}$ be a sequence of Borel probability measures on 
$\Gamma,$ let 
$\{a_{n}\}$ be a sequence of positive real numbers such that 
${\displaystyle \lim_{n \ra \infty}a_{n}=\infty ,}$ and let
${\displaystyle I:\Gamma\to [0,\infty ]}$ be a lower semicontinuous functional on 
$\Gamma.$ For each Borel measurable set 
${\displaystyle E\subset \Gamma,}$ let 
$\overline{E}$ and 
$E^{\circ }$ denote respectively the closure and interior of 
$E.$ The sequence 
${\displaystyle \{\mathbb {P}_{n}\}}$ is said to satisfy a large deviation principle with speed 
$\{a_{n}\}$ and rate  
$I$ if, and only if, for each Borel measurable set 
${\displaystyle E\subset \Gamma,}$
$${\displaystyle -\inf _{\g\in E^{\circ }}I(\g)\leq \liminf_{n \ra \infty}a_{n}^{-1}\log(\mathbb {P} _{n}(E))\leq \limsup_{n \ra \infty}a_{n}^{-1}\log(\mathbb {P}_{n}(E))\leq -\inf_{\g\in {\overline {E}}}I(\g).}$$

\end{defn}

\subsection{Periodic boundary conditions}
 Let $\T_n$ be the $n \times n$ torus whose vertices are in correspondence with those of  $\Box_n$, obtained by identifying the $n^{th}$ row with the $0^{th}$ row and the $n^{th}$ column with the $0^{th}$ column. 
\bit
\item Let $E_0(\T_n)$ be the set of all parallelograms $e_0\subseteq \T_n$ whose vertices are $\{(v_1, v_2),  (v_1 + 1, v_2),  (v_1 + 1, v_2 + 1),  (v_1 +2, v_2 + 1)\}.$

\item Let $E_1(\T_n)$ be the set of all parallelograms $e_1\subseteq \T_n$ whose vertices are $\{(v_1, v_2),  (v_1 + 1, v_2),  (v_1, v_2 + 1),  (v_1 +1, v_2 + 1)\}.$ 
\item Let $E_2(\T_n)$ be the set of all parallelograms $e_2\subseteq \T_n$ whose vertices are$\{(v_1, v_2),  (v_1 + 1, v_2+1),  (v_1, v_2 + 1),  (v_1 +1, v_2 + 2)\}.$ 
\eit
Let $V(\T_n)$ be the set of vertices of $\T_n.$ Where there is no ambiguity,  we will abbreviate $V(\T_n)$ to $\T_n.$

{
For $s_0, s_1, s_2 > 0$ and $f:V(\T_n) \ra \R$, we say that $ \nabla^2(f)$ satisfies $\nabla^2(f) \preccurlyeq s = (s_0, s_1, s_2)$, if  $\nabla^2(f)$ satisfies

\ben 
\item  $\nabla^2(f)(e) \leq  s_0,$ if $e \in E_0(\T_n)$.
\item $\nabla^2(f)(e)  \leq  s_1,$  if $e \in E_1(\T_n)$.
\item $\nabla^2(f)(e) \leq  s_2,$ if  $e \in E_2(\T_n)$.
\een

}
\begin{defn}
 Given  $s = (s_0, s_1, s_2)\in \R_+^3,$ let $P_n(s)$ be the bounded polytope of  all functions $g:V(\T_n) \ra \R$ such that $\sum_{v \in V(\T_n)} g(v) = 0$ and $\nabla^2(g)\preccurlyeq s$. 
 \end{defn}
 
The following Lemma appears as Lemma 3.8 in \cite{random_concave}.  For the sake of completeness, the proof has been reproduced in the section below.
\begin{lemma}\lab{lem:2.8}
Let $\min_i s_i = 2.$ Then, as $n\ra \infty$, $|P_n(s)|^{\frac{1}{n^2}}$  converges to a limit in the interval $[1, 2e]$.
\end{lemma}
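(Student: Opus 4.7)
The proof splits into convergence of $a_n := n^{-2} \ln |P_n(s)|$ together with the two explicit numerical bounds $1 \leq \lim a_n \leq \ln(2e)$. For the convergence, I would pursue an approximate superadditivity bound amenable to Fekete's lemma. The naive embedding $P_m(s) \hookrightarrow P_{mn}(s)$ by pullback along the quotient $\T_{mn} \to \T_m$ yields only an $(m^2 - 1)$-dimensional slice of a $((mn)^2 - 1)$-dimensional polytope, so does not directly compare volumes. Instead, I would construct elements of $P_{mn}(s)$ by tiling: decompose $V(\T_{mn})$ into $n^2$ disjoint $m \times m$ blocks, place an independent $f_i \in P_m(s')$ (with slightly shrunken $s' < s$ and vanishing boundary values) on the $i$-th block, and add a fixed concave correction in an $O(mn)$-thick neighborhood of block boundaries to repair potential violations of $\hess f \preccurlyeq s$ on rhombi straddling two blocks. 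Because the boundary region contains $o((mn)^2)$ vertices, this correction contributes only $o((mn)^2)$ to the log-volume, yielding
\[
\ln |P_{mn}(s)| \geq n^2 \ln |P_m(s')| - o((mn)^2).
\]
Combined with $s' \to s$ and a suitable variant of Fekete's lemma, this yields convergence of $(a_n)$.

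\textbf{Lower bound.} Since $s > 0$ componentwise, the origin lies in the interior of $P_n(s)$. Let $\{\phi_v\}$ be a basis of mean-zero ``tent'' functions on $V(\T_n)$, each supported in an $O(1)$-neighborhood of the corresponding vertex $v$ and normalized so that each component of $\hess \phi_v$ has $L^\infty$ norm at most $1$. A triangle-inequality argument shows that the box $\{\sum_v c_v \phi_v : |c_v| \leq r\}$ lies inside $P_n(s)$ for some $r > 0$ depending only on $s$, giving $|P_n(s)| \geq (2r)^{n^2 - 1}$; the numerical value $s_0 = 2$ is calibrated so that $2r \geq 1$, hence $\lim a_n \geq 0$.

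\textbf{Upper bound.} I would parameterize $P_n(s)$ by Hessian values. The linear map $\Psi: f \mapsto (\De_0 f, \De_1 f, \De_2 f)$ on mean-zero functions on $V(\T_n)$ has trivial kernel, and its image is cut out by a collection of direction-sum-zero and cocycle relations, including the pointwise relation $\De_0 f + \De_1 f + \De_2 f = 0$ which forces $\De_0 f \geq -(s_1 + s_2)$ whenever $\De_1 f \leq s_1$ and $\De_2 f \leq s_2$. Thus the image polytope in the $\De_0$-coordinates is bounded both above (by $s_0 = 2$) and below, with the sum-to-zero constraint. Its $n^2$-th root volume admits an explicit monotone-rearrangement bound. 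Dividing by the Jacobian of $\Psi$ restricted to its image, which is computable by Fourier analysis on the torus in terms of eigenvalues of the discrete Laplacian, yields $\lim a_n \leq \ln(2e)$.

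\textbf{Main obstacle.} The delicate step is the superadditivity construction: the global Hessian inequality must hold on every rhombus of $\T_{mn}$, including those crossing block boundaries, and one must show that the repair requires only $o((mn)^2)$ additional log-volume cost. Toroidal periodic boundary conditions make this slightly more subtle than the corresponding problem on a fixed square domain, since one cannot freely impose zero boundary values without producing rhombus-concavity defects at the torus seams.
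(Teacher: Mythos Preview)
Your outline is plausible but differs substantially from the paper's argument, and a couple of the steps you sketch would require nontrivial extra work that you have not supplied.

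\textbf{Convergence.} You propose a tiling construction: partition $\T_{mn}$ into $n^2$ blocks, place independent elements of $P_m(s')$ with ``vanishing boundary values'' on each, and repair along an $o((mn)^2)$-vertex boundary layer. The difficulty you flag is real: elements of $P_m(s)$ live on a torus and have no boundary, so to tile you must first pass to a \emph{section} of $P_m(s')$ pinned to zero on an $O(m)$-vertex boundary layer, and then show this section has volume at least $|P_m(s')|\cdot e^{-o(m^2)}$. You do not say how to control the volume of this section. The paper handles exactly this issue, but by a different mechanism: for $n_1\mid n_2$ it conditions on a ``double-layer'' boundary $\bb$ and uses Pr\'ekopa's theorem (log-concavity of marginals) together with Fradelizi's theorem (density at the centroid is at least $e^{-\dim}$ times the sup) to show the fiber over the mean boundary data is not too small. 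A second lemma (again via Brunn--Minkowski and Fradelizi) removes the divisibility restriction. No explicit repair function is constructed; the convexity tools absorb the boundary loss automatically.

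\textbf{Lower bound.} Your tent-function construction is correct in spirit but unnecessary. With $s_0=2$ (and implicitly $s_1,s_2\geq 2$), one has $[0,1]^{V(\T_n)\setminus\{0\}}\subset \tilde P_n(s)$ directly, since any second difference of a function with values in $[0,1]$ lies in $[-2,2]$. This gives $|P_n(s)|^{1/n^2}\geq 1$ in one line.

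\textbf{Upper bound.} Your plan is to parametrize by $(\De_0 f,\De_1 f,\De_2 f)$, use a linear relation to bound the $\De_0$-coordinates two-sidedly, and then divide by a Fourier-computed Jacobian. This is vague, and it is not clear how the constant $2e$ would fall out of the Jacobian computation. The paper instead uses the single map $\phi(f)(v)=f(v)-f(v-e_1)-f(v-e_2)+f(v-e_1-e_2)$, observes that this integer linear map has Jacobian determinant $\geq 1$ on the mean-zero hyperplane, and then bounds $|\phi(P_n(s))|$ by an entropy argument: each coordinate of $\phi(f)$ has mean zero and is bounded above by $2$, so the differential entropy of each marginal is at most $1+\ln 2$ (the exponential distribution maximizes entropy among nonnegative mean-$1$ laws), and subadditivity of entropy gives $|\phi(P_n(s))|\leq (2e)^{n^2}$.

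In summary: your superadditivity strategy is in the right spirit but is missing the key volume-of-section estimate, which is where the paper brings in log-concavity tools; your lower bound is overcomplicated; and your upper bound route does not obviously yield the sharp constant, whereas the paper's entropy argument does so in a few lines.
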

\subsection{Hives}
\newcommand{\tih}{\tilde{h}}
 
Denoting probability densities  with respect to $\Leb_{n-1, 0}$ by $\rho_n$, it is known through the work of Knutson and Tao (see  \cite{KT2} and also Equation (4) in \cite{Zuberhorn}) that 

\beq\lab{eq:2.4new} \rho_n\left[\spec(X_n + Y_n) = \nu_n\right] =  \frac{V_n(\nu_n)V_n(\tau_n)}{V_n(\la_n)V_n(\mu_n)} |H_n(\la_n, \mu_n; \nu_n)|.\eeq

Then, denoting $X_n + Y_n$ by $Z_n$, for all positive $\epsilon$, we have
\beq \p_n\left[\spec(Z_n) \in B_\I^n(\nu_n, \eps n^2)\right] =\left( \frac{V_n(\tau_n)^2}{V_n(\la_n)V_n(\mu_n)} \right)\int\limits_{B_\I^n(\nu_n, \eps n^2)} \left(\frac{V_n(\nu'_n)}{V_n(\tau_n)}\right) |H_n(\la_n, \mu_n; \nu'_n)|\Leb_{n-1, 0}(d\nu'_n).\nonumber\\\lab{eq:3.2}\eeq

We view $$\bigcup\limits_{\nu'_n \in \Spec_n}A_n(\la_n, \mu_n; \nu'_n)$$
as a $(n-1)^2 + (n-1)$ dimensional polytope contained in $\R^{(n+1)^2}$ with coordinates $(a_{ij})_{i, j \in [n+1]},$ corresponding to entries in an augmented hive, as in Figure~\ref{fig:aug-hive}. Thus, $j$ increases from bottom to top, and $i$ increases from left to right. Let $\Pi_{diag, n}:\R^{(n+1)^2} \ra \R^{n-1}$, denote the evaluation map that takes a $(n+1) \times (n+1)$  matrix $A$ to its diagonal (without the endpoints) $(a_{i, i})_{2\leq i \leq n}.$ Let $\Pi_{diag}$ denote the continuous analogue of $\Pi_{diag, n}.$

Recall from the beginning of Section~\ref{sec:prelim} that $\Spec_n$ denotes the cone of all possible vectors $x = (x_1, \dots, x_n)$ in $\R^n$ such that $$x_1 \geq x_2 \geq \dots \geq x_n,$$ and $\sum_i x_i = 0.$ 
\begin{defn}Let $\eps_0 > 0$ and let $\la_n, \mu_n, \nu_n \in \Spec_n.$ Let $$G_n(\la_n, \mu_n; \nu_n, \eps_0 n^2) := \left(\bigcup\limits_{\nu'_n \in \Spec_n}A_n(\la_n, \mu_n; \nu'_n)\right)\cap \Pi_{diag, n}^{-1}(B_\I^n(\nu_n, \eps_0n^2)).$$
Similarly, let $$G(\la, \mu; \nu, \eps_0) := \left(\bigcup\limits_{\nu'}A(\la, \mu; \nu')\right)\cap \Pi_{diag}^{-1}(B_\I(\nu, \eps_0)).$$
\end{defn}
The volume of this $n^2 - n$ dimensional polytope is,
\beq |G_n(\la_n, \mu_n; \nu_n, \eps_0 n^2) | & = & \int\limits_{B_\I^n(\nu_n, \eps_0 n^2)}|A_n(\la_n, \mu_n; \nu'_n)|\Leb_{n-1, 0}(d\nu'_n).\lab{eq:G}\\
& = & \int\limits_{B_\I^n(\nu_n, \eps_0 n^2)} \left(\frac{V_n(\nu'_n)}{V_n(\tau_n)}\right) |H_n(\la_n, \mu_n; \nu'_n)|\Leb_{n-1, 0}(d\nu'_n).\eeq We need to bound this volume from above and below in a suitable manner.

\begin{defn}
Given a $C^0$ hive $h \in H(\la, \mu; \nu)$, let $L_n(h)$ denote the point  in $H_n(\la_n, \mu_n; \nu_n)$ whose $ij^{th}$ coordinate is given by $$(L_n(h))_{ij} := n^2 h\left(\frac{i}{n}, \frac{j}{n}\right),$$ for all $i, j \in [n+1]$ such that $i \leq j$.
Let $U_n$ denote the set of lattice points $(x, y) \in \Z^2$ belonging to the triangle in a $[n+1]\times [n+1]$ index set of an augmented hive, such that $0 < y < x \leq n+1$. 
Let $U$ denote the set of points $(x, y) \in [0, 1]^2$ such that $0 < y < x \leq 1.$
\end{defn}

\section{Convex geometry}
Let $1 \leq \ell \in \Z$. Given sets $K_i\subseteq \R^m$ for $i \in [\ell]$, let their Minkowski sum $\{x_1 + \dots + x_\ell \big| \forall i \in [\ell], x_i \in K_i\},$ be denoted by $K_1 + \dots + K_\ell.$
We state the Brunn-Minkowski inequality below.
\begin{thm}[Brunn-Minkowski] 
Let $K$ and $L$ be compact convex subsets of $\R^m$. 
\beq |K + L|^\frac{1}{m} \geq |K|^\frac{1}{m} + |L|^\frac{1}{m}.\eeq 
\end{thm}
We will also need the following integral generalization of the Brunn-Minkowski inequality that follows from replacing the above sum by a weighted integral.
\begin{corollary}\lab{cor:Brunn}
Let $f$ be a probability density defined on a convex set $J \subset \R^d$ and let the image of a (not necessarily orthogonal) projection $\Pi_d$ of $K$ on to $\R^d$ of a compact $m$-dimensional convex set $K \subseteq \R^m \supseteq \R^d$ be equal to $J.$ Suppose that for every $x \in J,$ the $m-d$-dimensional volume of the preimage of $x$ under $\Pi_d$ is proportional to $f(x).$ Then, 
\beqs \left(\vol_{m-d}\left(\Pi_d^{-1}\left(\frac{\Pi_d \int_{v \in K} vdv}{\vol_m(K)}\right)\right)\right)^{\frac{1}{m-d}} \geq  \int_{v \in J} f(\Pi_d v) (\vol_{m-d}(\Pi_d^{-1}(\Pi_d v)))^{\frac{1}{m-d}}dv.
\eeqs
\end{corollary}


We shall need the following result of Pr\'{e}kopa (\cite{prekopa}, Theorem 6).
\begin{thm}[Pr\'{e}kopa]\lab{thm:prekopa}
Let $f(x, y)$ be a function of $\R^n \oplus \R^m$ where $x \in \R^n$ 
and $y \in \R^m$. Suppose that $f$ is logconcave in $\R^{n+m}$ and let
$A$ be a convex subset of $\R^m$. Then the function of the variable x:
$$\int_A f(x, y) dy$$
is logconcave in the entire space $\R^n$.
\end{thm}

We note the following theorem of Fradelizi \cite{Fradelizi}. 
\begin{thm}[Fradelizi]\lab{thm:frad}
 The density at the center of mass of a logconcave density on $\R^{n}$ is greater or equal to $e^{- n}$ multiplied by the supremum of the density.
\end{thm}

We will also need the following theorem of Vaaler (Corollary on page 554, \cite{Vaaler}).
\begin{thm}[Vaaler]\lab{thm:vaal}
The volume of a central section of the unit  cube $[-\frac{1}{2}, \frac{1}{2}]$ is greater or equal to $1$.
\end{thm}
\subsection{Volume of the polytope $P_n(s)$}
For the convenience of the reader we have reproduced material from  Section 3 of the preprint \cite{random_concave} here culminating in the proof of Lemma 3.8 in \cite{random_concave}, which is Lemma~\ref{lem:2.8} here.
Given a $k$-dimensional polytope $P,$ we denote by $|P|$ its $k$-dimensional Lebesgue measure $\vol_k P.$ We will need to show that $|P_m(s)|^{1/m^2}$ is less than $(1 + o_m(1))|P_n(s)|^{\frac{1}{n^2}},$ for $ n \geq m$. We achieve this by conditioning on a ``double layer boundary" and the use of the Brunn-Minkowski inequality.
Given a linear operator $T:\R^n \ra \R^n$, we denote the operator norm of $T$, namely $\max_{x \in \R^n}\|Tx\|_2/\|x\|_2,$ by $\|T\|_{op}.$ Let $\one$ denote the vector whose every coordinate is $1.$ In this section, we will assume that  $2 = s_0 \leq s_1 \leq s_2.$ 
\begin{lemma}\lab{lem:op}
Let $P \subseteq \R^n$ be a $n-d$ dimensional polytope.  Let  $T:\R^n \ra \R^n$ be a  linear map such that $\|T\|_{op} \leq M.$   Suppose that  $V$ is an  $n-k$ dimensional subspace  of $\R^n$ on which $T$ acts as identity.  Then \beq \frac{\vol_{n-d} T(P)}{\vol_{n-d} P} \leq M^k.\eeq
\end{lemma}
\begin{proof}
We may assume that the dimension of the linear span of $T(P)$ is the same as the dimension of the linear span of $P$, since otherwise, $\vol_{n-d} T(P) = 0,$ and the lemma holds. Let $V'$ denote the linear span of $P,$ and let it $n' =\dim V'$, and let $n' - k' := \dim V' \cap V.$
We see that $$k' = \left(\dim V' - \dim V'\cap V\right) = \dim V'/(V' \cap V) \leq \dim \R^n/V = k.$$ Let $v_1, \dots, v_{n'-k'}$ be an orthonormal basis for $V' \cap V$, and let 
$v_1, \dots, v_{n'}$ be an orthonormal basis of $V'.$ Denoting the exterior product by $\wedge$, we then see that 
\beqs \frac{\vol_{n-d} T(P)}{\vol_{n-d} P} & \leq & \frac{|Tv_1 \wedge \dots \wedge T v_{n'}|}{|v_1 \wedge \dots \wedge v_{n'}|}\\
& \leq & M^{k'}\\
& \leq & M^k.\eeqs
\end{proof}
\begin{defn}Let $\tP_n(s)$ be defined to be the image of $P_n(s)$ under the affine transformation $T: g \mapsto g - g(0) \one.$  Thus, given  $s = (s_0, s_1, s_2)\in \R_+^3,$  where $2 = s_0 \leq s_1 \leq s_2,$ we let $\tP_n(s)$ be the bounded polytope of  all functions $\tilde{g}:V(\T_n) \ra \R$ such that $\tilde{g}(0) = 0$ and $\nabla^2(g)\preccurlyeq s$. 
\end{defn}
\begin{lemma}\lab{lem:28}
The $n^2-1$ dimensional Lebesgue measures of $\tP_n(s)$ and  $P_n(s)$ satisfy
$$|\tP_n(s)|^{1/n^2}\left(1 - \frac{C\log n}{n^2}\right) \leq |P_n(s)|^{1/n^2} \leq |\tP_n(s)|^{1/n^2}.$$
\end{lemma}
\begin{proof}
The map $T:\R^{V(\T_n)} \ra \R^{V(\T_n)}$ where  $T: g \mapsto g - g(0) \one,$ acts as identity on every 
point in the $n^2-1$ dimensional subspace $\{g \in \R^{V(\T_n)}|g(0) = 0\}.$ We see that $T$ maps $P_n(s)$ on to $\tP_n(s).$ Denoting the identity map by $I$, we see that $$\|(T - I) (g)\|_2 \leq \|g\|_2 \|\one\|_2,$$ and therefore, $\|T-I\|_{op} \leq \sqrt{|V(\T_n)|} + 1.$ Applying Lemma~\ref{lem:op} to $T$ with $k = d = 1$, and $M = \sqrt{|V(\T_n)|} + 1$ we see that $$ |\tP_n(s)|^{1/n^2}\left(1 - \frac{C\log n}{n^2}\right) \leq |\tP_n(s)|^{1/n^2}(n+1)^{-\frac{1}{n^2}} \leq |P_n(s)|^{1/n^2} .$$
Next, define $\tilde{T}:g \mapsto g - (n^{-2}\sum_{v \in V(\T_n)} g(v))\one.$  We see that $\tilde{T}$ maps $\tP_n(s)$ on to $P_n(s).$ Also, $\tilde{T}$ is an orthogonal projection on to $\{g \in \R^{V(\T_n)}|\sum_{v \in V(\T_n)} g(v) = 0\}.$ Therefore, $\|\tilde{T}\|_{op} = 1.$  Applying Lemma~\ref{lem:op} to $\tilde{T}$ with $k = d = 1$, and $M = 1,$ we see that $$ |P_n(s)|^{1/n^2} \leq |\tP_n(s)|^{1/n^2} .$$
 
\end{proof}

Recall from the beginning of this section that given a $k$-dimensional polytope $P,$ we denote by $|P|$ its $k$-dimensional Lebesgue measure $\vol_k P.$ Thus in particular, the presence of $\{0\}$ in $\bb_1$ in Lemma~\ref{lem:3-} does not cause $|\Pi_{\bb_1} \tP_{n_1}(s)|$ to go to $0.$

Given $n_2$ that is a multiple of $n_1,$ the quotient map from $\Z^2$ to $\Z^2/(n_1 \Z^2) = \T_{n_1}$ factors through $\Z^2/(n_2 \Z^2) =\T_{n_2}$. We denote the respective resulting maps from $\T_{n_2}$ to $\T_{n_1}$ by $\phi_{n_2, n_1}$, from $\Z^2$ to $\T_{n_2}$ by $\phi_{0, n_2}$ and from $\Z^2$ to $\T_{n_1}$ by $\phi_{0, n_1}$.
Given a set of boundary nodes $\bb \subseteq V(\T_n)$, and $ x \in \R^{\bb}$, we define $Q_{ \bb}(x)$ to be the fiber polytope over $x$, that is the preimage of $x$ under the projection map $\Pi_{\bb}$ of $\tP_n(s)$ onto $\R^{\bb}.$ Note that $Q_{\bb}(x)$ implicitly depends on $s$. 
\begin{lemma}\lab{lem:3-}
Let $\{0\} \subseteq \bb_1 \neq \{0\},$ be a subset of $V(\T_{n_1}).$ Then,
$$0 \leq \ln |\Pi_{\bb_1} \tP_{n_1}(s)| \leq (|\bb_1| -1)\ln (Cn_1^2).$$
\end{lemma}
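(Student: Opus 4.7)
The proof splits into the upper and lower bounds, both of which should follow from the torus structure of $\tP_{n_1}(s)$.

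\textbf{Upper bound.} The key step is to show $\|g\|_\infty \leq Cn_1^2$ for some $C = C(s)$ independent of $n_1$ and all $g \in \tP_{n_1}(s)$. Since $g$ is periodic on $\T_{n_1}$, a telescoping computation gives $\sum_{e \in E_i(\T_{n_1})} \De_i g(e) = 0$ for each $i \in \{0,1,2\}$ (indeed $\De_i g$ is a difference of pure-direction second differences whose cyclic sum vanishes). Combined with the one-sided inequality $\De_i g \leq s_i$, this controls the negative parts in $\ell^1$: $\sum_e (\De_i g(e))^- \leq |E_i(\T_{n_1})|\, s_i \leq n_1^2 s_i$. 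A second telescoping along a closed straight line on the torus, using that the mean first difference along the loop vanishes by periodicity, then bounds individual first differences of $g$ along that loop by $O(n_1\max_i s_i)$. Summing first differences along a short path from $0$ to any $v$ and using $g(0)=0$ gives $|g(v)| \leq Cn_1^2$. Since the coordinate at $0$ is pinned, $\Pi_{\bb_1}\tP_{n_1}(s)$ lies in a box of side $2Cn_1^2$ in each of the $|\bb_1|-1$ free directions, giving $|\Pi_{\bb_1}\tP_{n_1}(s)| \leq (2Cn_1^2)^{|\bb_1|-1}$ as desired.

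\textbf{Lower bound.} I would construct, for each $v \in \bb_1 \setminus \{0\}$, an explicit piecewise-linear ``hat'' function $\phi_v$ on $\T_{n_1}$ supported in a small neighborhood of $v$, with $\phi_v(0) = 0$, $\phi_v(v) = h_v > 0$, and $\nabla^2 \phi_v \preccurlyeq s$. Choosing hat size and amplitude proportional to $\min_i s_i$, one verifies all three Hessian inequalities hold directly. If the hats have pairwise disjoint support on the torus, the linear map $c = (c_v) \mapsto \sum_v c_v \phi_v$ sends $[0,1]^{|\bb_1|-1}$ into $\tP_{n_1}(s)$ and induces the axis-aligned box $\prod_v [0, h_v]$ in $\R^{\bb_1 \setminus \{0\}}$. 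When disjoint support is not possible (e.g.\ when $|\bb_1|$ is close to $n_1^2$), one uses overlapping tiny hats with amplitude $h_v \geq 1$ and estimates the Jacobian $(\phi_v(w))_{w,v}$: it is diagonally dominant with diagonal entries $\geq 1$, hence has determinant $\geq 1$, so its image still has volume $\geq 1$.

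\textbf{Main obstacle.} The principal technical point is ensuring that the hat functions simultaneously satisfy all three constraints $\De_0,\De_1,\De_2 \preccurlyeq s$ on the triangular lattice. Since the three parallelogram types $E_i(\T_{n_1})$ involve different pairs of first-difference operators, a naive cone-shaped bump can be too large in one $\De_i$-direction even when acceptable in the other two, so the shape must be chosen with care (for instance, as a rescaled quadratic bump that is equally concave in all three lattice directions). Once this is done, the two bounds combine into the stated inequality, and no additional structure is needed.
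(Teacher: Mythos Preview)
Your upper bound argument is correct and is essentially the paper's proof, spelled out in more detail. The paper simply notes that along any straight lattice direction on $\T_{n_1}$ the one--sided bound $\nabla^2 f_1 \preccurlyeq s$ forces the slope to increase by at most a constant per step; combined with the fact that such a line closes up after $n_1$ steps (so the first differences along it average to zero), this gives $|A_i f_1|\le C n_1$, hence $\|f_1\|_\infty\le C n_1^2$. Your global telescoping via $\sum_e \De_i g(e)=0$ is a perfectly good variant of the same idea.

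For the lower bound you are working much harder than necessary, and the ``main obstacle'' you flag does not in fact arise. The paper's one--line argument is that the full unit cube $[0,1]^{V(\T_{n_1})\setminus\{0\}}$ already sits inside $\tP_{n_1}(s)$: if $g$ takes values in $[0,1]$ with $g(0)=0$, then for any unit rhombus $\{a,b,c,d\}$ one has $g(a)+g(c)-g(b)-g(d)\le 2\le s_i$ (recall $s_0=2\le s_1\le s_2$ in this section). Projecting to the $\bb_1$--coordinates gives $[0,1]^{\bb_1\setminus\{0\}}\subseteq \Pi_{\bb_1}\tP_{n_1}(s)$, so the volume is at least $1$ and $\ln|\Pi_{\bb_1}\tP_{n_1}(s)|\ge 0$. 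In your language, the ``hat functions'' can simply be the Kronecker deltas $\phi_v=\delta_v$; these automatically satisfy all three rhombus constraints simultaneously, there is no need for small supports, no issue with overlap, and no Jacobian estimate is required. Your more elaborate construction would also work, but it introduces complications (shaping the bump to control $\De_0,\De_1,\De_2$ at once, handling overlapping supports) that the trivial cube inclusion avoids entirely.
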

\begin{proof}
 Given any vertex $v_1$ in $\bb_1$ other than $0$, there is a lattice path $\mathrm{path}(v_1)$ (i.e. a path $0= a_1, \dots, a_k = v_0$, where each $a_i - a_{i-1}$ is in the set $\{(0, 1), (1, 0)\}$) that goes from $0$ to some vertex $v_0 \in \phi^{-1}_{0,n_1}(v_1)$ that consists of two straight line segments, the first being a horizontal segment from $0$ to some point in $\Z^+ \times  \{0\}$, and the second being a vertical segment having the direction $(0, 1)$. It is clear that this $v_0$ can be chosen to have $\ell_1$-norm at most $2n_1$ by taking an appropriate representative of $\phi^{-1}_{0,n_1}(v_1).$
Since $2 = s_0 \leq s_1 \leq s_2,$ as specified in the beginning of this section, we see that $\{0\}\times [0,1]^{\bb_1\setminus\{0\}} \subseteq \Pi_{\bb_1} \tP_{n_1}(s) \subseteq \{0\} \times \R^{\bb_1\setminus \{0\}}.$ Let $f_1 \in  \tP_{n_1}(s)$. Along $\mathrm{path}(v_1)$, at each step, the slope of $f$ increases by no more than $s_0 + s_1$ in the horizontal stretch, due to the condition $(\Delta_0 + \Delta_1)(f_1) \leq s_0 + s_1,$ and slope of $f$ increases by no more than $s_1 + s_2$ in the vertical stretch, due to the condition $(\Delta_1 + \Delta_2)(f_1) \leq s_1 + s_2.$ By performing a translation of the torus that makes $v_1$ the origin, and repeating the same argument after reversing the roles of $0$ and $v_1$, this implies that $f_1$ is $Cn_1$-Lipschitz. Therefore, $\|f_1\|_\infty$ is at most $Cn_1^2.$ Thus $\Pi_{\bb_1} \tP_{n_1}(s)$ is contained inside a $|\bb_1| -1$ dimensional cube of side length no more than $Cn_1^2.$ 
We have thus proved the lemma.
\end{proof}

\begin{lemma}\lab{lem:3}
Let $n_1$ and $n_2$ be positive integers satisfying $n_1 | n_2$. Then 
\beq 1 \leq |\tP_{n_1}(s)|^{\frac{1}{n_1^2}} \leq |\tP_{n_2}(s)|^{\frac{1}{n_2^2}}\left(1 + \frac{C\log n_1}{n_1}\right).\eeq 
\end{lemma}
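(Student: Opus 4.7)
The plan is to prove the upper bound (the main content) by a ``double-layer boundary'' decomposition combined with Brunn--Minkowski, following the strategy indicated in the preamble to this section. The lower bound $|\tP_{n_1}(s)|^{1/n_1^2} \geq 1$ is the easier direction and follows from a direct volume estimate on a sub-polytope of small perturbations of the zero function, which lies in $\tP_{n_1}(s)$ since each component of $s$ is strictly positive and the constraints $\nabla^2 g \preccurlyeq s$ are preserved under small $\ell_\infty$-perturbations.

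For the upper bound, set $k = n_2/n_1$ and partition $V(\T_{n_2})$ into $k^2$ translated copies of $V(\T_{n_1})$, viewed as sub-regions rather than sub-quotients of $\T_{n_2}$. Introduce a ``seam'' $B \subset V(\T_{n_2})$ of width $2$ separating adjacent tiles, chosen so that every unit parallelogram used in defining the $\nabla^2$-constraints either lies entirely inside a single tile's interior or has all its vertices in $B$; a direct count gives $|B| = O(n_2^2/n_1)$. For $g \in \tP_{n_2}(s)$, conditioning on $g|_B$ decouples the remaining rhombus constraints across tiles, so the fiber polytope $Q_B(g|_B)$ factorizes as $\prod_{i=1}^{k^2} Q^{(i)}(g|_B)$, where $Q^{(i)}(g|_B)$ is the per-tile fiber polytope in the interior variables of tile $T_i$.

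The next step is to compare each $Q^{(i)}(g|_B)$ to a slice of $\tP_{n_1}(s)$. Identifying tile $T_i$ with $V(\T_{n_1})$, the fiber $Q^{(i)}$ satisfies all the rhombus constraints for parallelograms lying inside $T_i$ but omits the wrap-around constraints that $\tP_{n_1}(s)$ additionally imposes. Since those wrap-around constraints involve only vertices near the boundary of $T_i$, a sub-polytope of $Q^{(i)}(g|_B)$ of comparable size coincides with a boundary-slice of $\tP_{n_1}(s)$. Integrating over $g|_B$ and applying Pr\'ekopa--Leindler to the log-concave integrand $x \mapsto \prod_i |Q^{(i)}(x)|$, together with translation-invariance of $\tP_{n_1}(s)$ on the torus, yields an inequality of the form
\[
|\tP_{n_2}(s)| \;\geq\; c\, |\tP_{n_1}(s)|^{k^2} \big/ |\Pi_B \tP_{n_2}(s)|.
\]
By Lemma~\ref{lem:3-} applied with $\bb_1 = B$, one has $\ln |\Pi_B \tP_{n_2}(s)| = O(|B|\log n_2) = O((n_2^2/n_1)\log n_2)$. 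Taking $1/n_2^2$ powers then produces the stated bound with the $1 + C\log n_1/n_1$ correction factor.

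The main technical obstacle is the comparison in the previous paragraph between a per-tile fiber and the corresponding fiber of $\tP_{n_1}(s)$: the per-tile fiber is geometrically \emph{larger} than the $\tP_{n_1}(s)$-fiber (it has fewer constraints), which is indeed the useful direction for a lower bound on $|\tP_{n_2}(s)|$, but one still has to match the boundary data exactly and absorb the missing wrap-around constraints without losing too much volume. Resolving this requires either (i) averaging over translates of tiles within $\T_{n_2}$ and exploiting $(\Z/n_1)^2$-translation invariance on each tile, or (ii) enlarging the seam $B$ slightly to encode the wrap-around information as additional boundary data, at the cost of an extra factor of $\exp(O(|B|\log n_1))$ that still fits within the error budget after taking $1/n_2^2$ powers.
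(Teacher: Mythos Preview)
Your proposal has the right shape (condition on a double-layer boundary, factorize the fiber, compare to $\tP_{n_1}$), but there are two genuine gaps, and the second one comes precisely from the place where your decomposition diverges from the paper's.

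\textbf{First gap: the comparison step is not actually carried out.} You partition $V(\T_{n_2})$ into $k^2$ translated \emph{sub-regions} $T_i$ and correctly note that, after fixing $g|_B$, the fiber factorizes as $\prod_i Q^{(i)}(g|_B)$. But the boundary data $g|_B$ restricted to the seam around $T_i$ is \emph{different for each $i$}, so $\prod_i |Q^{(i)}(g|_B)|$ has no reason to be bounded below by $|\tP_{n_1}(s)|^{k^2}$ for a given $g|_B$, nor does its integral. Your invocation of Pr\'ekopa--Leindler and ``translation-invariance'' does not produce the inequality $|\tP_{n_2}(s)| \geq c\,|\tP_{n_1}(s)|^{k^2}/|\Pi_B\tP_{n_2}(s)|$; the shared boundary coordinates prevent the integral from factorizing, and neither of your suggested fixes (i), (ii) addresses this. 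The paper sidesteps the issue entirely by using the \emph{covering map} $\phi_{n_2,n_1}$ rather than a sub-region tiling: with $\bb_2=\phi_{n_2,n_1}^{-1}(\bb_1)$, the fiber over \emph{periodic} boundary data $\psi_{\bb_1,\bb_2}(x)$ factorizes exactly as $F_1(x)^{(n_2/n_1)^2}$ (equation~(\ref{eq:2.6})). One then restricts the $\bb_2$-integration to a thin tube (radius $n_1^{-4}$) around $\psi_{\bb_1,\bb_2}(\R^{\bb_1})$ via Claim~\ref{cl:2.2} and Vaaler's theorem, and finishes with Jensen (monotonicity of $L_p$ norms) to convert $\int F_1^{k^2}$ into $(\int F_1)^{k^2}$.

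\textbf{Second gap: your error term is $O(\log n_2/n_1)$, not $O(\log n_1/n_1)$.} Even granting your displayed inequality, Lemma~\ref{lem:3-} applied to $B\subset V(\T_{n_2})$ gives $\ln|\Pi_B\tP_{n_2}(s)|\leq (|B|-1)\ln(Cn_2^2)$, so after dividing by $n_2^2$ the correction is $O((\log n_2)/n_1)$, which is not the stated $1+C\log n_1/n_1$ and can be arbitrarily larger. The paper avoids $n_2$ in the error because all volume factors arising from the boundary integration live at scale $n_1$: the base $\Pi_{\bb_1}\tP_{n_1}(s)$ has volume $\leq n_1^{Cn_1}$, and the transverse cube has side $n_1^{-4}$ in $|\bb_2|-|\bb_1|=O(n_2^2/n_1)$ dimensions, contributing $n_1^{O(n_2^2/n_1)}$. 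Both give $n_1^{O(1/n_1)}$ after taking the $1/n_2^2$ root.
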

\begin{proof}The lower bound of $1$ on $|\tP_{n_1}(s)|^{\frac{1}{n_1^2}}$ follows from $[0,1]^{V(\T_{n_1})\setminus\{0\}} \subseteq \tP_n(s).$
We define the set $\bb_{1} \subseteq V(\T_{n_1})$ of ``boundary vertices'' to be all vertices that are either of the form $(0, y)$ or $(1, y)$ or $(x, 0)$ or $(x, 1)$, where $x, y$ range over all of $\Z/(n_1 \Z)$. We define the set $\bb_{2}$ to be $\phi_{n_2, n_1}^{-1}(\bb_{1}).$ For $x \in \R^{\bb_1}$, let $F_1(x):= |Q_{\bb_1}(x)|,$ and for  $x \in \R^{\bb_2}$, let $F_2(x):= |Q_{\bb_2}(x)|.$
We now have \beq|\tP_{n_1}(s)| = \int\limits_{\R^{\bb_1}}F_1(x)dx = \int\limits_{\Pi_{\bb_1} \tP_{n_1}(s)} F_1(x) dx.\eeq
Let  $\psi_{\bb_1, \bb_2}$ be the linear map from $\R^{\bb_1}$ to $\R^{\bb_2}$ induced by $\phi_{n_2, n_1}$. 
Then, for $x \in \R^{\bb_1},$ \beq F_2(\psi_{\bb_1, \bb_2}(x)) = F_1(x)^{\left(\frac{n_2}{n_1}\right)^2}.\lab{eq:2.6}\eeq
Note that that $\tP_{n}(s)$ is $n^2-1$ dimensional, has an $\ell_\infty$ diameter of $O(n^2)$ and contains a $n^2-1$ dimensional unit $\ell_\infty$-ball as a consequence of $2 = s_0 \leq s_1 \leq s_2.$ So the $|\bb_1|-1$ dimensional polytopes  $\Pi_{\bb_1} \tP_{n_1}(s)$, and $\psi_{\bb_1, \bb_2}(\Pi_{\bb_1} \tP_{n_1}(s))$ contain $|\bb_1|-1$ dimensional $\ell_\infty$ balls of radius $1.$ 
\begin{claim} \lab{cl:2.2}
Let $S_{\bb_1, \bb_2}(\frac{1}{n_1^{4}})$ be the set of all $y \in \R^{\bb_2}$ such that there exists $x \in  \Pi_{\bb_1} \tP_{n_1}((1 - \frac{1}{n_1^2})s)$ for which  $ y - \psi_{\bb_1, \bb_2}(x) \perp \psi_{\bb_1, \bb_2}(\R^{\bb_1})$  and $\|y - \psi_{\bb_1, \bb_2}(x) \|_\infty < \frac{1}{n_1^{4}}.$ Then, $y \in S_{\bb_1, \bb_2}(\frac{1}{n_1^{4}})$ implies the following. \ben \item $y \in \Pi_{\bb_2} \tP_{n_2}((1 - \frac{1}{2n_1^2})s)$ and \item 
 $|Q_{\bb_2} (y)| \geq c^{(\frac{n_2}{n_1})^2} |Q_{\bb_2} (\psi_{\bb_1, \bb_2}(x))|.$\een
 Here $c, C$ are absolute constants that do not depend on $n_1$ and $n_2.$
\end{claim}
\begin{proof}
The first assertion of the claim follows from the triangle inequality. To see the second assertion, 
let the vector $w \in \R^{V(\T_{n_2})}$ equal $0$ on all the coordinates indexed by $V(\T_{n_2})\setminus \bb_2$ and equal $\psi_{\bb_1, \bb_2}(x) - y$ on coordinates indexed by $\bb_2$.
We know that $x \in  \Pi_{\bb_1} \tP_{n_1}((1 - \frac{1}{n_1^2})s)$. Therefore, \ben \item[$(\ast)$] $Q_{\bb_2} (\psi_{\bb_1, \bb_2}(x)) -w$ has dimension $n_2^2 - |\bb_2|$, and contains an axis aligned cube of side length $\frac{c}{n_1^2},$ and hence a euclidean ball of radius $\frac{c}{n_1^2}$.\een Since every constraint defining $\tP_{n_2}(s)$ has the form $x_a + x_b - x_c - x_d \leq s_i,$ or $x_0 = 0$, \ben \item[$(\ast \ast)$] the affine spans of the codimension $1$ faces of the fiber polytope $Q_{\bb_2}(y)$ are respectively translates of the affine spans of the corresponding codimension $1$ faces of $Q_{\bb_2} (\psi_{\bb_1, \bb_2}(x)) -w$ by  euclidean distances that do not exceed $\frac{C}{ n_1^{4}}.$ \een
 If $K$ is an polytope in $\R^n$ containing a ball $B(0, r)$ and $K'$ is an affine polytope obtained by translating the faces of $K$ by a distance at most $h$, then $(1 - \frac{h}{r})K' \subset K$.
Indeed,  it suffices to prove this for half-spaces that contain $B(0, r)$, because $K$ is an intersection of such half-spaces. But if one translates a half-space by $h$ whose bounding hyperplane is at a distance $R$ from the origin,  then the distance to the origin of the bounding hyperplane of the translated half-space is at most $R + h$. Since $r \leq R$,  $(1 - \frac{h}{r})K' \subset K$.

Therefore, by $(\ast)$ and $(\ast \ast)$, some translate of $(1 - \frac{C}{n_1^{2}})Q_{\bb_2} (\psi_{\bb_1, \bb_2}(x))$ is contained inside $Q_{\bb_2} (y)$ (see Figure~\ref{fig:astast}), completing the proof of Claim~\ref{cl:2.2}.
\end{proof}

\begin{figure}
\begin{center}
\includegraphics[scale=1.0]{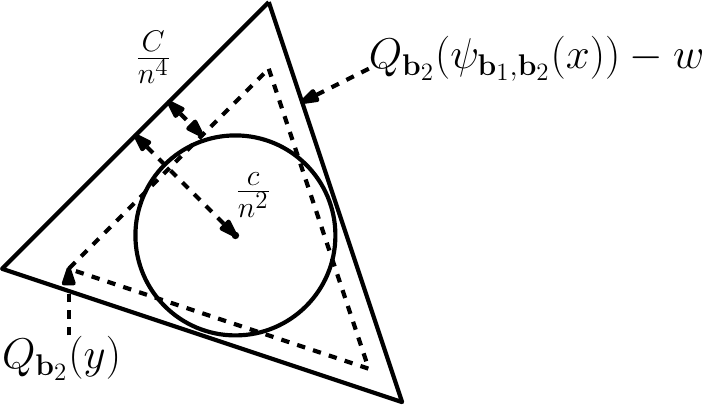}
\caption{Some translate of $(1 - \frac{C}{n_1^{2}})Q_{\bb_2} (\psi_{\bb_1, \bb_2}(x))$ is contained inside $Q_{\bb_2} (y)$.}
\label{fig:astast}
\end{center}
\end{figure}
Let $K$ denote the intersection of the origin symmetric cube of radius $\frac{1}{n_1^{4}}$ in $\R^{\bb_2}$ with the orthocomplement of $\psi_{\bb_1, \bb_2}(\R^{\bb_1})$. By the lower bound of $1$ on the volume of a central section of the unit  cube due to Vaaler (Theorem~\ref{thm:vaal}), it follows that the volume of $K$ is at least $\left(\frac{1}{n_1^{4}}\right)^{|\bb_2| - |\bb_1|}.$
The inequalities below now follow from (\ref{eq:2.6}) and Claim~\ref{cl:2.2}.
\beqs |\tP_{n_2}(s)|  & = & \int\limits_{\Pi_{\bb_2} \tP_{n_2}(s)}|Q_{\bb_2}(y)|dy\\
                             & \geq & \int\limits_{\Pi_{\bb_2} \tP_{n_2}((1 - \frac{1}{2n_1^2})s)} F_2(y) dy\lab{eq:2.8}\\
& \geq & \int\limits_{ S_{\bb_1, \bb_2}(\frac{1}{n_1^4})} F_2(y) dy\\
& \geq & \vol(K)\int\limits_{\psi_{\bb_1, \bb_2}(\Pi_{\bb_1} \tP_{n_1}((1 - \frac{1}{n_1^2})s))} c^{(\frac{n_2}{n_1})^2}F_2(z) dz\\
& \geq &  \vol(K)\int\limits_{\Pi_{\bb_1} \tP_{n_1}((1 - \frac{1}{n_1^2})s)}c^{(\frac{n_2}{n_1})^2} F_1(x)^{\left(\frac{n_2}{n_1}\right)^2}dx\nonumber\\
& \geq &  c^{(\frac{n_2}{n_1})^2}\left(\frac{1}{n_1^{4}}\right)^{|\bb_2|-|\bb_1|}\int\limits_{\Pi_{\bb_1} \tP_{n_1}((1 - \frac{1}{n_1^2})s)} F_1(x)^{\left(\frac{n_2}{n_1}\right)^2}dx.\eeqs

By Lemma~\ref{lem:3-}, $ 1 \leq |\Pi_{\bb_1} \tP_{n_1}(s)| \leq n_1^{Cn_1}$, for some universal positive constant $C > 1.$ Also, \beq c  |\Pi_{\bb_1} \tP_{n_1}(s)| & \leq & (1 - \frac{1}{n_1^2})^{|\bb_1|-1}|\Pi_{\bb_1} \tP_{n_1}(s)|\nonumber\\ & = & |\Pi_{\bb_1} \tP_{n_1}((1 - \frac{1}{n_1^2})s)| \lab{eq:c-3.5}\\ & \leq & |\Pi_{\bb_1} \tP_{n_1}(s)|.\nonumber\eeq

We note that 
\beqs \int\limits_{\Pi_{\bb_1} \tP_{n_1}((1 - \frac{1}{n_1^2})s)} F_1(x)^{\left(\frac{n_2}{n_1}\right)^2}dx & \geq & |\Pi_{\bb_1} \tP_{n_1}((1 - \frac{1}{n_1^2})s)|^{1 - (n_2/n_1)^2}  \\ &\times&\left(\int\limits_{\Pi_{\bb_1} \tP_{n_1}((1 - \frac{1}{n_1^2})s)} F_1(x)dx\right)^{\left(\frac{n_2}{n_1}\right)^2}\\
& \geq &  |\Pi_{\bb_1} \tP_{n_1}(s)|^{1 - (n_2/n_1)^2}  | \tP_{n_1}((1 - \frac{1}{n_1^2})s)|^{\left(\frac{n_2}{n_1}\right)^2}\\
& \geq &  |\Pi_{\bb_1} \tP_{n_1}(s)|^{1 - (n_2/n_1)^2}  \left(c| \tP_{n_1}(s)|\right)^{\left(\frac{n_2}{n_1}\right)^2} \quad \text{by}\,\, (\ref{eq:c-3.5})\\
& \geq & (Cn_1^{Cn_1})^{1 - (n_2/n_1)^2} | \tP_{n_1}(s)|^{\left(\frac{n_2}{n_1}\right)^2}. \eeqs
Thus,
\beq | \tP_{n_1}(s)|^{\left(\frac{n_2}{n_1}\right)^2} \leq (Cn_1^{Cn_1})^{(n_2/n_1)^2-1} \left(n_1^{4}\right)^{|\bb_2|-|\bb_1|}|\tP_{n_2}(s)|,  \eeq
which gives us \beq| \tP_{n_1}(s)|^{\left(\frac{1}{n_1}\right)^2} & \leq & (Cn_1^{Cn_1})^{(1/n_1^2)-(1/n_2^2)} \left(n_1^{4}\right)^{\frac{|\bb_2|-|\bb_1|}{n_2^2}}|\tP_{n_2}(s)|^{\frac{1}{n_2^2}}\\
& \leq & |\tP_{n_2}(s)|^{\frac{1}{n_2^2}}n_1^{\frac{C}{n_1}}\\
& \leq & |\tP_{n_2}(s)|^{\frac{1}{n_2^2}}\left(1 + \frac{C \log n_1}{n_1}\right).\eeq
\end{proof}
For a positive integer $n$, let $[n]$ denote the set of positive integers less or equal to $n$, and let $[n]^2$ denote $[n]\times [n]$.
In what follows, we will use $v$ to denote an arbitrary vertex in $V(\T_{n_3})$. Then, by symmetry, \beq \frac{\int_{P_{n_3}(s)} x(v) dx}{|P_{n_3}(s)|} & = &  \left(\frac{1}{n_3^2}\right)\sum_{v' \in V(\T_{n_3})}  \frac{\int_{P_{n_3}(s)} x(v') dx}{ |P_{n_3}(s)|} \\
& = &   \frac{\int_{P_{n_3}(s)} \left(\frac{\sum_{v' \in V(\T_{n_3})} x(v')}{n_3^2}\right)dx}{|P_{n_3}(s)|} \\
& = & 0.\lab{eq:2.10} \eeq
The linear map $u:P_{n_3}(s) \rightarrow \tP_{n_3}(s)$ defined  by $u(x)(v) = x(v) - x(0)$ is surjective and injective. Therefore,  
\beq \frac{\int_{\tP_{n_3}(s)} x(v) dx}{|\tP_{n_3}(s)|} & = &   \frac{\int_{P_{n_3}(s)} u(x)(v) dx}{ |P_{n_3}(s)|} \\
& = &  \frac{\int_{P_{n_3}(s)} x(v) dx}{|P_{n_3}(s)|} -  \frac{\int_{P_{n_3}(s)} x(0) dx}{|P_{n_3}(s)|}\\
& = & 0.\eeq
\begin{lemma}\lab{lem:4}
Let $C < n_2 < n_3$. Then, 
\beq |P_{n_2}(s)|^{\frac{1}{n_2^2}} \geq |P_{n_3}(s)|^{\frac{1}{n_3^2}}\left(1 - \frac{C (n_3 - n_2) \ln n_3}{n_3}\right).\eeq  
\end{lemma}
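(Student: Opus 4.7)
The idea is to upper bound $|\tP_{n_3}(s)|$ by projecting onto an $n_2\times n_2$ block of a fundamental domain of $\T_{n_3}$, relate the image to $|\tP_{n_2}(s)|$, and bound the fibres by a Lipschitz estimate. Because $n_2$ need not divide $n_3$, there is no sublattice of $V(\T_{n_3})$ isomorphic to $V(\T_{n_2})$, so the projection lands not directly in $\tP_{n_2}(s)$ but in a closely related ``flat'' polytope on a block, which we then compare to $\tP_{n_2}(s)$.

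First I would fix a block $W \cong [n_2]^2$ inside $[n_3]^2 \subseteq V(\T_{n_3})$ containing the origin, and consider the restriction map $\Pi_W:\tP_{n_3}(s) \to \R^{W\setminus\{0\}}$. Every unit rhombus with vertices in $W$ is a unit rhombus of $\T_{n_3}$, so $\Pi_W(\tP_{n_3}(s))$ is contained in the polytope $H_{n_2}(s)$ of functions on $W$ with $f(0)=0$ satisfying the rhombus inequalities on all interior rhombi. A conditioning/Brunn-Minkowski argument parallel to Lemma~\ref{lem:3}, using that $\tP_{n_2}(s)$ differs from $H_{n_2}(s)$ only through the $O(n_2)$ wrap-around Hessian constraints and that each such halfspace cuts a centered convex body by at most a constant factor, then yields $|H_{n_2}(s)| \leq 2^{Cn_2}|\tP_{n_2}(s)|$, i.e.\ an error of $1+O(1/n_2)$ at the level of $n_2^{-2}$-th roots.

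Second, by the Lipschitz estimate in the proof of Lemma~\ref{lem:3-}, every coordinate of $f \in \tP_{n_3}(s)$ lies in an interval of length $O(n_3^2)$, so each fibre of $\Pi_W$, of dimension $n_3^2-n_2^2$, has volume at most $(Cn_3^2)^{n_3^2-n_2^2}$. Combining via Fubini,
\[
|\tP_{n_3}(s)| \;\leq\; |\tP_{n_2}(s)|\cdot 2^{Cn_2}\cdot (Cn_3^2)^{n_3^2-n_2^2}.
\]
Taking $n_3^{-2}$-th roots and using $(n_3^2-n_2^2)/n_3^2 \leq 2(n_3-n_2)/n_3$, the fibre factor contributes $1+O((n_3-n_2)\ln n_3/n_3)$, which dominates both the wrap-around contribution and the further $1+O(\ln n_3/n_3)$ error incurred in passing between $\tP_n(s)$ and $P_n(s)$ (as recorded right after the definition of $\tP_n(s)$).

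\textbf{Main obstacle.} The most delicate step is the block-vs-torus comparison $|H_{n_2}(s)| \leq 2^{Cn_2}|\tP_{n_2}(s)|$: since the wrap-around constraints cut through a body of dimension $n_2^2-1$, one must verify that they lose only an $\exp(O(n_2))$ factor and not an $\exp(O(n_2^2))$ factor. This requires a fibre-conditioning argument on the boundary layer of $W$ in the spirit of Claim~\ref{cl:2.2}, leveraging the centering identity (\ref{eq:2.10}) to show each wrap-around constraint is satisfied on a bounded fraction of $H_{n_2}(s)$.
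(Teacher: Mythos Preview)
Your decomposition has a genuine gap at the block--versus--torus step. The polytope you call $H_{n_2}(s)$, namely functions on the $n_2\times n_2$ block $W$ with $f(0)=0$ satisfying only the \emph{interior} rhombus inequalities, is unbounded: any linear function $f(v)=\langle a,v\rangle$ vanishing at the origin has $\nabla^2 f\equiv 0\preccurlyeq s$, so $H_{n_2}(s)$ contains a two--dimensional affine subspace and $|H_{n_2}(s)|=\infty$. The comparison $|H_{n_2}(s)|\le 2^{Cn_2}|\tP_{n_2}(s)|$ is therefore false, and the ``each wrap--around halfspace cuts a centered body by a bounded factor'' heuristic cannot even get started, since there is no centered body to cut. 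What bounds the linear part in $\tP_{n_2}(s)$ is precisely the $O(n_2)$ wrap--around rhombi you have dropped; without them the slope can be arbitrary. You could try to bound $|\Pi_W\tP_{n_3}(s)|$ directly rather than via $H_{n_2}(s)$, but the only immediate bound, coming from the $\ell_\infty$ diameter $O(n_3^2)$ of $\tP_{n_3}(s)$, gives $(Cn_3^2)^{n_2^2-1}$, which is far too weak.

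The paper's proof runs in the dual direction and thereby avoids the problem. Instead of projecting onto $W$, it projects onto the complement $\bb=V(\T_{n_3})\setminus W$ and looks at the \emph{fibre} $Q_\bb(0)$ over the origin. Points of $Q_\bb(0)$ vanish identically on $\bb$, so they automatically satisfy every wrap--around constraint of the smaller torus; after absorbing a two--vertex collar one gets an isometric inclusion $Q_\bb(0)\hookrightarrow \tP_{n_2+2}\bigl(s(1+O(n_2^{-2}))\bigr)$. The size of this particular fibre is then controlled from below by the barycenter trick: by Pr\'ekopa the marginal $x\mapsto |Q_\bb(x)|$ is logconcave, by the centering identity (\ref{eq:2.10}) its barycenter is $0$, and the $L_p$--monotonicity inequality for probability measures gives $|Q_\bb(0)|\ge |\tP_{n_3}(s)|/|\Pi_\bb\tP_{n_3}(s)|$. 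Finally $|\Pi_\bb\tP_{n_3}(s)|$ is bounded by the crude cube estimate $(Cn_3^2)^{|\bb|-1}$ with $|\bb|=O\bigl(n_3(n_3-n_2)\bigr)$, which produces exactly the $\exp\bigl(C(n_3-n_2)\ln n_3/n_3\bigr)$ loss in the statement. The point is that a \emph{slice} through $0$ lands naturally inside the smaller torus polytope, whereas your \emph{projection} does not.
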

\begin{proof}  Let $\rho:V(\T_{n_2}) \ra [n_2]^2\subseteq \Z^2$ be the unique map that satisfies $\phi_{0, n_2} \circ \rho = id$ on $V(\T_{n_2})$. We embed $V(\T_{n_2})$ into $V(\T_{n_3})$ via  
  $\phi_{0, n_3} \circ \rho,$ and define $\bb$ to be $ V(\T_{n_3})\setminus (\phi_{0, n_3} \circ \rho(V(\T_{n_2}))).$  Note that $0 \in \bb$, since $0 \not \in [n_2].$ Recall that $Q_{ \bb}(x)$ was defined to be the fiber polytope over $x$, that arises from the projection map $\Pi_{\bb}$ of $\tP_n(s)$ onto $\R^{\bb}.$
Thus,
\beq \int\limits_{\R^{\bb\setminus \{0\}}} \left(\frac{|Q_{\bb}(x)| }{|\tP_{n_3}(s)|}\right)x dx & = & \Pi_\bb  \left(\frac{\int_{\tP_{n_3}(s)} v dv}{|\tP_{n_3}(s)|}\right)\\ 
& = & 0.\lab{eq:int=0}\eeq
By Theorem~\ref{thm:prekopa}, $\frac{|Q_{\bb}(x)| }{|\tP_{n_3}(s)|}$ is a logconcave function of $x\in \Pi_\bb \tP_{n_3}(s),$ and 
 is a non-negative and integrable function of $x$ that integrates to $1.$ By the corollary to the Brunn-Minkowski inequality given in Corollary~\ref{cor:Brunn}, it follows that
\beqs \int\limits_{\R^{\bb\setminus \{0\}}} \left(\frac{|Q_{\bb}(x)|}{|\tP_{n_3}(s)|}\right)|Q_{\bb}(x)|^{\frac{1}{n_3^2 - |\bb|}} dx & \leq & \left|Q_{\bb}\left( \,\,\int\limits_{\R^{\bb\setminus \{0\}}} \left(\frac{|Q_{\bb}(x)| }{|\tP_{n_3}(s)|}\right)x dx\right)\right|^{\frac{1}{n_3^2 - |\bb|}}\\
& = & |Q_{\bb}(0)|^{\frac{1}{n_3^2 - |\bb|}}  \quad \text{by}\,\,(\ref{eq:int=0}).\eeqs
Therefore, 
\beqs \int\limits_{\Pi_{\bb} \tP_{n_3}(s)} |Q_{\bb}(x)|^{1 + \frac{1}{n_3^2 - |\bb|}} \left(\frac{dx}{|\Pi_\bb\tP_{n_3}(s)|}\right) \leq \left(\frac{| \tP_{n_3}(s)|}{|\Pi_{\bb} \tP_{n_3}(s)|}\right)|Q_{\bb}(0)|^{\frac{1}{n_3^2 - |\bb|}}.\eeqs
By the monotonic increase of $L^p$-norms as $p$ increases from $1$ to $\infty$,  for the probability measure $\mu(dx) = \frac{dx}{|\Pi_\bb\tP_{n_3}(s)|}$, we see that 
\beq \int\limits_{\Pi_{\bb}\tP_{n_3}(s)} |Q_{\bb}(x)|^{1 + \frac{1}{n_3^2 - |\bb|}} \frac{dx}{|\Pi_\bb \tP_{n_3}(s)|} & \geq &  \left(\int\limits_{\Pi_{\bb}\tP_{n_3}(s)} |Q_{\bb}(x)| \frac{dx}{|\Pi_\bb \tP_{n_3}(s)|}\right)^{1 + \frac{1}{n_3^2 - |\bb|}}\\ & = & \left(\frac{| \tP_{n_3}(s)|}{|\Pi_{\bb} \tP_{n_3}(s)|}\right)^{1 + \frac{1}{n_3^2 - |\bb|}}.\eeq
It follows that \beq |Q_{\bb}(0)| \geq \frac{| \tP_{n_3}(s)|}{|\Pi_{\bb} \tP_{n_3}(s)|}.\eeq
Suppose that $n_2 + 2 < n_3$.
Let $\rho_+:V(\T_{n_2+2}) \ra [n_2+2]^2\subseteq \Z^2$ be the unique map that satisfies $\phi_{0, n_2+2} \circ \rho_+ = id$ on $V(\T_{n_2+2})$. We embed $V(\T_{n_2+2})$ into $V(\T_{n_3})$ via  
  $\phi_{0, n_3} \circ \rho_+,$ and define $\tilde{\bb}$ to be $ V(\T_{n_3})\setminus (\phi_{0, n_3} \circ \rho_+(V(\T_{n_2+2}))).$ 
We observe that $|\tP_{n_2+2}(s(1 + \frac{2}{(n_2+2)^2}))|$ is greater or equal to $|Q_{\bb}(0)|(\frac{1}{(n_2+2)^2}))^{|\bb|-|\tilde{\bb}|},$ since $\phi_{0, n_3} \circ \rho_+,$ induces an isometric map from $Q_\bb(0) + [0, \frac{1}{(n_2+2)^2}]^{\bb\setminus\tilde{\bb}}$ into $\tP_{n_2+2}(s(1 + \frac{2}{(n_2+2)^2})).$
Thus,
\beqs |\tP_{n_2 + 2}(s)| & = &  \left(1 + \frac{2}{(n_2+2)^2}\right)^{-(n_2+2)^2+1}\left|\tP_{n_2+2}\left(s(1 + \frac{2}{(n_2+2)^2})\right)\right|\\
&  \geq & e^{-2} |Q_{\bb}(0)|\left(\frac{1}{(n_2+2)^2}\right)^{|\bb|-|\tilde{\bb}|}\\
&  \geq & \frac{e^{-2}| \tP_{n_3}(s)|(\frac{1}{(n_2+2)^2})^{|\bb|-|\tilde{\bb}|} }{|\Pi_{\bb} \tP_{n_3}(s)|}\\
& \geq & | \tP_{n_3}(s)| (Cn_3)^{-Cn_3(n_3-n_2)}.\eeqs
Noting that $\tP_{n_2+2}(s)$ contains a unit cube and hence has volume at least $1$, we see that 
\beq |\tP_{n_2 + 2}(s)|^{\frac{1}{(n_2 + 2)^2}} & \geq & |\tP_{n_2+2}(s)|^{\frac{1}{n_3^2}}\\
& \geq & | \tP_{n_3}(s)|^{\frac{1}{n_3^2}} (C n_3)^{-C(1-\frac{n_2}{n_3})}\\
& \geq &  | \tP_{n_3}(s)|^{\frac{1}{n_3^2}} \left(1 - \frac{C (n_3 - n_2) \ln n_3}{n_3}\right).\eeq
We have thus proved the following:
let $C < n_2 +2 < n_3$. Then, 
\beq |P_{n_2 + 2}(s)|^{\frac{1}{(n_2+2)^2}} \geq |P_{n_3}(s)|^{\frac{1}{n_3^2}}\left(1 - \frac{C (n_3 - n_2) \ln n_3}{n_3}\right).\eeq  
Relabeling $n_2 + 2$ by $n_2$ yields the lemma.
\end{proof}


We will need the notion of differential entropy (see page 243 of \cite{Cover}).
\begin{defn}[Differential entropy]
Let ${\displaystyle X}$ be a random variable supported on a finite dimensional Euclidean space $\R^m$,
associated with a measure $\mu$ that is absolutely continuous with respect to the Lebesgue measure. Let the Radon-Nikodym derivative of $\mu$ with respect to the Lebesgue measure be denoted $f$. The differential entropy of $X$, denoted ${\displaystyle h(X)}$ (which by overload of notation, we shall also refer to as the differential entropy of $f$, i.e. $h(f)$), is defined  as
${\displaystyle h(X)=-\int _{\R^m}f(x)\ln f(x)\,dx}$.
\end{defn}

The following Lemma is well known, but we include a proof for the reader's convenience.
\begin{lemma}\lab{lem:5}
The differential entropy of a  mean $1$ distribution with a bounded Radon-Nikodym derivative with respect to the Lebesgue measure, supported on $[0, \infty)$ is less or equal to $1$, and equality is achieved on the exponential distribution.
\end{lemma}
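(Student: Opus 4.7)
The plan is to apply the non-negativity of the Kullback--Leibler divergence (Gibbs' inequality), using the exponential density as the reference measure. First I would verify the equality case: for the exponential density $g(x) := e^{-x}\mathbf{1}_{x \geq 0}$, one checks $\int_0^\infty x g(x)\, dx = 1$ and
\[
h(g) = -\int_0^\infty e^{-x}\ln(e^{-x})\, dx = \int_0^\infty x e^{-x}\, dx = 1,
\]
so the exponential distribution is admissible and attains the claimed upper bound.

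For the upper bound itself, let $f$ be any admissible density, i.e., a probability density on $[0,\infty)$ with bounded Radon--Nikodym derivative and $\int_0^\infty x f(x)\, dx = 1$. The main observation is that the ``cross entropy'' against $g$ depends on $f$ only through its mean:
\[
-\int_0^\infty f(x)\ln g(x)\, dx = \int_0^\infty x f(x)\, dx = 1.
\]
Writing $-\ln f = -\ln g - \ln(f/g)$ and integrating against $f$, together with the standard inequality $\int_0^\infty f \ln(f/g)\, dx \geq 0$ (an immediate consequence of $\ln t \leq t-1$ applied pointwise to $t = g/f$), we obtain
\[
h(f) = -\int_0^\infty f\ln f\, dx = 1 - \int_0^\infty f \ln(f/g)\, dx \leq 1,
\]
with equality if and only if $f = g$ almost everywhere.

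The hypothesis of a bounded Radon--Nikodym derivative is used only to guarantee that the entropy integral $h(f)$ is well-defined: it prevents $f \ln f$ from having a non-integrable positive part (the mean constraint already controls the negative part via the comparison with $-\int x f\, dx = -1$). Since this is a classical textbook computation, there is no genuine obstacle to overcome; the entire argument reduces to one application of Gibbs' inequality with the mean-$1$ exponential as the reference density.
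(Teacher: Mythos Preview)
Your proof is correct and is essentially identical to the paper's own argument: both use the non-negativity of the Kullback--Leibler divergence $\int f\ln(f/g)\,dx \geq 0$ with $g(x)=e^{-x}$ as reference, together with the observation that $-\int f\ln g\,dx = \int x f\,dx = 1$ by the mean constraint. Your discussion of the equality case and of the role of the boundedness hypothesis is slightly more detailed than the paper's, but the core computation is the same.
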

\begin{proof}
Let $f:[0, \infty) \ra \R$ denote a density supported on the non-negative reals, whose associated distribution $F$ has mean $1$. Let $g:[0, \infty) \ra \R$ be given by $g(x) := e^{-x}$. The relative entropy between $f$ and $g$ is given by 

\beq \partial(f||g) := \int_{[0, \infty)} f(x) \ln\left(\frac{f(x)}{g(x)}\right)dx,\eeq and can be shown to be non-negative for all densities $f$ using Jensen's inequality.
We observe that \beq \partial(f||g) & = & -h(f) +  \int_{[0, \infty)} f(x) \ln\left({e^{x}}\right)dx\\
                                          & = & - h(f) + 1, \eeq because $F$ has mean $1$.
This implies that $h(f) \leq 1 = h(g)$.
\end{proof}

\begin{lemma}\lab{lem:6}
If $2 = \min_i s_i$, \beqs |P_{n}(s)| \leq \exp\left((n-1)^2 (1 + \ln 2) + 2(n-1) \ln(C(s) n)\right).\eeqs
If one of the $s_i$ is $0,$ then the dimension of $\tP_n(s)$ is less or equal to $2n-2.$
\end{lemma}
\begin{proof}
In the first case, without loss of generality, we assume that $s_1 = 2$.
 Consider the matrix $\nabla_x:\R^{V(\T_n)} \ra \R^{V(\T_n)}$ that is given by $\nabla_x f(p, q):= f(p, q) - f(p-1, q)$ and $\nabla_y:\R^{V(\T_n)} \ra \R^{V(\T_n)}$ that is given by $\nabla_y f(p, q):= f(p, q) - f(p, q-1).$ 
 We denote the list of values of $\De_1$ on squares of the form $\{(p, q), (p, q-1), (p-1, q), (p-1, q-1)\}$ for $(p, q) \in [n] \times [n]$ by $\De_1|_{e \subseteq E_1 \cap \Box_n}$ Consider the map $\tilde{\Phi}: \R^{V(\T_n)\setminus \{(0, 0)\}}\ra  \R^{[n-1]^2\oplus[n-1] \oplus[n-1]}$ given by $$\tilde{\Phi} = \left(\De_1|_{ E_1 \ni e \subseteq \Box_n}, \nabla_x|_{[n-1]\times \{0\}}, \nabla_y|_{\{0\}\times [n-1]}\right).$$ 
 \begin{claim}\lab{cl:36}
  The determinant of the matrix for the map $\tilde{\Phi}$ with respect to the canonical bases for the domain and the range is either $1$ or $-1.$
 \end{claim}
 \begin{proof}
 For any $f \in span(\tP_n(s)),$ and $(p, q) \in \Box_n,$ we have $$f(p, q) =  f(p, 0) + f(0, q) - \sum_{E_1 \ni e \subseteq [p]\times [q]} (\De_1 f)(e).$$ Note that $f(p, 0)$ and $f(0, q)$ can in turn be expressed as sums of the values of $\nabla_x f$ and $\nabla_y f$ along the $x$ and $y$ axis respectively and the coefficients are all equal to $1$. Thus, the matrix for the map $\tilde{\Phi}$ with respect to the canonical bases for the domain and the range is invertible on $span(\tP_n(s))$ and has an inverse that is a matrix with integer entries. Therefore, we have proved the claim.
\end{proof}
 Next suppose $f \in \tP_n(s).$
As in the proof of Lemma~\ref{lem:3-}, we see that the slope of $f$ increases by no more than $s_0 + s_1$ in a horizontal stretch, due to the condition $(\Delta_0 + \Delta_1)(f) \leq s_0 + s_1,$ and slope of $f$ increases by no more than $s_1 + s_2$ in a vertical stretch, due to the condition $(\Delta_1 + \Delta_2)(f) \leq s_1 + s_2.$ Because these points are on a torus, we know that $\|\nabla_x f\|_\infty \leq C(s)n$ and $\|\nabla_y f\|_\infty < C(s) n.$ 
$|\tP_n(s)| \geq |P_n(s)|$ by Lemma~\ref{lem:28}, so together with Claim~\ref{cl:36}, $$|\tilde{\Phi} \tP_n(s)|= |\tP_n(s)| \geq |P_n(s)| .$$ 
As noted in the proof of Lemma~\ref{lem:28}, the map $T:\R^{V(\T_n)} \ra \R^{V(\T_n)}$ where  $T: g \mapsto g - g(0) \one,$ acts as identity on every 
point in the $n^2-1$ dimensional subspace $\{g \in \R^{V(\T_n)}|g(0) = 0\}.$ We see that $T$ maps $P_n(s)$ on to $\tP_n(s).$
Let $f$ be sampled uniformly at random from the $n^2 -1$-dimensional Lebesgue measure on $P_n(s).$
Let $\tilde{f}$ be sampled uniformly at random from the $n^2 -1$-dimensional Lebesgue measure on $\tilde{\Phi} \tP_n(s).$ Thus the distribution of $\tilde{f}$ is exactly the same 
as that of $\tilde{\Phi}T f.$
 By the formula for $T$ above, the distribution of each coordinate of $\De_1 f$ is exactly the same as the distribution of each coordinate of $\De_1 Tf,$ which is a density whose mean is $0$ and support is bounded above by $s_1.$ However, for $i \in \{0, 1, 2\}$ it is clear that  the distribution of each coordinate of $\De_i f$ is exactly the same as the distribution of each coordinate of $\De_i Tf,$ which is a density whose mean is $0.$  Now $\ln |\tilde{\Phi} \tP_n(s)|$ is the differential entropy of $\tilde{f}$ with respect to the Lebesgue measure. Since the joint differential entropy of a vector valued random variable, is less or equal to the sum of the differential entropies of its one dimensional marginals (see Corollary 10.34 of \cite{Yeung}), 
 \beqs \ln |\tilde{\Phi} \tP_n(s)| \leq (n-1)^2 (1 + \ln 2) + 2(n-1)(1 + 2 \ln(C(s) n)). \eeqs
Finally,  for the second case in the statement of the lemma, if one of the $s_i$ is zero, which without loss of generality we take to be $s_1$, then
for any $f \in span(\tP_n(s)),$  we have $\sum_{E_1 \ni e \subseteq [n]\times [n]} (\De_1 f)(e) = 0$ and $(\De_1 f)(e) \leq 0$ for each $e \in E_1,$ which implies that $(\De_1 f)(e) = 0$ for each $e \in E_1.$ Thus, $f(p, q) =  f(p, 0) + f(0, q) - \sum_{E_1 \ni e \subseteq [p]\times [q]} (\De_1 f)(e) = f(p, 0) + f(0, q),$  for $(p, q) \in \Box_n,$ implying that $f$ is constrained to lie in a subspace of dimension $2n-2.$
\end{proof}

We will use the lemmas in this section to prove Lemma~\ref{lem:2.8}.

\begin{proof}[Proof of Lemma~\ref{lem:2.8}]
By Lemma~\ref{lem:28} and Lemma~\ref{lem:3} and Lemma~\ref{lem:6}, \beq 1 \leq  \liminf\limits_{n \ra \infty} |P_n(s)|^{\frac{1}{n^2}} \leq \limsup\limits_{n \ra \infty} |P_n(s)|^{\frac{1}{n^2}} \leq 2e.\eeq

Let $C < n_1^2  \leq n_2.$ Let $n_3 = (\lfloor \frac{n_2}{n_1}\rfloor + 1)  n_1.$ 
By Lemma~\ref{lem:28} and Lemma~\ref{lem:3} and Lemma~\ref{lem:4},
\beqs |P_{n_1}(s)|^{\frac{1}{n_1^2}} & \leq &  |P_{n_3}(s)|^{\frac{1}{n_3^2}}\left(1 + \frac{C\log n_1}{n_1}\right)\\
& \leq & |P_{n_2}(s)|^{\frac{1}{n_2^2}}\left(1 - \frac{C (n_3 - n_2) \ln n_3}{n_3}\right)^{-1}\left(1 + \frac{C\log n_1}{n_1}\right)\\
& \leq & |P_{n_2}(s)|^{\frac{1}{n_2^2}}\left(1 - \frac{C n_1 \ln n_3}{n_3}\right)^{-1}\left(1 + \frac{C\log n_1}{n_1}\right)\\ 
& \leq & |P_{n_2}(s)|^{\frac{1}{n_2^2}}\left(1 - Cn_1 \left(\frac{\ln n_1^2}{n_1^2}\right)\right)^{-1}\left(1 + \frac{C\log n_1}{n_1}\right).\eeqs
This implies that \beqs |P_{n_2}(s)|^{\frac{1}{n_2^2}} \geq |P_{n_1}(s)|^{\frac{1}{n_1^2}}\left(1 - \frac{C\log n_1}{n_1}\right).\eeqs
As a consequence, \beqs \left(1 + \frac{C\log n_1}{n_1}\right)\liminf\limits_{n_2 \ra \infty} |P_{n_2}(s)|^{\frac{1}{n_2^2}} \geq |P_{n_1}(s)|^{\frac{1}{n_1^2}}. \eeqs Finally, this gives 
\beqs \liminf\limits_{n_2 \ra \infty} |P_{n_2}(s)|^{\frac{1}{n_2^2}} \geq \limsup\limits_{n_1 \ra \infty}|P_{n_1}(s)|^{\frac{1}{n_1^2}}, \eeqs implying 
\beqs 1 \leq  \liminf\limits_{n \ra \infty} |P_n(s)|^{\frac{1}{n^2}} = \lim\limits_{n \ra \infty} |P_n(s)|^{\frac{1}{n^2}} = \limsup\limits_{n \ra \infty} |P_n(s)|^{\frac{1}{n^2}} \leq 2e.\eeqs
\end{proof}

\begin{observation}\lab{obs:fn}
By the Brunn-Minkowski inequality, $\f_n(s) := (\vol_{n^2-1} P_n(s))^{\frac{1}{n^2-1}}$ is a positive Lipschitz concave function of $s$ if all the $s_i$ are nonzero, and for some $s_i = 0$, provided $n^2 -1 > 2n-2$ as is the case for $n \geq 2,$ $\f_n(s) = 0.$ 
\end{observation}

We thus conclude the following (part of which is included in the statement of Corollary 3.10 in \cite{random_concave}.) 
\begin{corollary}\lab{cor:2.10}
As $n \ra \infty$, the pointwise limit of the functions $\mathbf{f}_n$ is a function  $\mathbf{f}$ defined on $\R^3_{\geq 0}$ that is concave. If one of the $s_i$ is $0$ then for all $n$, $\f_n(s) = 0.$
\end{corollary}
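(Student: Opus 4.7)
My plan is to prove the corollary in three steps: extend the convergence of Lemma~\ref{lem:2.8} off the slice $\{s_0=2\}$ to all of $\R_+^3$, pass from the $n^2$-th root to the $(n^2-1)$-th root that appears in $\mathbf{f}_n$, and combine the Brunn-Minkowski concavity of each $\mathbf{f}_n$ (already noted just above the corollary) with the standard fact that pointwise limits of concave functions are concave.

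For the first step I would exploit the homogeneity $P_n(cs) = c\cdot P_n(s)$, valid for every $c > 0$ because the defining inequalities $\nabla^2 g \preccurlyeq s$ are linear in $(g,s)$ and the single constraint $\sum_v g(v) = 0$ does not involve $s$. In particular $|P_n(cs)| = c^{\,n^2 - 1}|P_n(s)|$. Given an arbitrary $s \in \R_+^3$, choose $c := s_0/2$ so that $c^{-1} s$ has first coordinate $2$ and Lemma~\ref{lem:2.8} applies to it. Then
\begin{equation*}
|P_n(s)|^{1/n^2} \;=\; c^{(n^2-1)/n^2}\, |P_n(c^{-1}s)|^{1/n^2} \;\longrightarrow\; c \cdot L(c^{-1}s) \;=:\; L(s) \in \left[\tfrac{s_0}{2},\, e s_0\right],
\end{equation*}
a strictly positive, finite limit.

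For the second step, since $\mathbf{f}_n(s) = \bigl(|P_n(s)|^{1/n^2}\bigr)^{n^2/(n^2-1)}$, and the base converges to the strictly positive number $L(s)$ while the exponent tends to $1$, we obtain $\mathbf{f}_n(s) \to L(s)$ pointwise on $\R_+^3$; set $\mathbf{f} := L$. For the third step, recall that each $\mathbf{f}_n$ is concave on $\R_+^3$: the linearity of $\nabla^2$ yields the Minkowski-sum inclusion $\lambda P_n(s) + (1-\lambda) P_n(s') \subseteq P_n(\lambda s + (1-\lambda) s')$, and Brunn-Minkowski in the ambient dimension $n^2 - 1$ then gives $\mathbf{f}_n(\lambda s + (1-\lambda) s') \geq \lambda \mathbf{f}_n(s) + (1-\lambda) \mathbf{f}_n(s')$. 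Since concavity is preserved under pointwise limits of finite-valued functions, $\mathbf{f}$ is concave on $\R_+^3$.

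I do not expect any genuine obstacle; the only point worth double-checking is the scaling identity $P_n(cs) = c\cdot P_n(s)$, which is immediate from the linearity of $\nabla^2$ and of the rhombus concavity constraints. The rest is the standard Brunn-Minkowski machinery combined with the already-established limit in Lemma~\ref{lem:2.8}.
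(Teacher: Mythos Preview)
Your proposal is correct and follows the same route as the paper: the paper's argument is simply the sentence ``By the Brunn--Minkowski inequality, $\mathbf{f}_n(s) := |P_n(s)|^{\frac{1}{n^2-1}}$ is a positive concave function of $s$,'' combined implicitly with Lemma~\ref{lem:2.8} for pointwise convergence. You have merely spelled out the two details the paper leaves to the reader---the homogeneity $P_n(cs)=c\,P_n(s)$ that extends convergence from the slice $\{s_0=2\}$ to all of $\R_+^3$, and the passage from the $n^2$-th root to the $(n^2-1)$-th root---so the approaches coincide.
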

Also, each $\f_n$ is a Lipschitz function of $s$ (which can be seen from its linearity along rays and its concavity as shown in Corollary~\ref{cor:2.10}, we have the following.
\begin{corollary}\lab{cor:last}
As $n \ra \infty$, the convergence of the functions $\mathbf{f}_n$ to  $\f$ is uniform on compact subsets of $\R^3_{\geq 0}$. 

Also, as $n \ra \infty$, the convergence of the functions $\frac{\mathbf{f}_n}{\f}$ to  $1$ is uniform on compact subsets of $\R^3_{> 0}$.
\end{corollary}

\begin{defn}[Surface tension $\sigma$] \lab{def:sigma}
We define $\sigma(s) :=  -\ln \f(s)$ on $\R^3_+$.
\end{defn}
 We will refer to this quantity as the surface tension, in agreement with its usage in the statistical physics literature.
 

\section{Uniform integrability and submartingale convergence}

\begin{defn}
Let $h_* \in \bigcup\limits_{\nu'\in \overline{B}_\I(\nu, \eps)}H(\la, \mu; \nu')$ satisfy \beq\inf\limits_T \min\limits_{0 \leq i \leq 2}  ((-1)D_i h_*)_{ac} > 0.\lab{eq:39Di}\eeq
Let $\F_a \subset \F$ be the $\sigma-$field generated by all possible events  $\kappa$ where $\kappa$ belongs to the set $\D_a$ of all dyadic triangles or dyadic squares of side length $2^{-a}$ contained in $T$. Let $\tX = (-\hess h_*)_{ac}(\omega)$ and $\tX_a = \E[\tX | \F_a].$  Let $\tY_a(\omega) = (2|\kappa|)^{-1}(-\hess h_*)_{sing}(\kappa),$ where the  $\kappa$ are half-open (in the sense that $\kappa$ has the form  $[x_0, x_1) \times (y_0, y_1]$ if it is a square and if it is a dyadic triangle, it includes the three sides) so that $\D_a$ is a partition of $T$ and each $\omega $ uniquely determines a dyadic triangle or square $\kappa \in \D_a$, and $|\kappa|$ denotes $\Leb_2(\kappa).$
\end{defn}
Let $\tr(\hess h_*)$ denote the Radon measure given by   $\sum_{i \in \{0, 1, 2\}} (D_ih_*)(x).$

The goal of this section is to prove in Theorem~\ref{cor:24} that \beqs\lim_{a \ra \infty} \exp\left(-\sum_{\kappa \in \D_a}|\kappa| \sigma\left((-1)|\kappa|^{-1}\int_{\kappa}\hess h_*(dx)\right)\right) = \\
\nonumber \exp\left(-\int_T \sigma((-1)(\hess h_*(x))_{ac})\Leb_2(dx)\right).\eeqs  Here $|\kappa|:= \Leb_2(\kappa).$ The main tool used is the uniform integrability of a certain submartingale:
namely that of $\{\sigma(\tX_a + \tY_a)\}_{a\geq 1}$ below.

We now proceed with the details.

Let $\Leb_2$ denote the Lebesgue measure in $\R^2$. 
The Hessian $\hess h_*$ of a concave function $h_*$ is a matrix valued Radon measure \cite{dudley}, which by the Lebesgue decomposition Theorem can be decomposed as  \beq \hess h_* = (\hess h_*)_{ac} + (\hess h_*)_{sing},\eeq where $(\hess h_*)_{ac},$ is absolutely continuous with respect to two dimensional Lebesgue measure and 
$(\hess h_*)_{sing}$ is singular with respect to the same.
 We start with $T_{(0, 1), (0, 0), (1, 1)} = T$. 
Given a dyadic triangle $T_{D,E,F}$ with vertices $D, E$ and $F$  we define its three children to be the triangles  $T_{\frac{E + F}{2}, \frac{F + D}{2}, F}$,
$T_{D, \frac{F + D}{2}, \frac{D + E}{2}}$, and the square $Q_{\frac{E + F}{2}, E, \frac{D + E}{2}, \frac{D + F}{2}}$.
Given a dyadic square $Q'$, we define its four children by subdivision into four congruent squares of half the side length.

\begin{defn}[Uniform integrability] A collection of random variables $X_i, i \in I,$ is said to be uniformly integrable if
$$\lim\limits_{M \ra \infty}\left(\sup\limits_{i \in I} \E(|X_i| \one_{|X_i| > M})\right) = 0.$$
\end{defn}
\begin{lemma}[Theorem~4.6.1, \cite{Durrett-v5}]\lab{lem:UI}
Given a probability space $(\Omega,\mathcal{F},\PP)$ and a random variable $X \in L^1,$ the collection $\{\mathbb{E}[X |G ]\big | G  \subset \mathcal{F}\}$, is uniformly integrable.
\end{lemma}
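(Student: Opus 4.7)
The plan is to establish the standard $\eps$--$\delta$ criterion for uniform integrability by combining the conditional Jensen inequality with the absolute continuity of the integral of $|X|$. Since the conditional expectation contracts in $L^1$ and since $|X|\in L^1$, all tails can be controlled in terms of the single integrable random variable $X$, uniformly over the sub-$\sigma$-algebra $\G$.

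First I would reduce to estimating $\E\bigl[|\E[X\mid\G]|\,\one_{A_M(\G)}\bigr]$, where $A_M(\G):=\{|\E[X\mid\G]|>M\}\in\G$. By the conditional Jensen inequality, $|\E[X\mid\G]|\le \E[|X|\mid\G]$ almost surely, so
\beqs
\E\bigl[|\E[X\mid\G]|\,\one_{A_M(\G)}\bigr]
\le \E\bigl[\E[|X|\mid\G]\,\one_{A_M(\G)}\bigr]
= \E\bigl[|X|\,\one_{A_M(\G)}\bigr],
\eeqs
where the last equality uses that $A_M(\G)\in\G$ and the tower property. Next, Markov's inequality together with $\E|\E[X\mid\G]|\le\E|X|$ gives
$\p(A_M(\G))\le \E|X|/M$, so the measure of the ``bad set'' shrinks to $0$ as $M\to\infty$ uniformly in $\G$.

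Then I would invoke the absolute continuity of the Lebesgue integral: since $|X|\in L^1$, for every $\eps>0$ there exists $\delta>0$ such that $\p(A)<\delta$ implies $\E[|X|\one_A]<\eps$. (This is a standard consequence of dominated convergence applied to the truncations $|X|\wedge k$, or of the $\sigma$-additivity of the finite measure $A\mapsto\E[|X|\one_A]$.) Choosing $M$ large enough that $\E|X|/M<\delta$, we obtain $\E[|X|\,\one_{A_M(\G)}]<\eps$ for every sub-$\sigma$-algebra $\G\subset\F$, and hence $\sup_\G \E[|\E[X\mid\G]|\,\one_{A_M(\G)}]<\eps$. Letting $\eps\to 0$ proves uniform integrability.

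The argument is essentially routine; the only mild subtlety is the absolute continuity step, which I would just cite from standard measure theory, and the use of the conditional Jensen inequality for the absolute value. There is no real obstacle here because we are transferring all the work onto a single fixed random variable $|X|$, whose integral is well-behaved, and the family $\{\E[X\mid\G]\}$ only enters through the level-set event $A_M(\G)$, whose probability we have already uniformly controlled.
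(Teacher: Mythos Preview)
Your proof is correct and is essentially the standard argument (and indeed the one given in Durrett, Theorem~4.6.1, which the paper simply cites without reproducing a proof). There is nothing to compare: the paper does not supply its own proof of this lemma.
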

The following lemma is due to de la Vall\'{e}e-Poussin, and may be found in  (p.19, Theorem T22,  \cite{Mayer}) or \cite{Poussin}.
\begin{lemma}[de la Vall\'{e}e-Poussin]\lab{lem:24}
The family $\{X_a\}_{a \in A}$ is uniformly integrable if and only if there exists a non-negative increasing convex function $G(t)$ such that $\lim\limits_{t \ra \infty} \frac{G(t)}{t} = \infty$ and $\sup\limits_a \E[G(|X_a|)] < \infty.$
\end{lemma}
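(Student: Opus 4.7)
The plan is to prove the two implications separately. For the sufficiency direction ($\Leftarrow$), the strategy is a standard truncation argument. Given $M>0$, since $G$ is convex and non-decreasing with $G(t)/t\to\infty$, I would set $\psi(M):=\sup_{t\ge M} t/G(t)$, which tends to $0$ as $M\to\infty$. Then on the event $\{|X_a|>M\}$ one has $|X_a|\le \psi(M)\,G(|X_a|)$, so
$$\E\bigl[|X_a|\,\one_{|X_a|>M}\bigr]\le \psi(M)\,\E[G(|X_a|)]\le \psi(M)\,\sup_{a\in A}\E[G(|X_a|)],$$
and the right-hand side vanishes uniformly in $a$ as $M\to\infty$, which is exactly uniform integrability.

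For the necessity direction ($\Rightarrow$), the idea is to construct $G$ as an infinite sum of ramp functions whose kinks are spaced so that uniform integrability makes the total expectation summable. Using the hypothesis, I would inductively pick an increasing sequence $M_1<M_2<\cdots\to\infty$ with
$$\sup_{a\in A}\E\bigl[|X_a|\,\one_{|X_a|>M_n}\bigr]\le 2^{-n},$$
and set $G(t):=\sum_{n=1}^{\infty}(t-M_n)^+$. Each summand is non-negative, non-decreasing and convex, so $G$ inherits these properties, and the sum is finite for every $t$ since $M_n\to\infty$. To see that $G(t)/t\to\infty$, I note that whenever $t\ge 2M_n$ each of the first $n$ summands is at least $t/2$, so $G(t)/t\ge n/2$, and $n$ can be taken arbitrarily large. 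Finally, Tonelli's theorem together with the pointwise bound $(|X_a|-M_n)^+\le |X_a|\,\one_{|X_a|>M_n}$ yields
$$\E[G(|X_a|)]=\sum_{n=1}^{\infty}\E\bigl[(|X_a|-M_n)^+\bigr]\le\sum_{n=1}^{\infty}2^{-n}=1,$$
uniformly in $a$, as required.

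The only delicate point, and where I would expect to spend the most thought, is the interplay between the spacing of the $M_n$ and the growth of $G$. One needs the $M_n$ large enough that the tail bounds are summable, yet the conclusion $G(t)/t\to\infty$ requires the $M_n$ not to escape too rapidly relative to $t$. The calculation above shows that no additional growth condition on $M_n$ beyond $M_n\to\infty$ is needed, so uniform integrability alone makes both demands simultaneously satisfiable. Aside from this bookkeeping, the rest is routine manipulation of convex functions and monotone convergence.
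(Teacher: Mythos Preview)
Your proof is correct and follows the classical argument for the de la Vall\'{e}e--Poussin criterion. Note, however, that the paper does not supply its own proof of this lemma: it is stated as a named classical result (alongside several citations to Durrett) and invoked without argument. So there is no ``paper's approach'' to compare against; you have simply filled in a proof that the authors omitted.

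A couple of minor remarks on presentation. In the sufficiency direction, your $\psi(M)=\sup_{t\ge M} t/G(t)$ is only well-defined once $M$ is large enough that $G(t)>0$ for $t\ge M$; this is guaranteed by $G(t)/t\to\infty$, but it is worth saying explicitly. In the necessity direction, you should note that the $M_n$ can always be chosen with $M_n\ge n$ (in addition to the tail bound $\le 2^{-n}$), which is what guarantees $M_n\to\infty$ and hence that $G(t)$ is a finite sum for each fixed $t$. With those clarifications the argument is complete.
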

\begin{lemma}[Theorem~4.6.4  \cite{Durrett-v5}]\lab{lem:27}
For a submartingale, the following are equivalent.
\ben 
\item  It is uniformly integrable.
\item It converges almost surely and in $L^1$.
\item It converges in $L^1$.
\een
\end{lemma}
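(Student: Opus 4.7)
The plan is to establish the cycle of implications $(1) \Rightarrow (2) \Rightarrow (3) \Rightarrow (1)$, leveraging the two lemmas already stated (Lemma~\ref{lem:UI} on uniform integrability of conditional expectations, and Lemma~\ref{lem:24} of de la Vall\'ee-Poussin, though in fact only Lemma~\ref{lem:UI} will be essential for the main direction).

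For $(1) \Rightarrow (2)$: Uniform integrability immediately gives $\sup_n \E|X_n| < \infty$, since each $\E[|X_n|\one_{|X_n|>M}]$ is small for large $M$ and the remainder is at most $M$. Doob's submartingale convergence theorem then yields $X_n \to X_\infty$ almost surely for some $X_\infty \in L^1$. The combination of almost sure convergence with uniform integrability yields $L^1$ convergence by the Vitali convergence theorem (truncate at height $M$, apply bounded convergence, and use UI to control the tails uniformly in $n$). The implication $(2) \Rightarrow (3)$ is immediate.

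The main step is $(3) \Rightarrow (1)$, and here the submartingale structure is essential. Assume $X_n \to X_\infty$ in $L^1$. For any $A \in \mathcal{F}_n$ and any $m \geq n$, the submartingale property gives $\int_A X_n \, d\p \leq \int_A X_m \, d\p$; letting $m \to \infty$ and using $L^1$ convergence on the right, I obtain $\int_A X_n \, d\p \leq \int_A X_\infty \, d\p$ for every $A \in \mathcal{F}_n$, which means $X_n \leq \E[X_\infty \mid \mathcal{F}_n]$ almost surely. Taking positive parts and applying the conditional Jensen inequality, $X_n^+ \leq \E[X_\infty^+ \mid \mathcal{F}_n]$. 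The family $\{\E[X_\infty^+ \mid \mathcal{F}_n] : n \geq 1\}$ is uniformly integrable by Lemma~\ref{lem:UI}, and domination preserves uniform integrability, so $\{X_n^+\}$ is uniformly integrable.

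It remains to handle $\{X_n^-\}$. Since $L^1$ convergence of $X_n$ to $X_\infty$ implies $L^1$ convergence of $X_n^-$ to $X_\infty^-$ (the map $x \mapsto x^-$ is $1$-Lipschitz), and any $L^1$-convergent sequence is uniformly integrable — a direct $\varepsilon/2$ argument: pick $N$ so that $\|X_n^- - X_\infty^-\|_1 < \varepsilon/2$ for $n \geq N$, bound the finitely many initial terms together with $X_\infty^-$ uniformly using their integrability, and use Markov's inequality to make $\p[|X_n^-| > M]$ small uniformly — the family $\{X_n^-\}$ is uniformly integrable as well. Combining the two, $\{|X_n|\}$ is uniformly integrable, completing the cycle. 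The main obstacle is the asymmetric handling of $X_n^+$ and $X_n^-$ in the final step: only the positive part admits the clean conditional-expectation domination, while the negative part must be treated by the general $L^1$-to-UI argument, and it is important not to attempt to apply the submartingale inequality in the wrong direction.
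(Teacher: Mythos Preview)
Your argument is correct. The paper itself does not prove this lemma at all; it simply quotes the result from Durrett's textbook as a black box, so there is no ``paper's own proof'' to compare against.

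One remark on efficiency: in your $(3)\Rightarrow(1)$ step, the submartingale structure is actually not needed. The general fact you invoke for $\{X_n^-\}$ --- that every $L^1$-convergent sequence is uniformly integrable --- already applies to $\{X_n\}$ itself, and this yields uniform integrability of $\{|X_n|\}$ in one stroke. Your conditional-expectation domination argument for $\{X_n^+\}$ via $X_n \le \E[X_\infty \mid \F_n]$ and Lemma~\ref{lem:UI} is perfectly valid, but it is doing more work than necessary here; that style of bound is what one really needs in the \emph{martingale} version of the theorem (where one wants to identify the limit as $\E[X_\infty\mid\F_n]$ exactly), or when only $L^1$-boundedness rather than $L^1$-convergence is assumed. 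As written, both halves of your decomposition could be replaced by the single observation that $L^1$-Cauchy $\Rightarrow$ UI.
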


\begin{lemma}[Theorem~4.6.8, \cite{Durrett-v5}]\lab{lem:4.6.8}
 Suppose $\{\F_a\}_{a\geq 0}$ is an increasing sequence of $\sigma-$fields and $\F_\infty$ is the $\sigma-$field generated by $\bigcup_a\F_a$. Let $X \in L^1$. Then, as $a \ra \infty$,
$\E(X|\F_a) \ra \E(X|\F_\infty)$ almost surely and in $L^1.$
\end{lemma}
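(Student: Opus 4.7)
The plan is to assemble Lemma~\ref{lem:4.6.8} from the three earlier tools (uniform integrability of conditional expectations, the submartingale convergence dichotomy, and a $\pi$-$\lambda$ identification of the limit). First I would let $M_a := \E(X \mid \F_a)$ and observe that $\{M_a\}$ is a martingale with respect to the filtration $\{\F_a\}$: the tower property applied to $\F_a \subseteq \F_b$ for $a \leq b$ gives $\E(M_b \mid \F_a) = \E(\E(X \mid \F_b) \mid \F_a) = \E(X \mid \F_a) = M_a$. In particular $\{M_a\}$ is a submartingale.

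Next I would invoke Lemma~\ref{lem:UI}, which states exactly that the family $\{\E(X \mid \G) : \G \subseteq \F\}$ is uniformly integrable whenever $X \in L^1$. Specializing to the sub-$\sigma$-fields $\F_a$ yields uniform integrability of $\{M_a\}$. By Lemma~\ref{lem:27}, a uniformly integrable submartingale converges both almost surely and in $L^1$; call the limit $M_\infty$. Since each $M_a$ is $\F_a$-measurable and $\F_a \subseteq \F_\infty$, and since an almost sure limit of $\F_\infty$-measurable random variables is (after possibly adjusting on a null set) $\F_\infty$-measurable, $M_\infty$ is $\F_\infty$-measurable. The one remaining task is to identify $M_\infty$ with $\E(X \mid \F_\infty)$, i.e.\ to verify the integral-matching condition $\int_A M_\infty \, d\PP = \int_A X \, d\PP$ for every $A \in \F_\infty$.

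For this identification I would first check the condition on the algebra $\bigcup_a \F_a$: if $A \in \F_{a_0}$ for some $a_0$, then for all $b \geq a_0$ the defining property of $M_b = \E(X \mid \F_b)$ gives $\int_A M_b \, d\PP = \int_A X \, d\PP$. The $L^1$ convergence $M_b \to M_\infty$ established in the previous paragraph lets me pass to the limit on the left-hand side, yielding $\int_A M_\infty \, d\PP = \int_A X \, d\PP$. The collection of sets $A \in \F_\infty$ satisfying this equality is closed under monotone limits and complements (a $\lambda$-system), and $\bigcup_a \F_a$ is a $\pi$-system generating $\F_\infty$ by hypothesis, so Dynkin's $\pi$-$\lambda$ theorem extends the equality to all of $\F_\infty$. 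This characterizes $M_\infty$ as $\E(X \mid \F_\infty)$ almost surely, completing both modes of convergence.

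The only subtle step is the interchange of limit and integral in the last paragraph, but this is immediate from $L^1$ convergence (no extra truncation is needed because $\PP(A) \leq 1$ and $|\int_A (M_b - M_\infty) d\PP| \leq \|M_b - M_\infty\|_{L^1}$). Everything else is bookkeeping on top of Lemmas~\ref{lem:UI} and~\ref{lem:27}, so I do not anticipate a genuine obstacle; the main point is that the result is a packaging of those two lemmas with the standard $\pi$-$\lambda$ identification of conditional expectations.
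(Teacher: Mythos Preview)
The paper does not actually supply a proof of this lemma; it is quoted verbatim as Theorem~4.6.8 from Durrett's textbook and used as a black box. Your argument is correct and is precisely the standard textbook proof (martingale via the tower property, uniform integrability from Lemma~\ref{lem:UI}, convergence from Lemma~\ref{lem:27}, then identification of the limit on the $\pi$-system $\bigcup_a \F_a$ extended by Dynkin's theorem), so there is nothing to compare.
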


Consider the probability space $(\Omega, \mathcal{F}, \PP),$ where $\Omega = T$, $\mathcal{F}$ is the collection of Lebesgue measurable subsets of $T$ and $\PP$ equals $2\Leb_2$ (\ie $\Leb_2$ restricted to $T$ and normalized to be a probability measure). 
Let $\tr((\hess h_*)_{ac})$ denote the part of the Radon measure given by   $\sum_{i \in \{0, 1, 2\}} (D_ih_*)(x)$ that is absolutely continuous with respect to the Lebesgue measure in the Lebesgue decomposition.

\begin{lemma}\lab{lem:26}
Let $h_* \in \bigcup\limits_{\nu'\in \overline{B}_\I(\nu, \eps)}H(\la, \mu; \nu').$ 
Then, $\tr((\hess h_*)_{ac})$ belongs to $L^1.$
\end{lemma}
\begin{proof}
Since $h_*$ is Lipschitz and rhombus concave, $\tr(\hess h_*)$ is a (nonpositive valued) Radon measure, and $\int_T \tr(\hess h_*)$ is finite. Therefore $\tr((\hess h_*)_{ac}) \in L^1.$  
\end{proof}
Recall that for $s \in \R^3_+,$ $$\sigma(s) = - \ln \left(\lim\limits_{n \ra \infty} (\vol_{n^2-1} P_n(s))^{\frac{1}{n^2-1}}\right).$$
\begin{lemma}Let $h_* \in \bigcup\limits_{\nu'\in \overline{B}_\I(\nu, \eps)}H(\la, \mu; \nu')$ satisfy $$\inf\limits_T \min\limits_{0 \leq i \leq 2}  ((-1)D_i h_*)_{ac} > 0.$$
As $a \ra \infty$, 
 $\sigma(\tX_a + \tY_a)$ converges almost surely and in $L^1$ to $\sigma(\tX).$
\end{lemma}
\begin{proof} We see that $$\E[\tX_{a+1} + \tY_{a+1}|\F_a] = \tX_a + \tY_a,  $$ and so $\{\tX_a + \tY_a\}_{a\geq 1}$ is a martingale.
Since $\sigma$ is convex, this implies that 
$$\E[\sigma(\tX_{a+1} + \tY_{a+1})|\F_a] \geq \sigma(\tX_a + \tY_a),$$ and so 
$\{\sigma(\tX_a + \tY_a)\}_{a\geq 1}$ is a submartingale. Similarly $\{\sigma(\tX_a)\}_{a\geq 1}$ is a submartingale.
We will now prove that $\{\sigma(\tX_a + \tY_a)\}_{a\geq 1}$ and $\{\sigma(\tX_a)\}_{a\geq 1}$ are uniformly integrable. Taking $G(t) := \exp(t)$ and applying Lemma~\ref{lem:24}, we see that it suffices to show that $\sup_a \E\exp\{|\sigma(\tX_a + \tY_a)|\}_{a \geq 1}<\infty$ and $\sup_a \E\exp\{|\sigma(\tX_a)|\}_{a \geq 1}<\infty$. 
By Lemma~\ref{lem:2.8},  for all  $x \in T$, \beq \log (\min_i (- D_i h_*)_{ac}(x))  & \leq & - \sigma((-1)(\hess h_*)_{ac}(x)) + \log 2\\ &<& \log \left(4e\right) + \log (\tr((-1)\hess h_*)_{ac}(x)).\lab{eq:5.5}\eeq Also $\sigma(\tX_a + \tY_a) \leq \sigma(\tX_a)$. Therefore, in order to show that $\sup_a \E\exp\{|\sigma(\tX_a + \tY_a)|\}_{a \geq 1}<\infty$ and $\sup_a \E\exp\{|\sigma(\tX_a)|\}_{a \geq 1}<\infty$,  by (\ref{eq:39Di}) it suffices to show that
$$\sup_a \E\exp(-\sigma(\tX_a + \tY_a)) < \infty.$$ Note that $\exp(-\sigma)$ is a concave function by Corollary~\ref{cor:2.10} and  $\exp(-\sigma(\E(\tX_a + \tY_a)))$ is independent of $a$. Thus 
 $$\sup_a \E\exp(-\sigma(\tX_a + \tY_a)) \leq \sup_a \exp(-\sigma(\E(\tX_a + \tY_a))) = \exp(-\sigma(\E(\tX_1 + \tY_1))) < \infty,$$ the last inequality holding because $h_*$ is Lipschitz and concave, and so $\int_T \hess h_*(dx) < \infty$. We have thus proved that $\{\sigma(\tX_a + \tY_a)\}_{a\geq 1}$ and $\{\sigma(\tX_a)\}_{a\geq 1}$ are uniformly integrable. Therefore $\{\sigma(\tX_a + \tY_a)\}_{a\geq 1}$ converges almost surely and in $L^1$. It remains to show that the limit is $\sigma(\tX).$ To do so,  note that
 $\tr((\hess h_*)_{ac}) \in L^1$ by Lemma~\ref{lem:26}, and so 
  by Lemma~\ref{lem:UI}, $\{\tr(\tX_a)\}_{a\geq 1}$ converges almost surely and in $L^1$ to $\tr(\tX)$. By (\ref{eq:5.5})  
  $\{\sigma(\tX_a)\}_{a\geq 1}$ converges in $L^1$, and hence by Lemma~\ref{lem:27}, almost surely as well.  By Lemma~\ref{lem:4.6.8}, the almost sure limit of $\{\sigma(\tX_a)\}_{a\geq 1}$ is $\sigma(\tX).$
\begin{claim}
$\{\tY_a\}_{a\geq 1}$ converges almost surely to $0$.
\end{claim}
\begin{proof}
  We recall that $(\hess h_*)_{sing}$ is a coordinate-wise nonpositive finite measure that is singular with respect to Lebesgue measure.  By the Lebesgue differentiation theorem (see Theorem 3.22,  page 99 of \cite{Folland})  $$\lim_{a \ra \infty}  \frac{(-1)(\hess h_*)_{sing}(\kappa_a(\omega))}{|\kappa_a|} = 0,$$ for (Lebesgue-)almost all points $\omega \in T$ where $\kappa_a(\omega)$ is the unique element of $\D_a$ that contains $\omega.$ 
Therefore,  $\{\tY_a\}_{a\geq 1}$ converges almost surely to $0$. 
\end{proof}
It follows that $\{\sigma(\tX_a + \tY_a) - \sigma(\tX_a)\}_{a\geq 1}$ converges almost surely to $0$. 
Therefore the almost sure limit of $\{\sigma(\tX_a + \tY_a)\}_{a\geq 1}$ is $\sigma(\tX)$. By Lemma~\ref{lem:27} the almost sure limit and the $L^1$ limit coincide. This proves the Lemma.
 \end{proof}
 \begin{defn}\lab{def:77}
Let $\tilde{\de} > 0.$
Let $H^{\tilde{\de}} = H^{\tilde{\de}}(\la, \mu)$ be the set of all  $h \in H(\la, \mu)$ such that  for any $n \in \N$ and any $i = 0, 1, 2$,  for any rhombus $e \in E_i(T_n),$
$$-\De_ih_n(e) \geq \frac{\tilde{\de}}{n^2}.$$
Let $\tau$ be the rhombus concave function on $T$ such that $\tau(x, y)  = - x^2 -  y^2 + xy + \ell,$ where $\ell$ is the unique linear function that ensures that $\tau(0, 0) = \tau(0, 1) = \tau(1, 1) = 0. $ Let $\la^{\tilde{\de}} := \la(y) - \tilde{\de}\partial_y \tau(x, y)|_{x = 0},$ and $\mu^{\tilde{\de}} := \mu(x) - \tilde{\de}\partial_x \tau(x, y)|_{y=1}.$
 \end{defn}
 \begin{lemma}\lab{lem:Hlm-nonempty}
 Suppose $\la^{\tilde{\de}}$ and $\mu^{\tilde{\de}}$ are strongly decreasing functions. Then, $H^{\tilde{\de}}(\la, \mu)$ is nonempty.
 \end{lemma}
 \begin{proof}
 Consider the hive $h: T \ra \R$ that belongs to $H(\la^{\tilde{\de}}, \mu^{\tilde{\de}})$ for which $h(x,y) = h(0, y) + h(x, 1).$ Then $h + \tilde{\de} \tau \in H^{\tilde{\de}}(\la, \mu)$, thus showing that $H^{\tilde{\de}}(\la, \mu)$ is nonempty.
 \end{proof}

 \begin{theorem}\lab{cor:24}
 For any fixed $h_* \in H(\la, \mu)$,
\beq \lab{eq:notJa} \lim_{a \ra \infty} \exp\left(-\sum_{\kappa \in \D_a}|\kappa| \sigma\left((-1)|\kappa|^{-1}\int_{\kappa}\hess h_*(dx)\right)\right) = \\
\nonumber \exp\left(-\int_T \sigma((-1)(\hess h_*(x))_{ac})\Leb_2(dx)\right).\eeq  Here $|\kappa|:= \Leb_2(\kappa).$
 \end{theorem}
\begin{proof}
In the preceding discussion, we just proved the following.

For positive $\tilde{\de}$ and any fixed $h \in H^{\tilde{\de}}$,
\beqs \lim_{a \ra \infty} \exp\left(-\sum_{\kappa \in \D_a}|\kappa| \sigma\left((-1)|\kappa|^{-1}\int_{\kappa}\hess h(dx)\right)\right) = \\
\nonumber \inf_{a \in \N} \exp\left(-\sum_{\kappa \in \D_a}|\kappa| \sigma\left((-1)|\kappa|^{-1}\int_{\kappa}\hess h(dx)\right)\right) = \\ 
\nonumber \exp\left(-\int_T \sigma((-1)(\hess h(x))_{ac})\Leb_2(dx)\right).\eeqs  
It remains to remove the restriction that $$\inf\limits_T \min\limits_{0 \leq i \leq 2}  ((-1)D_i h)_{ac} > 0.$$

For any fixed $h_* \in H(\la, \mu)$,  for $t \in  \N$,  let $h_*^{(t)} := h_* + t^{-1} \tau \in H^{t^{-1}},$ where $\tau$ is defined in Definition~\ref{def:77}.
In (\ref{eq:monotone}) below, we have used the fact that for  $t_1, t_2 \in \N$, if $t_1 < t_2$, then 
$$ \sigma\left((-1)|\kappa|^{-1}\int_{\kappa}(-(t_1^{-1}, t_1^{-1}, t_1^{-1}) dx+\hess h_*(dx))\right) > \sigma\left((-1)|\kappa|^{-1}\int_{\kappa}(-(t_2^{-1}, t_2^{-1}, t_2^{-1}) dx+\hess h_*(dx))\right).$$
For any fixed $a,$ we have 
\beq  \nonumber \lim_{t \ra \infty} \exp\left(-\sum_{\kappa \in \D_a}|\kappa| \sigma\left((-1)|\kappa|^{-1}\int_{\kappa}\hess (h_*^{(t)})(dx)\right)\right) = \\
\nonumber \lim_{t \ra \infty} \exp\left(-\sum_{\kappa \in \D_a}|\kappa| \sigma\left((-1)|\kappa|^{-1}\int_{\kappa}(-(t^{-1}, t^{-1}, t^{-1})dx + \hess h_*(dx))\right)\right) = \\
\lab{eq:monotone} \inf_{t \in \N} \exp\left(-\sum_{\kappa \in \D_a}|\kappa| \sigma\left((-1)|\kappa|^{-1}\int_{\kappa}(-(t^{-1}, t^{-1}, t^{-1}) dx+\hess h_*(dx))\right)\right) = \\ \nonumber 
 \exp\left(-\sum_{\kappa \in \D_a}|\kappa| \sigma\left((-1)|\kappa|^{-1}\int_{\kappa}(\hess h_*(dx))\right)\right).\eeq

Therefore,
\beqs \lim_{a \ra \infty} \exp\left(-\sum_{\kappa \in \D_a}|\kappa| \sigma\left((-1)|\kappa|^{-1}\int_{\kappa}\hess h_*(dx)\right)\right) = \\
\nonumber \inf_{a \in \N} \exp\left(-\sum_{\kappa \in \D_a}|\kappa| \sigma\left((-1)|\kappa|^{-1}\int_{\kappa}\hess h_*(dx)\right)\right) =\\
\nonumber \inf_{a \in \N} \inf_{t \in \N} \exp\left(-\sum_{\kappa \in \D_a}|\kappa| \sigma\left((-1)|\kappa|^{-1}\int_{\kappa}(-(t^{-1}, t^{-1}, t^{-1})dx +\hess h_*(dx))\right)\right) = \\
\nonumber  \inf_{t \in \N}\inf_{a \in \N} \exp\left(-\sum_{\kappa \in \D_a}|\kappa| \sigma\left((-1)|\kappa|^{-1}\int_{\kappa}(-(t^{-1}, t^{-1}, t^{-1})dx +\hess h_*(dx))\right)\right) = \\
\nonumber  \inf_{t \in \N} \exp\left(-\int_T \sigma((-1)(-(t^{-1}, t^{-1}, t^{-1})dx + \hess h_*(x))_{ac})\Leb_2(dx)\right).\eeqs

We identify two cases:
\ben \item $\int_T \sigma((-1)(\hess h_*(x))_{ac})\Leb_2(dx) < +\infty.$\\
By Lebesgue's dominated convergence Theorem
\beqs  \inf_{t \in \N} \exp\left(-\int_T \sigma((-1)(-(t^{-1}, t^{-1}, t^{-1}) + \hess h_*(x))_{ac})\Leb_2(dx)\right) = \\ \exp\left(-\int_T \sigma((-1)(\hess h_*(x))_{ac})\Leb_2(dx)\right).
\eeqs

\item $\int_T \sigma((-1)(\hess h_*(x))_{ac})\Leb_2(dx) = +\infty.$\\
In this case,  
\beqs  \inf_{t \in \N} \exp\left(-\int_T \sigma((-1)(-(t^{-1}, t^{-1}, t^{-1}) + \hess h_*(x))_{ac})\Leb_2(dx)\right) = \\
\lim_{M \ra \infty} \inf_{t \in \N} \exp\left(-\int_T \min(M, \sigma((-1)(-(t^{-1}, t^{-1}, t^{-1}) + \hess h_*(x))_{ac})))\Leb_2(dx)\right).\eeqs
Again, by Lebesgue's dominated convergence Theorem, this equals
\beqs \lim_{M \ra \infty}  \exp\left(-\int_T \min(M, \sigma((-1)(\hess h_*(x))_{ac})))\Leb_2(dx)\right),\eeqs
which equals $0.$
\een
 This completes the proof of this Theorem.



\end{proof}

\section{Lower bound}

\begin{defn}
Consider the functional $$\J: \bigcup\limits_{\nu'} H(\la, \mu; \nu')\ra \R$$ given by 
$$\J(h) := V(\nu')\exp\left(-\int_T 2\sigma((-1)(\hess h(x))_{ac})\Leb_2(dx)\right)$$ for any $h \in H(\la, \mu; \nu').$
\end{defn}

The goal of this section is to prove in Lemma~\ref{lem:44-new} the following:\\ let $X_n$ and $Y_n$ be independent random Hermitian matrices with spectra $\la_n$ and $\mu_n$ respectively.  Let $Z_n = X_n + Y_n.$ Then,
$$\liminf\limits_{n \ra \infty}\p_n\left[\spec(Z_n) \in B_\I^n(\nu_n, \eps n^2)\right]^\frac{2}{n^2} \geq  \sup\limits_{\nu'\in {B}_\I(\nu, \eps)}\sup\limits_{h' \in H(\la, \mu; \nu')} \left(V(\la)V(\mu)\right)^{-1}\J(h').$$

The proof goes via Lemma~\ref{lem:41-new3}, in which crucial use is made of the Theorem of Fradelizi stated in Theorem~\ref{thm:frad}. This Theorem allows us to freeze the boundary of a quadratic hive, without losing much entropy.
We then patch together (in the $C^2$ case) many such near quadratic hives with frozen boundary conditions, and show that even such random haves have enough entropy.

We now proceed with the details.

\begin{defn}[Approximate identity]\lab{def:27}
Let $\theta:\R^2 \ra \R$ be a $C^3$ approximate identity supported on $[0, 1]^2$ given for $(x, y) \in [0, 1]^2$ by $$ \frac{x^4(1-x)^4y^4(1-y)^4}{\int\limits_{(x, y) \in [0, 1]^2}x^4(1-x)^4y^4(1-y)^4 dxdy}$$ and elsewhere by $0$,  and let $\theta_\eps(x, y) = \eps^{-2}\theta(\frac{x}{\eps}, \frac{y}{\eps}),$ for all $(x, y) \in \R^2.$
\end{defn}

\begin{lemma}\lab{lem:17}
$${V_n(\tau_n)} \geq  \exp\left({n \choose 2}\log n + n^2 \int_{0}^{1} x\log (1 - x)dx \right).$$
\end{lemma}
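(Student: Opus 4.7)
The plan is to reduce the statement to an elementary Riemann-sum estimate for the function $g(x) := (1-x)\log x$ on $(0,1]$.

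First I would note that since consecutive entries of $\tau_n$ differ by $1$, we have $\tau_n(i)-\tau_n(j)=j-i$, so
\[
V_n(\tau_n)=\prod_{1\le i<j\le n}(j-i)=\prod_{k=1}^{n-1}k^{\,n-k},
\]
the number of pairs $(i,j)$ with $j-i=k$ being exactly $n-k$. Taking logarithms and separating out a factor of $\log n$ (using $\sum_{k=1}^{n-1}(n-k)=\binom{n}{2}$) gives
\[
\log V_n(\tau_n)=\binom{n}{2}\log n+\sum_{k=1}^{n-1}(n-k)\log(k/n)=\binom{n}{2}\log n+n^2\cdot\frac{1}{n}\sum_{k=1}^{n-1}g(k/n).
\]
It therefore suffices to prove the Riemann-sum lower bound
\[
\frac{1}{n}\sum_{k=1}^{n-1}g(k/n)\ \ge\ \int_0^1 g(x)\,dx\ =\ \int_0^1 x\log(1-x)\,dx,
\]
where the equality of the two integrals follows from the substitution $u=1-x$, and a direct computation (integration by parts) evaluates them to $-3/4$.

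The key analytic facts about $g$ that I would use are that $g(x)\le 0$ on $(0,1]$ (since $1-x\ge0$ and $\log x\le0$), and that $g$ is strictly increasing on $(0,1]$. The monotonicity follows from $g'(x)=-\log x+1/x-1$ together with $g'(1)=0$ and $g''(x)=-1/x-1/x^2<0$, so that $g'$ is strictly decreasing and hence positive on $(0,1)$. Given that $g$ is increasing, on each subinterval $[(k-1)/n,\,k/n]$ with $k=1,\dots,n-1$ we have $g(x)\le g(k/n)$, so
\[
\int_0^{(n-1)/n}g(x)\,dx=\sum_{k=1}^{n-1}\int_{(k-1)/n}^{k/n}g(x)\,dx\ \le\ \frac{1}{n}\sum_{k=1}^{n-1}g(k/n),
\]
i.e.\ the right Riemann sum dominates the integral over $[0,(n-1)/n]$. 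Finally, since $g\le 0$ on $[(n-1)/n,1]$, we have $\int_{(n-1)/n}^1 g(x)\,dx\le 0$, so
\[
\int_0^1 g(x)\,dx\ \le\ \int_0^{(n-1)/n}g(x)\,dx\ \le\ \frac{1}{n}\sum_{k=1}^{n-1}g(k/n),
\]
which is exactly the inequality needed. Combining with the identity above yields the lemma.

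The only point requiring care is the integrability of $g$ near $0$: although $g(0)=-\infty$, the integral $\int_0^{1/n}g(x)\,dx$ is finite, and the monotonicity estimate $g(x)\le g(1/n)$ on $(0,1/n]$ is still valid and gives $\int_0^{1/n}g\le (1/n)g(1/n)$, so nothing is lost at the left endpoint. No other step is substantive; everything else is bookkeeping.
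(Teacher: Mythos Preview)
Your proof is correct and follows essentially the same approach as the paper: both compute $V_n(\tau_n)=\prod_{k=1}^{n-1}k^{\,n-k}$ explicitly, separate out the $\binom{n}{2}\log n$ term, and bound the remaining sum by an integral via a monotonicity/Riemann-sum argument. The only cosmetic difference is that the paper reindexes to $\prod_{k=1}^{n-1}(n-k)^k$ and works (implicitly) with $f(x)=x\log(1-x)$, whereas you work with $g(x)=(1-x)\log x=f(1-x)$; your write-up is in fact more detailed than the paper's, which asserts the final inequality without spelling out the Riemann-sum step.
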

\begin{proof}
We see that 
\beqs {V_n(\tau_n)} & = & \prod_{1 \leq i < j \leq n} ({j-i})\\
& = & \prod_{k = 1}^{n-1} \prod_{\substack{{1 \leq i < j \leq n}\\{|i-j| = k}}} ({j-i}).\eeqs
Now,
\beqs 
\prod_{k = 1}^{n-1} \prod_{\substack{{1 \leq i < j \leq n}\\{|i-j| = k}}} ({j-i})
& \geq & \prod_{k = 1}^{n-1} (n-k)^k\\
& \geq & \exp\left({n \choose 2}\log n + n^2 \int_{0}^{1} x\log(1 - x)dx \right).\eeqs
\end{proof}
Recall from Section~\ref{sec:prelim} that $T_n$ denotes $(nT) \cap (\Z\times \Z),$ where $T$ is the (closed) convex hull of $(0, 0),(0, 1)$ and $(1, 1).$ For $i = 0, 1, 2$ $E_i(T_n)$ is defined in the manner analogous to $E_i(\T_n)$. Recall from the introduction that 
we suppose $\alpha, \beta$ are Lipschitz strongly concave functions from $[0, 1]$ to $\R$ and $\g$ is a concave function from $[0, 1]$ to $\R$, such that $\a(0) = \g(0) = 0$, and  $\a(1) = \b(0) = 0$ and $\b(1) = \g(1) = 0.$ 
 Let $\la = \partial^- \a$, $\mu = \partial^- \b$ on $(0, 1]$ and $\nu = \partial^- \g,$ at all points of $(0, 1]$, where $\partial^-$ is the left derivative, which is monotonically decreasing.
Let $\la_n(i) := n^2(\a(\frac{i}{n})-\a(\frac{i-1}{n}))$, for $i \in [n]$, and similarly,
$\mu_n(i) := n^2(\b(\frac{i}{n})-\b(\frac{i-1}{n}))$, and 
$\nu_n(i) := n^2(\g(\frac{i}{n})-\g(\frac{i-1}{n}))$. 

\begin{lemma}\lab{lem:16} Given a bounded monotonically decreasing function $\la = \partial^-\a$, from $(0, 1]$ to $\R$, 
$$\lim_{n \ra \infty} \left(\frac{V_n(\la_n)}{V_n(\tau_n)}\right)^\frac{2}{n^2} = \exp\left(2\int_{T\setminus\{(t, t)|t \in [0, 1]\}}\log\left(\frac{\la(x) - \la(y)}{x-y}\right)dxdy\right).$$  
\end{lemma}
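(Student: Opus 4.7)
The plan is to express $\frac{2}{n^2}\log(V_n(\la_n)/V_n(\tau_n))$ as a Riemann sum for the claimed integral and pass to the limit by a dominated/uniform convergence argument. Directly from the definitions,
\[
\frac{2}{n^2}\log\frac{V_n(\la_n)}{V_n(\tau_n)} \;=\; \frac{2}{n^2}\sum_{1\le i<j\le n}\log\frac{(\la_n(i)-\la_n(j))/n}{(j-i)/n}.
\]
Let $F_n$ be the piecewise-constant function on $T$ equal to $\log((\la_n(i)-\la_n(j))/(j-i))$ on the cell $((i-1)/n,i/n]\times((j-1)/n,j/n]$ for each $i<j$, so that the right-hand side equals $2\int_T F_n\,dx\,dy$.

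Since $\la_n(i)/n = n(\a(i/n)-\a((i-1)/n))$ equals the average of $\la=\partial^-\a$ over the subinterval $((i-1)/n,i/n]$, Lebesgue's differentiation theorem applied to the bounded integrable function $\la$ yields $\la_n(\lceil nx\rceil)/n\to\la(x)$ at a.e.\ $x\in(0,1)$. Consequently $F_n(x,y)\to F(x,y):=\log|(\la(x)-\la(y))/(x-y)|$ at a.e.\ $(x,y)\in T$ with $x\ne y$, matching the integrand in the claimed limit. To upgrade this a.e.\ convergence to $L^1(T)$ convergence I would verify uniform integrability. The positive part is dominated by the estimate $F_n(x,y)\le\log(2\|\la\|_\infty)-\log((j-i)/n)$ on the $(i,j)$-cell, whose Riemann sums converge to the integrable function $\log(2\|\la\|_\infty)-\log|y-x|$ on $T$, so reverse Fatou controls $\limsup_n\int_T F_n$. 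For the negative part I would truncate at depth $-M$ and split the sum into a bulk part with $j-i\ge n^{1/2}$ and a boundary layer with $j-i<n^{1/2}$; the boundary-layer contribution is at most $O(n^{3/2}\log n)/n^2 = o(1)$, from the crude bound $|F_n|\le\log(2\|\la\|_\infty n)$ combined with the fact that there are $O(n^{3/2})$ such cells.

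The main obstacle is controlling the lower tail in the bulk regime, since $F_n$ can be very negative on cells where consecutive values of $\la_n$ nearly coincide. In the non-degenerate case where $\la$ is strictly decreasing, the modulus of strict monotonicity provides a uniform lower bound for $F_n$ off a small exceptional neighborhood of the diagonal, and Vitali's convergence theorem together with the a.e.\ convergence $F_n\to F$ yields $\int_T F_n\,dx\,dy \to \int_T F\,dx\,dy$, which after multiplication by $2$ is exactly the logarithm of the right-hand side of the lemma. In the degenerate case where $\la$ is constant on a subinterval of positive length, $V_n(\la_n)=0$ for all sufficiently large $n$ and the limiting integrand is $-\infty$ on a set of positive measure, so both sides of the identity equal $0$ simultaneously and the equality holds trivially.
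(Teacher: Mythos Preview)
Your approach has a genuine gap in the boundary-layer step. The ``crude bound $|F_n|\le\log(2\|\la\|_\infty n)$'' that you invoke for the cells with $j-i<n^{1/2}$ is only valid as an \emph{upper} bound on $F_n$: since $\la_n(i)-\la_n(j)\le \la_n(1)-\la_n(n)\le 2n\|\la\|_\infty$, one gets $F_n\le\log(2\|\la\|_\infty n)$, but there is no matching lower bound. On a near-diagonal cell,
\[
F_n=\log\!\bigl((\la_n(i)-\la_n(i+1))/1\bigr)=\log\!\bigl(n^2(2\a(i/n)-\a((i-1)/n)-\a((i+1)/n))\bigr),
\]
and this second difference can be arbitrarily small when $\a$ is concave but not strongly concave. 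For instance with $\a(x)=-(x-\tfrac12)^4$ one has $\la_n(i)-\la_n(i+1)\approx 12(i/n-\tfrac12)^2$, so near $i=n/2$ the term $F_n$ is of order $-2\log n$, not $O(\log n)$ from below. Thus the estimate $O(n^{3/2}\log n)/n^2$ for the boundary-layer negative tail is not justified, and your uniform-integrability argument breaks exactly at the point you yourself flag as ``the main obstacle.'' The modulus of strict monotonicity controls the bulk $j-i\ge n^{1/2}$, but it gives nothing useful when $j-i$ is bounded.

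The paper circumvents this by not attacking the ratio directly. It normalises so that $\la(0)-\la(1)=1$, which forces every summand $\log((\la_n(j)-\la_n(i))/n)$ to be $\le 0$; then it treats $\tfrac{2}{n^2}\log(V_n(\la_n)/n^{\binom n2})$ and $\tfrac{2}{n^2}\log(V_n(\tau_n)/n^{\binom n2})$ separately, the latter via the explicit product in Lemma~\ref{lem:17}. With the sign in hand, uniform convergence of $g_n$ to $g$ on compact sets away from the diagonal already yields $\limsup\int g_n\le\int_K g$ for every such $K$, hence $\limsup\int g_n\le\int g$, and the case $\int g=-\infty$ falls out immediately. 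Your argument can be repaired along the same lines: split $F_n$ as $\log((\la_n(i)-\la_n(j))/n)-\log((j-i)/n)$ and handle the two pieces separately, using the one-sided sign on the first piece rather than trying to establish two-sided uniform integrability of $F_n$.
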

\begin{proof} We may assume that $\la$ is strictly decreasing, since otherwise, the limit is clearly $0$. 
Without loss of generality, after scaling, we may assume that $\la(0) - \la(1) = 1.$ 
Observe that $$\log \frac{V_n(\la_n)}{n^{n \choose 2}} = \sum_{1 \leq j < i \leq n} \log\left(\frac{\la_n(j) - \la_n(i)}{n}\right).$$
For $(x, y) \in T\setminus\{(t, t)|t \in [0, 1]\}$, let $$g(x, y) := \log\left({\la(x) - \la(y)}\right).$$ Let $$g_n:T\setminus\{(t, t)|t \in [0, 1]\} \ra \R,$$ be the function given by $$g_n(x, y) = \log\left(\frac{\la_n(\lceil n y \rceil) - \la_n(\lfloor n x \rfloor)}{n}\right).$$ Note that $g$ is nonpositive and the $g_n$ are nonpositive on $T\setminus\{(t, t)|t \in [0, 1]\}$ for any $n$.
 On every compact subset of $T\setminus\{(t, t)|t \in [0, 1]\},$ $g$ is bounded, and the $g_n$ are bounded. Also, for any $(x, y) \in T \setminus\{(t, t)|t \in [0, 1]\}$ and $n' > n$,  $g_n(x, y) \geq g_{n'}(x, y)$ and the $g_n$  converge pointwise almost everywhere to $g$ as $n \ra \infty$, because any monotonically decreasing function $\la$ on $(0, 1]$ is continuous almost everywhere. Suppose first that $$\int_{T\setminus\{(t, t)|t \in [0, 1]\}}g(x, y)dxdy > -\infty.$$ Then, the $L_1$ norm of the function $g$ restricted to an $\eps$ neighborhood of  $\{(t, t)|t \in [0, 1]\}$ tends to $0$ as $\eps$ tends to $0$. Therefore, by the dominated convergence theorem (\cite{Evans}, Theorem 1.19), $$\lim_{n \ra \infty}\int_{T\setminus\{(t, t)|t \in [0, 1]\}}g_n(x, y)dxdy = \int_{T\setminus\{(t, t)|t \in [0, 1]\}}g(x, y)dxdy.$$ Next suppose $\int_{T\setminus\{(t, t)|t \in [0, 1]\}}g(x, y)dxdy = -\infty$. Then, the integral of $g$ on $T\setminus \{(t_1, t_2)||t_1 - t_2| \geq \eps\}$  (which is a compact subset of $T \setminus\{(t, t)|t \in [0, 1]\}$) tends to $-\infty$ as $\eps$  tends to $0$. Since $g_n$ is nonpositive, this implies that $$\lim_{n \ra \infty} \int_{T\setminus\{(t, t)|t \in [0, 1]\}}g_n(x, y)dxdy = -\infty$$ as well.
 By Lemma~\ref{lem:17}, $\lim_{n \ra \infty} \frac{2}{n^2}\log \frac{V_n(\tau_n)}{n^{n \choose 2}}$ is bounded below by an absolute constant. 
 We see by comparing the numerator and denominator term by term that $\lim_{n \ra \infty} \frac{2}{n^2}\log \frac{V_n(\tau_n)}{n^{n \choose 2}}$ is bounded above by a universal constant.   
 
 Applying the above argument to $\la_n = \tau_n,$ we see that
 $$\lim_{n \ra \infty} \left(\frac{V_n(\la_n)}{V_n(\tau_n)}\right)^\frac{2}{n^2} = \exp\left(\lim_{n \ra \infty} \frac{2}{n^2}\log \frac{V_n(\la_n)}{n^{n \choose 2}} - \lim_{n \ra \infty} \frac{2}{n^2}\log \frac{V_n(\tau_n)}{n^{n \choose 2}}\right).$$
 The lemma follows.
\end{proof}
\begin{defn} \lab{def:32}
Given a bounded, monotonically decreasing function $\la = \partial^-\a$, from $(0, 1]$ to $\R$, let
$V(\la):= 
\lim_{n \ra \infty} \left(\frac{V_n(\la_n)}{V_n(\tau_n)}\right)^\frac{2}{n^2},$ which by Lemma~\ref{lem:16} equals  $$\exp\left(2\int_{T\setminus\{(t, t)|t \in [0, 1]\}}\log\left(\frac{\la(x) - \la(y)}{x-y}\right)dxdy\right).$$ 
\end{defn}

\begin{lemma}\lab{lem:10} Let $\a$ and $\beta$ be strongly concave Lipschitz functions from $[0, 1]$ to $\R$, taking the value $0$ at the endpoints. Let $\la = \partial^-\a$ and $\mu = \partial^-\beta$. Then, $\lim\limits_{n \ra \infty} \left(\frac{V_n(\tau_n)^2}{V_n(\la_n)V_n(\mu_n)}\right)^\frac{2}{n^2} = \left(\frac{1}{V(\la)V(\mu)}\right).$
\end{lemma}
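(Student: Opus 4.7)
The plan is to reduce Lemma~\ref{lem:10} to two applications of Lemma~\ref{lem:16} by factoring the expression multiplicatively and verifying that the individual limits are strictly positive and finite, which is exactly what the hypotheses of strong concavity and Lipschitz continuity supply.

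First, I would write
\[
\left(\frac{V_n(\tau_n)^2}{V_n(\la_n)V_n(\mu_n)}\right)^{\frac{2}{n^2}} = \left(\frac{V_n(\tau_n)}{V_n(\la_n)}\right)^{\frac{2}{n^2}}\left(\frac{V_n(\tau_n)}{V_n(\mu_n)}\right)^{\frac{2}{n^2}},
\]
and then apply Lemma~\ref{lem:16} separately to $\la_n$ and $\mu_n$. By Definition~\ref{def:32}, this gives, provided both $V(\la)$ and $V(\mu)$ are nonzero and finite,
\[
\lim_{n \ra \infty}\left(\frac{V_n(\la_n)}{V_n(\tau_n)}\right)^{\frac{2}{n^2}} = V(\la), \qquad \lim_{n \ra \infty}\left(\frac{V_n(\mu_n)}{V_n(\tau_n)}\right)^{\frac{2}{n^2}} = V(\mu),
\]
after which taking reciprocals and multiplying yields the claimed limit.

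The only nontrivial step is thus verifying $0 < V(\la), V(\mu) < \infty$ so that passing to reciprocals in the limit is legal. For $V(\la)$, strong concavity of $\a$ means there exists $c > 0$ such that $\la(x) - \la(y) \geq c(y-x)$ whenever $x < y$, so on $T \setminus \{(t,t)\}$ one has $\log\bigl(\tfrac{\la(x)-\la(y)}{x-y}\bigr) \geq \log c > -\infty$. The Lipschitz hypothesis on $\a$ gives an upper bound $\la(x) - \la(y) \leq L(y-x)$, so the same logarithm is at most $\log L < \infty$. Since $T$ has finite area, the integral in Definition~\ref{def:32} lies in $[\log c, \log L]$, so $V(\la) \in (0,\infty)$; the identical argument handles $V(\mu)$.

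There is no real obstacle here — the entire content is packaged inside Lemma~\ref{lem:16} — so the proof amounts to a short bookkeeping step. The only thing to be careful about is making the factorization and the continuity of $t \mapsto 1/t$ at the positive finite limit values explicit, to avoid any appearance of an indeterminate form.
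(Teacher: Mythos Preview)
Your approach is correct and is exactly the paper's one-line proof (``This follows immediately from Lemma~\ref{lem:16}''), which you have simply unpacked. One small slip: Lipschitz continuity of $\a$ only bounds $|\la|\leq L$, not $\la(x)-\la(y)\leq L(y-x)$; the finiteness of $V(\la)$ instead follows from $\frac{\la(x)-\la(y)}{y-x}\leq \frac{2L}{y-x}$ together with the integrability of $\log\frac{1}{|x-y|}$ over $T$.
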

\begin{proof}
This follows immediately from Lemma~\ref{lem:16}.
\end{proof}
\begin{defn} Given sequences $\la_n, \mu_n$ of length $n$, we define the $X_n(\la_n)$ and $Y_n(\mu_n)$ to be independent unitarily invariant random Hermitian matrices with spectra $\la_n$ and $\mu_n$ respectively.
\end{defn}
Denote $X_n(\la_n) + Y_n(\mu_n)$ by $Z_n$. 
Let $0 < \eps_1 < 1/16$. 
\begin{defn}\lab{def:heps1}
Given $h \in H(\la, \mu; \nu)$, let $h_{\eps_1}:T \ra \R$ denote the $C^3$ function defined as follows, where $a(x, y)$ is the unique affine function that results in the function $h_{\eps_1}$ taking the value $0$ at the corners of $T$:
\beqs h_{\eps_1}(x, y) := (h \star \theta_{\eps_1})((1 - 4\eps_1)x + \eps_1, (1 - 4\eps_1)y + 3\eps_1) - \eps_1(x^2 + y^2 - xy) + a(x,y).\eeqs
\end{defn}

\begin{defn}\lab{def:12} Define $\ta, \tb, \tg$ the be the boundary values of $h_{\eps_1}$ on the vertical, horizontal and diagonal side of $T$ respectively.
\end{defn}
Since $h$ is Lipschitz, so is $h_{\eps_1}$. 

\begin{lemma}\lab{lem:7.5} For any $\eps_0 < \eps$, if $\eps_1$ is sufficiently small, then  
for $\tilde{\la} = \partial(\tilde{\a})$, $\tilde{\mu} = \partial(\tilde{\b}),$ and $\tilde{\nu}=\partial(\tilde{\g})$ we have that $\tilde{\a} \in B_\infty(\a, \frac{\eps-\eps_0}{4})$, $\tilde{\b} \in B_\infty(\b, \frac{\eps-\eps_0}{4})$ and $\tilde{\g} \in B_\infty(\g, \frac{\eps - \eps_0}{2})$, are $C^3$, concave on $[0, 1]$ and equal $0$ on the endpoints $0$ and $1$. Further, $h_{\eps_1}$ is a $C^3$-hive.
\end{lemma}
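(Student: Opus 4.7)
The plan is to verify the four claims---$C^{3}$ regularity, concavity, vanishing at the three corners of $T$, and the sup-norm closeness---separately; the first three are structural, and the last reduces to a routine mollification estimate.

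First I would check that the affine shift $(x,y)\mapsto((1-4\eps_1)x+\eps_1,(1-4\eps_1)y+3\eps_1)$ sends $T$ into the $\eps_1$-interior of $T$ for $\eps_1<1/16$. Indeed, for $(x,y)\in T$ (so $0\le x\le y\le 1$) the image coordinates lie in $[\eps_1,1-3\eps_1]\times[3\eps_1,1-\eps_1]$ with the second coordinate exceeding the first by at least $2\eps_1$. Since $\theta_{\eps_1}$ is supported on $[0,\eps_1]^2$, the convolution $(h\star\theta_{\eps_1})$ evaluated at the shifted point uses only values of $h$ inside $T$ (all arguments $(x'-s,y'-t)$ with $s,t\in[0,\eps_1]$ remain in $T$), so it is well defined. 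Because $\theta_{\eps_1}$ is $C^{3}$ with compact support, $h\star\theta_{\eps_1}$ is $C^{3}$, and hence so is $h_{\eps_1}$ after adding the smooth quadratic and affine corrections; in particular $\ta,\tb,\tg$ are $C^{3}$.

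For concavity, $h$ is a concave function on $T$ (being a $C^{0}$ hive, hence rhombus concave and a fortiori concave). Convolution with the nonnegative kernel $\theta_{\eps_1}$ preserves concavity on the shrunken domain, and precomposition with the affine shift preserves it as well. The quadratic term $-\eps_1(x^2+y^2-xy)$ has Hessian $-\eps_1\bigl(\begin{smallmatrix}2&-1\\-1&2\end{smallmatrix}\bigr)$, which is negative definite with eigenvalues $-\eps_1,-3\eps_1$, so this term is (strongly) concave; the affine $a(x,y)$ is neutral with respect to concavity. Therefore $h_{\eps_1}$ is concave on $T$, and its restrictions $\ta,\tb,\tg$ to the three sides inherit concavity.

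For the corner-vanishing and the sup-norm bounds, the three equations $h_{\eps_1}(0,0)=h_{\eps_1}(0,1)=h_{\eps_1}(1,1)=0$ impose three independent linear conditions on the three coefficients of $a(x,y)=c_0+c_1x+c_2y$ (the corners being affinely independent), so $a$ is uniquely determined by construction. Since $h$ is Lipschitz on $T$, both the $\eps_1$-mollification and the $O(\eps_1)$ affine shift perturb $h$ by $O(\eps_1)$ in sup norm, and the quadratic correction is itself bounded by $2\eps_1$. Using $h(0,0)=h(0,1)=h(1,1)=0$, the values that $a$ must absorb at the corners are therefore $O(\eps_1)$, whence $\|a\|_\infty=O(\eps_1)$. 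Summing the three contributions gives $\|h_{\eps_1}-h\|_{C^{0}(T)}=O(\eps_1)$, so restricting to the three sides yields $\|\ta-\a\|_\infty,\|\tb-\b\|_\infty,\|\tg-\g\|_\infty=O(\eps_1)$. Choosing $\eps_1$ smaller than a suitable constant multiple of $\min\bigl(\eps_0,(\eps-\eps_0)/2\bigr)$ then delivers all three ball containments simultaneously. The only mild obstacle is the domain check in the first step---ensuring the shifted convolution uses only values of $h$ inside $T$---after which everything reduces to a Lipschitz bound and the explicit choice of $a$.
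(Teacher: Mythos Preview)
Your proof is correct and follows essentially the same approach as the paper, which argues that convolving a $C^0$ rhombus concave function against the nonnegative $C^3$ kernel $\theta_{\eps_1}$ preserves both $C^3$ regularity and (rhombus) concavity, and that these are inherited by the boundary restrictions. You supply considerably more detail than the paper's two-sentence sketch---the explicit domain check that the shifted convolution only samples $h$ inside $T$, the Lipschitz sup-norm estimate giving $\|h_{\eps_1}-h\|_\infty=O(\eps_1)$, and the corner-vanishing via the uniquely determined affine $a$---all of which the paper leaves to the reader.
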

\begin{proof}
The convolution of a $C^3$ function on $[0, \eps]\times [0, \eps]$ with a $C^0$ rhombus concave function defined on $T$ is $C^3$ and rhombus concave at all points where the convolution is well defined; namely those points in $T$  whose $\ell_\infty$ distance from the diagonal of $T$ is at least $\eps.$ The convex combination of a set of rhombus concave functions on $T$ is rhombus concave on $T$. The lemma follows.
\end{proof}
\begin{lemma}\lab{lem:8-}
For all $\tilde{\la} = \partial^-(\tilde{\a})$, $\tilde{\mu} = \partial^-(\tilde{\b}),$ and $\tilde{\nu}=\partial^-(\tilde{\g})$ such that $\tilde{\a} \in B_\infty(\a, \frac{\eps-\eps_0}{4})$, $\tilde{\b} \in B_\infty(\b, \frac{\eps-\eps_0}{4})$ and $\tilde{\g} \in B_\infty(\g, \frac{\eps - \eps_0}{2})$, are $C^3$, concave on $[0, 1]$ and equal $0$ on the endpoints $0$ and $1$, 
\beqs\p_n\left[\spec(X_n(\la_n) + Y_n(\mu_n)) \in B_\I^n(\nu_n, \eps n^2)\right] \geq \p_n\left[\spec(X_n(\tilde{\la}_n) + Y_n(\tilde{\mu}_n)) \in B_\I^n\left(\tilde{\nu}_n, \eps_0 n^2\right)\right].\eeqs
\end{lemma}
\begin{proof}
We couple the tuple of random variables $(X_n(\la_n), X_n(\tla_n))$ so that they have the same eigenvectors
 corresponding to the eigenvalues, when listed in decreasing order. We also couple the tuple of random variables $(Y_n(\mu_n), Y_n(\tmu_n))$ so that they have the same eigenvectors
 corresponding to the eigenvalues, when listed in decreasing order. By design, $n^{-2}\|\spec(X_n(\tla_n) - X_n(\la_n))\|_\I \leq \frac{\eps - \eps_0}{4},$ and $n^{-2}\|\spec(Y_n(\tmu_n) - Y_n(\mu_n))\|_\I \leq \frac{\eps - \eps_0}{4}.$ The Ky Fan inequalities, (see Equation (1.55), \cite{Tao_book}), state that, for each $k \in [n],$ for $n \times n$ Hermitian matrices $X$ and $Y,$ the sum of the top $k$ eigenvalues of $X + Y$ is less or equal to the sum of the top $k$ eigenvalues of $X$ added to the sum of the top $k$ eigenvalues of $Y.$  Consequently,
 \beqs n^{-2} \|\spec((X_n(\la_n) - X_n(\tla_n)) + (Y_n(\mu_n) - Y_n(\tmu_n))) \|_\I  & \leq &   n^{-2}\|\spec(X_n(\la_n) - X_n(\tla_n))\|_\I\\ & + & n^{-2}\|\spec(Y_n(\mu_n) - Y_n(\tmu_n))\|_\I\\
 & \leq & \frac{\eps - \eps_0}{2}.\eeqs
 Thus if $$\spec(X_n(\tilde{\la}_n) + Y_n(\tilde{\mu}_n)) \in B_\I^n\left(\tilde{\nu}_n, \eps_0 n^2\right),$$ then $$X_n(\la_n) + Y_n(\mu_n)) \in B_\I^n\left(\tnu_n, \frac{\eps_0 + \eps}{2} n^2\right) \subseteq B_\I^n\left(\nu_n,  \eps n^2\right),$$ and therefore \beqs\p_n\left[\spec(X_n(\la_n) + Y_n(\mu_n)) \in B_\I^n(\nu_n, \eps n^2)\right] \geq \p_n\left[\spec(X_n(\tilde{\la}_n) + Y_n(\tilde{\mu}_n)) \in B_\I^n\left(\tilde{\nu}_n, \eps_0 n^2\right)\right].\eeqs
\end{proof}

We will need the following lemma.
\begin{lemma}\lab{lem:8}
$$ \liminf\limits_{n \ra \infty}\p_n\left[\spec(Z_n) \in B_\I^n(\nu_n, \eps n^2)\right]^\frac{2}{n^2}\geq \left(\frac{1}{V(\tilde{\la})V(\tilde{\mu})}\right)\liminf\limits_{n \ra \infty} |G_n(\tla_n, \tmu_n; \tnu_n, \eps_0 n^2)|^\frac{2}{n^2}.$$
\end{lemma}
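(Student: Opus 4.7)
The plan is to reduce the inequality to a direct probability comparison between $\spec(Z_n)$ and the spectrum of the perturbed sum $\tilde Z_n:=X_n(\tla_n)+Y_n(\tmu_n)$, and then establish that comparison by an obvious coupling. First, applying (\ref{eq:3.2}) with $(\la_n,\mu_n,\nu_n)$ replaced by $(\tla_n,\tmu_n,\tnu_n)$ and combining with the identity $|A_n(\tla_n,\tmu_n;\nu'_n)|=|H_n(\tla_n,\tmu_n;\nu'_n)|V_n(\nu'_n)/V_n(\tau_n)$ together with the definition of $G_n$, I obtain
$$\p_n\!\left[\spec(\tilde Z_n)\in B_\I^n(\tnu_n,\eps_0)\right]=\frac{V_n(\tau_n)^2}{V_n(\tla_n)V_n(\tmu_n)}\,|G_n(\tla_n,\tmu_n;\tnu_n,\eps_0)|.$$
Lemma~\ref{lem:10} applied to $(\tla,\tmu)$ gives $(V_n(\tau_n)^2/(V_n(\tla_n)V_n(\tmu_n)))^{2/n^2}\to(V(\tla)V(\tmu))^{-1}$, so the target reduces to the probability comparison
$$\liminf_n\p_n[\spec(Z_n)\in B_\I^n(\nu_n,\eps)]^{2/n^2}\;\geq\;\liminf_n\p_n[\spec(\tilde Z_n)\in B_\I^n(\tnu_n,\eps_0)]^{2/n^2}.$$

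To prove this, I would couple the four matrices through shared Haar unitaries. Let $U,V$ be independent Haar-distributed unitaries and set $X_n:=UD_{\la_n}U^*$, $Y_n:=VD_{\mu_n}V^*$, $\tilde X_n:=UD_{\tla_n}U^*$, $\tilde Y_n:=VD_{\tmu_n}V^*$, where $D_\xi$ denotes the diagonal matrix with entries $\xi$. Each of the marginal pairs $(X_n,Y_n)$ and $(\tilde X_n,\tilde Y_n)$ has the correct joint distribution. Under this coupling, $\|Z_n-\tilde Z_n\|_{\mathrm{op}}\le\|\la_n-\tla_n\|_\infty+\|\mu_n-\tmu_n\|_\infty$, and by the mean value theorem $\|\la_n-\tla_n\|_\infty\le n\|\la-\tla\|_\infty$ (with the analogous bound for $\mu$). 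Weyl's inequality then yields
$$\|\spec(Z_n)-\spec(\tilde Z_n)\|_\I\le n\|\spec(Z_n)-\spec(\tilde Z_n)\|_\infty\le n^2\bigl(\|\la-\tla\|_\infty+\|\mu-\tmu\|_\infty\bigr).$$
Lemma~\ref{lem:7.5} additionally gives $\|\nu_n-\tnu_n\|_\I\le n^2\eps_0$ via the identity $\nu_n(1)+\cdots+\nu_n(i)=n^2\g(i/n)$. Choosing $\eps_1$ small enough that $\|\la-\tla\|_\infty+\|\mu-\tmu\|_\infty+2\eps_0<\eps$ and applying the triangle inequality gives the sample-path inclusion $\{\|\spec(\tilde Z_n)-\tnu_n\|_\I<\eps_0\}\subseteq\{\|\spec(Z_n)-\nu_n\|_\I<\eps\}$; taking probabilities and the $2/n^2$-th root completes the reduction.

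The main obstacle is securing $\|\la-\tla\|_\infty\to 0$ as $\eps_1\to 0$, since Lemma~\ref{lem:7.5} only asserts uniform closeness of the antiderivatives rather than of their left derivatives. I would verify this directly from Definition~\ref{def:heps1}: on the vertical side of $T$, $\tilde\a$ is a rescaled mollification $h\star\theta_{\eps_1}$ evaluated on a slightly shifted copy of the side, plus a smooth $O(\eps_1)$ correction, so its derivative is essentially $\la\star\theta_{\eps_1}$ up to lower-order terms and converges uniformly to $\la$ as $\eps_1\to 0$ whenever $\la$ is continuous. Continuity of $\la=\partial^-\a$ follows from the Lipschitz-plus-strongly-concave hypothesis on $\a$. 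Once this regularity is in hand, the coupling argument above goes through as sketched.
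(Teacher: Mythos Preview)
Your overall reduction is exactly the paper's: both rewrite the right-hand side as $\liminf_n \p_n[\spec(\tilde Z_n)\in B_\I^n(\tnu_n,\eps_0)]^{2/n^2}$ via (\ref{eq:3.2}) and Lemma~\ref{lem:10}, and then reduce the lemma to the pointwise probability comparison
\[
\p_n\bigl[\spec(Z_n)\in B_\I^n(\nu_n,\eps)\bigr]\;\ge\;\p_n\bigl[\spec(\tilde Z_n)\in B_\I^n(\tnu_n,\eps_0)\bigr].
\]
The paper dispatches that comparison in one line (``by the triangle inequality''), using only the $L^\infty$ closeness $\ta\in B_\infty(\a,\tfrac{\eps-\eps_0}{2})$, $\tb\in B_\infty(\b,\tfrac{\eps-\eps_0}{2})$, $\tg\in B_\infty(\g,\eps_0)$ supplied by Lemma~\ref{lem:7.5}; it never passes through $\|\la-\tla\|_\infty$.

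Your Weyl-based detour, by contrast, forces you to control $\|\la-\tla\|_\infty$, and the justification you offer does not hold up. First, by Definition~\ref{def:heps1} one has $\ta(y)=h_{\eps_1}(0,y)=(h\star\theta_{\eps_1})\bigl(\eps_1,(1-4\eps_1)y+3\eps_1\bigr)$ plus smooth corrections, so $\tla=\partial\ta$ is a weighted average of $\partial_y h$ over a small two-dimensional patch \emph{inside} $T$ (near $x=\eps_1$), not a one-dimensional mollification of the boundary trace $\la=\partial_y h|_{x=0}$. For a generic $C^0$ hive $h\in H(\la,\mu;\nu)$ there is no reason $\partial_y h$ a hair inside the vertical side should be uniformly close to $\la$; nothing in the hypotheses prevents a boundary layer. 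Second, ``Lipschitz and strongly concave'' does not make $\la=\partial^-\a$ continuous: for instance $\a(t)=-t^2-|t-\tfrac12|$ is Lipschitz and strongly concave on $[0,1]$ yet $\partial^-\a$ jumps at $t=\tfrac12$. So even if $\tla$ \emph{were} a genuine mollification of $\la$, uniform convergence would fail at such jumps. Thus the key step ``$\|\la-\tla\|_\infty\to0$ as $\eps_1\to0$'' is unproven, and your Weyl estimate $\|\spec(Z_n)-\spec(\tilde Z_n)\|_\I\le n^2(\|\la-\tla\|_\infty+\|\mu-\tmu\|_\infty)$ does not close.

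The paper's route avoids this entirely by staying with the $\I$-seminorm (which by design is controlled by partial sums, i.e.\ by $\a,\b,\g$ themselves) and invoking only the antiderivative-level information from Lemma~\ref{lem:7.5}. If you want to repair your argument, the natural move is to replace the Weyl/operator-norm step by Ky Fan type inequalities so that the perturbation is measured in the $\I$-norm from the outset, matching what Lemma~\ref{lem:7.5} actually delivers.
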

\begin{proof} 
 
Observe that by (\ref{eq:3.2}),
\beqs \p_n\left[\spec(X_n(\tilde{\la}_n) + Y_n(\tilde{\mu}_n)) \in B_\I^n\left(\tilde{\nu}_n, \eps_0 n^2\right)\right]
& = & \int\limits_{B_\I^n(\tilde{\nu}_n, \eps_0 n^2)}|H_n(\tilde{\la}_n, \tilde{\mu}_n; \nu'_n)|\left(\frac{V_n(\nu'_n)V_n(\tau_n)}{V_n(\tilde{\la}_n)V_n(\tilde{\mu}_n)}\right)\Leb_{n-1, 0}(d\nu'_n)\\
& = & \left(\frac{V_n(\tau_n)^2}{V_n(\tilde{\la}_n)V_n(\tilde{\mu}_n)}\right)\int\limits_{B_\I^n(\tilde{\nu}_n, \eps_0 n^2)}|A_n(\tilde{\la}_n, \tilde{\mu}_n; \tilde{\nu}_n)|\Leb_{n-1, 0}(d\nu'_n).
\eeqs
Therefore, by Lemma~\ref{lem:10} and (\ref{eq:G}), 
\beqs \liminf\limits_{n \ra \infty}\p_n\left[\spec(Z_n) \in B_\I^n(\nu_n, \eps n^2)\right]^\frac{2}{n^2}\geq \left(\frac{1}{V(\tilde{\la})V(\tilde{\mu})}\right)\liminf\limits_{n \ra \infty} |G_n(\tla_n, \tmu_n; \tnu_n, \eps_0 n^2)|^\frac{2}{n^2}.
\eeqs
\end{proof}

For a fixed $a \in \N$, we define the set of boundary nodes $\bb \subseteq V(T_n)$ as follows. 
We first define $\bb'$ to be the set of all nodes $(n x, n y) \in V(T_n)$ such that there exists an $i \in \Z$ such that $|x - i 2^{-a}| \leq \frac{4}{n}$ or there exists a $j \in \Z$ such that $|y - j 2^{-a}| \leq \frac{4}{n},$ or $|y - x| \leq \frac{4}{n}.$ Now $V(\T_n)\setminus \bb'$ consists of a set of disjoint lattice rectangles separated by boundaries with at least $4$ layers together with some boundary triangles which are also separated from the lattice rectangles by at least $4$ layers.  

\begin{lemma}\lab{lem:27.1}
There exists a set of boundary nodes $\bb'' \supseteq \bb'$,  that satisfies the following conditions.
\ben
\item $\bb''$ is contained in the set of all lattice points $(nx, ny)$ such that, either, there exists an $i \in \Z,$ $|x - i 2^{-a}| \leq \frac{5}{n}$, or there exists a $j \in \Z,$ such that $|y - j 2^{-a}| \leq \frac{5}{n},$ or $|y - x| \leq \frac{5}{n}.$
\item $V(T_n)\setminus \bb''$ is the union of a collection of disjoint lattice {\it squares} $\kappa_{lat}$ (not merely rectangles) or isosceles right triangles $\kappa'_{lat}$.
\item  The Minkowski sum of each lattice square $\kappa_{lat}$ with $\{(x, y)| \max(|x|, |y|) \leq 4/n\}$
is contained in a unique dyadic square $\kappa$ of side length $2^{-a}$.
\item The set $V(\T_n)\setminus \bb''$ consists of a set of disjoint lattice squares  and lattice triangles,
of side length in the interval $[2^{-a}n - 16,  2^{-a}n ]$.
\een
\end{lemma}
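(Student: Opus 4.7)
My plan is to construct $\bb''$ from $\bb'$ by trimming the lattice rectangle inside each axis-aligned dyadic cell of side $2^{-a}$ that lies strictly above the diagonal in $T$ until it becomes a genuine lattice square, while leaving the lattice triangles that $\bb'$ already produces along the diagonal untouched. First I would identify the connected components of $V(T_n)\setminus \bb'$: each axis-aligned dyadic square $\kappa$ of side $2^{-a}$ strictly above $y = x$ produces an axis-aligned lattice rectangle whose $X$-range depends only on the column $i$ of $\kappa$ and whose $Y$-range depends only on the row $j$, while each dyadic cell straddling $y = x$ produces a right lattice triangle. A direct count from the definition of $\bb'$ gives
\[
w_i \;=\; \lceil (i+1) 2^{-a} n - 2\rceil - \lfloor i 2^{-a} n + 2\rfloor - 1 \;\in\; [\,2^{-a}n - 5,\; 2^{-a}n - 3\,],
\]
and the corresponding height $h_j$ lies in the same interval. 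For a diagonal cell, the three strict inequalities $X > i 2^{-a} n + 2$, $Y < (i+1) 2^{-a} n - 2$, and $Y - X > 2$ force the two axis-aligned legs and the lattice hypotenuse of the remaining triangle all to have the same number of lattice points, namely $Y_{\max} - X_{\min} - 2 \in [2^{-a}n - 8,\, 2^{-a}n - 6]$, so the triangle is already an isoceles right lattice triangle and can serve verbatim as $\kappa'_{lat}$.

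Next I would square up each rectangle. Setting $s_\kappa := \min(w_i, h_j)$, I remove $\lceil (w_i - s_\kappa)/2\rceil$ lattice columns from one vertical side and $\lfloor (w_i - s_\kappa)/2\rfloor$ from the other (and analogously for horizontal sides if $h_j > s_\kappa$), adjoining the removed lattice points to $\bb''$. Since $|w_i - h_j|\leq 2$, every such strip has width at most one lattice unit, so the boundary band around any vertical line $x = i 2^{-a}$ widens only from $|X - i 2^{-a} n| \leq 2$ to $|X - i 2^{-a} n| \leq 3$, which is exactly the tolerance of condition~(1). The resulting lattice square $\kappa_{lat}$ has side $s_\kappa \in [2^{-a}n - 5,\, 2^{-a}n - 3]\subseteq [2^{-a}n - 8,\, 2^{-a}n]$, and condition~(3) follows because $\kappa_{lat}$ sits at lattice distance strictly greater than $2$ from each of the two dyadic grid lines bounding $\kappa$ in each coordinate, so its $\ell_\infty$-thickening by $2/n$ stays strictly inside $\kappa$.

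I would finally declare $\bb''\setminus \bb'$ to be the union of all lattice points removed during the squaring step. Condition~(2) then holds by disjointness of the original components; condition~(1) holds because no lattice points were added to the diagonal band and every horizontal or vertical strip lies within one lattice unit of an existing band; condition~(4) follows from combining the two discrete interval estimates above. The main technical obstacle I anticipate is verifying that the three side-counts of the diagonal lattice triangle really are automatically equal, which is a case analysis of how the three strict inequalities interact with lattice rounding; once that is in hand, the rest of the argument is short discrete arithmetic, and the generous $[2^{-a}n - 8,\, 2^{-a}n]$ slack in condition~(4) is designed precisely to absorb the mismatch between the square sides ($\approx 2^{-a}n - 4$) and the triangle sides ($\approx 2^{-a}n - 7$).
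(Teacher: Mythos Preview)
Your proposal is correct and takes essentially the same approach as the paper: augment $\bb'$ by adding a thin layer inside each dyadic square so that the residual rectangle becomes an exact lattice square, while leaving the diagonal triangles untouched. One minor sharpening: since $w_i$ and $h_j$ are both counts of integers in open intervals of the \emph{same} length $2^{-a}n-4$, one actually has $|w_i-h_j|\le 1$ (not $2$), so a single added row or column per cell already suffices---which is precisely the paper's construction; your symmetric two-sided trimming is correct but slightly more elaborate than necessary.
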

\begin{proof}
We will give a constructive proof. Augment $\bb'$ in the following way. Take a dyadic square $\kappa$. If the vertices in $n\kappa$ that are not in $\bb'$ form a rectangle with the number of rows exceeding the number of columns by exactly one or  the number of columns exceeding the number of rows by exactly one rather than a square, augment $\bb'$ by adding one column layer of vertices in $n\kappa$ to $\bb'$, or row layer respectively.
Do this sequentially for each dyadic square $\kappa.$
Since we add only one row or column to each dyadic square 1. is satisfied. We see that 2. is satisfied by construction. Even $\bb'$ satisfies 3., and so $\bb''$ does as well. Since we add at most one row or column of vertices to each dyadic square, 4. is satisfied as well.

The above process takes care of the lattice triangles as well, since they are automatically isosceles.
\end{proof}

\begin{defn}\lab{def:37}
We set $\bb$ to $\bb''$, (whose existence is guaranteed by the above lemma).   Given $\tla, \tmu, \tnu \in \Spec_n$, let $h \in H(\tla, \tmu; \tnu)$. We define $h_\bb \in \R^\bb$ to be the unique vector whose coordinate corresponding to $b \in \bb$ is given by the value that $n^2 h$ takes on $(1/n) b.$
\end{defn}

\begin{figure}
    \centering
        \includegraphics[width=2.5in]{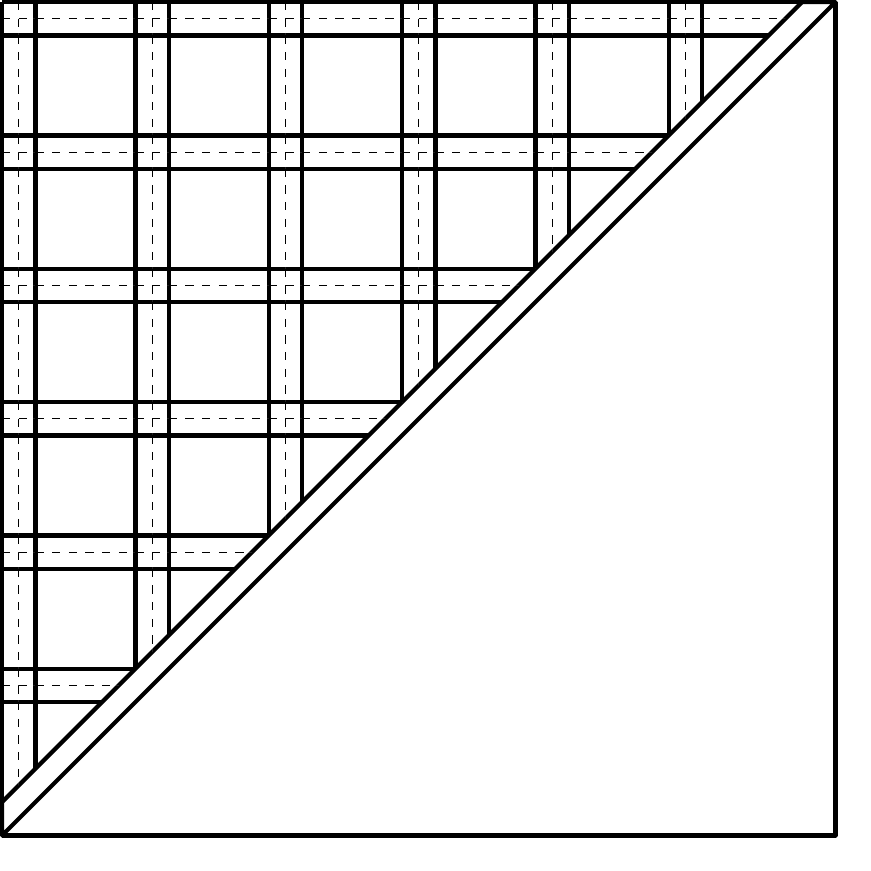} 
        \caption{Partition with multiple layers separating the squares and triangles.  Dashed lines indicate borders of dyadic pieces.} \label{fig:doublelayer}
    \end{figure}
    We will need the definition below as well.
\begin{defn}\lab{def:6.1} We define the polytope $\tilde{P}_n(\bb, h)$ as the preimage of $(h_\bb)$ under the coordinate projection $\Pi_\bb$ of $G_n(\tla_n, \tmu_n; \tnu_n, \eps_0 n^2)$ onto $\R^\bb$. We define the polytope $\tilde{Q}_n(\bb, h)$ as the preimage of $(h_\bb) + [-\frac{1}{2n^6}, \frac{1}{2n^6}]^\bb$ under the coordinate projection $\Pi_\bb$ of $G_n(\tla_n, \tmu_n; \tnu_n, \eps_0 n^2)$ onto $\R^\bb$.
\end{defn}
In the above definition, for large enough $n$, the role of $\frac{1}{2n^6}$ is simply to clamp down the values on $\bb$ so that they are essentially fixed, while at the same not clamping them down to the extent that they affect the asymptotics of the per vertex differential entropy.

\begin{lemma}\lab{lem:41-new3}
Let $\tih$ denote a $C^2$ hive, contained in $H(\tla, \tmu; \tnu)$ such that $$\inf_{x \in T} \min_i(-D_i\tih(x)) > 0.$$ Then, the volumes of the polytopes $\tilde{Q}_n(\bb, \tih)$ given by Definition~\ref{def:6.1} satisfy the following asymptotic lower bound as $n$ tends to $\infty$. Let $\eps_2$ be a  positive constant.
 Then, $$\liminf_{n\ra \infty} |\tilde{Q}_n(\bb, \tih)|^\frac{2}{n^2} \geq \exp(-\eps_2)V(\nu')\exp\left(\int_T 2\sigma(-D_0\tih(x), -D_1\tih(x), -D_2\tih(x))\Leb_2(dx)\right)$$   for all $a$ (which is implicit in the definition of $\bb$) larger than some number depending on $\tih$ and $\eps_2$.
\end{lemma}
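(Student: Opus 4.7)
The plan is to decompose $\tilde Q_n(\bb,\tih)$ along the partition of the lattice upper triangle induced by the double-layer boundary $\bb$. Fixing the boundary coordinates on $\bb$ inside the cube of sidelength $n^{-6}$ around $\tih_\bb$, the remaining polytope splits, up to a negligible multiplicative error, into the product of a Gelfand-Tsetlin slice below the cross-diagonal (whose top row is now pinned to within $n^{-6}$ of $\tnu_n$) and fiber polytopes supported on the disjoint dyadic cells $\kappa$ making up $V(T_n)\setminus\bb$ in the upper triangle. The cube itself contributes $n^{-6|\bb|}$, which is harmless in the $(\cdot)^{2/n^2}$ limit since $|\bb|=O_a(n)$. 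By the volume formula for augmented hives stated just before Figure~\ref{fig:aug-hive} together with Lemma~\ref{lem:16}, the GT slice supplies, in the limit, exactly the advertised factor $V(\nu')$.

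On each cell $\kappa$ of sidelength $m_\kappa\in[2^{-a}n-8,2^{-a}n]$, write $g=\tih|_\kappa+g'$; the concavity constraint $\nabla^2 g\le 0$ becomes $\nabla^2 g'\le -\nabla^2 \tih$, which, because $\tih\in C^2$, is uniformly within $o_a(1)$ of the constant $-\bar s_\kappa:=-|\kappa|^{-1}\int_\kappa \hess\tih$. A fiber/Brunn-Minkowski argument in the spirit of Lemma~\ref{lem:3} and Claim~\ref{cl:2.2} shows that the polytope of admissible $g'$ contains a translate of a $(1-o_a(1))$-dilate of $\tilde P_{m_\kappa}(-\bar s_\kappa)$; the $n^{-6}$ slack around $\tih_\bb$ together with the double-layer structure of $\bb$ is what lets us extend an arbitrary torus configuration across the boundary layer to meet the prescribed boundary values while preserving every rhombus inequality. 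Applying Lemma~\ref{lem:2.8} on each cell yields a volume lower bound of the form $\exp\bigl(-m_\kappa^2\,\sigma(-\bar s_\kappa)(1+o_n(1))\bigr)$. Multiplying over cells, using $m_\kappa^2\asymp n^2|\kappa|$, and invoking Corollary~\ref{cor:24} (with $h_*=\tih$, whose Hessian has no singular part) after sending $n\to\infty$ and then $a\to\infty$, the hive contribution to $|\tilde Q_n|^{2/n^2}$ tends to a finite positive value which, because $\sigma\le 0$, is at least $\exp\bigl(\int_T\sigma(-\hess\tih)\,\Leb_2(dx)\bigr)$, giving the claimed bound after absorbing the remaining approximation errors into $\eps_2$.

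The main obstacle is the cell-to-torus comparison above: each cell is a bounded lattice domain with a prescribed, slightly non-constant boundary, whereas $\tilde P_m(s)$ is a torus polytope with a uniform Hessian bound. Two approximation losses must be controlled in parallel: the $o_a(1)$ deviation of $\nabla^2\tih$ from $\bar s_\kappa$ across $\kappa$, absorbed by a small dilation of $s$; and the need to modify an arbitrary torus configuration on the outer boundary layer, within the $n^{-6}$ cube around $\tih_\bb$, to meet the prescribed values without creating any violated rhombus inequality. Showing that both losses contribute only $o_a(1)$ in the normalized exponent, uniformly over the finitely many cell shapes and the bounded range of $\bar s_\kappa$ guaranteed by the $C^2$ regularity of $\tih$, is the technical heart of the argument, and relies on the double-layer design of $\bb$ together with the Pr\'ekopa/Brunn-Minkowski machinery of Section~3.
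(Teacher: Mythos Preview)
Your overall architecture matches the paper's: split $\tilde Q_n(\bb,\tih)$ into the GT slice (giving $V(\nu')$) and the product of cell polytopes in the upper triangle, bound each cell by a torus polytope, apply Lemma~\ref{lem:2.8}, and pass to the integral via Corollary~\ref{cor:24}. But the heart of the argument, the cell-to-torus comparison, is wrong as you have stated it.

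You assert that the polytope of admissible $g'$ on a cell $\kappa$ \emph{contains} a $(1-o_a(1))$-dilate of $\tilde P_{m_\kappa}(-\bar s_\kappa)$, justified by ``extending an arbitrary torus configuration across the boundary layer'' within the $n^{-6}$ cube. This cannot work: configurations in $\tilde P_m(s)$ have $\ell_\infty$ norm as large as $Cm^2$ (Lemma~\ref{lem:3-}), so matching them to boundary data within $n^{-6}$ across a two-vertex-wide strip would force rhombus increments of order $m^2$, not $s_i$. The cell polytope is not larger than the torus polytope; it is essentially a codimension-$O(m_\kappa)$ \emph{section} of it. What the paper actually does is fix the boundary exactly at $\tih_\bb$ (working with $\tilde P_n$ first), set $\de_i:=\inf_\kappa(-D_i\tih)$ so that the containment $R_{n_1}\subseteq Q_{n_1}$ is automatic, and then invoke Theorem~\ref{thm:frad} (Fradelizi) together with Theorem~\ref{thm:prekopa}: the pushforward of the uniform measure on the torus polytope $\tilde R_{n_1}$ onto the $O(n_1)$ boundary coordinates is log-concave with center of mass $0$ by symmetry, so the fiber $R_{n_1}$ over $0$ satisfies $|R_{n_1}|/|\tilde R_{n_1}|\ge (2e\|g\|_2)^{-O(n_1)}$. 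This loss is $\exp(-O(n_1\log n_1))$, which vanishes in the $(\cdot)^{2/n_1^2}$ normalization. Your references to Lemma~\ref{lem:3} and Claim~\ref{cl:2.2} are misplaced: those compare torus polytopes of different sizes, not a fixed-boundary cell to a torus.

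Two further points. First, you do not treat the dyadic triangles abutting the diagonal; the paper handles them by reflecting each triangle $\kappa'$ to a square, enlarging the boundary set symmetrically to $\tilde\bb$, and rerunning the Fradelizi argument, so that the square root (from the reflection symmetry) gives the triangle's contribution. Second, the claim ``$\sigma\le 0$'' is false in general ($\sigma(ts)=\sigma(s)-\ln t$, so $\sigma$ is unbounded above as $s\to 0$) and is not how the paper closes the argument; the passage from the cell infima $\de_i$ to the integral $\int_T\sigma(-\hess\tih)$ uses instead the uniform continuity of $\hess\tih$ (since $\tih\in C^2$) together with the strong rhombus concavity to keep $\inf_\kappa(-D_i\tih)$ bounded away from $0$.
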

\begin{proof}Note that $L_n(\tih) \in H_n(\tla_n, \tmu_n; \tnu_n)$. 
We will now consider two cases corresponding to dyadic squares and triangles respectively.

\noindent 1. {\it Let $\kappa$ be a dyadic square in $\D_a$.}\\
 All the lattice points in $n \kappa$ for a dyadic square $\kappa \in \D_a$ that are not in $\bb$ are separated from the lattice points in every abutting dyadic square or triangle scaled by $n$ by at least four layers of $\bb$. Let the subset of $V(T_n)\cap n\kappa$ of all lattice points at an $\ell_\infty$ distance of at most $2$ from the set $\left(V(T_n) \setminus \bb\right) \cap n\kappa$ be denoted $V_\kappa$. Let this correspond to a lattice square with $n_1 + 4$ vertices on each side (of which $n_1$ do not belong to $\bb$, and $4$ vertices do).
 Denote the projection of the translated polytope $\left(\tilde{P}_n(\bb,  \tih) - L_n( \tih)\right)$ onto $\R^{V_\kappa}$ by $Q_{n_1}$.
 
 For $i \in \{0, 1, 2\},$ let $\inf_{x \in \kappa} (-1)D_i \tih(x) = \de_i$. 
Let $\tilde{R}_{n_1}$ be the polytope consisting of the set of all functions $g:V(\T_{n_1+4}) \ra \R$ such that $\sum_{v \in V(\T_{n_1+4})} g(v) = 0$ and $\nabla^2(g)(e) \leq \de_i$  where  $e \in E_i(\T_{n_1+4})$. 
We identify the vertices of $V_\kappa$ with those of $\T_{n_1+4}$ by pasting together the sides of $V_\kappa$ in the natural way to transform it into a discrete torus.
Let $R_{n_1}$ be the polytope consisting of the set of all functions $g:V(\T_{n_1+4}) \ra \R$ such that for $v \in \bb$, $g(v) = 0$ and  $\sum_{v \in V(\T_{n_1+4})} g(v) = 0$ and $\nabla^2(g)(e) \leq \de_i$  where  $e \in E_i(\T_{n_1+4})$. 

Note  that  $$ R_{n_1} \subseteq \{x| \sum_i x_i = 0\}\cap Q_{n_1} \subseteq \Pi_{\{x| \sum_i x_i = 0\}} Q_{n_1}.$$

Further, for any $x\in (1 - n^{-6})R_{n_1}$, the line segment $\{x + [- n^{-C}, n^{-C}] \one_{\left(V(T_n) \setminus \bb\right) \cap n\kappa}\}$ is contained in $Q_{n_1}$.

This implies that for all sufficiently large $n$, \beq |Q_{n_1}| \geq n^{-C} (1 - n^{-6})^{\dim R_{n_1}} |R_{n_1}| . \eeq

 We see that $R_{n_1}$ is a section of $\tilde{R}_{n_1}$ and is the preimage of $0$ under the push forward of the uniform measure on $\tilde{R}_{n_1}$ under the evaluation map $\Pi_\bb$ that maps $g\in \tilde{R}_{n_1}$ on to $g_{\bb\cap V_\kappa}$. By Theorem~\ref{thm:frad}, and the logconcavity of the pushforward of the uniform measure on $\tilde{R}_{n_1}$ under the evaluation map $\Pi_\bb$ that follows from the convexity of $\tilde{R}_{n_1}$ and Theorem~\ref{thm:prekopa}, (noting also that the center of mass of $\tilde{R}_{n_1}$ is $0,$ by (\ref{eq:2.10})) we see that $$\frac{|R_{n_1}|}{|\tilde{R}_{n_1}|} \geq  \sup\limits_{g \in \tilde{R}_{n_1}}(2e\|g\|_\infty)^{-Cn_1 }.$$
 
However \beq \lim\limits_{n_1 \ra \infty} |\tilde{R}_{n_1}|^{\frac{1}{n_1^2}} = \exp(-\sigma(\de_0, \de_1, \de_2)),\eeq by Lemma~\ref{lem:2.8}.  

It follows that \beq\lab{eq:5.2}  \liminf\limits_{n_1 \ra \infty} |R_{n_1}|^{\frac{1}{n_1^2}} \geq \exp(-\sigma(\de_0, \de_1, \de_2)).\eeq 
As $a \ra \infty$, the supremum over all $\kappa \in \D_a$ of $$\sup_{x \in \kappa} (-1)D_i \tih(x) - \inf_{x \in \kappa} (-1)D_i \tih(x),$$ tends to $0$ because $\tih$ is $C^2$ with a uniformly continuous Hessian. However because $\tih$ is strongly rhombus concave, $\inf_{x \in \kappa} (-1)D_i \tih(x)$ does not tend to $0$. As a consequence, as $a$ tends to infinity, the supremum over all $\kappa \in \D_a$ of 
$$\exp\left(-\sigma(\de_0, \de_1, \de_2) + |\kappa|^{-1}\int_\kappa \sigma(-D_0(x), -D_1(x), -D_2(x))\Leb_2(dx)\right)$$ tends to $0.$

\noindent 2. {\it Let $\kappa'$ be a dyadic triangle in $\D_a$.}\\
 All the lattice points in $n \kappa'$ for a dyadic triangle $\kappa' \in \D_a$ that are not in $\bb$  are separated from the lattice points in every abutting dyadic square or triangle scaled by $n$ by at least four layers of $\bb$. Let the subset of $V(T_n)\cap n\kappa'$ of all lattice points at an $\ell_\infty$ distance of at most $2$ from the set $\left(V(T_n) \setminus \bb\right) \cap n\kappa'$ be denoted $V_{\kappa'}$. Let this correspond to a lattice triangle with $n_1 + 4$ vertices on each side (of which $n_1$ do not belong to $\bb$, and $4$ vertices do).
 For $i \in \{0, 1, 2\},$ let $\inf_{x \in \kappa'} (-1)D_i \tih(x) = \de_i$. 
Let $\kappa'' \cup \kappa'$ be a dyadic square $\kappa$, and let $\kappa'' \cap \kappa'  $ be the diagonal segment of $\kappa'$. This defines $\kappa''$ uniquely to be a dyadic triangle obtained by reflecting $\kappa'$ about its diagonal.
Let the subset of $V(\square_n)\cap n\kappa$ of all lattice points at an $\ell_\infty$ distance of at most $2$ from the set $\left(V(\square_n) \setminus \bb\right) \cap n\kappa$ be denoted $V_\kappa$.
We identify the vertices of $V_{\kappa}$ with those of $\T_{n_1+4}$ by pasting together the sides of $V_{\kappa}$ in the natural way to transform it into a discrete torus.

Let $\mathcal{R}$ denote the reflection of $\Box_n$ about the line $\{(x, y)|y = x\}.$

We augment $\bb \cap \kappa$ to $\tilde{\bb}$, by including in $\tilde{\bb}$ a band surrounding $n(\kappa'\cap \kappa'')$ (and symmetric about it) and some additional nodes in $n\kappa''$ that ensure that $\tilde{\bb} \cap (n\kappa)$ is symmetric under the action of $\mathcal{R}$. More precisely, $\bb''$ is defined to be all those vertices in $n\kappa$ that either equal a vertex $v$ or a vertex $\mathcal{R}(v)$  for $v \in \bb \cap \kappa'$. 

 Denote the projection of the polytope $$\left(\tilde{P}_n({\bb}, L_n( \tih)) - L_n( \tih)\right) \times_{\bb \cap \mathcal{R}(\bb)} \left(\tilde{P}_n({\bb}, L_n( \tih)) - L_n( \tih)\right),$$ (viewed as a polytope of functions from $V(\square_n)$ to $\R$ with the constraints satisfying reflection symmetry across the diagonal) onto $\R^{V_{\kappa}}$ by $Q'_{n_1}$.

 We include $\tilde{\bb}$ as a subset of $V(\T_{n_1 + 4})$ by embedding the $(n_1 + 4)\times (n_1+4)$  square into the torus in the natural way.
Let $R_{n_1}$ be the polytope consisting of the set of all functions $g:V(\T_{n_1+4}) \ra \R$ such that for $v \in \tilde{\bb}$, $g(v) = 0$ and  $\sum_{v \in V(\T_{n_1+4})} g(v) = 0$ and $\nabla^2(g)(e) \leq \de_i $  where  $e \in E_i(\T_{n_1+4})$.

 Let $\tilde{R}_{n_1}$ be the polytope consisting of the set of all functions $g:V(\T_{n_1+4}) \ra \R$ such that $\sum_{v \in V(\T_{n_1+4})} g(v) = 0$ and $\nabla^2(g)(e) \leq \de_i $  where  $e \in E_i(\T_{n_1+4})$.

Note  that  $$ R_{n_1} \subseteq \{x| \sum_i x_i = 0\}\cap Q'_{n_1} \subseteq \Pi_{\{x| \sum_i x_i = 0\}} Q'_{n_1}.$$

Further, for any $x\in (1 - n^{-6})R_{n_1}$, the line segment $\{x + [-1/n^C, 1/n^C] \one_{\left(V(\square_n) \setminus \bb\right) \cap n\kappa}\}$ is contained in $Q'_{n_1}$.

This implies that for all sufficiently large $n$, \beq |Q'_{n_1}| \geq n^{-C} (1 - n^{-6})^{\dim R_{n_1}} |R_{n_1}| . \eeq

We see that $R_{n_1}$ is a section of $\tilde{R}_{n_1}$ and is the preimage of $0$ under the push forward of the uniform measure on $\tilde{R}_{n_1}$ under the evaluation map $\Pi_{\tilde{\bb}}$ that maps $g\in \tilde{R}_{n_1}$ on to $g_{\tilde{\bb}}$. By Theorem~\ref{thm:frad} due to Fradelizi, and the logconcavity of the pushforward of the uniform measure on $\tilde{R}_{n_1}$ under the evaluation map $\Pi_\bb$ that follows from  Theorem~\ref{thm:prekopa}, we see that $$\frac{|R_{n_1}|}{|\tilde{R}_{n_1}|} \geq  \sup\limits_{g \in \tilde{R}_{n_1}}(2e\|g\|_\infty)^{-(Cn_1)}.$$
However \beq \lim\limits_{n_1 \ra \infty} |\tilde{R}_{n_1}|^{\frac{1}{n_1^2}} = \exp(-\sigma(\de_0, \de_1, \de_2)),\eeq by Theorem~\ref{lem:2.8}. 

As in the case of a dyadic square, it follows that \beq \liminf\limits_{n_1 \ra \infty} |R_{n_1}|^{\frac{1}{n_1^2}} \geq \exp(-\sigma(\de_0, \de_1, \de_2)).\eeq As $a \ra \infty$, the supremum over all dyadic triangles $\kappa' \in \D_a$ of $$\sup_{x \in \kappa'} (-1)D_i \tih(x) - \inf_{x \in \kappa} (-1)D_i \tih(x),$$ tends to $0$ because $\tih$ is $C^2$ with a uniformly continuous Hessian. However because $\tih$ is strongly rhombus concave, $\inf_{x \in \kappa'} (-1)D_i \tih(x)$ does not tend to $0$. As a consequence,  as $a$ tends to infinity, the supremum over all $\kappa' \in \D_a$ of 
$$\exp\left(-\sigma(\de_0, \de_1, \de_2) + |\kappa'|^{-1}\int_{\kappa'} \sigma(-D_0h(x), -D_1h(x), -D_2h(x))\Leb_2(dx)\right)$$ tends to $0.$
 
 Also suppose $h_n \in H_n(\tla_n, \tmu_n; \tnu_n)$ has the property that  for each $i$, each $(-1)\De_i h_n$ is  uniformly  bounded above and below by positive constants on $T_n$. Then, any hive $h'_n$ that differs in $\ell_\infty$ norm from $h_n$ by less than $1/(2n^4)$, for sufficiently large $n$, satisfies the rhombus inequalities and thus is a valid hive. 
Taken together with Lemma~\ref{lem:16}, (and by the symmetry of $\tilde{\bb}$ under reflection about $\kappa'\cap \kappa''$)  we obtain a lower bound of $\exp(- \eps_2)V(\tnu)\exp\left(\int_T \sigma(-D_0h(x), -D_1h(x), -D_2h(x))\Leb_2(dx)\right)$ for all $a$ larger than some number depending on $\tih$, on $\liminf_{n\ra \infty} |\tilde{P}_n(\bb, \tih)|^\frac{2}{n^2}$, since the volume of the polytope $\tilde{P}_n(\bb, \tih)$ is equal to the product of the volumes of all the different $Q_{n_1}$  and $Q'_{n_1}$ for various dyadic squares and triangles.  

The lemma follows from Claim~\ref{cl:inlem41}.

\begin{claim} \lab{cl:inlem41} $$\liminf_{n\ra \infty} |\tilde{Q}_n(\bb, \tih)|^{\frac{2}{n^2}} \geq\liminf_{n\ra \infty} |\tilde{P}_n(\bb, \tih)|^{\frac{2}{n^2}}$$ \end{claim}
\begin{proof}
For any $x \in [-\frac{1}{2n^6}, \frac{1}{2n^6}]^\bb,$ the preimage of $(h_\bb) +x $ under the coordinate projection $\Pi_\bb$ contains the polytope $x + h_{\bb} + (1 - \frac{1}{n})(\tilde{P}_n(\bb, \tih) - L_n(\tih)),$ because $\tilde{P}_n(\bb, \tih)$ contains a Euclidean ball of constant radius (belonging to the affine span of $\tilde{P}_n(\bb, \tih)$).
It follows that $$|\tilde{Q}_n(\bb, \tih)| \geq |\tilde{P}_n(\bb, \tih)| \left(1 - \frac{1}{n}\right)^{\dim \tilde{P}_n(\bb, \tih)} \left|\left[-\frac{1}{2n^6}, \frac{1}{2n^6}\right]^\bb\right| .$$ Since, $a$ is constant as $n \ra \infty$,  $\bb$ contains a number of nodes that is linear in $n$, so the claim follows.
\end{proof}

\end{proof}

\begin{lemma}\lab{lem:41-new} For every $h \in H(\la, \mu; \nu)$, let $h_{\eps_1}$ be as defined in Definition~\ref{def:heps1} and  $\ta, \tb, \tg$ be the boundary values of $h_{\eps_1}$ on the vertical, horizontal and diagonal side of $T$ respectively. Let $\tilde{\la} = \partial(\tilde{\a})$, $\tilde{\mu} = \partial(\tilde{\b}),$ and $\tilde{\nu}=\partial(\tilde{\g})$. Given a fixed positive $\eps_2$, for sufficiently small values of $\eps_1$ (depending on $\eps_2)$, the resulting functions $\ta, \tb, \tg$ are such that 

 $$\liminf\limits_{n \ra \infty} |G_n(\tla_n, \tmu_n; \tnu_n, \eps_0 n^2)|^\frac{2}{n^2} \geq  \exp(-2\eps_2) V(\tnu)\exp\left(-\int_T 2\sigma((-1)(\hess h)_{ac})\Leb_2(dx)\right).$$
\end{lemma}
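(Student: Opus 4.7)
The plan is to apply Lemma~\ref{lem:41-new3} with $\tih = h_{\eps_1}$ and then estimate the surface-tension integral of $h_{\eps_1}$ in terms of that of $h$. First I would verify that $h_{\eps_1}$ from Definition~\ref{def:heps1} lies in $H(\tla,\tmu;\tnu)$, is $C^3$ (hence $C^2$), and is strongly rhombus concave in the sense $\inf_T \min_i ((-1)D_i h_{\eps_1})_{ac} > 0$. Convolution with $\theta_{\eps_1}$ delivers $C^3$ regularity and preserves $D_i \leq 0$ (since $\theta_{\eps_1} \geq 0$ and each $D_i h$ is a nonpositive measure); the term $-\eps_1(x^2+y^2-xy)$ has $D_i(-\eps_1 q) \leq -\eps_1 c_i < 0$ for constants $c_i > 0$, providing the strict concavity buffer; the affine shift $A(x,y)=((1-4\eps_1)x+\eps_1,(1-4\eps_1)y+3\eps_1)$ has image satisfying $A(x,y)_2-A(x,y)_1\geq 2\eps_1$, so the support $A(x,y)-[0,\eps_1]^2\subset T$ and the convolution is defined globally on $T$; and the affine function $a(x,y)$ restores the vanishing at the three corners, with Lemma~\ref{lem:7.5} ensuring the resulting $\ta,\tb,\tg$ meet the required boundary tolerances. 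Applying Lemma~\ref{lem:41-new3} yields, for $a$ large enough,
\[
\liminf_{n\to\infty}|\tilde Q_n(\bb,h_{\eps_1})|^{2/n^2}\;\geq\;\exp(-\eps_2)\,V(\tnu)\,\exp\!\left(-\int_T\sigma(\hess h_{\eps_1}(x))\,\Leb_2(dx)\right),
\]
and the containment $\tilde Q_n(\bb,h_{\eps_1})\subseteq G_n(\tla_n,\tmu_n;\tnu_n,\eps_0)$ from Definition~\ref{def:6.1} transfers the bound to $\liminf_n|G_n|^{2/n^2}$.

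The target inequality then reduces to showing that for all sufficiently small $\eps_1$,
\[
\int_T \sigma(\hess h_{\eps_1}(x))\,\Leb_2(dx)\;\leq\;\int_T \sigma(\hess h)\,\Leb_2(dx)+\eps_2,
\]
where the right-hand side is interpreted as $\int_T\sigma((\hess h)_{ac}(x))\,dx$ via Corollary~\ref{cor:24}; if this is $+\infty$ the conclusion is trivial. The key tools are the convexity of $\sigma$ (Corollary~\ref{cor:2.10}), its coordinatewise monotonicity (larger $s$ means a larger polytope $P_n(s)$), and its $1$-homogeneity $\sigma(rs)=\sigma(s)-\log r$ (which follows from $P_n(rs)=rP_n(s)$ and hence $\f(rs)=r\f(s)$). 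By construction, for every $(x,y)\in T$,
\[
-\hess h_{\eps_1}(x,y) = (1-4\eps_1)^2\,[(-\hess h)\star\theta_{\eps_1}](A(x,y))\,+\,\eps_1\,\hess q,
\]
and since both $\eps_1\hess q$ and the singular contribution $[(-\hess h)_{sing}\star\theta_{\eps_1}]$ are coordinatewise nonnegative, monotonicity and homogeneity of $\sigma$ give
\[
\sigma(-\hess h_{\eps_1}(x,y))\;\leq\;\sigma\!\left([(-\hess h)_{ac}\star\theta_{\eps_1}](A(x,y))\right)-2\log(1-4\eps_1).
\]

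After changing variables $z=A(x,y)$ (Jacobian $(1-4\eps_1)^{-2}$; image $A(T)\subsetneq T$ with $|T\setminus A(T)|=O(\eps_1)$), Jensen's inequality applied to the probability kernel $\theta_{\eps_1}(z-\cdot)$ gives the pointwise bound $\sigma([(-\hess h)_{ac}\star\theta_{\eps_1}](z))\leq (\sigma(-(\hess h)_{ac}))\star\theta_{\eps_1}(z)$; integration and Fubini then bound the integral above by $\int_T\sigma((\hess h)_{ac}(y))\,dy$. Each residual error (the factor $(1-4\eps_1)^{-2}$, the offset $-2\log(1-4\eps_1)$, and the boundary layer $T\setminus A(T)$) is $O(\eps_1)$, or $O(\eps_1\log(1/\eps_1))$ using the at-infinity estimate $|\sigma(s)|=O(\log\|s\|)$ that comes from $1$-homogeneity, and is absorbed into $\eps_2$ once $\eps_1$ is small.

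The main obstacle is the quantitative Jensen step in the presence of a possibly singular $\hess h$: here the coordinatewise monotonicity of $\sigma$ is essential, since it lets us replace $(\hess h)\star\theta_{\eps_1}$ by its AC part without picking up an error term (the smeared singular mass only enlarges the argument of $\sigma$, hence decreases $\sigma$). The homogeneity correction and the boundary-layer estimate are then routine $o(1)$ bounds, and combining them with Jensen and with the $\exp(-\eps_2)$ factor from Lemma~\ref{lem:41-new3} yields the desired factor $\exp(2\eps_2)$ on the right-hand side.
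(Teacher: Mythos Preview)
Your argument is correct and follows essentially the same route as the paper's: both reduce to Lemma~\ref{lem:41-new3} via the containment $\tilde Q_n(\bb,h_{\eps_1})\subseteq G_n(\tla_n,\tmu_n;\tnu_n,\eps_0)$, and then establish $\int_T\sigma(\hess h_{\eps_1})\,\Leb_2(dx)\le\int_T\sigma(\hess h)\,\Leb_2(dx)+\eps_2$ for small $\eps_1$ using the convexity of $\sigma$ together with the vanishing of the boundary-layer integral (the paper's (\ref{eq:bdryT})). You make explicit the Jensen step against the probability kernel $\theta_{\eps_1}$, and the use of coordinatewise monotonicity and $1$-homogeneity of $\sigma$ to discard the smeared singular part and the $-\eps_1(x^2+y^2-xy)$ perturbation, where the paper compresses all of this into ``the convexity of $\sigma$ and (\ref{eq:bdryT}) together imply''; the one minor overstatement is your claimed $O(\eps_1\log(1/\eps_1))$ rate for the boundary layer, which is neither needed nor obviously available in general---only convergence to $0$ is required, and that follows (as in the paper) from $\sigma((\hess h)_{ac})\in L^1(T)$ by absolute continuity of the integral.
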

\begin{proof}
By (\ref{eq:G}), 
\beqs |G_n(\tla_n, \tmu_n; \tnu_n, \eps_0 n^2) | & = & \int\limits_{B_\I^n(\tilde{\nu}_n, \eps_0 n^2)}|A_n(\tilde{\la}_n, \tilde{\mu}_n; \nu'_n)|\Leb_{n-1, 0}(d\nu'_n)\\
& = & \int\limits_{B_\I^n(\tilde{\nu}_n, \eps_0 n^2)}|H_n(\tilde{\la}_n, \tilde{\mu}_n; \nu'_n)|\left(\frac{V_n(\nu'_n)}{V_n(\tau_n)}\right)\Leb_{n-1, 0}(d\nu'_n).\eeqs
 If $-\int_T \sigma((-1)(\hess h)_{ac})\Leb_2(dx) > - \infty,$ then,  \beq \lim_{\eps'\ra 0} -\int_{(\partial T)_{\eps'}} \sigma((-1)(\hess h)_{ac})\Leb_2(dx) = 0\lab{eq:bdryT}\eeq where $(\partial T)_{\eps'}$ is an $\eps'$ neighborhood of  $\partial T$. This is due to the logarithmic nature of $\sigma$ ($\sigma(s) = - \ln \f(s)$ by Definition~\ref{def:sigma}) and the finite upper bound on the value that $(-1) \tr (\hess h)$ (which is a Radon measure by \cite{dudley}) assigns to $T$. The convexity of $\sigma$ and (\ref{eq:bdryT}) together imply that if $-\int_T \sigma((-1)(\hess h)_{ac})\Leb_2(dx) > - \infty,$ for any positive $\eps_2$, there is a positive threshold $\eps'$ such that for all $\eps_1 < \eps'$, $$-\int_T \sigma((-1)(\hess h)_{ac})\Leb_2(dx) < \eps_2 - \left( \int_T \sigma((-1)\hess h_{\eps_1})\Leb_2(dx)\right).$$
As a consequence,  (whether or not $-\int_T \sigma((-1)(\hess h)_{ac})\Leb_2(dx)$ is finite,)
\beqs V(\tnu)\exp\left(-\int_T 2\sigma((-1)(\hess h)_{ac})\Leb_2(dx)\right) \leq \exp(\eps_2)V(\tnu)\exp\left(-\int_T 2\sigma((-1)\hess h_{\eps_1})\Leb_2(dx)\right).\eeqs
Thus, it suffices to show that for any fixed $\eps_2 > 0$, when $\eps_1$ is sufficiently small and $h_{\eps_1}$ is defined by Definition~\ref{def:heps1}, $$V(\tnu)\exp\left(-\int_T 2\sigma((-1)\hess h_{\eps_1})\Leb_2(dx)\right) \leq \exp(\eps_2) \liminf\limits_{n \ra \infty} |G_n(\tla_n, \tmu_n; \tnu_n, \eps_0 n^2)|^\frac{2}{n^2}.$$
We shall analyze the polytope $\tilde{Q}_n(\bb, h_{\eps_1})$ given by Definition~\ref{def:6.1}. Since 
$$\tilde{Q}_n(\bb, h_{\eps_1}) \subseteq G_n(\tla_n, \tmu_n; \tnu_n, \eps_0 n^2),$$ it suffices to get a suitable lower bound on $|\tilde{Q}_n(\bb, h_{\eps_1})|$ to prove this Lemma.
This follows from Lemma~\ref{lem:41-new3}.
\end{proof}

\begin{lemma}\lab{lem:30-Jun30_2023}
Let $f$ from $[x_0, x_1]$ to $\R$ be  Lipschitz, and strongly concave,  and $\eps_7 > 0$ be fixed. Then, there is an $\eps_8 > 0$ such that 
 for any  Lipschitz concave function $\tilde{f}$ from $[x_0, x_1]$ to $\R$ that satisfies $\|f - \tilde{f}\|_\infty < \eps_8$, the following holds.  The set of all $x' \in (x_0, x_1)$ such that $$\{\partial^+\tilde{f}(x'), \partial^-\tilde{f}(x')\} \subseteq [-\eps_7 + \partial^+f(x'), \partial^-f(x') + \eps_7],$$
 has Lebesgue measure  at least $x_1 - x_0 - \eps_7$,  
\end{lemma}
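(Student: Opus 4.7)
The plan is to realize the one-sided derivatives of $f$ and $\tilde{f}$ at $x'$ via secant slopes over a short interval of length $\delta$, to be chosen. Concavity of $\tilde{f}$ gives
\[
\partial^{+}\tilde{f}(x') \geq \frac{\tilde{f}(x'+\delta) - \tilde{f}(x')}{\delta}, \qquad \partial^{-}\tilde{f}(x') \leq \frac{\tilde{f}(x') - \tilde{f}(x'-\delta)}{\delta},
\]
and concavity of $f$ sandwiches each of its analogous secants between $\partial^{+}f(x'+\delta)$ and $\partial^{-}f(x'-\delta)$. Since $\|\tilde{f}-f\|_{\infty} < \eps_{8}$, each secant of $\tilde{f}$ lies within $2\eps_{8}/\delta$ of the corresponding secant of $f$. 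Consequently, the task reduces to showing that on a subset of $(x_0,x_1)$ of measure at least $x_1-x_0-\eps_7$, the drop $\partial^{-}f(x'-\delta) - \partial^{+}f(x'+\delta)$ is at most $\eps_{7}/2$; choosing $\eps_{8}$ so that $2\eps_{8}/\delta \leq \eps_{7}/4$ will then close the argument.

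To control this drop, let $L$ be the Lipschitz constant of $f$, so $|\partial^{\pm}f| \leq L$. The function $\phi(x') := \partial^{-}f(x'-\delta) - \partial^{+}f(x'+\delta)$ is nonnegative by the monotonicity of $\partial^{-}f$. Using $\partial^{+}f=\partial^{-}f$ Lebesgue-almost everywhere and a change of variables,
\[
\int_{x_0+\delta}^{x_1-\delta}\phi(x')\,dx' = \int_{x_0}^{x_0+2\delta}\partial^{-}f(u)\,du - \int_{x_1-2\delta}^{x_1}\partial^{-}f(u)\,du \leq 4L\delta.
\]
Markov's inequality then gives $|\{\phi > \eta\}| \leq 4L\delta/\eta$. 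Setting $\eta := \eps_{7}/2$ and $\delta := \eps_{7}^{2}/(16L)$ yields $|\{\phi > \eps_{7}/2\}| \leq \eps_{7}/2$, while the boundary strips $(x_0,x_0+\delta) \cup (x_1-\delta,x_1)$ contribute an additional $2\delta$, absorbed into the $\eps_7$ budget (in the nontrivial regime $\eps_7 < 2L$; otherwise $[\partial^{+}f(x')-\eps_7,\partial^{-}f(x')+\eps_7] \supseteq [-L,L]$ and the conclusion is automatic).

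Finally, I would take $\eps_{8} := \delta\eps_{7}/8$ and verify the inclusion pointwise on the good set. For $x'$ in the good set, combining (i) the concavity-based secant inequalities for $\tilde{f}$, (ii) $\|\tilde{f}-f\|_\infty < \eps_8$, (iii) the concavity-based secant inequalities $\frac{f(x')-f(x'-\delta)}{\delta} \leq \partial^{-}f(x'-\delta)$ and $\frac{f(x'+\delta)-f(x')}{\delta} \geq \partial^{+}f(x'+\delta)$, and (iv) the bound $\phi(x') \leq \eps_{7}/2$ together with the monotonicities $\partial^{-}f(x'-\delta) \geq \partial^{-}f(x') \geq \partial^{+}f(x')$ and $\partial^{+}f(x'+\delta) \leq \partial^{+}f(x') \leq \partial^{-}f(x')$, I obtain $\partial^{-}\tilde{f}(x') \leq \partial^{-}f(x') + 3\eps_{7}/4 < \partial^{-}f(x') + \eps_{7}$ and $\partial^{+}\tilde{f}(x') \geq \partial^{+}f(x') - 3\eps_{7}/4 > \partial^{+}f(x') - \eps_{7}$. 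The two remaining inclusions in $[\partial^{+}f(x')-\eps_7, \partial^{-}f(x')+\eps_7]$ follow from $\partial^{+}\tilde{f}(x') \leq \partial^{-}\tilde{f}(x')$.

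The principal technical step is the $L^{1}$ bound for $\phi$ and the resulting Markov estimate for the bad set; the rest is routine bookkeeping combining secant inequalities with $\|\tilde{f}-f\|_{\infty} < \eps_{8}$. Notably, strong concavity of $f$ does not seem to enter the argument explicitly: what is actually needed is monotonicity and boundedness of $\partial^{-}f$, both of which follow from concavity and Lipschitz.
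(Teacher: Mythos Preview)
Your proof is correct and follows essentially the same strategy as the paper's: both control the drop $\partial^{-}f(x'-\delta)-\partial^{+}f(x'+\delta)$ on a large set and then transfer derivative information from $f$ to $\tilde f$ via secant inequalities and the bound $\|f-\tilde f\|_\infty<\eps_8$. Your presentation is tidier: where the paper appeals to the Radon-measure structure of $\hess f$ to assert that the bad set is small, you compute the $L^1$ norm of $\phi$ directly by telescoping and apply Markov, with explicit choices of $\delta$ and $\eps_8$.

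One small correction: in the case $\eps_7\geq 2L$ your claim that the target interval contains $[-L,L]$ does not finish the argument, because the Lipschitz constant of $\tilde f$ is not assumed to be $L$. The clean fix is monotonicity: the inclusion set only grows and the required measure bound only weakens as $\eps_7$ increases, so it suffices to prove the lemma for small $\eps_7$. Your observation that strong concavity of $f$ is not actually used is correct; the paper invokes it but the argument (both yours and, on inspection, the paper's) only needs that $\partial^{-}f$ is monotone and bounded, which follow from concavity and the Lipschitz hypothesis.
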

\begin{proof}
It is known that the Hessian of a Lipschitz concave function is a Radon measure (see \cite{dudley}). It follows that the measure of all $x \in (x_0 + \de, x_1 -\de)$ such that $\int_{x - \de}^{x + \de}(-1)\hess f(dx) \geq \eps_7/4$ tends to $0$ as $\de$ tends to $0$.
Therefore, there exists  $\eps_{7.5} > 0$ such that for any $\eps' \leq \eps_{7.5},$ the measure of the set $S$ of all $x \in (x_0 + \eps', x_1 - \eps')$ such that \beq\int_{x - \eps'}^{x + \eps'}(-1)\hess f(dx) \geq \eps_7/4\eeq is less than $\eps_7.$ 
By the last fact, there exists an $\eps_8 > 0$ such that if  $\tilde{f}$ is a  Lipschitz concave function from $[x_0, x_1]$ to $\R$ that satisfies $\|f - \tilde{f}\|_\infty < \eps_8$, the following holds. For all $x \not\in S,$ $$\partial^-\tilde{f}(x) < \partial^-f(x - \eps_{7.5}) + \frac{\eps_{7.5}}{2} < \partial^-f(x) + \frac{\eps_{7.5}}{2} + \frac{\eps_{7}}{4},$$ and 
$$\partial^+\tilde{f}(x) > \partial^+f(x + \eps_{7.5}) - \frac{\eps_{7.5}}{2} > \partial^+f(x) - \frac{\eps_{7.5}}{2} -  \frac{\eps_{7}}{4}.$$
The Lemma follows by ensuring that $\eps_{7.5} < \eps_7$.
\end{proof}

\begin{lemma}\lab{lem:41} Let $h \in H(\la, \mu; \nu)$ and $h_{\eps_1} \in H(\tla, \tmu; \tnu)$ be defined in accordance with Definition~\ref{def:heps1}, where $\la = \partial^- \a,$ $\mu = \partial^-\b$ and $\a$ and $\b$ are strongly concave. Given a fixed positive $\eps_2$, for all sufficiently small values of $\eps_1$, 
 $$\left(\frac{1}{V(\tilde{\la})}\right) > \exp( - \eps_2/2)\left( \frac{1}{V(\la)}\right),$$ and
  $$\left(\frac{1}{V(\tilde{\mu})}\right) >  \exp(- \eps_2/2)\left( \frac{1}{V(\mu)}\right).$$ 
    More generally, 
   for any fixed $\eps_2 > 0$,
 for any strongly decreasing  $\hat{\la}$ on $[0, 1]$ such that $\hat{\la} = \partial^-\hat{\a}$, there exists $\eps_3 > 0$ such that if  $\|\a - \hat{\a}\|_\infty < \eps_3,$ then 
  $$V(\hat{\la}) <  \exp(\eps_2/2)V(\la) .$$
\end{lemma}
\begin{proof}
As before, we write $\la = \partial^- \a,$ $\mu = \partial^-\b$ and $\nu = \partial^- \g$, $\tla = \partial^- \tilde{\a},$ $\tmu = \partial^-\tilde{\b}$ and $\tnu = \partial^- \tilde{\g}$ on $(0, 1)$. Here our choices of $\a, \b, \gamma, \tilde{\a}, \tilde{\b}, \tilde{\g}$ are those that are uniquely specified by their being continuous on $[0, 1]$ and the conditions $\a(0) = \tilde{\a}(0) = 0$, and $\b(0) = \tilde{\b}(0) = 0$, and $\gamma(0) = \tilde{\gamma}(0) = 0.$
Due to the fact that $h$ is Lipschitz, as $\eps_1 \ra 0$, $\max(\|\a - \tilde{\a}\|_\infty, \|\b - \tilde{\b}\|_\infty, \|\g - \tilde{\g}\|_\infty)$ tends to $0$. We also know by the conditions on $\la, \mu$, that $\a$ and $\b$ are strongly concave.  Recall from Definition~\ref{def:heps1} that  $h_{\eps_1}:T \ra \R$ denotes following the $C^3$ function where $a(x, y)$ is the unique affine function that results in the function $h_{\eps_1}$ taking the value $0$ at the corners of $T$:
\beqs h_{\eps_1}(x, y) = (h \star \theta_{\eps_1})((1 - 4\eps_1)x + \eps_1, (1 - 4\eps_1)y + 3\eps_1) - \eps_1(x^2 + y^2 - xy) + a(x,y).\eeqs By the rhombus inequalities, $\partial^-(\a)(0)$ is greater or equal to $\partial_y^{-}(h)(x, y)$ for any $0 \leq x \leq y \leq 1.$ Similarly, 
$\partial^-(\a)(1)$ is less or equal to $\partial_y^{-}(h)(x, y)$ for any $0 \leq x \leq y \leq 1.$ Therefore,
$\la(0) \geq \tla(0) - C \eps_1,$ and $\la(1) \leq \tla(1) + C\eps_1.$
We will show that $V(\tla)/V(\la)$ can be made less than $\exp(\eps_2/2)$ by making $\eps_1$ sufficiently small. 
By Lemma~\ref{lem:16}, it suffices to show that if $\eps_1$ is sufficiently small, then
$$\int_{T\setminus\{(t, t)|t \in [0, 1]\}}\log\left(\frac{\tla(x) - \tla(y)}{\la(x)-\la(y)}\right)dxdy < \eps_2/2.$$
If $$\int_{T\setminus\{(t, t)|t \in [0, 1]\}}\log\left({\tla(x) - \tla(y)}\right)dxdy = - \infty,$$ we are done, so since $$\int_{T\setminus\{(t, t)|t \in [0, 1]\}}\log\left({\tla(x) - \tla(y)}\right)dxdy \leq \Leb_2(T) \log\left({\tla(0) - \tla(1)}\right) < \infty,$$ we assume $\int_{T\setminus\{(t, t)|t \in [0, 1]\}}\log\left({\tla(x) - \tla(y)}\right)dxdy$ is finite.
By the strict concavity of $\a$, $\eps_3$ can be chosen such that we have
$$\int_{T\cap\{(t', t)| |t'-t| < \eps_3\}}\bigg|\log\left({\tla(x) - \tla(y)}\right)\bigg|dxdy < \eps_2/4.$$
By choosing $\eps_1$ and hence $\|\a - \tilde{\alpha}\|_\infty$ to be sufficiently small, we can ensure by Lemma~\ref{lem:30-Jun30_2023} that $$\int_{T\setminus\{(t', t)||t-t'| < \eps_3\}}\log\left(\frac{\tla(x) - \tla(y)}{\la(x)-\la(y)}\right)dxdy < \eps_2/4.$$
It follows that 
$$\int_{T\setminus\{(t, t)|t \in [0, 1]\}}\log\left(\frac{\tla(x) - \tla(y)}{\la(x)-\la(y)}\right)dxdy < \eps_2/2.$$
The same argument proves that $V(\tmu)/V(\mu)$ can be made less than $\exp(\eps_2/2)$ by making $\eps_1$ arbitrarily small.
In the argument above, we used the nearness in $L^\infty$, and 
$\la(0) \geq \tla(0) - C \eps_1,$ and $\la(1) \leq \tla(1) + C\eps_1,$
but not the specifics of the convolution. Therefore, there exists $\eps_3 > 0$ such that for any strongly decreasing $\hat{\la}$ such that $\hat{\la} = \partial^-\a$ and $\|\a - \hat{\a}\|_\infty < \eps_3,$ 

$$\int_{T\cap\{(t, t')\in [\sqrt{\eps_3}, 1]\times [0, 1-\sqrt{\eps_3}]\}} \log\left(\frac{\hat{\la}(x) - \hat{\la}(y)}{\la(x)-\la(y)}\right)dxdy < \eps_2/10.$$

We need to handle the integral on the two strips $T\cap ([0, \sqrt{\eps_3}]\times[0,1])$ and $T\cap ([0, 1]\times [1 - \sqrt{\eps_3}, 1]).$
 We only handle the first strip, since the second is analogous.
By the concavity of $\a$ and $\hat{\a},$ we see that for any $\eps$ such that $0< \eps < \sqrt{\eps_3}$, $|\hat{\la}(\eps) - \la(\eps)| < \frac{C\eps_3}{\eps}.$
Thus, $$\int_{T\cap\{(t, t')\in [0, \sqrt{\eps_3}]\times [0, 1]\}}\log\left(\frac{\hat{\la}(x) - \hat{\la}(y)}{\la(x)-\la(y)}\right)dxdy = O(\int_0^{\sqrt{\eps_3}} \log \left(\frac{1}{\eps}\right) d\eps).$$
The RHS equals $O(\sqrt{\eps_3} \log (1/\eps_3) + \sqrt{\eps_3})$, which tends to $0$ as $\eps_3$ tends to $0$. Therefore, for a sufficiently small choice of $\eps_3$, we have
 
  $$\left( \frac{1}{V(\la)}\right) < \exp(\eps_2/2)\left(\frac{1}{V(\hat{\la})}\right).$$
\end{proof}

\begin{lemma}\lab{lem:41.5Jul23-2023} Let $h \in H(\la, \mu; \nu)$ and $h_{\eps_1} \in H(\tla, \tmu; \tnu)$ be defined in accordance with Definition~\ref{def:heps1}, where $\la = \partial^- \a,$ $\mu = \partial^-\b$ and $\a$ and $\b$ are strongly concave. Given a fixed positive $\eps_2$, for all sufficiently small values of $\eps_1$, 
  $$V(\tnu) >  \exp(-\eps_2/2)V(\nu).$$
  \end{lemma}
\begin{proof} Let $q(x, y) := x^2 + y^2 - xy.$ Note that $$h_{\eps_1}(x, y) = ((h - \eps_1 q) \ast \theta_{\eps_1})((1 - 4\eps_1)x + \eps_1, (1 - 4\eps_1)y + 3\eps_1) + a'(x, y),$$ for a suitable affine function $a'$. For a given $\eps_1$, let $\de_x, \de_y \in [-\eps_1, 0]$.
Define the hive $\check{h}(x, y) := (h - \eps_1 q)((1 - 4\eps_1)x + \eps_1 + \de_x, (1 - 4\eps_1)y + 3\eps_1 + \de_y) + \check{a}(x, y),$ where $\check{a}$ is chosen to 
make the value of $\check{h}$ on the vertices of $T$ to be $0$, and let $\check{\la}, \check{\mu}$ and $\check{\nu}$ be such that $\check{h} \in H(\check{\la}, \check{\mu}; \check{\nu}).$ As a functional on the space of strongly decreasing functions from $[0, 1]$ to $\R$, $V$ is log-concave. Therefore, to prove this lemma, it suffices to show that when  $\eps_1$ sufficiently small, for all $\de_x, \de_y \in [-\eps_1, 0]$, 
$$V(\nu) \leq V(\check{\nu})\exp(\eps_2/100).$$ To see this, 
note that for all $x < - 100\eps_1 + y$, by the rhombus concavity of $h$, $$\check{\nu}(x)  - \check{\nu}(y) \geq \nu(x + 10 \eps_1) - \nu(y - 10\eps_1).$$
Without loss of generality, we may rescale $h$ and assume that $\nu(0) \leq \frac{1}{2}$ and $\nu(1) \geq - \frac{1}{2}.$

However, because of the presence of the term $ - \eps_1 q$ in the expression for $\check{h}$, $$\bigg|\int\limits_{\substack{(x, y) \in T\\ |x - y| < 100\eps_1}} \log (\check{\nu}(x)  - \check{\nu}(y))dx dy\bigg| < C\eps_1 \log \frac{1}{\eps_1}.$$ Let $R := T \cap (\{x \leq 10 \eps_1\} \cup \{y \geq 1 - 10 \eps_1\} \cup\{|x - y| < 80 \eps_1\}).$ By the rescaling,
 $$\int\limits_R \log (\nu(x)  - \nu(y))dx dy  <  C(\log M) \eps_1, $$ where $M$ is the Lipschitz-constant of $h$.
Thus, when $\eps_1$ is sufficiently small, $$V(\nu) \leq V(\check{\nu})\exp(\eps_2/100),$$ and we are done.
\end{proof}
\begin{lemma}\lab{lem:44-new}Let $\la:[0, 1]\ra \R$ and $\mu:[0, 1] \ra \R$ be strongly decreasing functions that integrate to $0$ over $[0, 1].$ Let $\nu:[0, 1] \ra \R$ be a decreasing function that integrates to $0$ over $[0, 1]$.  Let $X_n$ and $Y_n$ be independent random Hermitian matrices with spectra $\la_n$ and $\mu_n$ respectively.  Let $Z_n = X_n + Y_n.$ Then,
$$\liminf\limits_{n \ra \infty}\p_n\left[\spec(Z_n) \in B_\I^n(\nu_n, \eps n^2)\right]^\frac{2}{n^2} \geq  \sup\limits_{\nu'\in {B}_\I(\nu, \eps)}\sup\limits_{h' \in H(\la, \mu; \nu')} \left(V(\la)V(\mu)\right)^{-1}\J(h').$$
\end{lemma}
\begin{proof}
This is a consequence of Lemma~\ref{lem:8}, Lemma~\ref{lem:41-new}, Lemma~\ref{lem:41} and Lemma~\ref{lem:41.5Jul23-2023}.
\end{proof}
\section{Upper bound}
This section concludes with Lemma~\ref{lem:55-new} which states the following.\\
Let $Z_n = X_n + Y_n.$ Then,
$$\limsup\limits_{n \ra \infty}\p_n\left[\spec(Z_n) \in {B}_\I^n(\nu_n, \eps)\right]^\frac{2}{n^2} \leq \sup\limits_{\nu'\in {B}_\I(\nu, \eps)}\sup\limits_{h' \in H(\la, \mu; \nu')} \left(V(\la)V(\mu)\right)^{-1}\J(h').$$
The strategy to proving this is to break $T$ into dyadic squares, and prove an upper bound on the entropy over each square, when one looks at random hives inside a small tube around a fixed discrete concave function. The key fact used in proving the upper bound is Lemma~\ref{lem:22}, where it is shown, roughly speaking,  that if one considers functions on a torus with a variable upper bound on the Hessian, one can do no better entropy-wise than if this upper bound were a constant.

In Lemma~\ref{lem:46}, which is the lemma of this section that relates discrete hives to continuous rhombus concave functions, there is a requirement on 
 $h \in \bigcup\limits_{\nu'\in \overline{B}_\I(\nu, \eps)}H(\la, \mu; \nu')$  that it satisfies $$\inf\limits_T \min\limits_{0 \leq i \leq 2}  ((-1)D_i h)_{ac} >\tilde{\de} > 0.$$ The way this is achieved is by a convex combination with a fixed rhombus concave function with this property. But to then transfer our results to the general case, we need the below lemma.
 
\begin{lemma}\lab{lem:11}Fix $h \in \bigcup\limits_{\nu'\in \overline{B}_\I(\nu, \eps)}H(\la, \mu; \nu')$ 
that satisfies $$\inf\limits_T \min\limits_{0 \leq i \leq 2}  (-1)(D_i h)_{ac} >\tilde{\de} > 0.$$
 There exists $\nu' \in \overline{B}_\I(\nu, \eps)$ and $h_* \in H(\la, \mu, \nu')$ such that
 for all positive $N$ and $0 \leq \de < 1$, we have
\beqs \limsup\limits_{n \ra \infty} \bigg|G_n(\la_n, \mu_n; \nu_n, \eps n^2)\cap \left(B_{\infty}^{n + 1 \choose 2}\left(L_n(\de h + (1 - \de) h_\ast), \frac{n^2}{N}\right)\oplus \R^{U_n}\right)\bigg|^\frac{1}{n^2} \eeqs \beqs\geq \limsup\limits_{n \ra \infty} |G_n(\la_n, \mu_n; \nu_n, \eps n^2)|^\frac{1}{n^2}(1 - \de).\eeqs
\end{lemma}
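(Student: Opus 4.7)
The plan is to prove this lower bound via a Brunn--Minkowski-type contraction of $G_n$ into itself combined with a compactness argument to locate $h_\ast$. Fix any augmented hive $\tilde h$ on $\Box_n$ whose upper-triangle restriction equals $L_n(h)$; such $\tilde h$ exists because the diagonal of $L_n(h)$ is a weakly decreasing signature from the concavity of $h$, which admits a Gelfand--Tsetlin filling in the lower triangle. The affine contraction $\Psi_n(h') := \de \tilde h + (1-\de) h'$ then maps $G_n$ into $G_n$, since rhombus concavity, the GT inequalities, the boundary values $\la_n,\mu_n$, and the vanishing on $(0,b)$ all persist under convex combinations, while the new diagonal $\de\nu''_n + (1-\de)\nu'_n$ stays in $B_\I^n(\nu_n,\eps n^2)$ by the triangle inequality (using $\|\nu''_n-\nu_n\|_\I\le\eps n^2$ since the diagonal $\nu''$ of $h$ satisfies $\|\nu''-\nu\|_\I\le\eps$). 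Its Jacobian determinant is $(1-\de)^{d_n}$ where $d_n = n^2 - n$ is the dimension of the affine hull of $G_n$, so $(1-\de)^{d_n/n^2}\to 1-\de$.

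To choose $h_\ast$, push forward the uniform measure on $G_n$ under $h'\mapsto \hat h'_n$, where $\hat h'_n\in C(T)$ is the piecewise linear extension of the rescaled hive $h'|_{W_n}/n^2$; call this probability measure $\mu_n$. Extract a subsequence $\{n_k\}$ along which $|G_{n_k}|^{1/n_k^2}$ achieves $\limsup_n |G_n|^{1/n^2}$ and, invoking precompactness of the Lipschitz-bounded family of rescaled hives (Arzel\`a--Ascoli) together with Prokhorov's theorem, pass to a further subsequence along which $\mu_{n_k}\to \mu$ weak-$\ast$. Pick $h_\ast\in\mathrm{supp}(\mu)$; its diagonal boundary gives $\nu'\in\overline B_\I(\nu,\eps)$ with $h_\ast\in H(\la,\mu,\nu')$. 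By the Portmanteau theorem applied to the open ball $B_{C(T)}(h_\ast,1/N)$, there is $c_N>0$ with $\mu_{n_k}(B_{C(T)}(h_\ast,1/N))\ge c_N/2$ for all large $k$.

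Translating to the discrete picture, closeness of $\hat h'_{n_k}$ and $h_\ast$ in $C(T)$ at scale $1/N$ entails closeness of $h'|_{W_{n_k}}$ and $L_{n_k}(h_\ast)$ in $\ell_\infty$ at scale $n_k^2/N$ (the piecewise linear interpolant agrees with $h'/n^2$ at lattice points), so
\begin{equation*}
|G_{n_k}\cap(B_\infty(L_{n_k}(h_\ast),n_k^2/N)\oplus\R^{U_{n_k}})|\ge (c_N/2)|G_{n_k}|.
\end{equation*}
Applying $\Psi_{n_k}$ to this set, and using $L_n(\de h+(1-\de)h_\ast) = \de L_n(h)+(1-\de)L_n(h_\ast)$, the image's upper-triangle component lies in $B_\infty(L_{n_k}(\de h+(1-\de)h_\ast),(1-\de)n_k^2/N)\subseteq B_\infty(L_{n_k}(\de h+(1-\de)h_\ast),n_k^2/N)$, and the image's volume is $(1-\de)^{d_{n_k}}(c_N/2)|G_{n_k}|$. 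Taking $(1/n_k^2)$-th roots and $\limsup$ along $\{n_k\}$, the factors $(1-\de)^{d_{n_k}/n_k^2}\to 1-\de$ and $(c_N/2)^{1/n_k^2}\to 1$ yield the required inequality.

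The main obstacle is justifying the precompactness used to extract the limit $h_\ast$: concave diagonals $\g'$ with $\|\g'-\g\|_\infty\le\eps$ can in principle exhibit unbounded slopes near the endpoints of $[0,1]$, so the rescaled discrete hives need not be equi-Lipschitz a priori. This should be handled by a concentration argument showing that the augmented hives in $G_n$ whose rescaled Lipschitz constant exceeds $L$ occupy a fraction of $|G_n|$ tending to $0$ as $L\to\infty$, so that $\mu_n$ is effectively supported on an equi-Lipschitz subfamily on which Arzel\`a--Ascoli applies; this uses log-concavity of the uniform measure on $G_n$ together with the constraint $\|\nu'_n-\nu_n\|_\I\le\eps n^2$ to prevent sustained excursions of the diagonal away from $\nu$.
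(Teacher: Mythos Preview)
Your argument is essentially sound and reaches the same conclusion, but it differs from the paper's route and you have misidentified the genuine difficulty.

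\textbf{Comparison with the paper.} The paper avoids weak convergence of measures entirely. It invokes Bronshtein's theorem: the space of bounded Lipschitz concave functions on $T$ has $L^\infty$ covering number at most $\exp(CN)$ at scale $1/N$, so by pigeonhole one of the $\exp(CN)$ centers, call it $h_N$, satisfies
\[
\bigl|G_n\cap\bigl(B_\infty^{\binom{n+1}{2}}(L_n(h_N),n^2/N)\oplus\R^{U_n}\bigr)\bigr|\ \ge\ \exp(-CN)\,|G_n|
\]
along a subsequence of $n$'s realising the limsup. Letting $N\to\infty$ and passing to an $L^\infty$-convergent subsequence of $(h_N)$ in the compact set $\bigcup_{\nu'}H(\la,\mu;\nu')$ produces $h_\ast$. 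The convex-combination step is then a one-line application of Brunn--Minkowski to the two convex bodies $G_n\cap(B_\infty(L_n(h_\ast),n^2/N)\oplus\R^{U_n})$ and $G_n\cap(B_\infty(L_n(h),n^2/N)\oplus\R^{U_n})$, the latter being nonempty precisely because of the hypothesis $\inf_T\min_i(-D_ih)_{ac}>\tilde\de$. Your affine contraction $\Psi_n$ is a special case of this (it is Brunn--Minkowski with one of the two sets collapsed to the single point $\tilde h$), and your Prokhorov--Portmanteau construction of $h_\ast$ is a heavier substitute for the covering-number pigeonhole.

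\textbf{The obstacle you flag is not one.} Your final paragraph worries that rescaled hives in $G_n$ may fail to be equi-Lipschitz because $\gamma'$ could have unbounded slope near $\{0,1\}$. This cannot happen: the rhombus inequalities force every $h'\in H(\la,\mu;\nu')$ to have each directional derivative confined to the range determined by the extreme values of $\la$ and $\mu$. Concretely, interlacing gives $\la(1^-)+\mu(1^-)\le \nu'(t)\le \la(0^+)+\mu(0^+)$ for all $t$, and similarly all partial derivatives of $h'$ are bounded in terms of $\sup|\la|,\sup|\mu|$. Since $\alpha,\beta$ are Lipschitz by hypothesis, $\la,\mu$ are bounded, and the family $\bigcup_{\nu'}H(\la,\mu;\nu')$ is automatically uniformly Lipschitz and hence $L^\infty$-compact by Arzel\`a--Ascoli. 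No concentration argument is needed; the paper uses this same compactness without comment. Once you drop the concentration digression, your proof is complete.
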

\begin{proof}
By Bronshtein's Theorem  on the $L^\infty$ covering numbers of a space of bounded Lipschitz convex functions at a scale $N^{-1} $ (see Lemma~\ref{lem:Bronshtein}),
for every $N\geq 1$, there is a $C^0$ hive $h_N \in H(\la, \mu; \nu'_N)$ for some $\nu'_N \in B_\infty(\nu, \eps)$ such that 
\beqs \bigg|G_n(\la_n, \mu_n; \nu_n, \eps n^2)\cap\left( B_{\infty}^{n + 1 \choose 2}\left(L_n(h_N), \frac{n^2}{N}\right)\oplus \R^{U_n} \right)\bigg| \geq \exp(- CN)|G_n(\la_n, \mu_n; \nu_n, \eps n^2)|,\eeqs where $C$ is finite constant depending only on $\la$ and $\mu$. 
For each $N$, this implies that 
\beqs \limsup\limits_{n \ra \infty} \bigg|G_n(\la_n, \mu_n; \nu_n, \eps n^2)\cap \left(B_{\infty}^{n + 1 \choose 2}\left(L_n(h_N), \frac{n^2}{N}\right)\oplus \R^{U_n} \right)\bigg|^\frac{2}{n^2} = \limsup\limits_{n \ra \infty} |G_n(\la_n, \mu_n; \nu_n, \eps n^2)|^\frac{2}{n^2} .\eeqs 
Due to the compactness of $\bigcup\limits_{\nu} H(\la, \mu; \nu)$ in the $L^\infty$ topology, there is a subsequence of $(h_N)_{N \geq 1}$ that has a $L^\infty$ limit $h_\ast$, which belongs to $H(\la, \mu; \nu'')$, for some $\nu'' \in \overline{B}_\I(\nu, \eps)$.  
Then for all positive $N$, we have
\beqs \limsup\limits_{n \ra \infty} \bigg|G_n(\la_n, \mu_n; \nu_n, \eps n^2)\cap \left(B_{\infty}^{n + 1 \choose 2}\left(L_n(h_\ast), \frac{n^2}{N}\right)\oplus \R^{U_n}\right)\bigg|^\frac{2}{n^2} = \limsup\limits_{n \ra \infty} |G_n(\la_n, \mu_n; \nu_n, \eps n^2)|^\frac{2}{n^2} .\eeqs
By the Brunn-Minkowski inequality,  (taking account the fact that both the bodies involved below are nonempty)
$$\bigg|G_n(\la_n, \mu_n; \nu_n, \eps n^2)\cap \left(B_{\infty}^{n + 1 \choose 2}\left(L_n(\de h + (1 - \de) h_\ast), \frac{n^2}{N}\right)\oplus \R^{U_n}\right)\bigg|^\frac{1}{n^2}$$ is greater or equal to 
$$\bigg|G_n(\la_n, \mu_n; \nu_n, \eps n^2)\cap \left(B_{\infty}^{n + 1 \choose 2}\left(L_n(h_\ast), \frac{n^2}{N}\right)\oplus \R^{U_n}\right)\bigg|^\frac{1}{n^2}(1-\de) \,\,+ $$
$$\bigg|G_n(\la_n, \mu_n; \nu_n, \eps n^2)\cap \left(B_{\infty}^{n + 1 \choose 2}\left(L_n(h), \frac{n^2}{N}\right)\oplus \R^{U_n}\right)\bigg|^\frac{1}{n^2}\de.$$
But,
$$\bigg|G_n(\la_n, \mu_n; \nu_n, \eps n^2)\cap \left(B_{\infty}^{n + 1 \choose 2}\left(L_n(h), \frac{n^2}{N}\right)\oplus \R^{U_n}\right)\bigg|^\frac{1}{n^2}\de \geq 0,$$ by virtue of its arising from the volume of a nonempty set.
This completes the proof of the lemma.
\end{proof}
Let $h_\ast \in \bigcup\limits_{\nu'\in \overline{B}_\I(\nu, \eps)}H(\la, \mu; \nu')$.

\begin{defn}\lab{def:34}
For $a \geq 1$, consider the functional $$\J_a: \bigcup\limits_{\nu'} H(\la, \mu; \nu')\ra \R$$ given by 
$$\J_a(h) := V(\nu')\exp\left(-2 \sum_{\kappa \in \D_a}|\kappa| \sigma\left((-1)|\kappa|^{-1}\int_{\kappa^o}(\hess h)(dx)\right)\right)$$ for any $h \in H(\la, \mu; \nu').$ Here $|\kappa|:= \Leb_2(\kappa),$ and $\kappa^o$ is the interior of $\kappa$.
\end{defn}
Note that $\J_a(h_*)$ is {\it not} in general equal to the expression of which the limit is being taken in the LHS in (\ref{eq:notJa}), due to the possibility of the singular part of $\nabla^2 h_*$ being supported on $\partial \kappa.$ 
However, by the convexity of $\sigma$,  and (\ref{eq:notJa}) we have the following sets of inequalities, and hence Corollary~\ref{cor:cor}.
For any fixed $h_* \in \bigcup\limits_{\nu'\in \overline{B}_\I(\nu, \eps)}H(\la, \mu; \nu')$

\beqs 
\exp\left(-\sum_{\kappa \in \D_a}|\kappa| \sigma\left((-1)|\kappa|^{-1}\int_{\kappa}\hess h_*(dx)\right)\right) \geq \\ \nonumber
\exp\left(-\sum_{\kappa \in \D_a}|\kappa| \sigma\left((-1)|\kappa|^{-1}\int_{\kappa^o}\hess h_*(dx)\right)\right) \geq \\ \nonumber
\exp\left(-\sum_{\kappa \in \D_a}|\kappa| \sigma\left((-1)|\kappa|^{-1}\int_{\kappa}(\hess h_*)_{ac}\Leb_2(dx)\right)\right)\geq  \\ \nonumber \exp\left(-\int_T \sigma((-1)\hess h_*(x))_{ac}\Leb_2(dx)\right).\eeqs 

Thus we have the following  corollary to Theorem~\ref{cor:24}.
\begin{corollary}\lab{cor:cor}
For any fixed $h_* \in \bigcup\limits_{\nu'\in \overline{B}_\I(\nu, \eps)}H(\la, \mu; \nu')$
\beq \lab{eq:notJa1.5} \lim_{a \ra \infty} \exp\left(-\sum_{\kappa \in \D_a}|\kappa| \sigma\left((-1)|\kappa|^{-1}\int_{\kappa^o}(\hess h_*)_{ac}\Leb_2(dx)\right)\right) = \\ \nonumber \exp\left(-\int_T \sigma((-1)(\hess h_*(x))_{ac})\Leb_2(dx)\right).\eeq 
\beq \lab{eq:notJa2} \lim_{a \ra \infty} \exp\left(-\sum_{\kappa \in \D_a}|\kappa| \sigma\left((-1)|\kappa|^{-1}\int_{\kappa^o}\hess h_*(dx)\right)\right) = \\ \nonumber \exp\left(-\int_T \sigma((-1)(\hess h_*(x))_{ac})\Leb_2(dx)\right).\eeq  Here $|\kappa|:= \Leb_2(\kappa).$
\end{corollary}

\begin{lemma}\lab{lem:22}
Let $t:\left(E_0(\T_{n_1})\cup E_1(\T_{n_1})\cup E_2(\T_{n_1})\right) \ra \R_+$ be a function from the rhombi of $\T_{n_1}$ to the nonnegative reals.
Let $Q_{n_1}(t)$ be the polytope consisting of the set of all functions $g:V(\T_{n_1}) \ra \R$ such that $\sum_{v \in V(\T_{n_1})} g(v) = 0$ and $\nabla^2(g)(e) \leq t(e)$ for each $e \in E_0(\T_{n_1})\cup E_1(\T_{n_1})\cup E_2(\T_{n_1})$. Suppose that for each $i \in \{0, 1, 2\}$, we have $\sum_{e \in E_i(\T_{n_1})} t(e) = n_1^2 s_i$. Then, $|Q_{n_1}(t)| \leq |P_{n_1}(s)|.$
\end{lemma}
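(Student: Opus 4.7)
\textbf{Proof plan for Lemma~\ref{lem:22}.} The plan is to exploit the translation symmetry of the torus $\T_{n_1}$ and combine it with the Brunn--Minkowski inequality. Let $\Gamma = (\Z/n_1\Z)^2$ be the group of lattice translations on $V(\T_{n_1})$. For $\tau \in \Gamma$, write $\tau \cdot v$ for its action on vertices and observe that it induces a compatible action $\tau \cdot e$ on each set $E_i(\T_{n_1})$, $i\in\{0,1,2\}$. Because $|E_i(\T_{n_1})| = n_1^2 = |\Gamma|$ and the translation action is clearly transitive on each $E_i$, it is in fact simply transitive there.

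For each $\tau \in \Gamma$, define the translated constraint $t^\tau(e) := t(\tau\cdot e)$. The linear map $U_\tau: \R^{V(\T_{n_1})} \to \R^{V(\T_{n_1})}$ given by $(U_\tau g)(v) = g(\tau^{-1}\cdot v)$ is a coordinate permutation, preserves the hyperplane $\{g:\sum_v g(v)=0\}$, and satisfies $\nabla^2(U_\tau g)(e) = \nabla^2(g)(\tau \cdot e)$. Therefore $U_\tau$ carries $Q_{n_1}(t)$ bijectively onto $Q_{n_1}(t^\tau)$, so $|Q_{n_1}(t^\tau)| = |Q_{n_1}(t)|$ for every $\tau$.

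Next I would form the Minkowski average
\[
K \;:=\; \frac{1}{|\Gamma|}\sum_{\tau \in \Gamma} Q_{n_1}(t^\tau),
\]
and verify that $K \subseteq P_{n_1}(s)$. Indeed, any $k \in K$ is a convex combination $k = \frac{1}{|\Gamma|}\sum_\tau k_\tau$ with $k_\tau \in Q_{n_1}(t^\tau)$; the sum-zero condition is preserved by averaging, and for any $e \in E_i(\T_{n_1})$, by linearity of $\nabla^2$ and the simple transitivity of $\Gamma$ on $E_i$,
\[
\nabla^2(k)(e) \;\leq\; \frac{1}{|\Gamma|}\sum_\tau t^\tau(e) \;=\; \frac{1}{|\Gamma|}\sum_\tau t(\tau\cdot e) \;=\; \frac{1}{n_1^2}\sum_{e' \in E_i(\T_{n_1})} t(e') \;=\; s_i.
\]
Hence $k \in P_{n_1}(s)$, giving $|K| \leq |P_{n_1}(s)|$.

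Finally, iterating the Brunn--Minkowski inequality in the $m$-dimensional hyperplane $\{\sum_v g(v) = 0\}$ with $m = n_1^2 - 1$ yields
\[
|K|^{1/m} \;\geq\; \frac{1}{|\Gamma|} \sum_{\tau \in \Gamma} |Q_{n_1}(t^\tau)|^{1/m} \;=\; |Q_{n_1}(t)|^{1/m},
\]
and combining this with $|K| \leq |P_{n_1}(s)|$ gives $|Q_{n_1}(t)| \leq |P_{n_1}(s)|$. There is no real obstacle here; the only mildly delicate step is the verification that $\Gamma$ acts simply transitively on each class $E_i(\T_{n_1})$, which is immediate from the identification of a rhombus in $E_i$ with its distinguished vertex, and the check that the sum-zero normalization is preserved throughout.
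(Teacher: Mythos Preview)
Your proposal is correct and follows essentially the same approach as the paper: form the Minkowski average of all lattice translates of $Q_{n_1}(t)$, use Brunn--Minkowski together with the equality of the translates' volumes to bound $|Q_{n_1}(t)|$ by the volume of the average, and check that the average sits inside $P_{n_1}(s)$ because averaging the translated constraints over the torus yields the uniform bound $s_i$. The only cosmetic slip is the direction of the translation action (with $(U_\tau g)(v)=g(\tau^{-1}\cdot v)$ one gets $\nabla^2(U_\tau g)(e)=\nabla^2(g)(\tau^{-1}\cdot e)$, not $\nabla^2(g)(\tau\cdot e)$), but since you sum over the whole group this is harmless.
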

\begin{proof}
Let the translation operator $T_v$ act on functions $g:V(\T_{n_1}) \ra \R$ such that $\sum_{v \in V(\T_{n_1})} g(v) = 0$ by $T_v g(w) := g(w - v).$ Let $T_v(Q_{n_1}(t))$ be defined to be the set of all $T_v g$ as $g$ ranges over $Q_{n_1}(t)$.
Then, denoting Minkowski sums by $\oplus$, we see 
by the Brunn-Minkowski inequality and the fact that the $T_v(Q_{n_1}(t))$ all have the same volume that 
\beqs |Q_{n_1}(t)|^{\frac{1}{n_1^2-1}} \leq \Bigg|\left(\frac{1}{n_1^2}\right)\left(\bigoplus_{v \in V(\T_{n_1})}  T_v(Q_{n_1}(t))\right) \Bigg|^{\frac{1}{n_1^2-1}}.\eeqs From the containment \beqs \left(\frac{1}{n_1^2}\right)\left(\bigoplus_{v \in V(\T_{n_1})}  T_v(Q_{n_1}(t))\right) \subseteq P_{n_1}(s),\eeqs it follows that 
$$ \Bigg|\left(\frac{1}{n_1^2}\right)\left(\bigoplus_{v \in V(\T_{n_1})} T_v(Q_{n_1}(t))\right) \Bigg|^{\frac{1}{n_1^2-1}} \leq  |P_{n_1}(s)|^{\frac{1}{n_1^2-1}}.$$ 
This completes the proof of the lemma.

\end{proof}

\begin{lemma}\lab{lem:46} 
 Fix $ a \in \N$ and $h \in \bigcup\limits_{\nu'\in \overline{B}_\I(\nu, \eps)}H(\la, \mu; \nu')$ that satisfies $$\inf\limits_T \min\limits_{0 \leq i \leq 2}  ((-1)D_i h)_{ac} >\tilde{\de} > 0.$$ Fix $\eps_9 > 0$.  There is a positive $n_0$, such that following is true for all  $n > n_0$. Let $\bb$ be defined by Definition~\ref{def:37} and let  $\tilde{Q}_n$ be as in Definition~\ref{def:6.1}.  For every  $h'' = \tilde{\de}h + (1 - \tilde{\de}){h_{**}} \in H(\la, \mu, \nu''),$ where $h_{**} \in \bigcup\limits_{\nu'\in \overline{B}_\I(\nu, \eps)}H(\la, \mu; \nu')$,  $$|\tilde{Q}_n(\bb, h'')|^\frac{2}{n^2} < \J_{a}(h'')\exp( \eps_9).$$
\end{lemma}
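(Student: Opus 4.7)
The plan is to exploit the product structure of $\tilde P_n(\bb, L_n(h''))$ imposed by the multi-layer boundary $\bb$: fixing the coordinates of the augmented hive on $\bb$ decouples it into a Gelfand--Tsetlin polytope in the lower triangle (whose top row is determined by the fixed diagonal values of $L_n(h'')$) and a product of hive-piece polytopes, one for each dyadic region $\kappa\in\D_a$ of the upper triangle. The Gelfand--Tsetlin factor has volume $V_n(\nu''_n)/V_n(\tau_n)$, which by Lemma~\ref{lem:16} contributes $V(\nu'')$ in the $2/n^2$-th power limit.

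For each dyadic piece $\kappa$, with corresponding lattice side $n_1(\kappa)\sim 2^{-a}n$, I would first translate by subtracting the fixed values $L_n(h'')|_{n\kappa}$, so that the piece becomes a polytope in the differences $g$ with $g\equiv 0$ on $\bb$ and rhombus constraints $\nabla^2(g)(e)\le t_\kappa(e):=-\nabla^2(L_n(h''))(e)\ge 0$. I would then identify $n\kappa$ (for the square pieces) or its symmetric double across the hypotenuse (for the triangular pieces, as in the proof of Lemma~\ref{lem:41-new3}) with a discrete torus $\T_{n_1(\kappa)}$, giving the containment $\mathrm{piece}_\kappa\subseteq Q_{n_1(\kappa)}(t_\kappa)$ in the notation of Lemma~\ref{lem:22}. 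The double/triple layer thickness of $\bb$ is what makes this folding innocuous: every rhombus straddling two adjacent pieces has all four of its vertices inside $\bb$, so its constraint is automatic.

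Summing $t_\kappa$ over type-$i$ rhombi in $n\kappa$ yields, up to a boundary error of order $n_1(\kappa)$, the quantity $n^2\int_{\kappa^o}(-D_ih'')(dx)$, so the average $s^i_\kappa:=n_1(\kappa)^{-2}\sum_e t_\kappa(e)$ tends to $|\kappa|^{-1}\int_{\kappa^o}(-D_ih'')(dx)$. The assumption $\inf_T\min_i(-D_ih)_{ac}>\tilde\de$ together with the convex combination $h''=\tilde\de h+(1-\tilde\de)h_{**}$ (and rhombus concavity of $h_{**}$) forces $s^i_\kappa\ge\tilde\de$, keeping every $s_\kappa$ in a fixed compact subset of $\R^3_{>0}$. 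By Lemma~\ref{lem:22}, Lemma~\ref{lem:2.8} and Corollary~\ref{cor:last} we then have $|\mathrm{piece}_\kappa|\le |P_{n_1(\kappa)}(s_\kappa)|$, with $|P_{n_1(\kappa)}(s_\kappa)|^{1/n_1(\kappa)^2}=\exp(-\sigma(s_\kappa))(1+o_n(1))$ uniformly in $\kappa\in\D_a$. Taking the product over all pieces, multiplying by the Gelfand--Tsetlin factor, and raising to the $2/n^2$ power then matches the definition of $\J_a(h'')$ up to an $o_n(1)$ error; choosing $n_0$ so that this error is below $\eps_9$ finishes the proof.

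The main obstacle is the decoupling/folding step: verifying that after subtracting $L_n(h'')|_\bb$ the free variables in distinct pieces truly interact only through the already-fixed boundary data (so that $|\tilde P_n|$ literally factorizes), and handling the symmetric double for the triangular pieces so that the extra fixed nodes do not spoil the average $s_\kappa$. Bookkeeping all the $o(n^2)$ losses (the $O(|\bb|\log n)$ contribution from projecting out fixed coordinates via Lemma~\ref{lem:3-}, the $O(n_1)$ boundary rhombi per piece, and the discretisation error between $n^2\int_\kappa(-D_ih'')(dx)$ and the sum $\sum_e t_\kappa(e)$) uniformly in $\kappa\in\D_a$ is the remaining technical content.
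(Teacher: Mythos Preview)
Your proposal is correct and follows essentially the same route as the paper: factor off the Gelfand--Tsetlin volume $V_n(\nu''_n)/V_n(\tau_n)$ for the lower triangle, translate each dyadic piece by $L_n(h'')$, embed into a torus polytope, invoke Lemma~\ref{lem:22} to pass from the inhomogeneous constraints $t_\kappa$ to their averages $s_\kappa$, and then use Lemma~\ref{lem:2.8} together with the uniform convergence of Corollary~\ref{cor:last} on the compact set $\{s:\min_i s_i\ge\tilde\de\}$. The only cosmetic difference is that for the dyadic triangles the paper doubles via the central reflection $\RR(x)=p-x$ through the midpoint of the hypotenuse, whereas you cite the axial reflection used in Lemma~\ref{lem:41-new3}; either symmetrisation yields the required bound $|Q_{\kappa'}|^2\lesssim |P_{n_1}(s_{\kappa'})|$.
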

\begin{proof}
We proceed to study $\tilde{Q}_n(\bb, h'')$ in order to obtain an upper bound on its volume. 
\ben 
\item Let $\kappa$ be a dyadic square in $\D_a$.
 All the lattice points in $n\kappa$ for a dyadic square $\kappa \in \D_a$ that are not in $\bb$ are are separated from the lattice points in every abutting dyadic square by at least four layers of $\bb$. Let the subset of $V(T_n)\cap n\kappa$ of all lattice points at an $\ell_\infty$ distance of at most $2$ from the set $\left(V(T_n) \setminus \bb\right) \cap n\kappa$ be denoted $V_\kappa$.
 Denote the projection of the translated polytope $\left(\tilde{Q}_n(\bb, h'') - L_n(h'')\right)$ onto $ \R^{V_\kappa}$ by $Q_\kappa$. Then
 $\Pi_{\sum_i x_i = 0} Q_\kappa \subseteq Q_{n_1}(t)$, where $n_1^2$ is the number of vertices in $V_\kappa$, and for a rhombus $e$, $t(e)$ is $- \hess(h'')(e)$,   and $Q_{n_1}(t)$ is the polytope defined in Lemma~\ref{lem:22}.  Suppose that for each $i \in \{0, 1, 2\}$, we have $\sum_{e \in E_i(\T_{n_1})} t(e) = n^2 s_i$. Then,  applying Lemma~\ref{lem:22}, we see that $|Q_{n_1}(t)| \leq |P_{n_1}(s)|.$ That $s$ is dominated by $\frac{16n_1}{n^6}  -  |\kappa|^{-1}\int_{\kappa^o}\hess h''(dx),$ can be seen as follows using the fact that $V_\kappa \subseteq n\kappa^o.$  Suppose first that $h''$ is a $C^2$ function on  $n \kappa^o. $ Then this domination holds by Green's Theorem (see Lemma~\ref{lem:3.1}).  Next suppose $h''$ is an arbitrary Lipschitz rhombus concave function on $n\kappa^o.$ The required result follows from the smooth case after convolution with a $C^\infty$ approximate identity of sufficiently small support.  Therefore,  by Lemma~\ref{lem:2.8} and Observation~\ref{obs:fn},  for all sufficiently large $n$ (and hence $n_1$) \beq |P_{n_1}(s)|^\frac{1}{n_1^2 - 1} \leq \frac{Cn_1}{n^6} + |P_{n_1}(-|\kappa|^{-1}\int_{\kappa^o}\hess h''(dx))|^\frac{1}{n_1^2- 1},\eeq where $C$ is an absolute constant.
 Assuming $ |P_{n_1}(-|\kappa|^{-1}\int_{\kappa^o}\hess h''(dx))|^\frac{1}{n_1^2- 1} = \de_0 > 0,$ we see from Lemma~\ref{lem:22} that 
 \beqs |Q_\kappa|^\frac{1}{n_1^2} & \leq &  (diam (Q_\kappa))^{\frac{1}{n_1^2}} |\Pi_{\sum_i x_i = 0} Q_\kappa|^{\frac{1}{n_1^2}}\\
 & \leq & (diam(Q_\kappa))^{\frac{1}{n_1^2}}\left(\left(\frac{Cn_1}{ n^6} + |P_{n_1}(-|\kappa|^{-1}\int_{\kappa^o}\hess h''(dx))|^\frac{1}{n_1^2-1}\right)^{n_1^2 -1}\right)^{\frac{1}{n_1^2}}.\eeqs 
The RHS can be (crudely) bounded above by $$\left(\frac{Cn_1^3}{ \de_0 n^6} + (1 + \frac{C\log n}{n_1^2}) |P_{n_1}(-|\kappa|^{-1}\int_{\kappa^o}\hess h''(dx))|^\frac{1}{n_1^2}\right),$$ 
because letting $\xi = \frac{Cn_1}{n^6},$ we have $(\xi + \de_0)^{\frac{1}{n_1^2}} \leq \de_0^{\frac{1}{n_1^2}} \left(1 + \frac{C\xi}{{n_1^2\de_0}}\right).$ 

\begin{figure}\label{fig:onesquare}
\begin{center}
\includegraphics[scale=0.5]{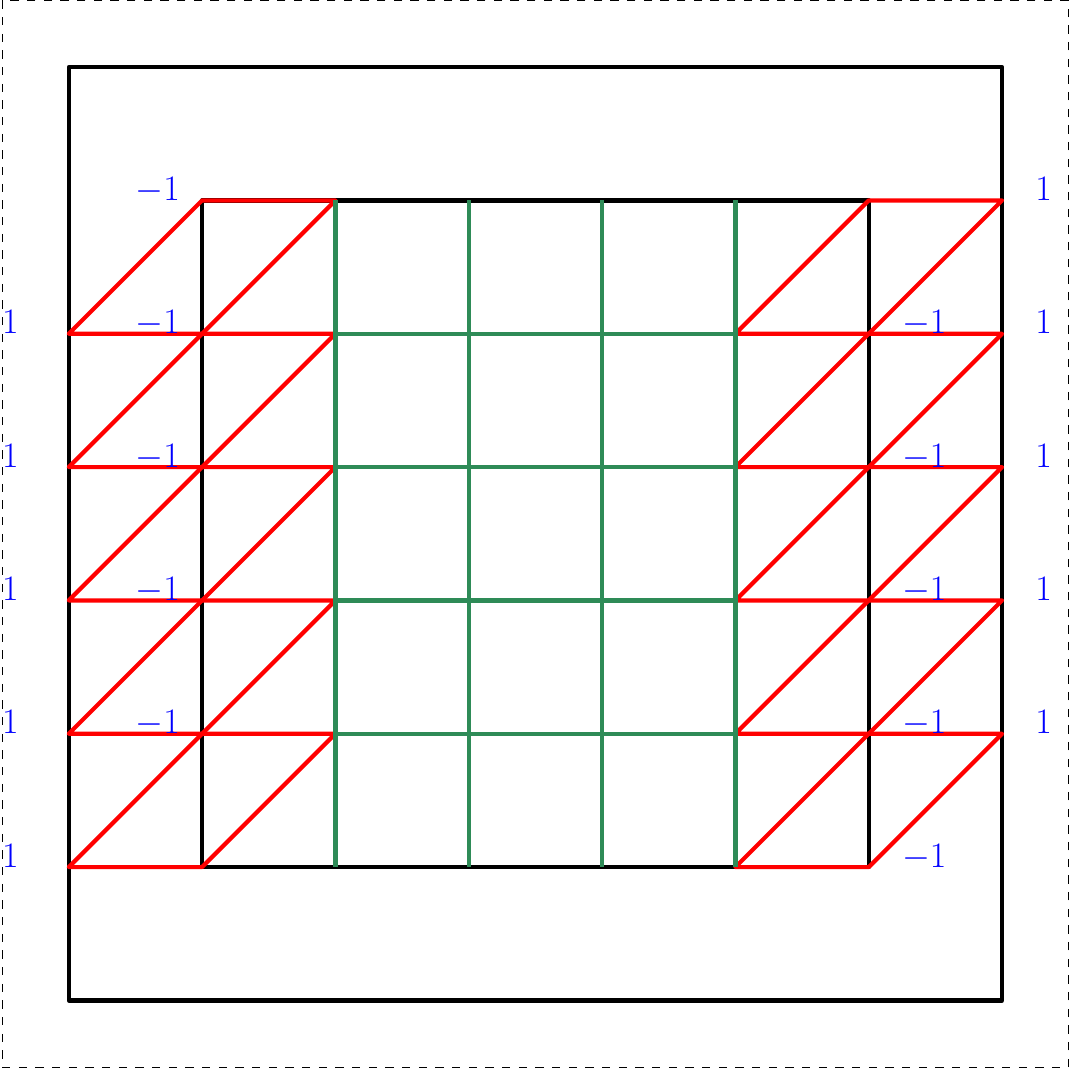}
\caption{A single dyadic square  (depicted with dashed lines) and the inner two layers of a   fixed boundary around it. The rhombi (which have been linearly transformed to become parallelograms) in red are some of the ones that intersect two layers of the boundary but are not contained in the boundary. The $3\times 3$ square in green in the center corresponds to $V_\kappa.$}
\end{center}
\end{figure}

\begin{figure}\label{fig:onetriangle}
    \centering
    \begin{subfigure}[t]{0.4\textwidth}
        \centering
        \includegraphics[width=\linewidth]{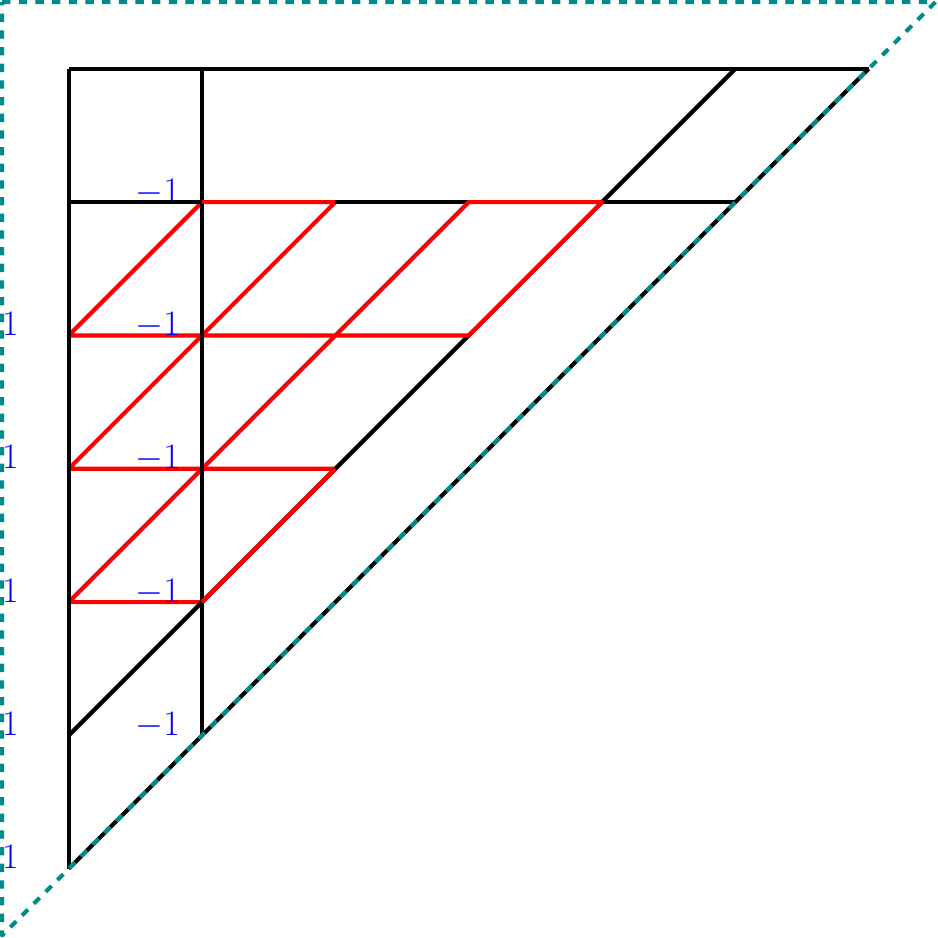} 
        \caption{A single dyadic triangle (depicted with dashed lines) and a fixed boundary around it.} \label{fig:onetriangle1}
    \end{subfigure}
    \hfill
    \begin{subfigure}[t]{0.4\textwidth}
        \centering
        \includegraphics[width=\linewidth]{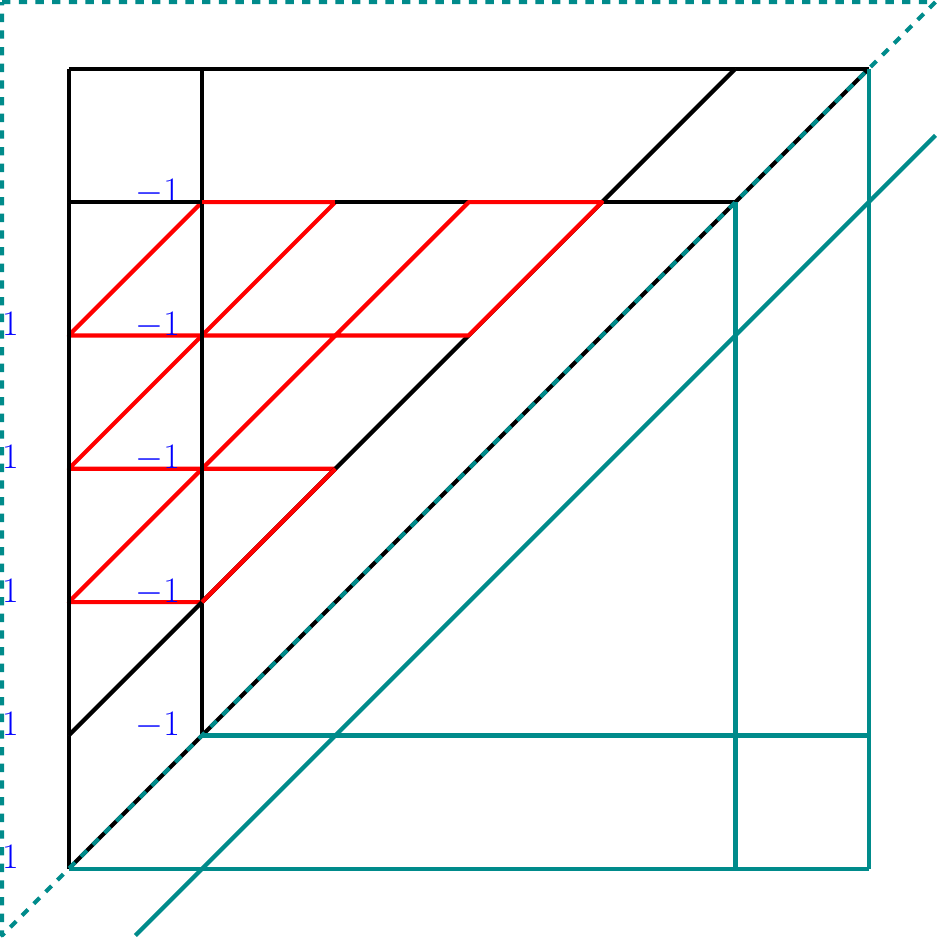} 
        \caption{completing a triangle to a square.} \label{fig:onetriangle2}
    \end{subfigure}
    \end{figure}

 \item Let $\kappa'$ be a dyadic triangle in $\D_a$. Let $\kappa''$ denote the triangle obtained by a central  reflection $\RR(\kappa') = \kappa''$, where $\RR(x) = p - x$, $p$ being the center of the hypotenuse of $\kappa$. Let $T'_n$ denote the right angled lattice triangle below the diagonal in the augmented $n \times n$ hive. 
 Let $V(T_n )\cap (n\kappa')$  be denoted $V_{\kappa'}$ and $V( T'_n)\cap (n\kappa'')$  be denoted $V_{\kappa''}$.
 Denote the projection of the translated polytope $\left(\tilde{Q}_n(\bb,  h'') - L_n(h'')\right)$ onto $ \R^{V_{\kappa'}}$ by $Q_{\kappa'}$. Let $T_\RR$ be the map from $\R^{V_{\kappa'}}$ to $\R^{V_{\kappa''}}$ that takes function of $V_{\kappa'}$ to a function of $V_{\kappa''}$ by mapping $V_{\kappa''}$ into $V_{\kappa'}$ using $\RR$.
 Denote the isomorphic copy $T_\RR(Q_{\kappa''})$ by $Q_{\kappa''}$.
 Note that $V_{\kappa'} \cap V_{\kappa''}$ consists of vertices on the diagonal, on which any points in $Q_{\kappa''}$ and $Q_{\kappa''}$, viewed as functions, take the value $0$. Suppose 
 $\Pi_{V_{\kappa'} \cup V_{\kappa''}}$ projects $\R^{V_{\kappa'}\cup V_{\kappa''}}$ onto $\{x \in \R^{V_{\kappa'} \cup V_{\kappa''}}, \sum_i x_i = 0\}$ orthogonally. Let $Q_{\kappa'\cup\kappa''}$ be the polytope $$\Pi_{V_{\kappa'} \cup V_{\kappa''}}\left(\left(Q_{\kappa'} \times_{{V_{\kappa'} \cap V_{\kappa''}}} Q_{\kappa''}\right) + \left[-\frac{1}{2n^6}, \frac{1}{2n^6}\right]^{(\bb \cap \kappa') \cup (\RR(\bb) \cap \kappa'')}\right),$$ where, $Q_\kappa:=Q_{\kappa'} \times_{{V_{\kappa'} \cap V_{\kappa''}}} Q_{\kappa''}$ is defined to be the subset of $\R^{V_{\kappa'}\cup V_{\kappa''}}$ given by the set of points whose coordinates equal their coordinates in $Q_{\kappa'}$ over $V_{\kappa'}$ and in $Q_{\kappa''}$ over  $V_{\kappa''}$, (which, on $V_{\kappa'} \cap V_{\kappa''}$, belong to $\left[-\frac{1}{2n^6}, \frac{1}{2n^6}\right]$ and agree across the two patches $V_{\kappa'}$ and $V_{\kappa''}$). Then
 \beq\lab{eq:Qk} \Pi_{V_{\kappa'} \cup V_{\kappa''}} Q_\kappa \subseteq Q_{n_1}(t),\eeq where $n_1^2$ is the number of vertices in $V_{\kappa'}\cup V_{\kappa''}$, and for a rhombus $e$ is supported in  $\kappa'$, $t(e)$ is $- \hess(L_n(h''))(e)$ or if $e$ is supported in $\kappa''$, $t(e)$ is $- \hess(L_n(h''))(\RR(e))$, or in case $e$ straddles the hypotenuse boundary of $\kappa$, $t(e)$ equals $\frac{2}{n^6}$. As before, $Q_{n_1}(t)$ is the polytope defined in Lemma~\ref{lem:22}.  Suppose that for each $i \in \{0, 1, 2\}$, we have $\sum_{e \in E_i(\T_{n_1})} t(e) = n^2 s_i$. Then,  by (\ref{eq:Qk}) and applying Lemma~\ref{lem:22}, we see that $$  (diam(Q_\kappa))^{-1}|Q_\kappa| \leq |\Pi_{V_{\kappa'} \cup V_{\kappa''}} Q_\kappa| \leq |Q_{n_1}(t)| \leq |P_{n_1}(s)|.$$

As in part 1.  of this proof,  we establish using Green's Theorem and the fact that $V_{\kappa'} \subseteq n \kappa',$ that $s$ is dominated by $\frac{16n_1}{n^6}  -  |\kappa'|^{-1}\int_{\kappa'}\hess h''(dx).$ Therefore, by Lemma~\ref{lem:2.8},  for all sufficiently large $n_1,$ \beq|P_{n_1}(s)|^\frac{1}{n_1^2 - 1} \leq \frac{Cn_1}{n^6} + |P_{n_1}(-|\kappa'|^{-1}\int_{\kappa'}\hess h''(dx))|^\frac{1}{n_1^2- 1}.\eeq
 Therefore, assuming $|P_{n_1}(-|\kappa'|^{-1}\int_{\kappa'}\hess h''(dx))|^\frac{1}{n_1^2} =  \de_0 > 0$,  \beqs |Q_{\kappa'}|^{\frac{2}{n_1^2}} & \leq & (diam(Q_{\kappa}))^{\frac{1}{n_1^2}} \left(\sqrt{|P_{n_1}(s)|}\right)^{\frac{2}{n_1^2}}\\ & \leq & \left( \left(\frac{Cn_1}{n^6} + (1 + \frac{C\log n}{n_1^2})|P_{n_1}(-|\kappa'|^{-1}\int_{\kappa'}\hess h''(dx))|^\frac{1}{n_1^2- 1}\right)^{n_1^2-1}\right)^{\frac{1}{n_1^2}}\\
 & \leq &\left(\frac{Cn_1^3}{\de_0n^6} + (1 + \frac{C\log n}{n_1^2})|P_{n_1}(-|\kappa'|^{-1}\int_{\kappa'}\hess h''(dx))|^\frac{1}{n_1^2}\right).\eeqs
 \een
 If a vector space $E$ is the orthogonal sum of subspaces $E_1 \oplus \dots \oplus E_r$, then for any full dimensional polytope $Q \subseteq E$, the volume of $Q$ is less or equal to the product of the volumes of the projections $\Pi_{E_i}Q$ of $Q$ on to the $E_i$ because $Q \subseteq \prod_{i=1}^r \Pi_{E_i} Q.$
 Thus,  by 1.  and 2. above,  \beqs |\tilde{Q}_n(\bb, h'')|^{\frac{1}{n_1^2}} & \leq & \left(\prod_{\text{subsquares\,}\kappa \in \D_a}\left(\frac{Cn_1^3}{\de_0 n^6} + (1 + \frac{C\log n}{n_1^2})|P_{n_1}(-|\kappa|^{-1}\int_{\kappa^o}\hess h''(dx))|^\frac{1}{n_1^2}\right)\right)\times\\ & & \left(\prod_{\text{subtriangles\,}\kappa' \in \D_a}
 \left(\frac{Cn_1^3}{\de_0 n^6} + (1 + \frac{C\log n}{n_1^2})|P_{n_1}(-|\kappa'|^{-1}\int_{\kappa'}\hess h''(dx))|^\frac{1}{n_1^2}\right)\right)\times\\& & V_n(\nu''_n).\eeqs
 The lemma follows from Lemma~\ref{lem:2.8}, noting that $\de_0$ is independent of $n$, and for each $\kappa' \in \D_a$, the convergence of $|P_{n_1}(-|\kappa'|^{-1}\int_{\kappa'}\hess h''(dx))|^\frac{1}{n_1^2}$ to $\f(-|\kappa'|^{-1}\int_{\kappa'}\hess h''(dx))$ as $n$ tends to infinity is uniform over the $h''$ in question by Corollary~\ref{cor:last} because $-|\kappa'|^{-1}\int_{\kappa'}\hess h''(dx)$ ranges over a compact subset of $\R_{> 0}^3$.
\end{proof}

Let $h \in \bigcup\limits_{\nu'\in \overline{B}_\I(\nu, \eps)}H(\la, \mu; \nu').$ 
Let $\kappa$ be a dyadic square in $\D_a$ such that \beq\lab{eq:dist} \inf_{\substack{{z \in \kappa}\\{z'\in \partial T}}} |z - z'| > 0.\eeq Let the $y$ coordinates in $\kappa$ range from $y_0= y^\kappa_0$ to $y_1 = y^\kappa_1$, and $x$ coordinates range from $x_0= x^\kappa_0$ to $x_1 = x^\kappa_1$. Consider the function $f_\kappa(x) = \int_{y_0}^{y_1}h(x, y)dy$, and the function $g_\kappa(y) = \int_{x_0}^{x_1} h(x, y) dx$.  For every fixed $y'$, $h(x, y')$ is a bounded, Lipschitz, concave function of $x \in [x_0, x_1]$. Therefore $f(x) = f_\kappa(x)$ is a bounded, Lipschitz, concave function of $x$ for $x \in [x_0, x_1]$. Similarly $g(y) = g_\kappa(y)$ is a bounded Lipschitz, concave function of $y$ for $y \in [y_0, y_1]$. 

Note that $\partial^-f_\kappa(x_0), \partial^+f_\kappa(x_1),  \partial^-g_\kappa(y_0)$ and $\partial^+g_\kappa(y_1)$  can be unambiguously defined because of (\ref{eq:dist}).
\begin{lemma}\lab{lem:39} Fix $\eps_7 = \eps^\kappa_7 > 0$.  Let $x' \in [x_0, x_1]$ and $y' \in [y_0, y_1].$

 There exists a sufficiently small positive $\eps_8 = \eps^\kappa_8$  depending on $\eps_7$, $\kappa,$  $x'$, $y'$ and $h$ such that the following is true.
 Let $\tilde{f}:[x_0, x_1]\ra \R$  and $\tilde{g}:[y_0, y_1]\ra \R$ be given by  $\tilde{f}(x) = \int_{y_0}^{y_1}\tilde{h}(x, y)dy$, and $\tilde{g}(y) = \int_{x_0}^{x_1}\tilde{h}(x, y)dx$ for some $\tilde{h} \in \bigcup\limits_{\nu'\in \overline{B}_\I(\nu, \eps)}H(\la, \mu; \nu')$  such that $$\|h - \tilde{h}\|_\infty < \eps_8.$$ Then, 
\ben \item $\{\partial^+\tilde{f}(x'), \partial^-\tilde{f}(x')\} \subseteq [-\eps_7 + \partial^+f(x'), \partial^-f(x') + \eps_7].$
 \item $\{\partial^+\tilde{g}(y'), \partial^-\tilde{g}(y')\} \subseteq [-\eps_7 + \partial^+g(y'), \partial^-g(y') + \eps_7].$
 \een
\end{lemma}
\begin{proof}
Since $h$ and $\tilde{h}$ are Lipschitz and concave, it follows that the same is true of $f, \tilde{f}$, $g$ and $\tilde{g}$. 
Since the situations are symmetric, it suffices to prove 1.
 
 There exists $x'_0 < x'$ such that $$\partial^- f(x') < (f(x') - f(x'_0))/(x' - x'_0) < \partial^- f(x') + \eps_7/2.$$
 


Since $\|f - \tilde{f}\|_\infty < \eps_8 2^{-a}$,  we see that 
 $$\partial^- \tilde{f}(x') < (f(x') - f(x'_0)+  \eps_8 2^{-a+1})/(x' - x'_0) < \partial^- f(x') + \eps_7,$$ for sufficiently small $\eps_8.$
 Similarly, when $\eps_8$ is sufficiently small, 
the case of $\partial^+ \tilde{f}(x')$ is analogous and
$$\partial^+ \tilde{f}(x') > \partial^+ f(x') - \eps_7.$$
 The result thus follows.
\end{proof}

\begin{lemma}\lab{lem:37}
 Let $\eps_{8.5}  > 0$,  be  an arbitrarily small  positive real.   Let  $h' \in \bigcup\limits_{\nu'\in \overline{B}_\I(\nu, \eps)}H(\la, \mu; \nu').$ 
There exists
an $ a' \in \N$ with the following property.   For every  $a \geq a'$,  the total Lebesgue measure of all dyadic squares $\kappa = [x_0, x_1) \times (y_0, y_1] \in \D_a$ such that 
\ben \item $\inf_{\substack{{z \in \kappa}\\{z'\in \partial T}}} |z - z'| > 0,$ and 
  \item $|\partial^+{f'_\kappa}(x_0) -  \partial^-{f'_\kappa}(x_0)| + |\partial^+{f'_\kappa}(x_1) -  \partial^-{f'_\kappa}(x_1)| < \eps_{8.5}|\kappa|$ and
 \item $|\partial^+{g'_\kappa}(y_0) -  \partial^-{g'_\kappa}(y_0)| + |\partial^+{g'_\kappa}(y_1) -  \partial^-{g'_\kappa}(y_1)| < \eps_{8.5}|\kappa|,$
\een
 is greater than $\frac{1}{2} - \eps_{8.5}.$
\end{lemma}
\begin{proof}
Note that $f'_\kappa$ and $g'_{\kappa}$ extend to an open neighborhood of $[x_0, x_1]$ because of 1.  and so $\partial^+{f'_\kappa}(x_1), $ $\partial^-{f'_\kappa}(x_0),$ $\partial^+{g'_\kappa}(y_1)$ and $\partial^-{g'_\kappa}(y_0)$ are all well defined.
We know from the rhombus inequalities that $h'$ is Lipschitz, and hence that $|\tr((\hess h')_{sing})(T)|$ is  finite. Let $\partial \kappa$ denote those points in $\kappa$ which do not belong to $\kappa^o$; recall that $\kappa$ is a half-open  dyadic square or a closed dyadic triangle. Therefore there is a finite upper bound independent of $a''$
on $\sum_{\kappa \in \D_{a''}} \int_{\partial \kappa} (-1) \tr(\hess h')(dx)$.  However,  for any $a \geq a''$,  $$\bigcup_{\kappa \in \D_a} \partial \kappa \supseteq \bigcup_{\kappa \in \D_{a''}} \partial \kappa.$$ Therefore,
 $\sum_{\kappa \in \D_a} \int_{\partial \kappa} (-1) \tr(\hess h')(dx)$ is a monotonically increasing function of $a$. 
 The measure $\tr((\hess h')_{sing})$ endows any point with zero measure,  because by Theorem 6.1 of \cite{dudley},  $\tr((\hess h'))$ is absolutely continuous with respect to the one dimensional Hausdorff measure.   Therefore,  for a dyadic square $\kappa = [x_0, x_1) \times (y_0, y_1]$ we know that   $|\partial^+{f'_\kappa}(x_0) -  \partial^-{f'_\kappa}(x_0)| + |\partial^+{g'_\kappa}(y_1) -  \partial^-{g'_\kappa}(y_1)|$
 is  bounded above by $\int_{\partial \kappa} (-1) \tr((\hess h')_{sing})(dx)$. 
 It follows that there exists some sufficiently large $a''$ such that for all $a > a''$,  
$$\sum_{\kappa \in \D_a} \int_{\partial \kappa} (-1) \tr(\hess h')(dx) - \sum_{\kappa \in \D_{a''}} \int_{\partial \kappa} (-1) \tr(\hess h')(dx) < \frac{\eps_{8.5}^2 }{100}.$$
For any fixed $a''$, the total Lebesgue measure of all dyadic squares $\kappa \in \D_{a}$ such that $\partial \kappa'' \cap \partial \kappa = \emptyset$ for all $\kappa'' \in \D_{a''},$ tends to $\frac{1}{2}$ (and in particular eventually exceeds $\frac{1}{2} - \frac{\eps_{8.5}}{2}$) as $a$ tends to infinity.
The Lemma now follows.
\end{proof}
\begin{lemma}\lab{lem:38}
  Fix $\eps_{8.5} > 0$,   and $h' \in \bigcup\limits_{\nu'\in \overline{B}_\I(\nu, \eps)}H(\la, \mu; \nu'),$ such that $\J(h') > \eps_{8.5}.$ 
 For all sufficiently large $a''$,  there exists $N > 0$ such that 
for every $h'' \in \bigcup\limits_{\nu'\in \overline{B}_\I(\nu, \eps)}H(\la, \mu; \nu')$ such that 
$\|h'' - h'\|_\infty < \frac{1}{N}$, 
the following is true.
\beqs  \J_{a''}(h'') < (\J_{a''}(h') +  \eps_{8.5}^2)\exp\left( C \eps_{8.5} + \eps_{8.5}\log \frac{C(\la, \mu)}{\eps_{8.5}}\right).\eeqs
\end{lemma}
\begin{proof} 
Let $a''$ be an arbitrary integer greater or equal to the $a'$ appearing in the statement of Lemma~\ref{lem:37}.   For each $\kappa \in \D_{a''}$,  let $\eps_8^\kappa$ be 
a small constant, whose value will be fixed later in this proof.
 Let $N$ be chosen such that  \beq \frac{1}N < \inf\limits_{\substack{\kappa \in \D_{a''}\\(x', y') \in \left(2^{-a''}\N^2\right)\cap \overline{\kappa}}} 4^{-a''}\eps_8^\kappa .\lab{eq:N-14Jul23}\eeq
Recall that by (\ref{eq:D0}),  (\ref{eq:D1}) and (\ref{eq:D2}),  we have 
that when $h$ is $C^2$, 
\beqs D_0 h & = & \partial_x\partial_y h+ \partial^2_x h \\
         D_1 h & = & - \partial_x \partial_y h\\
         D_2 h & = & \partial_x \partial_y h + \partial_y^2 h.\eeqs From this, we see by convolution with a $C^\infty$ approximate identity and Green's Theorem that for {\it any} Lipschitz rhombus concave $h$ and $\D_{a''} \ni \kappa = [x_0, x_1) \times [y_0, y_1),$ that 
\beqs \int_{\kappa^o}(-1)D_0h(dx) & = &  - h(x_0, y_0) - h(x_1, y_1) + h(x_0, y_1) + h(x_1, y_0) + \partial^+f_\kappa(x_0) - \partial^-f_\kappa(x_1),\\
 \int_{\kappa^o}(-1)D_1 h(dx) & = & h(x_0, y_0) + h(x_1, y_1) - h(x_0, y_1) - h(x_1, y_0),\\
\int_{\kappa^o}(-1)D_2 h(dx) & = & - h(x_0, y_0) - h(x_1, y_1)  + h(x_0, y_1) + h(x_1, y_0) + \partial^+g(y_0) - \partial^-g(y_1).\eeqs
\begin{observation}\lab{obs:A12Jul23}
By our choice of parameters,  for all $\kappa \in \D_{a''}$ that are contained in a set $S_1 \cup S_2$ of measure $\frac{1}{2} - \eps_{9}$ whose existence is guaranteed by Lemma~\ref{lem:39} and  Lemma~\ref{lem:37}, we either have for $\kappa \in S_1$ \beq \left| \sigma\left((-1)|\kappa|^{-1}\int_{\kappa^o}\hess h''(dx)\right)   -  \sigma\left((-1)|\kappa|^{-1}\int_{\kappa^o}\hess h'(dx)\right) \right| < C\eps_{8.5},\eeq
or have, for $\kappa \in S_2$,   \beq \lab{eq:6.7} \sigma\left((-1)|\kappa|^{-1}\int_{\kappa^o}\hess h'(dx)\right) > \frac{1}{\eps_{9}^2 },\eeq and 
 
\beq \sigma\left((-1)|\kappa|^{-1}\int_{\kappa^o}\hess h''(dx)\right) > \frac{1}{\eps_{9}^2},\eeq since $\J(h') > \eps_{8.5},$ for some $\eps_9 < \eps_{8.5}$ that depends only on $\eps_{8.5}$ and $h'$. We note that by Markov's inequality (and choosing $\eps_9$ to be small enough), the Lebesgue measure of $S_2$ can be ensured to be less than $\eps_9.$
\end{observation} 
\begin{claim}\lab{cl:5-Dec-26-2023} There exists $\eps_9$ depending on $h'$ and $\eps_{8.5}$ alone such that
we can choose $a''$ large enough that $$\exp\left(- 2 \sum_{\kappa \in \D_{a''}\setminus (S')}|\kappa| \sigma\left((-1)|\kappa|^{-1}\int_{\kappa^o}\hess h'(dx)\right)\right) V(\nu') < \J(h') + \eps_{8.5}^2$$ for any set $S' \subseteq \D_{a''}$ whose Lebesgue measure is less or equal to $2\eps_9.$ Here $h' \in H(\la, \mu; \nu')$.
\end{claim}
\begin{proof}
Since $\sigma(-(\nabla^2h')_{ac}) \in L^1,$ we see that there must exist an $\eps_9$ such that for all measurable subsets $S$ of $T$ having Lebesgue measure less than $2\eps_9$, 
 $$\int_S |\sigma(-(\nabla^2h')_{ac}) |\Leb_2(dx) < c \eps_{8.5}^2.$$
 
By Theorem~\ref{cor:24}, there exists an $a_1$ such that for all $a_3 \geq a_1$,  $\J_{a_3}(h') < \J(h') + \frac{\eps_{8.5}^2}{2}.$
By Theorem~\ref{cor:24}   and the convexity of $\sigma$, there exists an $a_0$ such that for all $a_2 \geq a_0$,  we have 
\beqs \sum_{\kappa \in \D_{a_2}} \left|\int_\kappa  \left(\sigma\left((-1)|\kappa|^{-1}\int_{\kappa^o}\left(\hess h'(dx)\right)\right) - \sigma\left(-(\hess h')_{ac}\right) \right) \Leb_2(dx) \right| &=& \\
\sum_{\kappa \in \D_{a_2}} (-1)\int_\kappa \left(\sigma\left((-1)|\kappa|^{-1}\int_{\kappa^o}\left(\hess h'(dx)\right)\right) - \sigma\left(-(\hess h')_{ac}\right)\right) \Leb_2(dx) & \leq &\\
\sum_{\kappa \in \D_{a_2}} (-1)\int_\kappa \left(\sigma\left((-1)|\kappa|^{-1}\int_{\kappa}\left(\hess h'(dx)\right)\right) - \sigma\left(-(\hess h')_{ac}\right)\right) \Leb_2(dx) <
c \eps_{8.5}^2.\eeqs

Choosing $a'' \geq \max(a_0, a_1)$ satisfies the conditions of this claim.
\end{proof}
For the set $S = S_2 \cup S_3$ of remaining dyadic triangles and dyadic cubes,  

 \beq \sum_{\kappa \in S} |\kappa|  \sigma\left((-1)|\kappa|^{-1}\int_{\kappa^o}\hess h''(dx)\right) \geq   (-1) \eps_{8.5}\log \frac{C(\la, \mu)}{\eps_{8.5}}, \lab{eq:Clamu} \eeq
where $C(\la,  \mu)$ is a finite constant that can be fixed (as seen from Lemma~\ref{lem:3-Nov20}) depending on $\la$ and $\mu$ alone,  which bounds from above, (up to multiplicative $C$) the Lipschitz constant and hence the magnitude of the total measure of the trace of the Hessian of  any rhombus concave function in $H(\la, \mu)$.
We now choose $\eps_8$ 
(such a value of $\eps_8$ exists by Lemma~\ref{lem:39}),
so that when $N$ is chosen to satisfy (\ref{eq:N-14Jul23}),  we have
for every $h'' \in \bigcup\limits_{\nu'\in \overline{B}_\I(\nu, \eps)}H(\la, \mu; \nu')$ such that 
$\|h'' - h'\|_\infty < \frac{1}{N}$, 
the following to be true (using the last part of Lemma~\ref{lem:41}, to get the second step,  and taking $h'' \in H(\la, \mu; \nu'')$ in the first step; where we use Observation~\ref{obs:A12Jul23} and Claim~\ref{cl:5-Dec-26-2023}).
\beqs  \exp\left(-\sum_{\kappa \in \D_{a''}\setminus(S_2 \cup S_3)}|\kappa| \sigma\left((-1)|\kappa|^{-1}\int_{\kappa^o}\hess h''(dx)\right)\right) V(\nu'')< \eeqs 
\beqs \exp\left(-\sum_{\kappa \in \D_{a''}\setminus (S_2 \cup S_3)}|\kappa| \sigma\left((-1)|\kappa|^{-1}\int_{\kappa^o}\hess h'(dx)\right)\right)V(\nu')\exp\left( C \eps_{8.5}\right) < \eeqs
\beqs(\J(h') +  \eps_{8.5}^2)\exp\left( C \eps_{8.5}\right).\eeqs
As stated in the beginning of this proof, we let $a''$ be an arbitrary integer greater or equal to the $a'$ appearing in the statement of Lemma~\ref{lem:37}.

Together with Observation~\ref{obs:A12Jul23} and (\ref{eq:Clamu}), we have that 

\beqs  \J_{a''}(h'') < (\J_{a''}(h') +  \eps_{8.5}^2)\exp\left( C \eps_{8.5} + \eps_{8.5}\log \frac{C(\la, \mu)}{\eps_{8.5}}\right).\eeqs

This completes the proof of this lemma.
\end{proof}

\begin{lemma}\lab{lem:39new}
  Fix $\eps_{8.5} > 0$,   and $h \in \bigcup\limits_{\nu'\in \overline{B}_\I(\nu, \eps)}H(\la, \mu; \nu'),$ such that $\J(h) = 0.$ 
 For all sufficiently large $a''$,  there exists $N > 0$ such that 
for every $h'' \in \bigcup\limits_{\nu'\in \overline{B}_\I(\nu, \eps)}H(\la, \mu; \nu')$ such that 
$\|h'' - h\|_\infty < \frac{1}{N}$, 

\beqs  \J_a(h'') <  \eps_{8.5}.\eeqs
\end{lemma}
\begin{proof}
Let $h' = h + \de \tau,$ for some positive $\de$ such that $\J(h') < \eps_{8.5}/2.$ Such a $\de$ exists by the proof of Theorem~\ref{cor:24}.
By the preceding Lemma~\ref{lem:38}, we see that there is an $L^\infty$ neighborhood of $h'$ of radius $1/N$ for which \beqs  \J_a(h'') < (\J_a(h') +  \eps_{8.5}^2)\exp\left( C \eps_{8.5}\right).\eeqs
This lemma  follows by relabelling $(\eps_{8.5}/2 + \eps_{8.5}^2) \exp\left(C\eps_{8.5}\right)$ as $\eps_{8.5},$  and noting that a $1/N$ neighborhood of $h$ is translated into a $1/N$ neighborhood of $h'$ under addition by $\de \tau.$

\end{proof}

\begin{lemma}\lab{lem:38.5}Let $\la, \mu$ be strongly decreasing functions that integrate to $0$ over $[0, 1].$
  The functional $\J$ is upper semicontinuous on $H(\la, \mu)$ with respect to the $L^\infty(T)$ metric.
\end{lemma}
\begin{proof}

  Fix $\eps_9>0.$ Then, by Lemma~\ref{lem:38} and Lemma~\ref{lem:39new}, 
for all sufficiently large $a$, there exists $N > 0$ such that 
for every $h'' \in H(\la, \mu)$ 
such that 
$\|h'' - h'\|_\infty < \frac{1}{N}$, 
\beqs  \J_{a}(h'') < \J_{a}(h')\exp(\eps_9) + \eps_9.\eeqs 
For any $a > a'$, $\J_{a'} - \J_a$ is a nonnegative functional  on $H(\la, \mu).$ 
Therefore,  $\lim_{a' \ra \infty} \J_{a'}$ (which by Corollary~\ref{cor:cor} equals $\J$) is upper semicontinuous at $h'$.   Thus,  $\J$ is an upper semicontinuous functional on 
 $H(\la, \mu)$ with respect to the $L^\infty(T)$ metric.
\end{proof}
In the following lemma, let $h_{\ast} \in \bigcup\limits_{\nu'\in \overline{B}_\I(\nu, \eps)}H(\la, \mu; \nu')$ be such that
 for all positive $N$ and $0 \leq \de < \tilde{\de}$, we have
\beqs \limsup\limits_{n \ra \infty} \bigg|G_n(\la_n, \mu_n; \nu_n, \eps n^2)\cap \left(B_{\infty}^{n + 1 \choose 2}\left(L_n(\de h + (1 - \de) h_\ast), \frac{n^2}{N}\right)\oplus \R^{U_n}\right)\bigg|^\frac{1}{n^2} \eeqs \beqs\geq \limsup\limits_{n \ra \infty} |G_n(\la_n, \mu_n; \nu_n, \eps n^2)|^\frac{1}{n^2}(1 - \de),\eeqs
where $h \in \bigcup\limits_{\nu'\in \overline{B}_\I(\nu, \eps)}H(\la, \mu; \nu')$ satisfies $$\inf\limits_T \min\limits_{0 \leq i \leq 2}  (-1)(D_i h)_{ac} > \tilde{\de} > 0.$$ Such an $h$ exists as a consequence of Lemma~\ref{lem:Hlm-nonempty} (by taking a convex combination with the rhombus concave function defined there).
The existence of such a $h_*$ is guaranteed by Lemma~\ref{lem:11}.
\begin{lemma}\lab{lem:upper} 
Let $h'' =\de h + (1 - \de) h_\ast$.
 For any fixed $\eps_{10} > 0$, the following is true. For any $a \in \N$, there exists
an $N \in \N$ such that  for all   $n$ larger than some threshold (possibly depending on $a$ and $N$), 
\beqs  \bigg|G_n(\la_n, \mu_n; \nu_n, \eps n^2)\cap \left(B_\infty^{n + 1 \choose 2}\left(L_n(h_\ast), \frac{n^2}{N}\right)\oplus \R^{U_n} \right)\bigg|^\frac{2}{n^2} (1 - \de)^2 < \J_a(h'')\exp( \eps_{10}).\eeqs
\end{lemma}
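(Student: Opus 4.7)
The strategy is to \emph{convexify} $A_n := G_n(\la_n,\mu_n;\nu_n,\eps n^2)\cap (B_\infty^{n+1\choose 2}(L_n(h_\ast),n^2/N)\oplus \R^{U_n})$ by affinely mixing with a fixed augmented hive lifting $L_n(h)$, then bound the resulting volume fiberwise by combining Lemma~\ref{lem:46} with the upper semicontinuity of $\J_a$ at $h''$ supplied by Lemma~\ref{lem:38}. First, since $h\in H(\la,\mu;\nu'')$ for some $\nu''\in\overline{B}_\I(\nu,\eps)$, pick any augmented hive $\tilde{L}_n(h)\in G_n(\la_n,\mu_n;\nu_n,\eps n^2)$ whose restriction to $T_n$ equals $L_n(h)$ (amounting to a choice of Gelfand--Tsetlin pattern in the lower triangle). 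Define the affine map $\Phi(x) := (1-\de)x + \de\tilde{L}_n(h)$; it lands in $G_n$ by convexity, and its Jacobian is $(1-\de)^{\dim G_n}=(1-\de)^{n^2-n}$, so
$$(1-\de)^2\, |A_n|^{2/n^2} \;\leq\; (1-\de)^{2-2/n}\,|A_n|^{2/n^2}\;=\;|\Phi(A_n)|^{2/n^2}.$$
It therefore suffices to show $|\Phi(A_n)|^{2/n^2}<\J_a(h'')\exp(\eps_{10})$ for $n$ large.

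For each $y=\Phi(x)\in\Phi(A_n)$, write $x|_{T_n}=L_n(h^x)$ for the $C^0$ hive $h^x\in\bigcup_{\nu'\in\overline{B}_\I(\nu,\eps)}H(\la,\mu;\nu')$ obtained from $x$ on the upper triangle, so $\|h^x-h_\ast\|_\infty\leq 1/N$ and $y|_{T_n}=L_n(h^y_T)$ with $h^y_T := (1-\de)h^x+\de h$. Since $\bb\subseteq V(T_n)$ by Definition~\ref{def:37}, each fiber $\Pi_\bb^{-1}(y_\bb)\cap G_n$ is contained in $\tilde{P}_n(\bb,L_n(h^y_T))$ per Definition~\ref{def:6.1}. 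Fubini therefore gives
$$|\Phi(A_n)|\;\leq\; |\Pi_\bb(\Phi(A_n))|\cdot \sup_{y\in\Phi(A_n)}|\tilde{P}_n(\bb,L_n(h^y_T))|,$$
with $|\Pi_\bb(\Phi(A_n))|\leq(2(1-\de)n^2/N)^{|\bb|}$ because the $\bb$-projection of $\Phi(A_n)$ lies in an $\ell_\infty$-cube of radius $(1-\de)n^2/N$ around $L_n(h'')_\bb$.

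Now each $h^y_T$ has precisely the algebraic form $\de h+(1-\de)h^x$ required by Lemma~\ref{lem:46} (with $\de$ playing the role of $\tilde{\de}$ there), so that lemma yields $|\tilde{P}_n(\bb,L_n(h^y_T))|^{2/n^2}<\J_a(h^y_T)\exp(\eps_{10}/3)$ uniformly in $y$ for $n$ larger than some threshold depending on $(h,a,\eps_{10})$. Since $h''=\de h+(1-\de)h_\ast$ satisfies $\inf_T\min_i(-1)(D_ih'')_{ac}\geq \de\tilde{\de}>0$, Lemma~\ref{lem:38} applies at $h''$ and produces $N_0$ such that $\|h^y_T-h''\|_\infty=(1-\de)\|h^x-h_\ast\|_\infty<1/N_0$ forces $\J_a(h^y_T)<\J_a(h'')\exp(\eps_{10}/3)$; we take $N\geq N_0$. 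Finally, $|\bb|=O(2^a n)$ is subquadratic in $n$ for fixed $a$, so
$$\bigl((2(1-\de)n^2/N)^{|\bb|}\bigr)^{2/n^2}=\exp\!\bigl(O(2^a\log n/n)\bigr)\;\longrightarrow\;1,$$
and combining these three estimates yields $|\Phi(A_n)|^{2/n^2}<\J_a(h'')\exp(\eps_{10})$ for all sufficiently large $n$, which is the claim. The main obstacle is the continuity step: the surface tension $\sigma$ can blow up near the boundary of $\R_+^3$, so $\J_a$ is not continuous in general; this is precisely circumvented by Lemma~\ref{lem:38} using the uniform Hessian lower bound $\de\tilde{\de}$ on $h''$ that comes from the $\de$-weight on the strongly rhombus-concave $h$.
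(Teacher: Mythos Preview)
Your proof is correct and follows essentially the same route as the paper's: affinely mix the localized augmented-hive set with the strongly rhombus-concave $h$ to pick up the Jacobian factor $(1-\de)^{\dim G_n}$, bound each $\bb$-fiber of the mixed set by Lemma~\ref{lem:46}, transfer $\J_a(h^y_T)$ to $\J_a(h'')$ via the semicontinuity Lemma~\ref{lem:38}, and absorb the base $|\Pi_\bb(\cdot)|$ using $|\bb|=O(2^a n)$. The paper states the same argument more tersely (writing $\hat Q_n=\de h+(1-\de)A_n$ and invoking Fubini without making the affine map or the cube bound on the $\bb$-projection explicit), but the logical structure and the two key lemmas invoked are identical; your version is simply a more detailed unpacking of the paper's proof.
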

\begin{proof}  Let $\hat{Q}_n := {\de}h_n + (1 - {\de})\left(G_n(\la_n, \mu_n; \nu_n, \eps n^2)\cap \left(B_\infty^{n + 1 \choose 2}\left(L_n(h_{\ast}), \frac{n^2}{N}\right)\oplus \R^{U_n} \right)\right).$ 
Let $\hat{Q}:= {\de}h + (1 - {\de})\left(G(\la, \mu; \nu, \eps )\cap \left(B_\infty \left(h_{\ast}, \frac{1}{N}\right)\oplus \R^{U} \right)\right)$, where  $U$ is the analogue of $U_n$ in the continuous setting. 

For each $n$, we see that 
$$ \bigg|G_n(\la_n, \mu_n; \nu_n, \eps n^2)\cap \left(B_\infty^{n + 1 \choose 2}\left(L_n(h_{\ast}), \frac{n^2}{N}\right)\oplus \R^{U_n} \right)\bigg|^\frac{2}{n^2}(1 - {\de})^2 = |\hat{Q}_n|^\frac{2}{n^2}$$

Fix $\eps_9 > 0$.
Let $\bb$ be defined by Definition~\ref{def:37} and $\tilde{Q}_n$ be as in Definition~\ref{def:6.1}.
For all $h' \in \hat{Q}$, 
 the following is true from Lemma~\ref{lem:46} for all sufficiently large $n$. 
For any $h' \in \hat{Q}$, $$|\tilde{Q}_n(\bb, L_n(h'))|^\frac{2}{n^2} < \J_{a}(h')\exp( \eps_9).$$
For $N$ large enough,  this is in turn less than $\J_a(h'')\exp(2\eps_9)$ by Lemma~\ref{lem:38}, where $h'' = {\de}h + (1 - {\de})h_\ast.$
Note that the dimension of $\tilde{Q}_n(\bb, L_n(h''))$ is $n^2$.
$$G_n(\la_n, \mu_n; \nu_n, \eps n^2)\cap \left(B_\infty^{n + 1 \choose 2}\left(L_n(h_\ast), \frac{n^2}{N}\right)\oplus \R^{U_n} \right)$$ can be covered by $n^{Cn}$ sets of the form $\tilde{Q}_n(\bb, L_n(h')),$ where the $h'$ all belong to $\hat{Q}.$
Therefore, 
we have, for all sufficiently large $n$,
\beqs  \bigg|G_n(\la_n, \mu_n; \nu_n, \eps n^2)\cap \left(B_\infty^{n + 1 \choose 2}\left(L_n(h_\ast), \frac{n^2}{N}\right)\oplus \R^{U_n} \right)\bigg|^\frac{2}{n^2} (1 - \de)^2 & < & \left(n^{Cn} \sup\limits_{h' \in \hat{Q}} |\tilde{Q}_n(\bb, L_n(h'))|\right)^\frac{2}{n^2}\\
& < &  \J_a(h'')\exp( \eps_{10}).\eeqs
\end{proof}

\begin{lemma}\lab{lem:55-new}
Let $\la:[0, 1]\ra \R$ and $\mu:[0, 1] \ra \R$ be strongly decreasing functions that integrate to $0$ over $[0, 1].$ Let $\nu:[0, 1] \ra \R$ be a decreasing function that integrates to $0$ over $[0, 1]$.  Let $X_n$ and $Y_n$ be independent random Hermitian matrices with spectra $\la_n$ and $\mu_n$ respectively.  Let $Z_n = X_n + Y_n.$ Then,
$$\limsup\limits_{n \ra \infty}\p_n\left[\spec(Z_n) \in {B}_\I^n(\nu_n, \eps n^2)\right]^\frac{2}{n^2} \leq \sup\limits_{\nu'\in {B}_\I(\nu, \eps)}\sup\limits_{h' \in H(\la, \mu; \nu')} \left(V(\la)V(\mu)\right)^{-1}\J(h').$$
\end{lemma}
\begin{proof}
By (\ref{eq:3.2}), we see that 
\beqs \limsup\limits_{n \ra \infty}\p_n\left[\spec(Z_n) \in {B}_\I^n(\nu_n, \eps n^2)\right]^\frac{2}{n^2} = \limsup\limits_{n \ra \infty} \left(\frac{V_n(\la_n)V_n(\mu_n)}{V_n(\tau_n)^2}\right)^{-\frac{2}{n^2}}\bigg|G_n(\la_n, \mu_n; \nu_n, \eps n^2)\bigg|^\frac{2}{n^2}.\eeqs
By Lemma~\ref{lem:16},  Lemma~\ref{lem:11},  Lemma~\ref{lem:38} and Lemma~\ref{lem:upper}, we see that 
\beqs \limsup\limits_{n \ra \infty} \left(\frac{V_n(\la_n)V_n(\mu_n)}{V_n(\tau_n)^2}\right)^{-\frac{2}{n^2}}\bigg|G_n(\la_n, \mu_n; \nu_n, \eps n^2)\bigg|^\frac{2}{n^2} & \leq & \lim\limits_{a \ra \infty}\frac{ \left(V(\la)V(\mu)\right)^{-1}\J_a(\de h + (1-\de) h_*)}{(1 - \de)^2}\\ 
& = &  \frac{ \left(V(\la)V(\mu)\right)^{-1}\J(\de h + (1-\de) h_*)}{(1 - \de)^2}.\eeqs
Therefore, 
\beqs \limsup\limits_{n \ra \infty} \left(\frac{V_n(\la_n)V_n(\mu_n)}{V_n(\tau_n)^2}\right)^{-\frac{2}{n^2}}\bigg|G_n(\la_n, \mu_n; \nu_n, \eps n^2)\bigg|^\frac{2}{n^2} & \leq & 
\liminf\limits_{\de \ra 0}  \left(V(\la)V(\mu)\right)^{-1}\J(\de h + (1-\de) h_*)\\
& \leq & \sup\limits_{\nu'\in {B}_\I(\nu, \eps)}\sup\limits_{h' \in H(\la, \mu; \nu')} \left(V(\la)V(\mu)\right)^{-1}\J(h').\eeqs

\end{proof}

\section{Large deviations for random hives}

\begin{lemma}\lab{lem:41-new2} Let $h \in H(\la, \mu; \nu)$ be a hive. Let $\tih \in H(\tla, \tmu; \tnu)$ be a $C^2$ hive such that $h - \tih$ is a hive. For any fixed $\eps_0 > 0$,

 $$ V(\tnu)\exp\left(-\int_T 2 \sigma((-1)\hess \tih)\Leb_2(dx)\right) \leq $$  $$ \liminf\limits_{n \ra \infty} \bigg|G_n(\la_n, \mu_n; \nu_n, \eps_0 n^2) \cap \left(B^{n+1 \choose 2}_\infty(L_n(h), n^2\eps_0) \oplus \R^{U_n}\right)\bigg|^\frac{2}{n^2}.$$
\end{lemma}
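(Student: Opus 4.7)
The plan is to reduce Lemma~\ref{lem:41-new2} to Lemma~\ref{lem:41-new3} applied to the $C^2$ hive $\tih$, via a volume-preserving translation on hive coordinates that exploits the hypothesis that $h-\tih$ is a hive.

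\textbf{Translation and change of variables.} Since $h-\tih$ is rhombus concave with boundary $(\la-\tla,\mu-\tmu,\nu-\tnu)$, and rhombus concavity is preserved under coordinate-wise addition, the affine map $\tih^{*}\mapsto \tih^{*}+L_n(h-\tih)$ on hive coordinates is a volume-preserving bijection from $H_n(\tla_n,\tmu_n;\tnu'_n)$ onto $H_n(\la_n,\mu_n;\tnu'_n+(\nu_n-\tnu_n))$ for every $\tnu'_n$, sending $L_n(\tih)$ to $L_n(h)$ and preserving $\ell_\infty$-balls around these centers. Decomposing the augmented hive volume in $G_n$ via Fubini into a hive factor times the Gelfand--Tsetlin factor $V_n(\nu'_n)/V_n(\tau_n)$, and changing variables $\tnu'_n = \nu'_n - (\nu_n-\tnu_n)$, I would obtain
\[
\begin{aligned}
& |G_n(\la_n,\mu_n;\nu_n,\eps_0) \cap (B_\infty(L_n(h),n^2\eps_0)\oplus\R^{U_n})| \\
& \qquad = \int_{\tnu'_n \in B_\I^n(\tnu_n,n^2\eps_0)} \bigl| H_n(\tla_n,\tmu_n;\tnu'_n) \cap B_\infty(L_n(\tih),n^2\eps_0) \bigr| \cdot \frac{V_n(\tnu'_n + (\nu_n-\tnu_n))}{V_n(\tau_n)}\, d\tnu'_n.
\end{aligned}
\]

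\textbf{Vandermonde monotonicity.} Since $\g-\tg$ is concave on $[0,1]$ (as the diagonal restriction of the rhombus-concave hive $h-\tih$), the vector $\nu-\tnu$ is weakly decreasing. Therefore each gap of $\tnu'_n+(\nu_n-\tnu_n)$ dominates termwise the corresponding gap of $\tnu'_n$, and hence $V_n(\tnu'_n+(\nu_n-\tnu_n))\geq V_n(\tnu'_n)$. This yields
\[ |G_n(\la_n,\mu_n;\nu_n,\eps_0) \cap (B_\infty(L_n(h),n^2\eps_0)\oplus\R^{U_n})| \geq |G_n(\tla_n,\tmu_n;\tnu_n,\eps_0) \cap (B_\infty(L_n(\tih),n^2\eps_0)\oplus\R^{U_n})|. \]

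\textbf{Lemma~\ref{lem:41-new3} and $\ell_\infty$ containment.} For any $\eps_2>0$ and $a$ sufficiently large, Lemma~\ref{lem:41-new3} applied to $\tih$ gives
$\liminf_{n\to\infty}|\tilde{Q}_n(\bb,\tih)|^{2/n^2}\geq\exp(-\eps_2)V(\tnu)\exp\!\left(-\int_T\sigma(\hess\tih(x))\,\Leb_2(dx)\right)$.
It then suffices to verify that, for $a$ large enough (depending on $\|\nabla\tih\|_\infty$, $\|\hess\tih\|_\infty$, and $\eps_0$), every $\hat{a}\in\tilde{Q}_n(\bb,\tih)$ has hive part inside $B_\infty(L_n(\tih),n^2\eps_0)$. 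On each dyadic cell $\kappa$, $\hat{a}$ agrees with $L_n(\tih)$ to $O(1/n^6)$ on the boundary $\bb\cap\kappa$; by concavity of $\hat{a}$, $\hat{a}(x)\leq\max_{\partial\kappa}\hat{a}\leq L_n(\tih)(x)+O(\|\nabla\tih\|_\infty\cdot 2^{-a}n^2)$, while $\hat{a}(x)\geq$ (bilinear interpolation of cell-corner boundary values)$\geq L_n(\tih)(x)-O(\|\hess\tih\|_\infty\cdot 2^{-2a}n^2)$. Both deviations are at most $n^2\eps_0$ once $a$ is large enough. Combining the three steps and sending $\eps_2\to 0$ produces the stated inequality.

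\textbf{Main obstacle.} The $\ell_\infty$ containment in the third step is the principal technical point: one must verify that rhombus concavity of $\hat{a}$ on a dyadic cell, together with close agreement with $L_n(\tih)$ on the dense surrounding grid $\bb$, forces uniform proximity to $L_n(\tih)$ on the cell interior. The upper bound from concavity is only first-order in the cell side-length, so one needs $a$ exceeding $\log_2(\|\nabla\tih\|_\infty/\eps_0)$ rather than a purely second-order threshold; care is needed to translate between the discrete and continuous norms, and to handle the dyadic triangles abutting the boundary of $T$ analogously to the dyadic squares.
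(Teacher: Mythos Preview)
Your approach is essentially the same as the paper's: both reduce to Lemma~\ref{lem:41-new3} applied to $\tih$ via the translation by $h-\tih$. The paper's argument is shorter because it applies the translation directly at the level of the $\bb$-constrained polytopes: one simply observes that $h'\mapsto h'+L_n(h-\tih)$ (extended to the lower triangle by any fixed Gelfand--Tsetlin pattern with top row $\nu_n-\tnu_n$, which exists since $\nu-\tnu$ is decreasing) injects $\tilde Q_n(\bb,\tih)$ into $\tilde Q_n(\bb,h)\subseteq G_n(\la_n,\mu_n;\nu_n,\eps_0)\cap(B_\infty(L_n(h),n^2\eps_0)\oplus\R^{U_n})$, so $|\tilde Q_n(\bb,h)|\ge|\tilde Q_n(\bb,\tih)|$ and Lemma~\ref{lem:41-new3} finishes. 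Your Fubini decomposition plus Vandermonde monotonicity is just the same injection unpacked coordinate-wise (addition of GT patterns preserves interlacing, which is equivalent to the gap-monotonicity you invoke).

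One correction: your map $\tih^*\mapsto\tih^*+L_n(h-\tih)$ is an \emph{injection}, not a bijection, since subtracting a rhombus-concave function need not preserve rhombus concavity. Consequently your displayed equality should be an inequality ($\ge$). This is harmless, since only the inequality is used downstream.

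On your flagged ``main obstacle'': you are right that the containment $\tilde Q_n(\bb,\tih)\subseteq B_\infty(L_n(\tih),n^2\eps_0)\oplus\R^{U_n}$ requires $a$ large enough, and the paper asserts this without comment. The simplest justification is that any discrete hive with boundary data $\tla_n,\tmu_n$ has Lipschitz constant $O(n)$ (from the rhombus inequalities and the boundedness of $\tla,\tmu$), so agreeing with $L_n(\tih)$ on the $2^{-a}$-grid $\bb$ to within $O(n^{-6})$ forces agreement everywhere to $O(n\cdot n2^{-a})=O(n^2 2^{-a})$, which is below $n^2\eps_0$ once $2^a>C/\eps_0$. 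Your concavity/interpolation sketch works too but is more than is needed.
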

\begin{proof} ({\bf case\,\,1}): First suppose that $\tilde{h} \in H^{\tilde{\de}}(\tla, \tmu)$ for some positive $\tilde{\de}$.
We shall analyze the polytope $\tilde{Q}_n(\bb, h)$ given by Definition~\ref{def:6.1}. Since 
$$\tilde{Q}_n(\bb, h) \subseteq G_n(\la_n, \mu_n; \nu_n, \eps_0 n^2) \cap \left(B^{n+1 \choose 2}_\infty(L_n(h), n^2\eps_0) \oplus \R^{U_n}\right),$$ it suffices to get a suitable lower bound on $|\tilde{Q}_n(\bb, h)|$ to prove this Lemma. The map $h' \mapsto h' + h - \tih$
translates $\tilde{Q}_n(\bb, \tih)$ into $\tilde{Q}_n(\bb, h)$. Therefore it suffices to get a suitable lower bound on $|\tilde{Q}_n(\bb, \tih)|.$
This follows from Lemma~\ref{lem:41-new3}.
                         
({\bf case \,\,2}): Now suppose $\tilde{h} \not\in H^{\de}(\tla, \tmu)$ for all positive $\de$. For the span of this proof, let $q:T \ra \R$ be given by $q(x, y) = a' - (x^2 + y^2 -xy)$, where $a'$ is the unique affine function chosen so as to make $q(0, 0) = q(0, 1) = q(1, 1) = 0.$

 By Lemma~\ref{lem:Hlm-nonempty}, $H^{\tilde{\de}}(\la, \mu)$ is nonempty for some positive $\tilde{\de}$. Let $\hat{h} \in  H^{\tilde{\de}}(\la, \mu)$.   For all sufficiently small $\de$, $h_\de := \de \hat{h} + (1 - \de) h$ satisfies
\beq L_n(h_\de) \in G_n(\la_n, \mu_n; \nu_n, \frac{\eps_0 n^2}{10}) \cap \left(B^{n+1 \choose 2}_\infty(L_n(h), \frac{n^2\eps_0}{10}) \oplus \R^{U_n}\right). \lab{eq:Ln}\eeq 
Suppose that $\de < \tilde{\de}$. Then,  $ \hat{h} - \de q$ is a hive.
Also, for $\tilde{h}_\de := (1-\de) \tilde{h} + \de^2 q \in H(\tla_\de, \tmu_\de; \tnu_\de)$, we have that
\beq \lim\limits_{\de \ra 0} V(\tnu_\de)\exp\left(-\int_T 2\sigma((-1)\hess \tih_\de)\Leb_2(dx)\right) \geq \nonumber\\ V(\tnu)\exp\left(-\int_T 2\sigma((-1)\hess \tih)\Leb_2(dx)\right). \lab{eq:Ln2}\eeq
Also, $h_\de - \tilde{h}_\de =  (1 - \de) (h - \tih) + \de(\hat{h} - \de q)$ is a hive. By ({\bf case\,\,1}), we see that 
 $$ V(\tnu_\de)\exp\left(-\int_T 2\sigma((-1)\hess \tih_\de)\Leb_2(dx)\right) \leq $$  $$ \liminf\limits_{n \ra \infty} \bigg|G_n(\la_n, \mu_n; \de\hat{\nu}_n + (1 - \de)\nu_n, (\eps_0 - C\de)n^2) \cap \left(B^{n+1 \choose 2}_\infty(L_n(h_\de), n^2(\eps_0 - C \de)) \oplus \R^{U_n}\right)\bigg|^\frac{2}{n^2}.$$
 However,
 $$G_n(\la_n, \mu_n; \de\hat{\nu}_n + (1 - \de)\nu_n, (\eps_0 - C\de)n^2) \cap \left(B^{n+1 \choose 2}_\infty(L_n(h_\de), n^2(\eps_0 - C \de)) \oplus \R^{U_n}\right) \subseteq $$
 $$G_n(\la_n, \mu_n; \nu_n, \eps_0 n^2) \cap \left(B^{n+1 \choose 2}_\infty(L_n(h_\de), n^2 \eps_0) \oplus \R^{U_n}\right).$$
This proves the Lemma.

\end{proof}

\begin{notation}
Let $H:= \bigcup_{\la}\bigcup_{\mu}\bigcup_{\nu}H(\la, \mu; \nu)$.
As in Definition~\ref{def:77old}, let $H(\la, \mu):= \bigcup_{\nu'}H(\la, \mu; \nu')$.

For $h' \in H(\la, \mu; \nu')$, let \beq \lab{eq:I1} I_1(h') := - \log\left(\frac{\J(h')}{V(\la)V(\mu)}\right).\eeq
For the purposes of this section, let $\p_n$ denote the measure on $H_n(\la_n, \mu_n) := \bigcup_{\nu'_n}H_n(\la_n, \mu_n; \nu'_n)$ obtained by restricting a uniformly random augmented hive from 
$\bigcup_{\nu'_n}A_n(\la_n, \mu_n; \nu'_n)$ to $T_n$.
Note that 
\beqs I_1(h') =  \log\left(\frac{V(\la)V(\mu)}{V(\nu')}\right) +   \int_T 2\sigma((-1)(\hess h')_{ac})\Leb_2(dx) .\eeqs
\end{notation}

\begin{thm}[Lower bound in terms of $C^2$ hives]\lab{thm:6}
For any $\eps > 0$ and $h_\ast \in H(\la, \mu; \nu),$ 
\beqs \liminf\limits_{n \ra \infty}\left(\frac{2}{n^2}\right)\log \p_n\left[h_n \in B_\infty^{n + 1 \choose 2}\left(L_n(h_{\ast}), n^2 \eps\right)\right] \geq \sup\limits_{\substack{h' \in B_\infty (h_{\ast}, \eps)\cap H(\la, \mu)\\h' - \tih \in H\\ \tih \in C^2(T)\cap H}}  \log\left(\frac{\J(\tih)}{V(\la) V(\mu)}\right).\eeqs
\end{thm}
\begin{proof}
This follows immediately from Lemma~\ref{lem:41-new2}.
\end{proof}

The following upper bound holds for all bounded monotonically decreasing $\la$ and $\mu$.

\begin{thm}[Upper bound in terms of $C^0$ hives]\lab{thm:7}
For any $\eps > 0$ and $h_\ast \in H(\la, \mu; \nu),$
\beqs \limsup\limits_{n \ra \infty}\left(\frac{2}{n^2}\right)\log \p_n\left[h_n \in B_\infty^{n + 1 \choose 2}\left(L_n(h_{\ast}), n^2 \eps\right)\right] \leq - \inf\limits_{h' \in B_\infty (h_{\ast},  \eps)\cap H(\la, \mu)} I_1(h').\eeqs
\end{thm}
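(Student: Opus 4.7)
The argument parallels the proof of Lemma~\ref{lem:55-new}, but tracks the full hive instead of only its cross-diagonal. First I would write the probability as a Vandermonde-weighted volume. Since the Gelfand--Tsetlin polytope below any cross-diagonal $\nu''_n$ has volume $V_n(\nu''_n)/V_n(\tau_n)$, and since (\ref{eq:2.4new}) combined with $\int \rho_n[\spec(Z_n)=\nu'_n]\,d\nu'_n=1$ forces the total augmented-hive volume to equal $V_n(\la_n)V_n(\mu_n)/V_n(\tau_n)^2$, we obtain
\begin{equation*}
\p_n\bigl[h_n \in B_\infty^{n+1 \choose 2}(L_n(h_\ast), n^2\eps)\bigr] = \frac{V_n(\tau_n)^2}{V_n(\la_n) V_n(\mu_n)} \int_{E_n} \frac{V_n(\nu''(h))}{V_n(\tau_n)}\, dh,
\end{equation*}
where $E_n = B_\infty^{n+1 \choose 2}(L_n(h_\ast), n^2 \eps) \cap H_n(\la_n, \mu_n)$. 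The integral on the right is at most $|G_n(\la_n, \mu_n; \nu_n, C\eps) \cap (B_\infty^{n+1 \choose 2}(L_n(h_\ast), n^2\eps) \oplus \R^{U_n})|$, since a hive in $E_n$ has cross-diagonal within $C\eps n^2$ of $\nu_n$. By Lemma~\ref{lem:10}, the prefactor satisfies $(V_n(\tau_n)^2/(V_n(\la_n)V_n(\mu_n)))^{2/n^2} \to 1/(V(\la)V(\mu))$.

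Next I would cover $B_\infty(h_\ast, \eps) \cap H(\la, \mu)$ in the $L^\infty$ topology by $K = \exp(O(N))$ balls of radius $1/N$ centred at hives $h^{(1)}, \ldots, h^{(K)}$, using the Bronshtein covering bound invoked after Definition~\ref{def:37}. For each centre I would pass, as in the proof of Lemma~\ref{lem:upper}, to a strictly rhombus-concave perturbation $\tih^{(k)} = \tilde{\de} h + (1-\tilde{\de}) h^{(k)}$ with $h$ a fixed hive satisfying $\inf_T \min_i (-1)(D_i h)_{ac} > 0$. Combining Lemma~\ref{lem:46} with the Fubini computation from the end of the proof of Lemma~\ref{lem:upper} gives, for all sufficiently large $n$,
\begin{equation*}
\bigl|G_n \cap (B_\infty^{n+1\choose 2}(L_n(h^{(k)}), n^2/N) \oplus \R^{U_n})\bigr|^{2/n^2}(1-\tilde{\de})^2 \leq \J_a(\tih^{(k)})\exp(\eps_{10}).
\end{equation*}
A union bound across the cover costs only $\tfrac{2 \log K}{n^2} \to 0$ on the $\tfrac{2}{n^2}$-scale, which is negligible.

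I would then send $n\to\infty$, $a\to\infty$, $\tilde{\de}\to 0$, $N\to\infty$, $\eps_{10}\to 0$, in that order. Corollary~\ref{cor:24} provides $\J_a(\tih^{(k)})\to \J(\tih^{(k)})$, the log-concavity of $\J$ along the segment $\{\tilde{\de} h + (1-\tilde{\de}) h^{(k)}\}$ controls the $\tilde{\de}\to 0$ step, and the upper semicontinuity of $\J$ from Lemma~\ref{lem:38.5} promotes the estimate at covering centres to an upper bound by $\sup_{h'\in B_\infty(h_\ast,\eps)\cap H(\la,\mu)}\J(h')$. Combining with the prefactor yields
\begin{equation*}
\limsup_{n\to\infty}\tfrac{2}{n^2}\log \p_n\bigl[h_n \in B_\infty^{n+1 \choose 2}(L_n(h_\ast), n^2\eps)\bigr] \leq -\inf_{h' \in B_\infty(h_\ast,\eps)\cap H(\la,\mu)} I_1(h'),
\end{equation*}
as desired.

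The main obstacle is the order of limits: transferring $\J_a(\tih^{(k)})\exp(\eps_{10})$ back to $\J(h^{(k)})$ uniformly in $k$ as $N\to\infty$ with $K=K(N)$ growing. On the subset where $\J > 0$ this is handled by the upper semicontinuity in Lemma~\ref{lem:38.5} together with the log-concavity of $\J$ along the perturbation segment; where $\J(h') = 0$ the right-hand bound is $-\infty$ and is automatic. A secondary technical point is that Bronshtein's covering applies because $H(\la,\mu)$ embeds into the space of Lipschitz concave functions on $T$ with a uniform Lipschitz constant determined by $\a$ and $\b$.
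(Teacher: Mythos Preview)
Your covering approach is a workable alternative to the paper's route, but your last paragraph misdiagnoses the one real difficulty, and one remark is backwards.

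The paper does \emph{not} cover $B_\infty(h_\ast,\eps)\cap H(\la,\mu)$ by Bronshtein balls. Instead it first observes, via the contraction $h''\mapsto \eps_0 h_\ast + (1-\eps_0)h''$ with Jacobian $(1-\eps_0)^{\binom{n}{2}}$, that shrinking the radius to $\eps(1-\eps_0)$ does not change the $\limsup$. It then replaces the random hive $h_n$ by $\hat\de\,\hat h_n+(1-\hat\de)h_n$; for $\hat\de$ small this maps the radius-$\eps(1-\eps_0)$ ball back into the radius-$\eps$ ball while forcing the Hessian uniformly away from zero. Lemma~\ref{lem:46} then bounds every fiber $\tilde P_n(\bb,L_n(h''))$ simultaneously, and a single Fubini integration over the $O(n)$-dimensional $\bb$-base finishes; Corollary~\ref{cor:24} passes from $\J_a$ to $\J$. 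No union bound and no covering entropy enter.

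Your argument has a genuine error. The claim ``where $\J(h')=0$ the right-hand bound is $-\infty$ and is automatic'' is backwards: if $\sup_{h'\in B_\infty(h_\ast,\eps)}\J(h')=0$ then $-\inf I_1=-\infty$, and the theorem asserts super-$\exp(-cn^2)$ decay, which is the \emph{strongest} possible statement, not a triviality. And Lemma~\ref{lem:38.5} does not rescue you at centres with $\J(h^{(k)})=0$, since it only yields upper semicontinuity where $\J>0$. The correct repair is to import the paper's shrinking trick into your scheme: cover the slightly smaller set $\overline{B}_\infty(h_\ast,\eps(1-\eps_0))\cap H(\la,\mu)$, so that for fixed $N$ and small enough $\tilde\de$ each perturbed centre $\tih^{(k)}=\tilde\de\,h+(1-\tilde\de)h^{(k)}$ still lies in $B_\infty(h_\ast,\eps)\cap H(\la,\mu)$, giving $\J(\tih^{(k)})\le\sup_{h'\in B_\infty(h_\ast,\eps)}\J(h')$ directly. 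With that in place your order of limits works cleanly (the union bound over $\exp(O(N))$ balls is harmless at scale $2/n^2$, and Corollary~\ref{cor:24} applies to the finitely many $\tih^{(k)}$), and Lemma~\ref{lem:38.5} is not needed at all.
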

\begin{proof}
The map $\phi: h'' \mapsto \eps_0 (h_\ast) + (1 - \eps_0) h''$ by passing to $L_n$ gives rise to a map $\phi_n$ from $H_n(\la_n, \mu_n) \cap B_\infty^{n + 1 \choose 2}\left(L_n(h_{\ast}), n^2 \eps \right)$ into $H_n(\la_n, \mu_n) \cap B_\infty^{n + 1 \choose 2}\left(L_n(h_{\ast}), n^2 \eps(1 - \eps_0)\right),$ and since this map has a Jacobian determinant that equals $(1 - \eps_0)^{n \choose 2}$,
we observe the following.  \beq \lim_{\eps_0 \ra 0^+} \limsup\limits_{n \ra \infty}\left(\frac{2}{n^2}\right)\log \p_n\left[h_n \in B_\infty^{n + 1 \choose 2}\left(L_n(h_{\ast}), n^2 \eps(1 - \eps_0)\right)\right] \lab{eq:thm7.1}\eeq \beqs = \,\limsup\limits_{n \ra \infty}\left(\frac{2}{n^2}\right)\log \p_n\left[h_n \in B_\infty^{n + 1 \choose 2}\left(L_n(h_{\ast}), n^2 \eps \right)\right].\eeqs
 Fix $ a \in \N$ and $\hat{h} \in \bigcup\limits_{\nu'\in \overline{B}_\I(\nu, \eps)}H(\la, \mu; \nu')$ that satisfies
 $$\inf\limits_T \min\limits_{0 \leq i \leq 2}  ((-1)D_i \hat{h})_{ac} >\tilde{\de} > 0.$$ 
 Such a choice of $\hat{h}$ exists from Lemma~\ref{lem:Hlm-nonempty} and taking convex combinations with $h_\ast$.
 Then we see that for each positive $\eps_0$, for all sufficiently small $\hat{\de}$, 
 \beqs \limsup\limits_{n \ra \infty}\left(\frac{2}{n^2}\right)\log \p_n\left[h_n \in B_\infty^{n + 1 \choose 2}\left(L_n(h_{\ast}), n^2 \eps(1 - \eps_0)\right)\right]\eeqs
 is less or equal to  \beq \limsup\limits_{n \ra \infty}\left(\frac{2}{n^2}\right)\log \p_n\left[\hat{\de} \hat{h}_n +(1 - \hat{\de})h_n \in B_\infty^{n + 1 \choose 2}\left(L_n( h_{\ast}), n^2\eps\right)\right]\lab{eq:thm7.2}.\eeq It follows that the limit as $\hat{\de}$ tends to $0$ of (\ref{eq:thm7.2}) is greater or equal to  (\ref{eq:thm7.1}).
 We now fix $\hat{\de}$ and obtain an upper bound on (\ref{eq:thm7.2}) in terms of $I_1$.
 
 Fix $\eps_9 > 0$.  There is a positive $n_0$, such that following is true for all  $n > n_0$. Let $\bb$ be defined by Definition~\ref{def:37} and let  $\tilde{Q}_n$ be as in Definition~\ref{def:6.1}.  Then, by Lemma~\ref{lem:46}, for every $\hat{\de} \leq \tilde{\de},$ $h'' = \hat{\de}\hat{h} + (1 - \hat{\de}){h_{**}} \in H(\la, \mu, \nu''),$ where $h_{**} \in \bigcup\limits_{\nu'\in \overline{B}_\I(\nu, \eps)}H(\la, \mu; \nu')$,  $$|\tilde{Q}_n(\bb, L_n(h''))|^\frac{2}{n^2} < \J_{a}(h'')\exp( \eps_9).$$
 We note that for each fixed $a$, the number of sites in $\bb$ is bounded above by $Cn$.

Note that the $\ell_\infty$ diameter of $\left(\hat{\de} \hat{h}_n + (1 - \hat{\de})H_n(\la_n, \mu_n)\right) \cap B_\infty^{n + 1 \choose 2}\left(L_n(h_{\ast}), n^2 \eps\right)$ is less than $n^C$ and  the sum of the  volumes of the elements in its cover $\C$ consisting of some  $n^{Cn}$ polytopes of the form $\tilde{Q}_n(\bb, L_n(h''))$
satisfies 
$$(1 - \hat{\de})^2 \left(\frac{V_n(\la_n)V_n(\mu_n)}{V_n(\tau_n)^2}\right)^{\frac{2}{n^2}}\left(\p_n\left[\hat{\de} \hat{h}_n +(1 - \hat{\de})h_n \in B_\infty^{n + 1 \choose 2}\left(L_n( h_{\ast}), n^2\eps\right)\right]\right)^\frac{2}{n^2} < $$
$$\left(\sum\limits_{\tilde{Q}_n(\bb, L_n(h'')) \in \C} \left|\tilde{Q}_n(\bb, L_n(h''))\right|\right)^{\frac{2}{n^2}} < \sup_{h'' \in \left(\hat{\de}\hat{h} + (1 - \hat{\de})H(\la, \mu)\right) \cap B_\infty\left(h_\ast, \eps\right)}\J_{a}(h'')\exp( \eps_9).$$

Note that for each $h'' = \hat{\de}\hat{h} + (1 - \hat{\de}){h_{**}} \in H(\la, \mu, \nu''),$ $\lim_{a \ra \infty}\J_a(h'') = \J(h''),$ from Corollary~\ref{cor:cor}.
By Lemma~\ref{lem:38}, Lemma~\ref{lem:39new} and the compactness of $$\left(\hat{\de} \hat{h} + (1 - \hat{\de})H(\la, \mu)\right) \cap B_\infty\left(h_{\ast},   \eps\right)$$ in the $L^\infty$ topology by the Arzela-Ascoli theorem (the rhombus concave functions in $H(\la, \mu)$ are all Lipschitz with the same finite Lipschitz constant by Lemma~\ref{lem:3-Nov20}) $$\sup_{h'' \in \left(\hat{\de}\hat{h} + (1 - \hat{\de})H(\la, \mu)\right) \cap B_\infty\left(h_\ast, \eps\right)}\J_{a}(h'')\exp( \eps_9),$$ converges as $a \ra \infty$ to 
$$\sup_{h'' \in \left(\hat{\de}\hat{h} + (1 - \hat{\de})H(\la, \mu)\right) \cap B_\infty\left(h_\ast, \eps\right)}\J(h'')\exp( \eps_9).$$ By Lemma~\ref{lem:10},  $\lim\limits_{n \ra \infty} \left(\frac{V_n(\tau_n)^2}{V_n(\la_n)V_n(\mu_n)}\right)^\frac{2}{n^2} = \frac{1}{V(\la)V(\mu)}.$
The theorem follows.

\end{proof}

\begin{figure}
    \centering
        \includegraphics[width=2.5in]{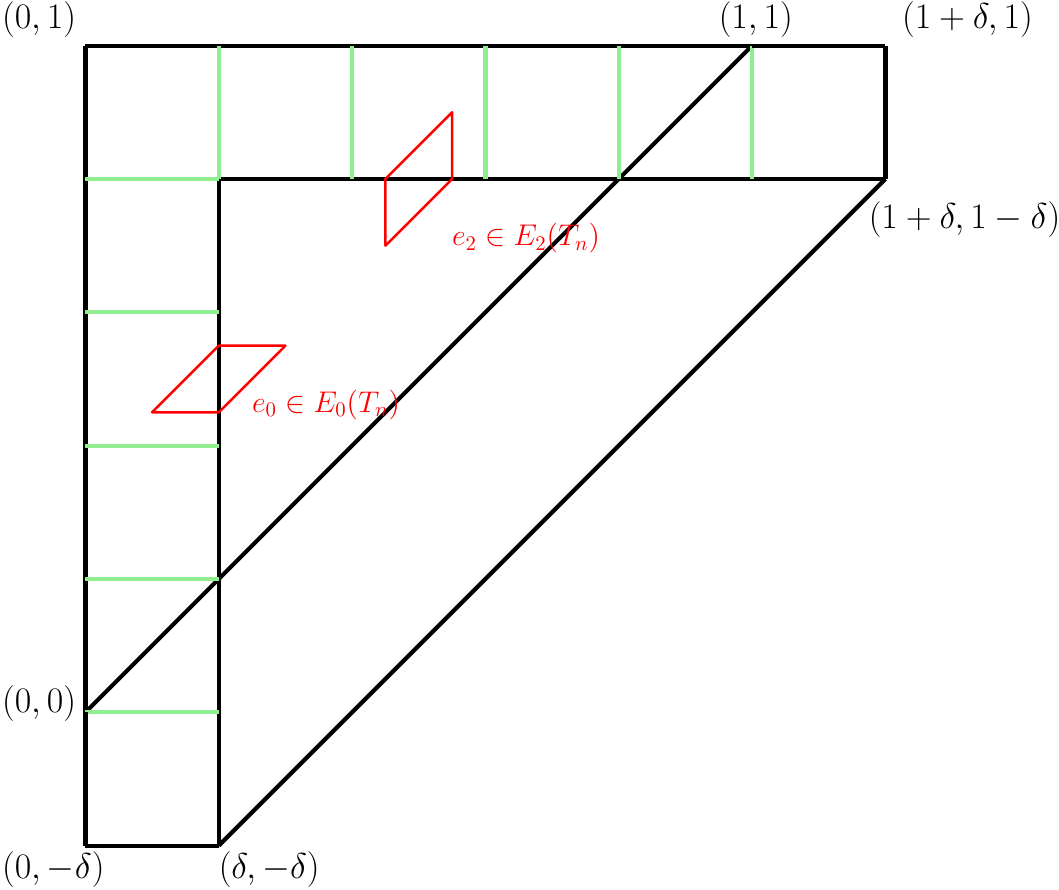}
        \caption{Construction of a  hive $h^{(2)}$ from $h^{(1)}$.  A crucial first step is inserting strips near the horizontal and vertical boundary. We then shift $h^{(1)}$ and paste it to the strips.  On a particular green line segment,  the value taken is constant.  $h^{(2)}$ is rhombus concave, as can be seen by analyzing $\De_0$ and $\De_2$ on the two rhombi $e_0 \in E_0(T_n)$ and $e_2 \in E_2(T_n)$ for some positive integer $n$ that have been depicted,   though $h^{(2)}$ is not in general differentiable.} \label{fig:C2hive1}
    \end{figure}

\begin{thm}[Equality when $\la$ and $\mu$ are $C^1$]\lab{thm:8} Suppose $\la$ and $\mu$ are strongly decreasing and belong to $C^1[0, 1]$ and satisfy $\int_0^1\la(y)dy = \int_0^1 \mu(x)dx = 0.$
For any $\eps > 0$ and $h_\ast \in H(\la, \mu; \nu),$ 
\beq \lim\limits_{n \ra \infty}\left(\frac{2}{n^2}\right)\log \p_n\left[h_n \in B_\infty^{n + 1 \choose 2}\left(L_n(h_{\ast}), n^2 \eps\right)\right] = -\inf\limits_{h' \in B_\infty (h_{\ast},  \eps)\cap H(\la, \mu)} I_1(h').\eeq
\end{thm}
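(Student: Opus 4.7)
}
The $\limsup$ direction is exactly Theorem~\ref{thm:7}, so the task is to match it with a $\liminf$. Theorem~\ref{thm:6} already gives
$$\liminf_{n\to\infty}\frac{2}{n^2}\log\p_n\bigl[h_n\in B_\infty^{n+1\choose2}(L_n(h_*),n^2\eps)\bigr]\;\geq\;-\!\!\inf_{\substack{h'\in B_\infty(h_*,\eps)\cap H(\la,\mu)\\ \tih\in C^2(T),\ h'-\tih\in H}}\!\!I_1(\tih),$$
so it suffices to show that for each $h'\in B_\infty(h_*,\eps)\cap H(\la,\mu;\nu')$ and each $\delta>0$ there exists a $C^2$ hive $\tih$ with $h'-\tih\in H$ and $I_1(\tih)\le I_1(h')+\delta$. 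The pair $(h',\tih)$ is then admissible in the Theorem~\ref{thm:6} infimum and sending $\delta\to 0$ finishes the proof. The remainder of the plan is this approximation result.

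\paragraph{Construction of the $C^2$ approximant $\tih$.}
The hypothesis that $\la,\mu\in C^1$ gives $\a,\b\in C^2$, which is both necessary (a $C^2$ hive with boundary $\a,\b$ on the straight sides requires $\a,\b\in C^2$) and the main enabling regularity. Fix two small parameters $\tau\ll\rho\ll 1$ and proceed in three stages:
\emph{(a)~Strip construction.} On the strip $S_v:=\{(x,y)\in T:x\le\rho\}$ we define $\tih$ by an explicit $C^2$ formula built from $\a$ and a smooth transverse profile that agrees at $x=\rho$ with the interior construction below, and symmetrically on $S_h:=\{y\ge 1-\rho\}\cap T$. The formula is chosen so that on the strip we have pointwise $-\hess\tih\le-(\hess h')_{ac}$ in all three rhombus directions (this uses that in a thin strip $h'$ is close to its affine interpolation across the strip by Lipschitz control).
\emph{(b)~Interior smoothing plus corrector.} On $T\setminus(S_v\cup S_h)$ define $\tih_0:=h'\star\theta_{\tau}$ using the approximate identity from Definition~\ref{def:27}, and then set $\tih_1:=\tih_0-c_\tau(x^2+y^2-xy)$ with a small $c_\tau>0$. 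The quadratic corrector has positive definite Hessian and its role is to absorb the Hessian oscillations created by mollifying the merely-$C^0$ function $h'$, so that $-\hess\tih_1\le -(\hess h')_{ac}$ pointwise (rhombus-wise) a.e.; this reduces to choosing $c_\tau$ slightly larger than the oscillation scale of $(\hess h')_{ac}\star\theta_\tau-(\hess h')_{ac}$, which can be done uniformly on any compact subset of $T$ away from the diagonal and sides.
\emph{(c)~Blending.} A smooth partition of unity across the transition annuli of width $\rho$, together with an affine adjustment near the corners, glues the strip and interior definitions into a single global $C^2$ function. The resulting $\tih$ is $C^2$, rhombus concave (a hive in $H(\la,\mu;\tnu)$ with $\tnu$ close to $\nu'$), and by construction $h'-\tih$ is rhombus concave (a hive).

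\paragraph{Control of $I_1(\tih)-I_1(h')$, and the main obstacle.}
Writing $I_1(k)=\log(V(\la)V(\mu))-\log V(\text{diagonal trace of }k)+\int_T\sigma(-\hess k)\,\Leb_2(dx)$, there are three sources of error to control. First, $V(\tnu)\to V(\nu')$ as $\rho,\tau\to 0$, since $\tg\to\g$ in $L^\infty$ and by Lemma~\ref{lem:41} the functional $V$ is continuous in $L^\infty$ on sequences of concave boundary data. Second, the strip contribution $\int_{S_v\cup S_h}\sigma(-\hess\tih)\Leb_2(dx)$ tends to $0$ as $\rho\to 0$: although $\sigma$ has a logarithmic singularity near the boundary of $\R^3_+$, it is integrable against the finite Radon measure $(-1)\tr(\hess h')(T)$ (by the inequalities (\ref{eq:5.5}) used in the proof preceding Corollary~\ref{cor:24}), so the strip integrals vanish with $\rho$. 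Third, on the interior the difference $\int \sigma(-\hess\tih_1)\Leb_2(dx)-\int\sigma(-(\hess h')_{ac})\Leb_2(dx)$ tends to $0$ as $\tau,c_\tau\to 0$ by Lebesgue dominated convergence (again dominated by $\tr((\hess h')_{ac})\in L^1$ via (\ref{eq:5.5})); this is effectively the same argument used to prove Corollary~\ref{cor:24}. Hence $I_1(\tih)\to I_1(h')$ and a suitable choice of $(\rho,\tau,c_\tau)$ gives $I_1(\tih)\le I_1(h')+\delta$.

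The main obstacle is Stage~(b): simultaneously achieving $\tih\in C^2$, $\tih$ rhombus concave, and $-\hess\tih\le-(\hess h')_{ac}$ when $h'$ is only $C^0$. Naive mollification of $h'$ produces a $C^\infty$ function whose Hessian oscillates around $(\hess h')_{ac}$, violating the domination requirement pointwise; the quadratic corrector $-c_\tau(x^2+y^2-xy)$ is introduced precisely to dominate those oscillations, but one then has to calibrate $c_\tau\to 0$ slowly enough to dominate the oscillations yet fast enough not to perturb $\int\sigma(-\hess\tih)$ by more than $\delta$. This calibration is what requires the $C^1$ hypothesis on $\la,\mu$ --- without it, the modulus of continuity of $(\hess h')_{ac}$ can degenerate near the boundary and the simultaneous control fails.
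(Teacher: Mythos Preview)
Your reduction to Theorems~\ref{thm:6} and~\ref{thm:7} is correct and is exactly how the paper begins. The gap is in Stage~(b) of your construction. You need $h'-\tih\in H$, which (since $\hess\tih$ is absolutely continuous and each $(D_ih')_{sing}\le 0$) is equivalent to $D_i\tih\ge (D_ih')_{ac}$ a.e., i.e.\ $-D_i\tih\le -(D_ih')_{ac}$. With $\tih_1=h'\star\theta_\tau-c_\tau Q$ and $Q(x,y)=x^2+y^2-xy$ strongly rhombus convex ($D_iQ>0$), one has $-D_i\tih_1=-D_i(h'\star\theta_\tau)+c_\tau D_iQ$, so the corrector \emph{increases} $-D_i\tih_1$ and moves it in the wrong direction for your inequality. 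Even discarding the corrector, $-D_i(h'\star\theta_\tau)=(-D_ih')\star\theta_\tau\ge(-(D_ih')_{ac})\star\theta_\tau$ (the singular part contributes nonnegatively), and $(-(D_ih')_{ac})\star\theta_\tau\le -(D_ih')_{ac}$ fails at every local minimum of $-(D_ih')_{ac}$; mollification gives no one-sided pointwise control when $(D_ih')_{ac}$ is merely $L^1$. Reversing the sign of the corrector would endanger rhombus concavity of $\tih_1$. This obstruction is interior, not a boundary effect, and is unrelated to the $C^1$ hypothesis on $\la,\mu$.

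The paper avoids the domination requirement altogether. Instead of keeping $h'$ fixed and seeking a $C^2$ hive $\tih$ below it, the paper builds a $C^2$ hive $h^{\#}\in H(\la,\mu)$ with $\|h^{\#}-h'\|_\infty<\de'$ and $I_1(h^{\#})<I_1(h')+\de'$, and then uses the pair $(h^{\#},h^{\#})$ in the Theorem~\ref{thm:6} infimum (the difference is $0\in H$). The construction is: shrink $h'$ by a factor $(1-2M\de)$ and add an affine function so that the directional derivatives vanish along the horizontal and vertical sides; extend by \emph{constant} strips along those two sides to a slightly larger region; mollify by $\bar\theta_{\de_1}$ with $\de_1\ll\de$ and subtract $\tfrac{\de}{16}Q$ (here the quadratic only ensures strict rhombus concavity, not any domination); finally add the separable corrector $\tilde h(x,y)=(\a(y)-h^{(3)}(0,y))+(\b(x)-h^{(3)}(x,1))$ to restore the exact boundary data $\a,\b$. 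The $C^1$ hypothesis on $\la,\mu$ (i.e.\ $\a,\b\in C^2$) enters only in this last step: because the function is constant across the strips, its mollification near the vertical (resp.\ horizontal) side depends only on $\a$ (resp.\ $\b$), and the uniform continuity of $\partial^2\a,\partial^2\b$ (condition~$(\star)$ in the proof) guarantees that $\tilde h$ has all rhombus second differences bounded by $\de/16$, so that $\tilde h$ is itself a hive and $h^{\#}=h^{(3)}+\tilde h$ remains rhombus concave with the correct boundary.
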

\begin{proof}
By the lower and upper bounds presented in Theorem~\ref{thm:6} and Theorem~\ref{thm:7}, it suffices to show that 
\beq - \inf\limits_{\substack{h'' \in B_\infty (h_{\ast}, \eps)\cap H(\la, \mu)\\h'' - \tih \in H\\ \tih \in C^2(T)}} I_1(\tih) \geq - \inf\limits_{h' \in B_\infty (h_{\ast},  \eps)\cap H(\la, \mu)} I_1(h').\eeq

We will achieve this as follows. For every $h' \in B_\infty (h_{\ast},  \eps)\cap H(\la, \mu),$  and $\de' > 0$, we will produce $h^{\#} \in B_\infty (h',  C \de')\cap H(\la, \mu),$ such that $h^{\#} \in C^2(T)$ and  $I_1(h^{\#}) < I_1(h') + \de'.$
Let $M$ be chosen such that $|\partial^2 \a(z)|$ and  $|\partial^2\b(z)|$ belong to $[\frac{1}{M}, M]$ for all $0 < z < 1$. Let $\de$ be such that  $\frac{1}{16M} > \de > 0$.
Let $$h^{(1)} := (1 - 2M\de) h' + \ell,$$ where $\ell$ is an affine function chosen so that \ben
\item[A.] $\partial_y \ell + (1 - 2M\de)\partial^- \a(1) = 0,$ and \item[B.] $\partial_x\ell + (1 - 2M\de)\partial^+ \b(0) = 0,$ and \item[C.] $\ell(0, 1) + (1 - 2M\de)h'(0, 1)= 0.$
\een
It is clear that $h^{(1)} - \ell \in B_\infty (h',  C \de)$ because $h'$ is bounded.
The reason we define $h^{(1)}$ with the multiplicative factor of $(1 - 2M\de)$  is that we will  need to perform various operations on it that transform it into a $C^2$ hive $h^{(3)}$ but in the process alter the boundary conditions a little.  In the final step, we will add a hive $\tilde{h}$ to $h^{(3)}$ to give rise to a strongly rhombus concave function $h^\#$ with exactly the right boundary conditions.
In order for the boundary values of $h^{(3)}$ to be such that  adding a suitable rhombus concave function makes the boundary values right,  we have to work with $h^{(1)}$ rather than $h$ in the first place. The additive term of $\ell$ ensures that derivatives at the point $(0, 1)$ are zero, and the value is $0$ as well. This is essential to make the rhombus inequalities hold in $h^{(2)}$ defined below. This is delicate because from left to right, across the vertical line
$\{(x, y)| x = \de,  y \leq 1-\de\},$ there can be a  jump discontinuity in $\partial_x^{-}h^{(2)}$ and from bottom to top,  across the horizontal line
$\{(x, y)| y = 1 - \de,  x \geq \de\},$ there can be a  jump discontinuity in $\partial_y^{-}h^{(2)}.$

Next, define $h^{(2)}$ on $T +_m  \left([0, \de]\times [-\de, 0]\right)$ (where $+_m$ signifies Minkowski sum) as follows. 
\ben
\item For $(x, y) \in F_1(\de):= (T + (\de, - \de))$, let $h^{(2)}(x, y) = h^{(1)}(x - \de, y + \de).$ 
\item For $(x, y) \in  F_2(\de):= [\de, 1 + \de] \times [1 - \de, 1],$  let $h^{(2)}(x, y) =  h^{(1)}(x -\de, 1)$. 
\item For $(x, y) \in F_3(\de) := [0, \de] \times [1-\de, 1],$ let $h^{(2)}(x, y) = 0.$ 
\item For $(x, y) \in F_4(\de):= [0, \de] \times [-\de, 1 - \de],$  let $h^{(2)}(x, y) = h^{(1)}(0, y + \de)$.
\een
$h^{(2)}|_T \in B_\infty (h^{(1)},  C \de)$ because $h^{(1)}$ is Lipschitz.
\begin{claim} The function $h^{(2)}$ is rhombus concave.
\end{claim}
\begin{proof}
We first consider rhombi straddling the  boundary of $F_1(\de)$ or $F_3(\de)$. The rhombi that do not straddle these boundaries can be subdivided into rhombi of the same shape, each of which is entirely contained in one of the $F_i(\de)$,  and which hence satisfy the corresponding rhombus constraint.  For rhombi contained in $F_3(\de) \cup F_2(\de)$, we see that since in this region the value of $h^{(2)}$ is independent of the $x$ coordinate,  and is concave with respect to the $y$ coordinate, the rhombus inequalities are satisfied. The same argument holds for $F_3(\de)\cup F_4(\de).$  Let us now consider the boundary of $F_1(\de).$ There are only two nontrivial cases: $e_0 \in E_0(T_n)$ intersecting $\{\de\}\times [- \de, 1 -\de]$ and $e_2 \in E_2(T_n)$ intersecting $[\de, 1 + \de] \times \{1 - \de\}$  (see Figure~\ref{fig:C2hive1}). These two are analogous, so we only handle the former.  Let the vertices of $e_0$ be $v_0, v_1, v_2, v_3$ starting from the left, and going in anti-clockwise order.
Then, $h^{(2)}(v_0) = h^{(2)}(v_1)$ by construction.  However,  $h^{(1)}$ is rhombus concave, and therefore in view of B.  above,  $h^{(2)}(v_2) \leq h^{(2)}(v_3).$ This proves the claim.
\end{proof}
Recall from Definition~\ref{def:27}, that $\theta$ supported on $[0, 1]^2$ is given for $(x, y) \in [0, 1]^2$ by $$\theta(x, y) :=  \frac{x^4(1-x)^4y^4(1-y)^4}{\int\limits_{(x, y) \in [0, 1]^2}x^4(1-x)^4y^4(1-y)^4 dxdy}$$ and elsewhere by $0$,  and $\theta_\de(x, y) = \de^{-2}\theta(\frac{x}{\de}, \frac{y}{\de}),$ for all $(x, y) \in \R^2.$ We define $\bar{\theta}_\de(x, y):= \theta_\de(-x, y).$
\ben
\item[($\star$)] Let $\de_1$ satisfying $\de^2 > \de_1 > 0$ be such that for any  point $z \in [0, 1]$, if $z' \in [0, 1]$ is such that $|z - z'| < \de_1$, then $|\partial^2 \a(z) - \partial^2\a(z')| < \frac{\de}{256},$ and $|\partial^2\b(z) - \partial^2\b(z')| < \frac{\de}{256}.$ Note that this implies that there exists a  modulus of continuity $\omega_{\partial^2\a}$ of $\partial^2\a$,  that satisfies $\omega_{\partial^2\a}(\de_1) < \frac{\de}{256}.$ An identical statement holds for $\partial^2\beta$.
\een
Next, we define $h^{(3)}$ on $T$ by \beq h^{(3)}(x, y) = (h^{(2)} \ast \bar{\theta}_{\de_1}) - \frac{\de}{16}\left(x^2 + y^2 - xy\right) + \ell',\lab{eq:sharp} \eeq where $h^{(2)}$ is viewed as a function from $T + ([0, \de]\times [-\de, 0])$ to $\R,$ and $\ell'$ is a linear function chosen so that the value taken by $h^{(3)}$ on $(0, 0)$, $(0, 1)$ and $(1, 1),$ equals $0$. 

Now, because $\theta$ is $C^2$ and $- \frac{\de}{16}\left(x^2 + y^2 - xy\right)$ is strongly rhombus concave, $h^{(3)}$ is a strongly rhombus concave hive.  We see that $h^{(2)} $ restricted to $T$ is a hive.  Further, in a $\de$ neighborhood of the vertical boundary it is constant along horizontal lines, and in a $\de$ neighborhood of the horizontal boundary, it is constant along vertical lines.

Also,  $h^{(3)} -\ell'  \in B_\infty (h^{(2)},  C \de)$ because $h^{(2)}$ is Lipschitz, and the convolution of a Lipschitz function with $\theta$ is $C\de'$-close to the given Lipschitz function in the $L^\infty$ norm.
\begin{claim} $\tilde{h}$ given by  $\tilde{h}(x, y) = (\a(y) - h^{(3)}(0, y)) + (\b(x) - h^{(3)}(x, 1)),$ is a hive.
\end{claim}
\begin{proof}
 Since, $\tilde{h}(x, y) = (\a(y) - h^{(3)}(0, y)) + (\b(x) - h^{(3)}(x, 1)),$ it suffices to show that $(\a(y) - h^{(3)}(0, y))$ is a concave function of $y$ and $(\b(x) - h^{(3)}(x, 1))$ is a concave function of $x$. By ($\star$), and  the equalities $\partial_x^2 (h^{(2)} \ast \bar{\theta}_{\de_1}) = (\partial_x^2 h^{(2)}) \ast \bar{\theta}_{\de_1} $ and $\partial_y^2 (h^{(2)} \ast \bar{\theta}_{\de_1}) = (\partial_y^2 h^{(2)}) \ast \bar{\theta}_{\de_1},$ we see that $\tilde{h}$ is a hive (the $\de/16$ in the definition of $h^{(3)}$ could have been replaced by any sufficiently small multiple of $\de$).
\end{proof}
We see that
   $h^\# = h^{(3)} + \tilde{h}$ has the right boundary conditions $\a$ and $\b$ and is a hive.
 We see that $h^\# \in B_\infty (h^{(3)},  C \de)$ because $\tilde{h}$ has a suitably small $L^\infty$ norm.
 We have thus shown that \beq h^{\#} \in B_\infty (h',  C \de')\cap H(\la, \mu).\lab{eq:hsharp} \eeq if $\de$ is chosen to be less than  $\de'.$
It remains to be shown that for sufficiently small $\de$, $$I_1(h^\#) < I_1(h') + \de'.$$
Recall that for $h^\# \in H(\la, \mu; \nu^\#)$, we have \beq e^{-I_1(h^\#)} = \left(\frac{1}{V(\la)V(\mu)}\right)V(\nu^\#) \exp\left(- \int_T 2\sigma((-1)\hess h^\#(x))\Leb_2(dx)\right).\lab{eq:sharp2}\eeq 
We will study the contributions to $\int_T \sigma((-1)\hess h^\#(x))\Leb_2(dx)$ coming from the regions $F_1, F_2, F_3$ and $F_4$ separately, and also obtain bounds for $V(\nu^\#)$.
\ben
\item[(I)] 
The second inequality below uses the fact that 
when a function $h^{(2)}$ is convolved with an  approximate identity $\bar{\theta}_{\de_1}$, the Hessian of $h^{(3)}$ equals the convolution of the Hessian of $h^{(2)}$ with $\bar{\theta}_{\de_1}.$ By the convexity of $\sigma$, and the fact that it monotonically decreases along any ray emanating from the origin,  contained in $\R^3_{> 0},$ we have (taking into account boundary effects) the following.
\beqs \int\limits_{T\cap F_1(\de)} \sigma((-1)\hess h^\#(x))\Leb_2(dx) & \leq &  \int\limits_{T \cap F_1(\de)} \sigma((-1)\hess h^{(3)}(x))\Leb_2(dx)\\
 & \leq &  \int\limits_{F_1(\de)} \sigma((-1)\hess h^{(2)}(x))\Leb_2(dx) + O\left(\de \log\left(\frac{1}{\de}\right)\right)\\
& \leq &  \int\limits_T \sigma((-1)\hess h^{(1)}(x))\Leb_2(dx) + O\left(\de \log\left(\frac{1}{\de}\right)\right).\eeqs

Therefore,
$$\lim\sup\limits_{\de \ra 0} \int\limits_{F_1(\de) \cap T} \sigma((-1)\hess h^\#(x))\Leb_2(dx) \leq  \int_{T} \sigma((-1)\hess h'(x))\Leb_2(dx).$$

\item[(II)] The fact that $\b$ is $C^2$ implies that 
$$\lim\limits_{\de \ra 0} \int\limits_{F_2(\de) \cap T} \sigma((-1)\hess h^\#(x))\Leb_2(dx) = 0.$$
\item[(III)] The fact that $h^\#$ is Lipschitz, together with the fact that the side length of the square region $F_3(\de)$ is $\de$, gives us that 
$$\lim\limits_{\de \ra 0} \int\limits_{F_3(\de)\cap T} \sigma((-1)\hess h^\#(x))\Leb_2(dx) = 0.$$
\item[(IV)] The fact that $\a$ is $C^2$ implies that 
$$\lim\limits_{\de \ra 0} \int\limits_{F_4(\de)\cap T} \sigma((-1)\hess h^\#(x))\Leb_2(dx) = 0.$$
\een

Lastly,  we need to analyze $V(\nu^\#).$ Let $v = (1, 1) \in \R^2.$ Then, $\nu^\#$ is a monotonically decreasing function on $[0, 1]$ and $\nu^\#(z)$  equals $\partial_v h^\#(z, z),$ for all $z \in (0, 1)$. The rhombus inequalities imply that $\partial_v h'$ is a monotonically decreasing function along line segments parallel to $v$, defined almost everywhere, and the following ``interlacing'' inequalities hold. Let $u = (1,  0)$. Then, for any two points $z_1 \in T$ and $z_2 \in T$ such that $z_2 - z_1$ is a positive multiple of $u$, then the  directional derivatives satisfy
\beq \lab{eq:int:1} \partial_v^+ h'(z_2) & \leq & \partial_v^+ h'(z_1)\\
\lab{eq:int:1.5}\partial_v^- h'(z_2) & \leq  & \partial_v^- h'(z_1).\eeq Similarly,
if $w = (0,  1)$,  then, for any two points $z_1 \in T$ and $z_2 \in T$ such that $z_2 - z_1$ is a positive multiple of $w$, then the directional derivatives satisfy
\beq \lab{eq:int:2} \partial_v^+ h'(z_2) & \leq & \partial_v^+ h'(z_1),\\
\lab{eq:int:2.5}\partial_v^- h'(z_2) & \leq & \partial_v^- h'(z_1).\eeq

Recall from Definition~\ref{def:32} that  $$V(\nu^\#) = \exp\left(2\int_{T\setminus\{(t, t)|t \in [0, 1]\}}\log\left(\frac{\nu^\#(x) - \nu^\#(y)}{x-y}\right)dxdy\right).$$

Because $$\int\limits_{T\cap\{(t, t')|0<|t-t'| < \de - \de_1\}}\log\left(\frac{\nu'(x) - \nu'(y)}{x-y}\right)dxdy < O(\de \log \frac{1}{\de}),$$ 
and
$$\int\limits_{T\cap\{(t, t')|0<|t-t'| < \de \}}\big|\log\left(\frac{\nu^\#(x) - \nu^\#(y)}{x-y}\right)\big|dxdy < O(\de \log \frac{1}{\de})$$ hold true, 
in order to show 
$$\int\limits_{T}\log\left(\frac{\nu^\#(x) - \nu^\#(y)}{x-y}\right)dxdy$$ is greater or equal to $$\int\limits_{T}\log\left(\frac{\nu'(x) - \nu'(y)}{x-y}\right)dxdy - C\de\log\left(\frac{1}{\de}\right),$$ where $C$ is a positive constant independent of $\de,$ it suffices to prove the following claim.

\begin{claim}\lab{claim:8}
$$\int\limits_{T\setminus\{(t, t')||t-t'| < \de\}}\log\left(\frac{\nu^\#(x) - \nu^\#(y)}{x-y}\right)dxdy$$ is greater or equal to $$\int\limits_{T\setminus\{(t, t')||t-t'| < \de - \de_1\}}\log\left(\frac{\nu'(x) - \nu'(y)}{x-y}\right)dxdy - C^\#\de\log\left(\frac{1}{\de}\right),$$ where $C^\#$ is a positive constant independent of $\de.$
\end{claim}
\begin{proof}
We observe that $$\int\limits_{T\setminus\{(t, t')||t-t'| < \de\}}\log\left(\frac{\nu^\#(x) - \nu^\#(y)}{x-y}\right)dxdy$$ is greater than 
$$\int\limits_{T\setminus\{(t, t')||t-t'| < \de\}}\log\left(\frac{\nu^{(3)}(x) - \nu^{(3)}(y)}{x-y}\right)dxdy$$ by pointwise domination.
We also claim that 
\begin{claim}
$$\int\limits_{T\setminus\{(t, t')||t-t'| < \de\}}\log\left(\frac{\nu^{(3)}(x) - \nu^{(3)}(y)}{x-y}\right)dxdy$$ is greater or equal to 
$$\int\limits_{T\setminus\{(t, t')||t-t'| < \de - \de_1\}}\log\left(\frac{\nu^{(2)}(x) - \nu^{(2)}(y)}{x-y}\right)dxdy - C^{(3)} \de\log\left(\frac{1}{\de}\right)$$ 
\end{claim}
\begin{proof}
Let $q(x, y) := x^2 + y^2 - xy.$ Note that $$h^{(3)} = (h^{(2)} - \frac{\de}{16} q) \ast \bar{\theta}_{\de_1}+ a',$$ for a suitable affine function $a'$. For a given $\eps_1$, let $(\de_x, \de_y) \in  [0, \de_1]\times [-\de_1, 0]$.
Define the hive $\check{h}:  T  \ra \R$ by  $\check{h}(x, y) := (h^{(2)} - \frac{\de}{16} q)(x + \de_x,  y + \de_y) + \check{a}(x, y),$ where $\check{a}$ is chosen to 
make the value of $\check{h}$ on the vertices of $T$ to be $0$, and let $\check{\la}, \check{\mu}$ and $\check{\nu}$ be such that $\check{h} \in H(\check{\la}, \check{\mu}; \check{\nu}).$ As a functional on the space of strongly decreasing functions from $[0, 1]$ to $\R$, $V$ is log concave. Therefore, to prove this claim, it suffices to show that when  $\de$ is sufficiently small, for all $(\de_x, \de_y) \in [0, \de_1] \times [-\de_1, 0]$, 
$$V(\nu^{(2)}) \leq V(\check{\nu})\exp(C\de \log \frac{1}{\de}).$$ To see this, 
note that for all $x < - 100\de_1 + y$, by the rhombus concavity of $h^{(2)}$, $$\check{\nu}(x)  - \check{\nu}(y) \geq \nu^{(2)}(x + 10 \de_1) - \nu^{(2)}(y - 10\de_1).$$
We assume that \beq \lab{eq:M'1} \nu^{(2)}(0) \leq \frac{M'}{2}\eeq and \beq \lab{eq:M'2} \nu^{(2)}(1) \geq - \frac{M'}{2}.\eeq

However, because of the presence of the term $ - \frac{\de}{16} q$ in the expression for $\check{h}$, $$\bigg|\int\limits_{\substack{(x, y) \in T\\ |x - y| < 100\de_1}} \log (\check{\nu}(x)  - \check{\nu}(y))dx dy\bigg| < C\de_1 \log \frac{1}{\de}.$$ 
Let $R := T \cap (\{x \leq 10 \de_1\} \cup \{y \geq 1 - 10 \de_1\} \cup\{|x - y| < 80 \de_1\}).$ By (\ref{eq:M'1}) and (\ref{eq:M'2}),
 $$\int\limits_R \log (\nu^{(2)}(x)  - \nu^{(2)}(y))dx dy  <  C(\log M') \de_1.$$

Thus, when $\de$ is sufficiently small, 
for all $(\de_x, \de_y) \in [0, \de_1]\times[-\de_1, 0]$, 
$V(\nu^{(2)}) \leq V(\check{\nu})\exp(C\de \log \frac{1}{\de}),$ which implies the claim.
\end{proof}

Also, using  the fact that 
the bands near the boundary that are not contained in $F_1(\de)$ have thickness $O(\de),$ we see that
$$\int\limits_{T\setminus\{(t, t')||t-t'| < \de - \de_1\}}\log\left(\frac{\nu^{(2)}(x) - \nu^{(2)}(y)}{x-y}\right)dxdy - C^{(3)} \de\log\left(\frac{1}{\de}\right)$$
is greater or equal to
$$\int\limits_{T\setminus\{(t, t')||t-t'| < \de - \de_1\}}\log\left(\frac{\nu'(x) - \nu'(y)}{x-y}\right)dxdy - C'\de\log\left(\frac{1}{\de}\right).$$
\end{proof}
Claim~\ref{claim:8} and the discussion immediately preceding it imply that $$\lim\inf_{\de \ra 0} V(\nu^\#) \geq V(\nu').$$

Together with $(I), (II), (III)$ and $(IV)$ above, this implies that for sufficiently small $\de$, $$I_1(h^\#) < I_1(h') + \de',$$  and we are done.

\end{proof}

\begin{lemma}\lab{lem:upperII}
For all $h \in H(\la, \mu),$
\beq   \lim\limits_{\eps \ra 0} \inf\limits_{ h' \in B_\infty(h, \eps)\cap H(\la, \mu)}  I_1(h') = I_1(h).\lab{eq:tpt1}\eeq
\end{lemma}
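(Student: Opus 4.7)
The inequality $\lim_{\eps \ra 0} \inf_{h' \in B_\infty(h, \eps) \cap H(\la, \mu)} I_1(h') \leq I_1(h)$ is immediate, since $h$ itself lies in $B_\infty(h, \eps)$ for every $\eps > 0$. The content of the lemma is therefore the reverse inequality, which is precisely the statement that $I_1$ is lower semicontinuous at $h$ in the $L^\infty$ topology, equivalently that $\J$ is upper semicontinuous at $h$.

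The plan is to split into two cases. In the case $I_1(h) < \infty$ (equivalently $\J(h) > 0$), the conclusion reduces directly to Lemma~\ref{lem:38.5}: applying that lemma with $\de' = \J(h)/2$ and arbitrary $\eps_9 > 0$ produces $N$ such that $\|h' - h\|_\infty < 1/N$ forces $\J(h') < \J(h)\exp(\eps_9)$, hence $I_1(h') > I_1(h) - \eps_9$; taking $\eps < 1/N$ and then $\eps_9 \ra 0$ completes this case.

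The case $I_1(h) = \infty$, equivalently $\J(h) = 0$, is not covered by Lemma~\ref{lem:38.5} and is the main obstacle. Here I would argue by contradiction: assume $h_n \ra h$ in $L^\infty$ with $I_1(h_n) \leq M < \infty$, so that $\J(h_n) \geq \exp(-M) V(\la) V(\mu) > 0$. Since $\sigma \geq -\log(2e)$ by Lemma~\ref{lem:2.8}, this forces both $V(\nu_n) \geq c_1 > 0$ and $\int_T \sigma(\hess h_n) \leq C_1 < \infty$ for constants depending on $M$. Because $\J(h) = 0$, at least one of $V(\nu) = 0$ or $\int_T \sigma(\hess h) = +\infty$ must hold. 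In the first subcase, I would invoke upper semicontinuity of the Vandermonde $V$ under uniform convergence of the diagonal trace $\g_n \ra \g$: rewriting $-\log V(\nu)$ as a constant plus the logarithmic energy of the pushforward measure $\nu_\# \Leb_{[0,1]}$, the pointwise a.e. convergence $\nu_n \ra \nu$ (a standard consequence of uniform convergence of concave $\g_n \ra \g$) yields weak convergence of pushforward measures, and the classical lower semicontinuity of logarithmic energy gives $V(\nu) \geq \limsup V(\nu_n) \geq c_1 > 0$, contradicting $V(\nu) = 0$. In the second subcase, I would invoke lower semicontinuity of the convex integral functional $h' \mapsto \int_T \sigma(\hess h')$: since $\sigma$ is convex by Corollary~\ref{cor:2.10} and bounded below, and since Hessian measures of concave functions $h_n \ra h$ in $L^\infty$ converge weakly to the Hessian measure of $h$, a standard lower semicontinuity theorem for convex integrands yields $\int_T \sigma(\hess h) \leq \liminf \int_T \sigma(\hess h_n) \leq C_1$, contradicting $\int_T \sigma(\hess h) = +\infty$.

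A cleaner and more self-contained alternative for the second case would be to route the argument through the dyadic approximations $\J_a$ of Definition~\ref{def:34}. By Jensen's inequality applied to the convex function $\sigma$, one has $\J_a(h') \geq \J(h')$ and $\J_a$ is nonincreasing in $a$ with $\J_a(h') \ra \J(h')$ by Corollary~\ref{cor:24}. For each fixed $a$, $\J_a(h')$ depends on $h'$ only through finitely many box integrals of $\hess h'$---expressible via Green's theorem in terms of directional derivatives at the dyadic partition points---together with the diagonal trace, so an extension of Lemma~\ref{lem:38} beyond its strongly-concave hypothesis, combined with control of the atomic contributions on the dyadic skeleton as in Lemma~\ref{lem:37}, yields upper semicontinuity of $\J_a$ at arbitrary $h$. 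Then $\limsup_{h' \ra h} \J(h') \leq \limsup_{h' \ra h} \J_a(h') \leq \J_a(h)$ for every $a$, and letting $a \ra \infty$ gives $\limsup_{h' \ra h} \J(h') \leq \J(h)$, which is exactly the lower semicontinuity of $I_1$ we need.
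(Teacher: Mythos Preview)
Your treatment of the easy inequality and of the case $I_1(h) < \infty$ is correct and cleaner than the paper's: you invoke Lemma~\ref{lem:38.5} directly at $h$, which gives upper semicontinuity of $\J$ there in one stroke.

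The paper organizes the argument differently. Rather than casing on $I_1(h)$, it cases on whether $M := \lim_{\eps \ra 0}\inf_{h'\in B_\infty(h,\eps)} I_1(h')$ is finite. When $M = +\infty$ there is nothing to prove. When $M < \infty$, the paper extracts a sequence $h^{(k)} \ra h$ with $I_1(h^{(k)}) \downarrow M$, so that all $h^{(k)}$ lie in $\La = \{\J \geq \de'\}$ for suitable $\de'>0$; it then cites Lemma~\ref{lem:38.5} for upper semicontinuity of $\J$ on $\La$ together with compactness of $\La$ in $L^\infty(T)$, and concludes $I_1(h) \leq M$. Thus the paper never isolates the situation $\J(h)=0$ for separate treatment; that possibility is absorbed into the single compactness assertion for $\La$.

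Your two proposals for the case $I_1(h) = +\infty$ are both incomplete as written. In the first, the inequality $\int_T \sigma(\hess h) \leq \liminf_n \int_T \sigma(\hess h_n)$ is not a standard fact you can simply cite: $\sigma$ is evaluated on the density of the \emph{absolutely continuous part} of a vector-valued Radon measure, and under weak convergence of Hessian measures mass can pass between the absolutely continuous and singular parts, so lower semicontinuity of such a functional requires structural hypotheses on $\sigma$ (of recession-function type) that you have not checked. The second proposal explicitly asks to extend Lemma~\ref{lem:38} and Corollary~\ref{cor:24} past their hypothesis $\inf_T\min_i((-1)D_i h)_{ac}>0$; that extension is the whole difficulty, not a detail, and you have not carried it out.
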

\begin{proof}
 This follows immediately from Lemma~\ref{lem:38.5},  which states that $\J$ is upper semicontinuous on $H(\la, \mu).$
\end{proof}
\begin{thm}\lab{thm:5new}
Suppose  $\la, \mu$ are strongly decreasing $C^1$ functions.
Given a discrete hive $h_n \in H_n(\la_n, \mu_n; \nu_n)$, we define $h' = \iota_n(h_n)$ to be the  piecewise linear extension of $h_n$. This is a map $h'$ from $T$ to $\R$ such that for any $(x, y) \in T \cap \left(\Z/n \times \Z/n\right)$,  
$h'(x, y) = h_n(nx, ny)/n^2$, that is linear on right triangles with vertices of the form $\{(\frac{i}{n}, \frac{j}{n}), (\frac{i}{n}, \frac{j+1}{n}),  (\frac{i+1}{n}, \frac{j+1}{n})\}$ for $i \leq j$ and $i+1,  j+ 1 \in[n].$
For any Borel set $E\subset L^\infty(T)$, let $\PP_n(E) := \p_n[\iota_n(h_n) \in E].$
Let $a_n := \frac{n^2}{2}.$
For each Borel measurable set 
${\displaystyle E\subset L^\infty(T),}$ 
$${\displaystyle -\inf_{h\in E^{\circ }}I_1(h)\leq \liminf_{n \ra \infty}a_{n}^{-1}\log(\mathcal {P}_{n}(E))\leq \limsup_{n \ra \infty}a_{n}^{-1}\log(\mathcal {P}_{n}(E))\leq -\inf_{h\in {\overline {E}}}I_1(h).}$$
\end{thm}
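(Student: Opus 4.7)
The plan is to bootstrap the ball-by-ball asymptotics of Theorem~\ref{thm:8} into a full LDP, using Lemma~\ref{lem:upperII} and the compactness, in $L^\infty(T)$, of the space of Lipschitz concave functions with prescribed vertical and horizontal boundary values $\a,\b$. Write $\tilde H$ for this compact set (compactness comes from the Bronshtein covering estimate recalled in Section~\ref{sec:prelim}). Every $\iota_n(h_n)$ lies in $\tilde H$ by the rhombus inequalities, and $H(\la,\mu)\subset \tilde H$ is closed; exponential tightness is therefore automatic. A routine Lipschitz estimate shows that for any $h_\ast\in H(\la,\mu)$ and any $\eps>\eps'>0$, for all large enough $n$ one has
\[
\bigl\{h_n: h_n \in B_\infty^{n+1\choose 2}(L_n(h_\ast), n^2\eps')\bigr\} \;\subseteq\; \bigl\{h_n : \iota_n(h_n) \in B_\infty(h_\ast,\eps)\bigr\} \;\subseteq\; \bigl\{h_n: h_n \in B_\infty^{n+1\choose 2}(L_n(h_\ast), n^2\eps)\bigr\},
\]
so the discrete and continuous balls are interchangeable in the asymptotic statements below.

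For the lower bound, fix $h\in E^\circ$ with $I_1(h)<\infty$ and choose $\eps>0$ with $B_\infty(h,\eps)\subset E^\circ$. Theorem~\ref{thm:8} applied with $h_\ast = h$, combined with the above ball comparison, gives
\[
\liminf_{n\to\infty} a_n^{-1}\log \PP_n(E) \;\geq\; \lim_{n\to\infty} a_n^{-1}\log \p_n\bigl[h_n\in B_\infty^{n+1\choose 2}(L_n(h),n^2\eps/2)\bigr] \;=\; -\!\!\!\inf_{h'\in B_\infty(h,\eps/2)\cap H(\la,\mu)}\!\!\! I_1(h') \;\geq\; -I_1(h),
\]
using $h\in B_\infty(h,\eps/2)\cap H(\la,\mu)$. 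Taking the supremum over $h\in E^\circ$ delivers the stated lower bound.

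For the upper bound, let $E$ be closed, set $m := \inf_{h\in \bar E}I_1(h)$, and fix $\de\in(0,\min(m,1))$ (the cases $m=0$ and $m=+\infty$ are respectively trivial or obtained by sending $\de$ appropriately after the argument below). For each $h\in \bar E\cap \tilde H$: if $h\in H(\la,\mu)$, Lemma~\ref{lem:upperII} supplies $\eps_h>0$ with $\inf_{h'\in B_\infty(h,\eps_h)\cap H(\la,\mu)}I_1(h') > m-\de$; if $h\notin H(\la,\mu)$, the closedness of $H(\la,\mu)$ in $L^\infty(T)$ furnishes $\eps_h>0$ with $B_\infty(h,\eps_h)\cap H(\la,\mu) = \emptyset$, making the infimum $+\infty$. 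By compactness of $\bar E\cap \tilde H$, extract a finite subcover $\{B_\infty(h_i,\eps_{h_i})\}_{i\leq N}$. A union bound together with Theorem~\ref{thm:7} applied to each ball (again via the ball comparison) yields
\[
\limsup_{n\to\infty} a_n^{-1}\log \PP_n(E) \;\leq\; \max_{i\leq N}\left(-\inf_{h'\in B_\infty(h_i,\eps_{h_i})\cap H(\la,\mu)}I_1(h')\right) \;\leq\; -(m-\de),
\]
and letting $\de\to 0$ closes the argument. The real work is the already-proven machinery: Theorem~\ref{thm:8} (whose equality relies on the $C^1$ strong concavity hypothesis on $\la,\mu$ to reconcile the $C^2$-hive lower bound of Theorem~\ref{thm:6} with the $C^0$-hive upper bound of Theorem~\ref{thm:7}) and the upper semicontinuity encoded in Lemma~\ref{lem:upperII}; without these, the matching upper and lower rates needed for an LDP of this form would not be available.
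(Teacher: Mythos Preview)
Your proposal is correct and follows essentially the same route as the paper: you bootstrap the ball asymptotics of Theorem~\ref{thm:8} into the full LDP via Lemma~\ref{lem:upperII} and a finite-subcover argument based on compactness of the relevant hive space. The only cosmetic differences are that you argue the upper bound directly rather than by contradiction, and you invoke Theorem~\ref{thm:7} (which suffices) rather than the full equality of Theorem~\ref{thm:8} for the ball upper bounds; your handling of the edge cases $h\notin H(\la,\mu)$ and $m\in\{0,+\infty\}$ is in fact slightly more explicit than the paper's.
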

\begin{proof}
For each Borel measurable set $E$, by covering $E^o$ using $L^\infty$ balls, it follows from Theorem~\ref{thm:8} that  $$-\inf_{h\in E^{\circ }}I_1(h)\leq \liminf_{n \ra \infty}a_{n}^{-1}\log(\mathcal {P}_{n}(E)).$$
Next, suppose for the sake of contradiction, that $$\limsup_{n \ra \infty}a_{n}^{-1}\log(\mathcal {P}_{n}(E)) = -\inf_{h\in {\overline {E}}}I_1(h) + \eps'$$ for some positive $\eps'$.

For all $h' \in \overline {E}$, such that $I_1(h') < + \infty$, let $\de_{h'}$ denote a positive real such that $- \inf_{h'' \in B_\infty(h', \de_{h'})} I_1(h'') < - I_1(h') + \eps'/2.$ The existence of such a positive real is guaranteed by Lemma~\ref{lem:upperII}. Since $\overline E \cap H(\la, \mu)$ (by Arzel\`{a}-Ascoli, in view of the Lipschitz norm being bounded and hence the family being equicontinuous) is compact, there is a finite subset $\La'$ of $\overline E \cap H(\la, \mu) $ such that the union of balls $ B_\infty(h', \de_{h'})$ over all $h' \in \La'$ covers $\overline E \cap H(\la, \mu)$. We see that \beqs \limsup_{n \ra \infty}a_{n}^{-1}\log(\mathcal {P}_{n}(E)) & \leq & \limsup_{n \ra \infty}a_{n}^{-1}\log\left(\mathcal {P}_{n}\left(\bigcup_{h \in \La'} B_\infty(h', \de_{h'})\right)\right)\\ & = & - \inf_{h' \in \La'}\left(\inf_{h \in B_\infty(h', \de_{h'})}I_1(h)\right)\\
& \leq & - \inf_{h' \in \overline{E}}I_1(h') + \eps'/2.\eeqs
This however contradicts $$-\inf_{h\in {\overline {E}}}I_1(h) + \eps' =  \limsup\limits_{n \ra \infty}a_{n}^{-1}\log(\mathcal {P}_{n}(E)).$$
Therefore we must have 
$$\limsup_{n \ra \infty}a_{n}^{-1}\log(\mathcal {P}_{n}(E)) \leq  -\inf_{h\in {\overline {E}}}I_1(h),$$ and we are done.
\end{proof}

\section{Large deviations for the spectrum of the sum of two random matrices}\lab{sec:8}

For a concave function $\g':[0, 1] \ra \R$ such that $\nu' = \partial^-\gamma'$, we define the rate function \beq\lab{eq:I} I(\g') :=  \log\left(\frac{V(\la)V(\mu)}{V(\nu')}\right) + \inf\limits_{h' \in H(\la, \mu; \nu')}  \int_T 2\sigma((-1)(\hess h')_{ac})\Leb_2(dx) .\eeq
\begin{thm} \lab{thm:4} 
Let $X_n, Y_n$ be independent random Hermitian matrices from unitarily invariant distributions with spectra $\la_n$,
$\mu_n$  respectively. 
Let $Z_n = X_n + Y_n$.
For any $\eps > 0$,
$$\lim\limits_{n \ra \infty}\left(\frac{-2}{n^2}\right)\log \p_n\left[\spec(Z_n) \in {B}_\I^n(\nu_n, \eps)\right] = \inf\limits_{\partial^-\gamma'\in {B}_\I(\nu, \eps)}I(\g').$$
\end{thm}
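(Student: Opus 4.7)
The plan is to derive Theorem~\ref{thm:4} directly by combining the matching upper and lower bounds furnished by Lemma~\ref{lem:44-new} (lower bound) and Lemma~\ref{lem:55-new} (upper bound). Both lemmas bracket $\p_n[\spec(Z_n)\in B_\I^n(\nu_n,\eps)]^{2/n^2}$, as $n\to\infty$, by the same expression
\beqs
\sup_{\nu' \in B_\I(\nu,\eps)} \sup_{h' \in H(\la,\mu;\nu')} \frac{\J(h')}{V(\la)\,V(\mu)},
\eeqs
so the $\limsup$ and $\liminf$ of $(2/n^2)\log\p_n$ already coincide.

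Taking $\log$ (which is monotone and continuous on the positive reals, and hence commutes with $\liminf$ and $\limsup$) and multiplying by $-1$, I would rewrite the common value as
\beqs
\lim_{n\to\infty}\left(\frac{-2}{n^2}\right)\log\p_n\bigl[\spec(Z_n) \in B_\I^n(\nu_n,\eps)\bigr]
= \inf_{\nu' \in B_\I(\nu,\eps)}\;\inf_{h' \in H(\la,\mu;\nu')} \log\!\left(\frac{V(\la)\,V(\mu)}{\J(h')}\right).
\eeqs
Next, I would unpack the definition of $\J(h') = V(\nu')\exp\!\left(-\int_T \sigma(\hess h'(x))\,\Leb_2(dx)\right)$ to decompose
\beqs
\log\!\left(\frac{V(\la)\,V(\mu)}{\J(h')}\right)
= \log\!\left(\frac{V(\la)\,V(\mu)}{V(\nu')}\right) + \int_T \sigma(\hess h'(x))\,\Leb_2(dx).
\eeqs

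Since the first summand on the right depends only on $\nu'$ (and is constant as $h'$ varies over $H(\la,\mu;\nu')$), the inner infimum over $h'$ slides past it, yielding
\beqs
\inf_{h' \in H(\la,\mu;\nu')} \log\!\left(\frac{V(\la)\,V(\mu)}{\J(h')}\right)
= \log\!\left(\frac{V(\la)\,V(\mu)}{V(\nu')}\right) + \inf_{h' \in H(\la,\mu;\nu')} \int_T \sigma(\hess h'(x))\,\Leb_2(dx)
= I(\g'),
\eeqs
by the very definition of $I(\g')$. Taking the outer infimum over $\nu' \in B_\I(\nu,\eps)$ then produces $\inf_{\nu' \in B_\I(\nu,\eps)} I(\g')$, which is the right-hand side of Theorem~\ref{thm:4}.

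Since the matching bounds already use the open ball $B_\I(\nu,\eps)$ in both Lemma~\ref{lem:44-new} and Lemma~\ref{lem:55-new}, no boundary adjustment is needed, and there is no real obstacle at this stage: all the heavy lifting (the Brunn--Minkowski arguments, the submartingale convergence of $\sigma(\tX_a+\tY_a)$ in Corollary~\ref{cor:24}, the covering/concentration estimates of Section~5 and Section~6, and the Vandermonde asymptotics of Lemma~\ref{lem:10} and Lemma~\ref{lem:16}) has been done in proving those two lemmas. The only mildly delicate point worth verifying explicitly in the write-up is that taking $\log$ through the $\liminf$/$\limsup$ is legitimate even when $\sup\sup \J(h')/(V(\la)V(\mu))$ is zero or infinite; in those degenerate cases one can read the identity off directly (with both sides equal to $+\infty$), so the theorem holds in full generality.
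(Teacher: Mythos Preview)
Your proposal is correct and follows essentially the same approach as the paper's proof: combine the matching lower and upper bounds from Lemma~\ref{lem:44-new} and Lemma~\ref{lem:55-new} to conclude that the limit exists and equals the common $\sup\sup$ expression, then take $-\log$ and unpack the definition of $\J$ to recognize the result as $\inf_{\nu'} I(\g')$. The paper's write-up is terser (it simply states the combined identity and says ``The theorem now follows''), but the logical content is identical to what you have written out.
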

\begin{proof}
From Lemma~\ref{lem:44-new} and Lemma~\ref{lem:55-new}, we have the following.
$$\lim\limits_{n \ra \infty}\p_n\left[\spec(Z_n) \in {B}_\I^n(\nu_n, \eps)\right]^\frac{2}{n^2} =$$ $$\sup\limits_{\partial^-\gamma' \in {B}_\I(\nu, \eps)}\sup\limits_{h' \in H(\la, \mu; \nu')} \left(V(\la)V(\mu)\right)^{-1}V(\nu')\exp\left(-\int_T 2\sigma((-1)(\hess h')_{ac})\Leb_2(dx)\right).$$
The theorem now follows.
\end{proof}
We will next prove the following. 

\begin{lemma}\lab{lem:upperI}
For all $\nu = \partial^- \g$, where $\g$ is Lipschitz and concave, and $\g(0) = \g(1) = 0,$
\beq   \lim\limits_{\eps \ra 0} \inf\limits_{\partial^- \g' \in B_\I(\nu, \eps)}  I(\gamma') = I(\g).\lab{eq:tpt}\eeq
\end{lemma}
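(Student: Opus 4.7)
The direction $\lim_{\eps \to 0}\inf_{\partial \g' \in B_\I(\nu,\eps)}I(\g') \le I(\g)$ is trivial, since $\partial^-\g \in B_\I(\nu,\eps)$ for every $\eps > 0$. The content of the lemma is lower semicontinuity of $I$ at $\g$ with respect to the $\|\cdot\|_\I$-topology on derivatives, which (after normalizing $\g'(0) = 0$) is equivalent to uniform convergence of antiderivatives on $[0,1]$.

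I would argue by contradiction. Suppose there exist $\de > 0$ and concave functions $\g_k \colon [0,1] \to \R$ with $\g_k(0) = \g_k(1) = 0$, $\|\partial^-\g_k - \nu\|_\I \to 0$, and $I(\g_k) \le I(\g) - 2\de$ (the case $I(\g) = \infty$ is handled analogously, replacing $I(\g) - 2\de$ by an arbitrarily large $M$). For each $k$, pick a near-minimizing hive $h_k \in H(\la,\mu;\partial^-\g_k)$ with $I_1(h_k) \le I(\g_k) + 1/k \le I(\g) - \de$, so $\J(h_k) \ge e^{-(I(\g)-\de)}V(\la)V(\mu) > 0$ uniformly in $k$. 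The boundary data $\a,\b,\g_k$ are uniformly Lipschitz (using $\g_k \to \g$ uniformly and $\g$ Lipschitz), hence so are the $h_k$, and Arzel\`a--Ascoli extracts a subsequence with uniform limit $h^\ast$. Rhombus concavity is closed under $L^\infty$ limits, so $h^\ast \in H(\la,\mu;\nu)$. A contradiction follows once I show $\J(h^\ast) \ge \limsup_k \J(h_k) > 0$: combined with $h^\ast \in H(\la,\mu;\nu)$, this gives $I_1(h^\ast) \le I(\g) - \de$, while the definition of $I$ forces $I_1(h^\ast) \ge I(\g)$.

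The required upper semicontinuity of $\J(h') = V(\partial^-\g')\exp(-\int_T \sigma(\hess h')\,\Leb_2(dx))$ at $h^\ast$ splits into two pieces. For the Vandermonde factor, Lemma~\ref{lem:16} and Definition~\ref{def:32} give $\log V(\partial^-\g')$ as an explicit double integral; uniform convergence $\g_k \to \g$ combined with Lemma~\ref{lem:39new} (nearness of derivatives outside an arbitrarily small measure set) yields a.e.\ convergence of the integrand $\log((\partial^-\g_k(x) - \partial^-\g_k(y))/(x-y))$, and Fatou's lemma (the integrand is bounded above by a constant determined by the common Lipschitz norm) produces $\limsup_k \log V(\partial^-\g_k) \le \log V(\nu)$. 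For the entropy factor, I use dyadic approximation: by Corollary~\ref{cor:24}, $\int_T \sigma(\hess h^\ast)\,\Leb_2(dx) = \lim_{a\to\infty}\sum_{\kappa\in\D_a}|\kappa|\,\sigma(|\kappa|^{-1}\int_{\kappa^o}\hess h^\ast)$ (after first approximating $h^\ast$ by the strongly rhombus-concave perturbation of Definition~\ref{def:heps1} so that the hypothesis of Corollary~\ref{cor:24} holds, and using Lemma~\ref{lem:41} to transfer the bound back to $h^\ast$), while Jensen's inequality for the convex function $\sigma$ (convexity by Corollary~\ref{cor:2.10}) gives, for every $k$ and $a$,
\[ \sum_{\kappa\in\D_a}|\kappa|\,\sigma\!\left(|\kappa|^{-1}\int_{\kappa^o}\hess h_k\right) \le \int_T \sigma(\hess h_k)\,\Leb_2(dx). \]
Uniform convergence $h_k \to h^\ast$ forces $\hess h_k \to \hess h^\ast$ weakly as vector-valued Radon measures, so at dyadic scales where $\hess h^\ast$ charges no cell boundary, the level-$a$ finite sum is continuous in $k$; letting $k \to \infty$ then $a \to \infty$ delivers $\int_T \sigma(\hess h^\ast) \le \liminf_k \int_T \sigma(\hess h_k)$, i.e.\ upper semicontinuity of the entropy factor, completing the contradiction.

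The main obstacle is precisely this entropy step. Because $\sigma$ diverges logarithmically near $\partial \R^3_+$, $L^\infty$ convergence of hives is not \emph{a priori} enough for lower semicontinuity of $\int_T \sigma(\hess h)$, especially in the presence of a singular part of $\hess h^\ast$. Handling this relies on the bound $-\sigma((\hess h^\ast)_{ac}) \le \log(2e) + \log \mathrm{tr}((\hess h^\ast)_{ac})$ from Lemma~\ref{lem:2.8}, the $L^1$-integrability of $\mathrm{tr}((\hess h^\ast)_{ac})$ from Lemma~\ref{lem:26}, and the strongly rhombus-concave approximation of Definition~\ref{def:heps1}, so that the dyadic/Jensen scheme transfers cleanly to the limit.
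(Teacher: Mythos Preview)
Your approach is correct and is essentially the same as the paper's: both establish the nontrivial inequality by proving that $\J$ is upper semicontinuous on any sub-level set $\{\J \ge \de'\}$, then extract a convergent subsequence of near-optimal hives via Arzel\`a--Ascoli. The paper packages the upper semicontinuity as Lemma~\ref{lem:38.5} and simply cites it, so its proof of the present lemma is a three-line paragraph; you have unfolded the same machinery (Jensen at dyadic scales, weak convergence of the Hessian measures, the strongly rhombus-concave perturbation of Definition~\ref{def:heps1} to meet the hypothesis of Corollary~\ref{cor:24}) inline.

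One small correction: your appeal to Lemma~\ref{lem:39new} for the Vandermonde factor is misplaced, since that lemma assumes the reference function is \emph{strongly} concave, which $\g$ need not be. You do not actually need it. It is a standard fact that if concave functions $\g_k$ converge uniformly to $\g$ on $[0,1]$, then $\partial^-\g_k(x) \to \partial^-\g(x)$ at every point of differentiability of $\g$, hence almost everywhere; this already gives a.e.\ convergence of the integrand $\log\bigl((\partial^-\g_k(x)-\partial^-\g_k(y))/(x-y)\bigr)$, and since the integrand is bounded above by the common Lipschitz constant, Fatou's lemma yields $\limsup_k \log V(\partial^-\g_k) \le \log V(\nu)$ as you claim.
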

\begin{proof}

Since $\partial^-\gamma \in B_\I(\nu, \eps),$ it suffices to show that 
\beq  \lim\limits_{\eps \ra 0} \inf\limits_{\partial^- \g' \in B_\I(\nu, \eps)}  I(\gamma') \geq I(\g).\lab{eq:tpt2}\eeq 
 Suppose that  $$\lim\limits_{\eps \ra 0} \inf\limits_{\partial^- \g' \in B_\I(\nu, \eps)}  I(\gamma') < + \infty.$$ Then, there exists a sequence of functions $\g^{(1)}, \g^{(2)}, \dots $ converging in $L^\infty[0, 1]$ to $\g$, such that 
 $$\lim\limits_{k \ra \infty} I(\g^{(k)})  = \lim\limits_{\eps \ra 0} \inf\limits_{\partial^- \g' \in B_\I(\nu, \eps)}  I(\gamma') < \infty.$$ Let $M:= \lim\limits_{k \ra \infty} I(\g^{(k)}) .$ 
Then, by Lemma~\ref{lem:38.5}, $\J$ is upper semicontinuous on $H(\la, \mu).$

Let $(h^{(k)})_{k \geq 1}$ be a sequence where $h^{(k)} \in H(\la, \mu ; \partial^-\gamma^{(k)})$ is such that for all $k$,  

$I_1(h^{(k)}) - I(\gamma^{(k)}) < \frac{1}{k}.$
We then see that $$\lim_{k \ra \infty} I_1(h^{(k)})  = M.$$
Since $H(\la, \mu)$ is compact in $L^\infty(T)$ by Arzel\`{a}-Ascoli,  $(h^{(i)})_{i \geq 1}$ has a convergent subsequence $(h^{(i_k)})_{k \geq 1},$ whose limit $h$  will belong to $H(\la, \mu; \nu).$ By the upper semicontinuity of $\J$,  $\limsup_k \J(h^{(i_k)}) \leq \J(h).$ This shows that $\liminf_k I_1(h^{(k)}) \geq I_1(h).$ However $I_1(h)$ is greater or equal to $I(\gamma)$ since $h \in H(\la, \mu; \nu).$
If $$\lim\limits_{\eps \ra 0} \inf\limits_{\partial^- \g' \in B_\I(\nu, \eps)}  I(\gamma') = + \infty,$$ there is nothing to prove and we are done.
\end{proof}
\begin{thm}\lab{thm:5}
Let $X_n, Y_n$ be independent random Hermitian matrices from unitarily invariant distributions with spectra $\la_n$,
$\mu_n$  respectively. 
Let $Z_n = X_n + Y_n$.
Given a monotonically decreasing sequence of $n$ numbers $v_n$ whose sum is $0$, we define $\iota_n(v_n)$ to be the unique piecewise linear (the pieces being intervals of the form $(\frac{i-1}{n}, \frac{i}{n})$) concave function $f$ from $[0, 1]$ to $\R$ such
$f(i/n) - f((i-1)/n) = v_n(i)/n^2$ for each $i \in [n]$ , and $f(0) = 0.$
For any Borel set $E\subset L^\infty[0, 1]$, let $\PP_n(E) := \p_n[\iota_n(\spec( Z_n)) \in E].$
Let $a_n := \frac{n^2}{2}.$
For each Borel measurable set 
${\displaystyle E\subset L^\infty[0, 1],}$ 
$${\displaystyle -\inf_{\g\in E^{\circ }}I(\g)\leq \liminf_{n \ra \infty}a_{n}^{-1}\log(\mathcal {P}_{n}(E))\leq \limsup_{n \ra \infty}a_{n}^{-1}\log(\mathcal {P}_{n}(E))\leq -\inf_{\g\in {\overline {E}}}I(\g).}$$
\end{thm}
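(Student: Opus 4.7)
The plan is to mimic the proof of Theorem~\ref{thm:5new} almost verbatim, using Theorem~\ref{thm:4} as the ball-asymptotic engine and Lemma~\ref{lem:upperI} as the upper-semicontinuity input. The key observation is that the map $\iota_n$ is built precisely so that $\|\spec(Z_n) - \nu_n'\|_\I < \eps$ is equivalent (up to the normalization $a_n = n^2/2$) to $\|\iota_n(\spec(Z_n)) - \g'\|_\infty < \eps$, where $\g'$ is the concave primitive of $\partial^-\g'/n$ aligned with the definition of $\iota_n$. Consequently, Theorem~\ref{thm:4} reads as the ball asymptotic $\lim_n a_n^{-1}\log \PP_n(B_\infty(\g,\eps)) = -\inf_{\g' \in B_\infty(\g,\eps)} I(\g')$, and the LDP follows by the standard two-step open/closed argument.

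For the lower bound, given a Borel $E$ and any $\g \in E^\circ$, choose $\eps > 0$ with $B_\infty(\g,\eps) \subset E$. Monotonicity of $\PP_n$ and Theorem~\ref{thm:4} give
\[
\liminf_n a_n^{-1}\log \PP_n(E) \;\geq\; \lim_n a_n^{-1}\log \PP_n(B_\infty(\g,\eps)) \;=\; -\inf_{\g' \in B_\infty(\g,\eps)} I(\g') \;\geq\; -I(\g).
\]
Taking supremum over $\g \in E^\circ$ yields $\liminf_n a_n^{-1}\log \PP_n(E) \geq -\inf_{\g \in E^\circ} I(\g)$.

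For the upper bound, I would run the contradiction argument from Theorem~\ref{thm:5new}. Since $\a,\b$ are Lipschitz on $[0,1]$, the entries of $\spec(X_n),\spec(Y_n)$ are uniformly bounded, hence so are those of $\spec(Z_n)$; therefore $\iota_n(\spec(Z_n))$ is almost surely a concave function on $[0,1]$ with $\g'(0) = \g'(1) = 0$ and Lipschitz constant bounded uniformly in $n$. By Arzel\`a--Ascoli such functions form a compact subset $\K$ of $L^\infty[0,1]$, and we may replace $\overline{E}$ by $\overline{E} \cap \K$ without loss. Assume, for contradiction, that $\limsup_n a_n^{-1}\log \PP_n(E) > -\inf_{\g \in \overline{E}} I(\g) + \eps'$ for some $\eps' > 0$. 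For each $\g' \in \overline{E}$ with $I(\g') < \infty$, Lemma~\ref{lem:upperI} yields $\de_{\g'} > 0$ with $\inf_{\g'' \in B_\infty(\g',\de_{\g'})} I(\g'') > I(\g') - \eps'/2$; for $\g'$ with $I(\g') = \infty$, choose $\de_{\g'}$ so the same infimum exceeds $2/\eps'$. By compactness of $\overline{E} \cap \K$, extract a finite subcover $\{B_\infty(\g_i',\de_{\g_i'})\}_{i=1}^k$. Then $\PP_n(E) \leq \sum_{i=1}^k \PP_n(B_\infty(\g_i',\de_{\g_i'}))$, and applying Theorem~\ref{thm:4} to each ball gives
\[
\limsup_n a_n^{-1}\log \PP_n(E) \;\leq\; \max_i\bigl(-\inf_{\g'' \in B_\infty(\g_i',\de_{\g_i'})} I(\g'')\bigr) \;\leq\; -\inf_{\g \in \overline{E}} I(\g) + \eps'/2,
\]
contradicting the assumption. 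Letting $\eps' \to 0$ delivers the upper bound.

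The main obstacle, as in the analogous Theorem~\ref{thm:5new}, is the upper-semicontinuity bookkeeping that underlies Lemma~\ref{lem:upperI}: one must know that $I$ is upper semicontinuous on sub-level sets of $\J$ (which rests on Lemma~\ref{lem:38.5} and compactness of $H(\la,\mu)$ in $L^\infty$) and that the $\nu \mapsto V(\nu)$ factor is continuous enough under $L^\infty$ perturbations of the primitive $\g$; this second point uses strong concavity of $\g$ and the interlacing/$L^\infty$-continuity arguments already invoked in Lemma~\ref{lem:41}. Once these ingredients are in place, no new computation is required beyond applying Theorem~\ref{thm:4} to a finite cover.
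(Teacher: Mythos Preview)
Your proposal is correct and follows essentially the same route as the paper's own proof: Theorem~\ref{thm:4} supplies the ball asymptotics, Lemma~\ref{lem:upperI} supplies the local upper-semicontinuity of $I$, and Arzel\`a--Ascoli compactness turns this into a finite-cover contradiction for the upper bound. Your treatment is in fact slightly more careful than the paper's in two places: you explicitly restrict to the compact set $\K$ of uniformly Lipschitz concave functions where $\iota_n(\spec(Z_n))$ lives (the paper invokes Arzel\`a--Ascoli on $\overline{E}$ directly without saying why it is compact), and you handle the case $I(\g')=\infty$ separately when choosing radii $\de_{\g'}$ (the paper only defines $\de_{\g'}$ for $I(\g')<\infty$ and then covers all of $\overline{E}$ with such balls, leaving the infinite case implicit).
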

\begin{proof}
For each Borel measurable set $E$, by covering $E^o$ using $L^\infty$ balls, it follows from Theorem~\ref{thm:4} that  $$-\inf_{\g\in E^{\circ }}I(\g)\leq \liminf_{n \ra \infty}a_{n}^{-1}\log(\mathcal {P}_{n}(E)).$$
Next, suppose for the sake of contradiction, that $$\limsup_{n \ra \infty}a_{n}^{-1}\log(\mathcal {P}_{n}(E)) = -\inf_{\g\in {\overline {E}}}I(\g) + \eps'$$ for some positive $\eps'$. For all $\gamma' \in \overline {E}$, such that $I(\gamma') < + \infty$, let $\de_{\gamma'}$ denote a positive real such that $- \inf_{\gamma'' \in B_\infty(\gamma', \de_{\gamma'})} I(\gamma'') < - I(\gamma') + \eps'/2.$ The existence of such a positive real is guaranteed by Lemma~\ref{lem:upperI}. Since $\overline E \cap H(\la, \mu)$ (by Arzel\`{a}-Ascoli, in view of the Lipschitz norm being bounded and hence the family being equicontinuous) is compact, there is a finite subset $\La'$ of $\overline E \cap H(\la, \mu)$ such that the union of balls $ B_\infty(\gamma', \de_{\gamma'})$ over all $\gamma' \in \La'$ covers $\overline E \cap H(\la, \mu)$. We see that \beqs \limsup_{n \ra \infty}a_{n}^{-1}\log(\mathcal {P}_{n}(E)) & \leq & \limsup_{n \ra \infty}a_{n}^{-1}\log\left(\mathcal {P}_{n}\left(\bigcup_{\g' \in \La'} B_\infty(\gamma', \de_{\gamma'})\right)\right)\\ & = & - \inf_{\g' \in \La'}\left(\inf_{\g \in B_\infty(\gamma', \de_{\gamma'})}I(\gamma)\right)\\
& \leq & - \inf_{\g' \in \overline{E}}I(\gamma') + \eps'/2.\eeqs
This however contradicts $$-\inf_{\g\in \overline{E}}I(\g) + \eps' =  \limsup\limits_{n \ra \infty}a_{n}^{-1}\log(\mathcal {P}_{n}(E)).$$
Therefore we must have 
$$\limsup_{n \ra \infty}a_{n}^{-1}\log(\mathcal {P}_{n}(E)) \leq  -\inf_{\g\in {\overline {E}}}I(\g),$$ and we are done.
\end{proof}

\section{Open problems}
\ben 
\item Is $\sigma$ strictly convex? An affirmative answer would imply the existence,  as $n \ra \infty,$ of a limit shape for a randomly chosen augmented hive with $\a, \b$ being strongly concave $C^2$ functions.
\item Is there an explicit analytic expression for $\sigma$? Based on the work of Wangru Sun (Definition 5.4 \cite{Wangru}) on the Bead model, and  the work of Shlyakhtenko and Tao, (Theorem 1.7, \cite{Shlyakhtenko}), and the work of Samuel Johnston \cite{Johnston} on random Gelfand-Tsetlin patterns, it is natural to expect that $$\lim_{s_2 \ra \infty} \sigma(s_0, s_1, s_2) = - \ln\left(\frac{e(s_0 + s_1)}{\pi}\sin\left(\frac{\pi s_0}{s_0 + s_1}\right)\right).$$
\item The open question $6.1$ in \cite{KT2} asks whether there exists a concrete measure preserving map from  the space $\MM_{\la_n \mu_n \nu_n}$ (of Hermitian triples $(X_n, Y_n, Z_n)$ satisfying $X_n + Y_n = Z_n$ and having spectrum $(\la_n, \mu_n, \nu_n)$) to  $H_n(\la_n, \mu_n; \nu_n)$. If there exists such a map (concrete or not) for each $n$ that is Lipschitz (the Lipschitz constant not depending on $n$) with respect to the $\|\cdot\|_\I$ norm on the spectra in the domain and sup norm in the range, it is then possible, to remove the restriction that $\a$ and $\b$ be $C^2$ in Theorem~\ref{thm:5new}, and merely require that they be Lipschitz and strongly concave. Do such  maps exist?
\item Are there similar large deviations principles for integer valued random hives? Integer valued augmented hives are significant in the representation theory of $GL_n(\mathbb{C})$ \cite{KT2}, and are connected to the irreducible representations appearing in the direct sum decomposition of the tensor product of two irreducible representations. Integer valued hives are also known to be in bijective correspondence with square triangle tilings of certain regions in the plane \cite{squaretri}.
\een

\section*{Acknowledgements}
We are very grateful to the anonymous reviewer for an exceptionally painstaking and careful review that pointed out several inaccuracies.
We thank Terence Tao for his valuable comments.
Hariharan Narayanan  is partially supported by a Ramanujan fellowship and a Swarna Jayanti fellowship,  instituted by the Government of India.  Scott Sheffield is partially supported by NSF awards DMS 1712862 and DMS 2153742.
\bibliographystyle{alpha}


\end{document}